\newtoks\prt
\newtheorem{thm}{Theorem}[section]
\newtheorem{ques}[thm]{Question}
\newtheorem{lemma}[thm]{Lemma}
\newtheorem{prop}[thm]{Proposition}
\newtheorem{cor}[thm]{Corollary}
\newtheorem{example}[thm]{Example}
\theoremstyle{definition}
\newtheorem{remark}[thm]{Remark}
\def\eqn#1$$#2$${\begin{equation}\label#1#2\end{equation}}
 \def\J#1#2#3{ \left\{ #1,#2,#3 \right\} }
\def\C{\mathcal C}
\def\T{\mathcal T}
\def\U{\mathcal U}
\def\co{\operatorname{conv}}
\def\ep{\varepsilon}
\def\en{\mathbb N}
\def\er{\mathbb R}
\def\dist{\operatorname{dist}}
\def \g{\boldsymbol{g}}
\def \h{\boldsymbol{h}}
\def\spt{\operatorname{spt}}
\def\span{\operatorname{span}}
\def \reg {\partial _{\kern1pt\text{reg}}}
\def\ip#1#2{\left\langle#1,#2\right\rangle}
\def\di{\,\mbox{\rm d}}
\def\dh{\widehat{\operatorname{d}}}
\def\clu#1{\operatorname{clust}_{w^*}(#1)}
\newcommand{\Norm}[1]{\Bigl\|#1\Bigr\|}
\newcommand{\norm}[1]{\left\|#1\right\|}
\newcommand{\betr}[1]{| #1  |}
\renewcommand{\Re}{\operatorname{Re}}
\newcommand{\wk}[2][X]{\operatorname{wk}_{#1}\left(#2\right)}
\newcommand{\wck}[2][X]{\operatorname{wck}_{#1}\left(#2\right)}
\newcommand{\wscl}[1]{\overline{#1}^{w^*}}
\newcommand{\abs}[1]{\left|#1\right|}
\newcommand{\setsep}{;\,}
\let\subset\subseteq
\title[Measures of weak non-compactness]{Measures of weak non-compactness in preduals of von Neumann algebras and JBW$^*$-triples}
\author[J. Hamhalter]{Jan Hamhalter}
\author[O.F.K. Kalenda]{Ond\v{r}ej F.K. Kalenda}
\author[A.M. Peralta]{Antonio M. Peralta}
\author[H. Pfitzner]{Hermann Pfitzner}
\address{Czech Technical University in Prague, Faculty of Electrical Engineering, Department of Mathematics, Technicka 2, 166 27, Prague 6,
Czech Republic}
\email{hamhalte@math.feld.cvut.cz}
\address{Charles University, Faculty of Mathematics and Physics, Department of
Mathematical Analysis, Sokolovsk{\'a} 86, 186 75 Praha 8, Czech Republic}
\email{kalenda@karlin.mff.cuni.cz}
\address{Departamento de An{\'a}lisis Matem{\'a}tico, Facultad de
Ciencias, Universidad de Gra\-na\-da, 18071 Granada, Spain.}
\email{aperalta@ugr.es}
\address{Institut Denis Poisson, Universit\'{e} d'Orl\'{e}ans, Universit\'{e} de Tours,
CNRS, Rue de Chartres, BP 6759, F-45067 Orl\'{e}ans Cedex 2, France}
\email{hermann.pfitzner@univ-orleans.fr}
\thanks{The first two authors were in part supported by the Research Grant GA\v{C}R 17-00941S. The first author was partly supported further by the project OP VVV Center for Advanced Applied Science CZ.02.1.01/0.0/0.0/16\_019/000077. The third author was partially supported by the Spanish Ministry of Science, Innovation and Universities (MICINN) and European Regional Development Fund project no. PGC2018-093332-B-I00 and Junta de Andaluc\'{\i}a grant FQM375.}
\keywords{measure of weak non-compactness, von Neumann algebra, JBW$^*$-triple, JBW$^*$-algebra, strong$^*$ topology}
\subjclass[2010]{46B50, 46L70, 17C65}
\begin{document}

\begin{abstract}
We prove, among other results, that three standard measures of weak non-compactness coincide in preduals of JBW$^*$-triples. This result is new even for preduals of von Neumann algebras. We further provide a characterization of JBW$^*$-triples with strongly WCG predual and describe the order of seminorms defining the strong$^*$ topology. As a byproduct we improve a characterization of weakly compact subsets of a JBW$^*$-triple predual, providing so a proof for a conjecture, open for almost eighteen years,  on weakly compact operators from a JB$^*$-triple into a complex Banach space.
\end{abstract}

\maketitle

\section{Introduction}

Measures of weak non-compactness are an important tool for a deeper understanding of weak compactness of sets and operators.
They are used to prove more precise versions of known results and to establish new results as well. As an illustration
we mention a fixed-point theorem \cite{deblasi}, quantitative versions of Krein's theorem \cite{f-krein,Gr-krein},
James' compactness theorem \cite{CKS,Gr-James}, Eberlein-\v{S}mulyan theorem \cite{AC-jmaa} and Gantmacher's theorem \cite{AC-meas}.\smallskip

There are several procedures how one can measure weak non-compactness of sets. There are two basic non-equivalent ways -- on
the one hand the De Blasi measure introduced and used in \cite{deblasi}, and on the other hand various mutually equivalent
quantities used in the other above-quoted papers. Their non-equivalence follows from \cite{tylli-cambridge,AC-meas}.
However, the counterexample witnessing their non-equivalence is an artificially constructed Banach space -- it is constructed
as the $\ell^1$-sum of suitable renormings of the space $\ell^1$. It seems to be still an open question whether there is
a `natural' Banach space where they fail to be equivalent.\smallskip

This question seems to be rather interesting as in many classical spaces all these measures of weak non-compactness are
equivalent. This applies, for example, to the Lebesgue spaces $L^1(\mu)$ for an arbitrary non-negative $\sigma$-additive measure $\mu$ \cite[Theorem 7.5]{qdpp}, including the special case $\ell^1(\Gamma)$ \cite[Proposition 7.3]{qdpp}, the space $c_0(\Gamma)$ \cite[Proposition 10.2]{qdpp}, the space of nuclear operators $N(\ell^q(\Lambda),\ell^p(J))$ for $p,q\in(1,\infty)$ \cite[Theorem 2.1]{HK-nuclear}, and preduals of atomic von Neumann algebras \cite[Theorem 2.2]{HK-nuclear}.\smallskip

In the present paper we prove the equivalence of the measures of weak non-compactness in preduals of JBW$^*$-triples.
This covers, in particular, the preduals of general von Neumann algebras. \smallskip

The machinery developed in this paper also have some important consequences in improving our current knowledge on relatively weakly compact subsets in the predual of a JBW$^*$-triple. More concretely, the result established in \cite{peralta2006some} asserts that a bounded subset, $K$, in the predual of a JBW$^*$-triple, $M$, is relatively weakly compact if and only if there is a couple of normal functionals $\varphi_1,\varphi_2$ in $M_*$  whose associated preHilbertian seminorm $\|\cdot\|_{\varphi_1,\varphi_2}$ controls uniformly the values of all
functionals in $K$ on the closed unit ball of $M$ (see sections \ref{sec:3}, \ref{sec:tripotents}, and \ref{sec:strong*} for definitions). As shown, for example in \cite[pages 340 and 341]{Cabrera-Rodriguez-vol2}, the existence of control seminorms $\|\cdot\|_{\varphi_1,\varphi_2}$ for relatively weakly compact subsets in $M_*$ can be applied to characterize weakly compact operators from a complex Banach space into the predual of a JBW$^*$-triple and from a JB$^*$-triple into a complex Banach space. It has been conjectured that, as in the case of von Neumann algebras (see \cite{akemann1967dual,jarchow1986weakly}), a single functional $\varphi$ in the predual, $M_*$, of a JBW$^*$-triple $M$  (and the associated seminorm $\|\cdot\|_{\varphi}$) is enough to control relatively weakly compact subsets in $M_*$ and to determine those weakly compact operators from a complex Banach space into $M_*$. It was even so claimed in \cite[Theorem 11]{chu1988weakly}. However, some subtle difficulties in the Barton-Friedman's proof for the Grothendieck inequality for JB$^*$-triples, also affected the arguments in \cite{chu1988weakly} (cf. section \ref{sec:11} or \cite[page 341]{Cabrera-Rodriguez-vol2}). In this paper we also provide a complete proof for this conjecture and we show the validity of the original statement in \cite[Theorem 11]{chu1988weakly}.\smallskip

The paper is organized as follows.\smallskip

In Section~\ref{sec:2} we give the definitions of the three measures of weak non-compact\-ness we will deal with and provide their basic properties.\smallskip

Section~\ref{sec:3} contains some background on JB$^*$-triples and, more specifically, on dual spaces among them, i.e., JBW$^*$-triples. We provide the basic notions and properties, including the relationship to the classical subclasses formed by C$^*$-algebras (respectively, von Neumann algebras) and JB$^*$-algebras (respectively, JBW$^*$-algebras).\smallskip

In Section~\ref{sec:main} we formulate our main result on the equivalence of the three measure of weak non-compactness in JBW$^*$-triple preduals  (see Theorem~\ref{T:main}). We further collect some consequences -- the special cases of the main result and the analogous result for real spaces.\smallskip

Sections~\ref{sec:ssp}, \ref{sec:tripotents}, \ref{sec:strong*} and~\ref{sec:8} are devoted to the proof of the main result. On the way to the proof we establish several results which seem to be of an independent interest. In Section~\ref{sec:ssp} we show that there is a close relationship between the equality of measures of weak non-compactness and the subsequence splitting property (see Proposition~\ref{p splitting}).\smallskip

In Section~\ref{sec:tripotents} we gather some properties of projections in C$^*$-algebras and JB$^*$-algebras and of tripotents in JB$^*$-triples. Most of the results presented there are known, but we point out that Lemmata~\ref{L:Jembed C*} and~\ref{L:Jembed JB*} and Proposition~\ref{P:M2 inclusion} seem to be of independent interest.\smallskip

In Section~\ref{sec:strong*} we investigate the relation between the strong$^*$ topology on a JBW$^*$-triple and the weakly compact subsets of its predual. The main achievements there are Propositions~\ref{P:seminorms order} and~\ref{P:wcompact cofinal}, where we explore the connections between the natural partial order in the set of tripotents with the order among the associated seminorms and the first results leading to the existence of control functionals.\smallskip

Section~\ref{sec:8} contains the culminating arguments of the proof of Theorem~\ref{T:main}.\smallskip

In Section~\ref{sec:9} we focus on $\sigma$-finite JBW$^*$-triples. We characterize those $\sigma$-finite JBW$^*$-triples whose predual is strongly WCG (see Theorem~\ref{T:SWCG}). This theorem reveals, in particular, a substantial
difference between JBW$^*$-triples and JBW$^*$-algebras, as the predual of a $\sigma$-finite JBW$^*$-algebra is always strongly WCG.\smallskip

In Section~\ref{sec:10} we apply the methods from Section~\ref{sec:9} to prove Theorem~\ref{T:order}
which provides a deep understanding of the structure of the strong$^*$ topology for general JBW$^*$-triples.\smallskip

In Section~\ref{sec:11}  we present the new advances on the characterization of relatively weakly compact subsets in the predual of a JBW$^*$-triple (see Theorem \ref{t weakly compact with a single control functional}), and the subsequent consequences determining the weakly compact operators from a JB$^*$-triple into a complex Banach space (cf. Theorem \ref{t weakly compact operators in the statement of ChuIochum}), which, as we have already commented, provides a proof for a conjecture considered during the last eighteen years.

The last section contains some open problems.

\section{Measures of weak noncompactness}\label{sec:2}

Let $X$ be a (real or complex) Banach space and $A,B\subset X$ two nonempty sets.
We set
$$\dh(A,B)=\sup\{\dist(a,B)\setsep a\in A\}.$$

The \emph{Hausdorff measure of norm non-compactness} is defined by the formula
$$\chi(A)=\inf\{\dh(A,F)\setsep F\subset X\mbox{ finite}\}=\inf\{\dh(A,K)\setsep K\subset X\mbox{ compact}\}$$
for a bounded set $A\subset X$. It is clear that $\chi(A)=0$ if and only if $A$ is relatively norm compact.\smallskip

The \emph{De Blasi measure of weak non-compactness} introduced in \cite{deblasi} is defined by
$$\omega(A)=\inf\{\dh(A,K)\setsep K\subset X\mbox{ weakly compact}\}.$$
(When confusion concerning the underlying space $X$ could arise, like for example in Lemma \ref{L:measures 1-complemented}, we will write $\omega_X$ instead of $\omega$.)
It is a natural modification of the Hausdorff measure of non-compactness. Further, $\omega(A)=0$ if and only if $A$ is relatively weakly compact. As remarked in \cite{deblasi} this was proved already by Grothendieck \cite[p. 401]{gro-book}
(compare also \cite[Lemma XIII.2]{Diestel_book}).\smallskip

Another measure of weak non-compactness inspired by the Banach-Alaoglu theorem is defined by
$$\wk{A}=\dh(\wscl{A},X).$$
Here $\wscl{A}$ is the closure of $A$ in the space $(X^{**},w^*)$, where $X$ is considered to be canonically embedded into its bidual.
It is clear that $A$ is relatively weakly compact if and only if $\wscl{A}\subset X$, i.e., if and only if $\wk{A}=0$.\smallskip

We will use one more quantity, namely
$$\wck{A}=\sup\{\dist(\clu{x_n},X)\setsep (x_n)\mbox{ is a sequence in }A\},$$
where $\clu{x_n}$ denotes the set of all weak$^*$-cluster points of the sequence $(x_n)$ in the bidual $X^{**}$. It follows easily from the Eberlein-\v{S}mulyan theorem that $\wck{A}=0$ whenever $A$ is relatively weakly compact. The converse follows from the quantitative version of the Eberlein-\v{S}mulyan theorem proven in \cite{AC-jmaa}. It consists in the inequalities
$$\wck{A}\le \wk{A}\le 2\wck{A}$$
which hold for any bounded subset $A\subset X$.
Further, the following inequalities are easy to check:
$$\wk{A}\le\omega(A)\le\chi(A).$$
In general, the quantities $\wk{\cdot}$ and $\omega(\cdot)$ (hence also $\wck{\cdot}$ and $\omega(\cdot)$) are not equivalent. As mentioned above, this was proved in \cite{tylli-cambridge,AC-meas} but it follows also from \cite[Theorem 2.3]{qdpp} using the fact that weakly compactly generated spaces are not stable under taking subspaces.\smallskip

If $Y$ is a closed subspace of $X$ and $A$ is a bounded subset of $Y$, then we can consider the measures of weak non-compactness of the set $A$ either in the space $X$ or in the space $Y$. In general it is not the same but we have
$$\wk{A}\le\wk[Y]{A}\le 2\wk{A},\quad
\wck{A}\le\wck[Y]{A}\le 2\wck{A}.$$
Indeed, in both cases the first inequality is trivial and the second one follows from \cite[Lemma 11]{Gr-krein}.\smallskip

Further, we clearly have $\omega_X(A)\le\omega_Y(A)$ but the converse inequality is, in general, not true, neither up to a constant (this follows, for example, from \cite[Theorem 2.3]{qdpp} with the same remark as above).\smallskip

However, if $Y$ is $1$-complemented in $X$, then the measures considered in $X$ and in $Y$ coincide. This is the content of the following easy lemma which is proved in \cite[Lemma 3.8(b)]{HK-nuclear}.

\begin{lemma}\label{L:measures 1-complemented}
Let $X$ be a Banach space, $Y\subset X$ a $1$-complemented subspace and $A\subset Y$ a bounded set.
Then
$$\wk{A}=\wk[Y]{A},\quad\wck{A}=\wck[Y]{A},\quad\omega_X(A)=\omega_Y(A).$$
\end{lemma}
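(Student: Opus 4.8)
The plan is to prove the three equalities by exploiting the defining feature of a $1$-complemented subspace: there is a norm-one projection $P\colon X\to Y$, and its bidual $P^{**}\colon X^{**}\to Y^{**}$ is a norm-one projection onto $Y^{**}$ which is $w^*$-to-$w^*$ continuous (being an adjoint operator) and restricts to the identity on $Y$. These two properties — contractivity and $w^*$-continuity — are exactly what is needed to push cluster sets and $w^*$-closures back and forth between $X^{**}$ and $Y^{**}$ without distortion.

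\medskip

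First I would record the direction that holds trivially for every closed subspace, namely $\wk{A}\le\wk[Y]{A}$, $\wck{A}\le\wck[Y]{A}$, and $\omega_X(A)\le\omega_Y(A)$; these are already noted in the excerpt and follow since a set weakly compact in $Y$ is weakly compact in $X$, and since distances computed in $X^{**}$ to the smaller space $X$ are no larger than distances to $Y\subset X$ (here one uses that for $A\subset Y$ the relevant cluster points lie in $Y^{**}$, so $\dist(\clu{x_n},X)\le\dist(\clu{x_n},Y)$). The substance is therefore the reverse inequalities, and for these I would invoke $P^{**}$. For the $\wk{\cdot}$ equality I would show $\wscl{A}^{(X^{**})}$ and $\wscl{A}^{(Y^{**})}$ are carried onto one another by $P^{**}$: since $A\subset Y$ we have $P^{**}$ equal to the identity on $\wscl{A}$ computed in $Y^{**}$, and $w^*$-continuity of $P^{**}$ gives $P^{**}(\wscl{A}^{(X^{**})})\subset \wscl{A}^{(Y^{**})}$. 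Because $P^{**}$ is a contraction fixing $Y$ pointwise, applying it to a near-optimal point does not increase the distance to $Y$, yielding $\wk[Y]{A}\le\wk{A}$. The argument for $\wck{\cdot}$ is identical with $\clu{x_n}$ in place of $\wscl{A}$, using that $P^{**}$ sends $w^*$-cluster points of $(x_n)$ in $X^{**}$ to $w^*$-cluster points in $Y^{**}$.

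\medskip

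For the De Blasi measure $\omega$ I would argue directly with weakly compact test sets rather than with the bidual. Given a weakly compact $K\subset X$ with $\dh(A,K)<\omega_X(A)+\ep$, the set $P(K)\subset Y$ is weakly compact (continuous linear image of a weakly compact set), and since $P$ is a contraction fixing $A$ pointwise, $\dist(a,P(K))=\dist(P(a),P(K))\le\dist(a,K)$ for every $a\in A$. Hence $\dh(A,P(K))\le\dh(A,K)$, giving $\omega_Y(A)\le\omega_X(A)$ and closing the chain.

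\medskip

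The main obstacle, such as it is, lies in being careful about which bidual the closures and cluster sets are taken in and in checking that $P^{**}$ really restricts to the identity on $Y^{**}$ (equivalently, that $P^{**}$ is a projection onto $Y^{**}$). This is a standard fact — $P^2=P$ implies $(P^{**})^2=P^{**}$, and $P|_Y=\mathrm{id}_Y$ gives $P^{**}|_{Y^{**}}=\mathrm{id}_{Y^{**}}$ by $w^*$-density of $Y$ in $Y^{**}$ together with $w^*$-continuity — but it is the one place where the bidual bookkeeping must be done honestly. Since the excerpt attributes the result to \cite[Lemma 3.8(b)]{HK-nuclear}, I would expect the intended proof to be exactly this short contraction-and-$w^*$-continuity argument.
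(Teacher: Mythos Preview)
Your argument is correct and is the standard one; the paper itself does not give a proof but simply cites \cite[Lemma 3.8(b)]{HK-nuclear}, so there is nothing further to compare. Your careful handling of the bidual bookkeeping (that $P^{**}$ is a $w^*$-continuous norm-one projection onto $Y^{**}$ fixing $Y$, hence fixing $\wscl{A}$) is exactly what is needed, and your direct argument for $\omega$ via $P(K)$ is clean.
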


\section{JB$^*$-triples and their subclasses}\label{sec:3}

Originated as mathematical models of physical observables in quantum mechanics by authors like W. Heisenberg, P. Jordan and J. von Neumann, C$^*$-algebras constitute nowadays an area of intensive research in functional analysis. The abstract definition says that a \emph{C$^*$-algebra} is a complex Banach algebra $A$ equipped with an algebra involution $^*$ satisfying the so-called \emph{Gelfand--Naimark axiom}, that is $\|a^* a\|= \| a\|^2$ for all $a\in A$. The celebrated Gelfand--Naimark theorem represents each abstract C$^*$-algebra as a norm-closed selfadjoint subalgebra of the space $B(H)$ of all continuous linear operators on a complex Hilbert space $H$.
\smallskip

From the point of view of functional analysis, the class of C$^*$-algebras is not very stable for several properties. For example, a C$^*$-algebra is reflexive if and only if it is finite-dimensional, and C$^*$-algebras are not stable under contractive projections as we can find a contractive projection from $B(H)$ onto the Hilbert space $H$. As observed by Poincar{\'e} in 1907, for complex Banach spaces of dimension bigger than or equal to two, there exists a wide range of simply connected domains which are not biholomorphic to the unit ball. In other words, the Riemann mapping theorem cannot be easily generalized to arbitrary complex Banach spaces. JB$^*$-triples were introduced in 1983 by W. Kaup in a successful classification of bounded symmetric domains in complex Banach spaces of arbitrary dimension (see \cite{kaup1983riemann}). A \emph{JB$^*$-triple} is a complex Banach space $E$ together with a (continuous) triple product $\{.,.,.\}: E^3 \to E$, which is symmetric and bilinear in the outer variables and conjugate-linear in the middle one, satisfying the following algebraic--analytic properties:
\begin{enumerate}[$(JB^*$-$1)$]
\item $L(x,y)L(a,b) = L(L(x,y)(a),b) $ $- L(a,L(y,x)(b))$ $+ L(a,b) L(x,y) $ for all $a,b,x,y\in E$, where given $a,b\in E$, $L(a,b)$ stands for the (linear) operator on $E$ given by $L(a,b)(x)=\J abx$, for all $x\in E$ (Jordan identity);
\item The operator $L(a,a)$ is a hermitian operator with nonnegative spectrum for each $a\in E$;
\item $\|\{a,a,a\}\|=\|a\|^3$ for $a\in E$.
\end{enumerate}

Let us observe that the third axiom $(JB^*$-$3)$ is a Jordan-geometric analogue of the \emph{Gelfand--Naimark axiom}.\smallskip

The mapping $ (x,y,z)\mapsto \J xyz =\frac12 (x y^* z +z y^* x)$ can be applied to define a structure of JB$^*$-triple on a C$^*$-algebra $A$, or on the space $B(H,K)$ of all bounded linear operators between complex Hilbert spaces $H$ and $K$ and in particular on an arbitrary Hilbert space $H$ identified with $B(H,\mathbb{C})$, in all these cases we keep the original norm of the corresponding underlying Banach spaces.\smallskip

Some of the lackings exhibited by the class of C$^*$-algebras are no longer a handicap for JB$^*$-triples. For example, JB$^*$-triples are stable under contractive perturbations (see \cite{friedman1985solution,stacho1982projection,kaup1984contractive}). One of the most interesting properties of JB$^*$-triples is that a linear bijection between JB$^*$-triples is an isometry if and only if it is a triple isomorphism, that is, it preserves triple products (see \cite[Proposition 5.5]{kaup1983riemann}). In particular, if a complex Banach space $E$ admits two triple products under which $E$ is a JB$^*$-triple with respect to its original norm and any of these products, then both products coincide.\smallskip

An intermediate class between C$^*$-algebras and JB$^*$-triples is formed by JB$^*$-algebras. The hermitian part, $A_{sa}$, of a C$^*$-algebra $A$ is not, in general, closed under products. However $A_{sa}$ is a Jordan subalgebra of $A$ when the latter is considered with its natural Jordan product defined by $a\circ b = \frac12 (a b + ba)$.\smallskip

P. Jordan released the notion of Jordan algebra to set a mathematical model for the algebra of observables in quantum mechanics. In abstract algebra, a Jordan algebra is a nonassociative algebra over a field whose multiplication, denoted by $\circ$, is commutative and satisfies the so-called \emph{Jordan identity} $( x \circ y ) \circ x^2 = x\circ ( y\circ x^2 ).$ Given an element $a$ in a Jordan algebra $B$, we shall write $U_a$ for the linear mapping on $B$ defined by $U_a (b) := 2(a\circ b)\circ a - a^2\circ b$. A Jordan Banach algebra $B$ is a Jordan algebra equipped with a complete norm satisfying $\|a\circ b\|\leq \|a\| \cdot \|b\|$ for all $a,b\in B$. A complex Jordan Banach algebra $B$ admitting an involution $^*$ satisfying that $\| U_{a} (a^*) \| = \|a\|^3$ for all $a,b\in B$ is called a \emph{JB$^*$-algebra} (see \cite{youngson1978vidav}, \cite[Definition 3.3.1]{Cabrera-Rodriguez-vol1}). In some texts the definition of JB$^*$-algebras includes the extra axiom that the involution in a JB$^*$-algebra is an isometry (cf. \cite[\S 3.8]{hanche1984jordan}). The result in \cite[Lemma 4]{youngson1978vidav} (see also \cite[Proposition 3.3.13]{Cabrera-Rodriguez-vol1}) shows that this extra axiom is redundant. The self-adjoint part of a JB$^*$-algebra $B$ will be denoted by $B_{sa}.$ It is known that (real) \emph{JB-algebras} are precisely the self-adjoint parts of JB$^*$-algebras (compare \cite{Wright1977}). Any JB$^*$-algebra  also admits a structure of a JB$^*$-triple when equipped with the triple product defined by $\J xyz = (x\circ y^*) \circ z + (z\circ y^*)\circ x - (x\circ z)\circ y^*$ in which case $U_a (b)=\{a,b^*,a\}$.\smallskip

For a general overview of JB-algebras, JB$^*$-algebras and JB$^*$-triples the reader is referred to the monographs \cite{hanche1984jordan,chubook,Cabrera-Rodriguez-vol1,Cabrera-Rodriguez-vol2}.\smallskip

A \emph{JBW$^*$-triple} (respectively, a \emph{JBW$^*$-algebra}) is a JB$^*$-triple (respectively, a \emph{JB$^*$-algebra}) which is also a dual Banach space. JBW$^*$-triples can be considered as the Jordan alter-ego of von Neumann algebras. It should be noted that, as established by Sakai's theorem for von Neumann algebras, every JBW$^*$-triple admits a unique (isometric) predual and its product is separately weak$^*$-to-weak$^*$ continuous  \cite{BaTi} (see also \cite[Theorems 5.7.20 and 5.7.38]{Cabrera-Rodriguez-vol2} for new proofs). These facts rely on the fact proved in \cite{BaTi}
(see also \cite{CR}) asserting that the bidual of any JB$^*$-triple $E$ is a JB$^*$-triple under a triple product which extends that of $E$ and is separately weak$^*$-to-weak$^*$ continuous. It is further known that every triple isomorphism between JBW$^*$-triples is automatically weak$^*$-continuous (cf. \cite[Corollary 3.22]{Horn1987}).\smallskip

A \emph{JC$^*$-algebra} is a concrete JB$^*$-algebra which materializes as a norm-closed Jordan $*$-subalgebra of  a C$^*$-algebra, and hence a norm-closed Jordan self-adjoint subalgebra of some $B(H)$. A JW$^*$-algebra is a JC$^*$-algebra which is also a dual Banach space, or equivalently, a weak$^*$-closed JB$^*$-subalgebra of some von Neumann algebra. There are examples of JB$^*$-algebras which are not JC$^*$-algebras. Macdonald's and Shirshov-Cohn's theorems are useful tools to describe certain important subalgebras of JB$^*$-algebras (see \cite[Theorems 2.4.13 and 2.4.14]{hanche1984jordan}). For example, these structure results were originally applied by J.D.M. Wright in \cite{Wright1977} to deduce that the JB$^*$-subalgebra of a JB$^*$-algebra generated by two hermitian elements (and the unit element) is a JC$^*$-algebra. This result and one of its multiple consequences are gathered in the next lemma (which is a variant of \cite[Proposition 3.4.6]{Cabrera-Rodriguez-vol1}).

\begin{lemma}\label{L:JC*}
Let $B$ be a unital JB$^*$-algebra and $a\in B$ an arbitrary element.
\begin{enumerate}[$(i)$]
\item Let $N$ be the closed Jordan $*$-subalgebra of $B$ generated by $a$ and $1$. Then $N$ is Jordan (isometrically) $*$-isomorphic to a JB$^*$-subalgebra of some $B(H)$, where $H$ is a complex Hilbert space;
\item The element $a^*\circ a$ is positive in $N$. Moreover, $a^*\circ a=0$ if and only if $a=0$.
\end{enumerate}
\end{lemma}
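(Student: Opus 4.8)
The plan is to derive (i) from the Shirshov--Cohn type theorem recalled just above, and then to obtain (ii) from the elementary identity that commutativity of the Jordan product forces on $a^*\circ a$.

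First I would pass from the single generator $a$ to two self-adjoint generators. Since $N$ is closed under the involution and contains $a$, it also contains $a^*$, and hence the self-adjoint elements $h=\frac12(a+a^*)$ and $k=\frac1{2i}(a-a^*)$. Conversely, as $N$ is a complex-linear Jordan $*$-subalgebra, $a=h+ik$ and $a^*=h-ik$ belong to the closed Jordan $*$-subalgebra generated by $h$, $k$ and $1$; thus $N$ is \emph{exactly} the closed Jordan $*$-subalgebra of $B$ generated by the two hermitian elements $h,k$ together with $1$. By Wright's theorem recalled above (the JB$^*$-version of the Shirshov--Cohn and Macdonald theorems), such a subalgebra is a JC$^*$-algebra, i.e.\ it is Jordan $*$-isomorphic to a norm-closed Jordan $*$-subalgebra of some $B(H)$. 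Since any Jordan $*$-isomorphism between JB$^*$-algebras is automatically isometric, the isomorphism is an isometry, which establishes (i).

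For (ii) I would first record the purely algebraic identity
$$a^*\circ a=(h-ik)\circ(h+ik)=h^2+k^2,$$
where the mixed contribution $i\,(h\circ k)-i\,(k\circ h)$ vanishes because the Jordan product is commutative. Now $h$ and $k$ are self-adjoint, so $h^2$ and $k^2$ lie in the positive cone of the JB-algebra $N_{sa}$; as this cone is closed under addition, $a^*\circ a=h^2+k^2$ is positive in $N$. This already avoids any appeal to the realization in $B(H)$, though the same conclusion can of course be read off there from $a^*\circ a=\frac12(a^*a+aa^*)$.

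It then remains to treat the zero case. If $a^*\circ a=0$, then $h^2=-k^2$, so $h^2$ is simultaneously positive and negative in $N_{sa}$; its spectrum is therefore $\{0\}$, forcing $\|h\|^2=\|h^2\|=0$ and hence $h=0$. The same argument gives $k=0$, whence $a=h+ik=0$, while the converse implication is trivial. The only genuinely deep ingredient is the structure theorem invoked in (i); once that is in hand, the main point requiring a little care is merely the bookkeeping that cancels the cross terms in the expansion of $a^*\circ a$, after which everything reduces to the standard order and norm properties ($h^2\ge 0$, $\|h^2\|=\|h\|^2$) of the JB-algebra $N_{sa}$.
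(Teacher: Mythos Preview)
Your proof of (i) is essentially identical to the paper's: both reduce to Wright's Shirshov--Cohn type result by writing $N$ as the unital Jordan $*$-subalgebra generated by the two hermitian elements $h=\tfrac12(a+a^*)$ and $k=\tfrac1{2i}(a-a^*)$.

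For (ii) you take a genuinely different route. The paper simply invokes (i) and observes that the statement is clear in $B(H)$ (where $a^*\circ a=\tfrac12(a^*a+aa^*)$ is a sum of positive operators, and vanishes only if $a^*a=0$, hence $a=0$). You instead compute $a^*\circ a=h^2+k^2$ directly and conclude from the order structure of the JB-algebra $N_{sa}$, together with $\|h^2\|=\|h\|^2$. Your argument is correct and more intrinsic: it does not use the $B(H)$-embedding at all, so (ii) becomes independent of (i). The paper's version is shorter to write but relies on the structure theorem; yours isolates the elementary content of (ii) and would survive in any JB$^*$-algebra context where one does not wish to invoke Shirshov--Cohn.
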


\begin{proof}$(i)$ Note that $N$ coincides with the unital Jordan $*$-subalgebra generated by two self-adjoint elements $\frac12(a^*+a)$ and $\frac1{2i}(a-a^*)$. Hence the assertion follows immediately from \cite[Corollary 2.2]{Wright1977} (see also \cite[Proposition 3.4.6]{Cabrera-Rodriguez-vol1}). Statement $(ii)$ follows from $(i)$ since the assertion is clearly true in $B(H)$.
\end{proof}

The literature offers a generous collection of structure results for JB$^*$-triples. The Gelfand--Naimark theorem established by Y. Friedman and B. Russo proves that every JB$^*$-triple can be isometrically embedded as a JB$^*$-subtriple of an $\ell_{\infty}$-sum of Cartan factors (see \cite[Theorem 1]{Friedman-Russo-GN} and details there). A consequence of this fact shows that every JB$^*$-triple is isometrically JB$^*$-triple isomorphic to a JB$^*$-subtriple of a JB$^*$-algebra.\smallskip

Let us fix some additional notation. Given two von Neumann algebras $A\subset {B}(H)$ and $W\subset {B}(K)$, the algebraic tensor product $A\otimes W$ is canonically embedded into ${B}(H\otimes K)$, where $H\otimes K$ is the hilbertian tensor product of $H$ and $K$ (see \cite[Definition IV.1.2]{Tak}). Then the \emph{von Neumann tensor product} of $A$ and $W$ (denoted by $A\overline{\otimes} W,$) is precisely the von Neumann subalgebra generated by the algebraic tensor product $A\otimes W$ in ${B}(H\otimes K)$, that is, the weak$^*$ closure
of $A\otimes W$ in ${B}(H\otimes K)$ (see \cite[\S IV.5]{Tak}).\smallskip

Suppose $A$ is a commutative von Neumann algebra and $M$ is a JBW$^*$-subtriple of some $B(H)$. Following the standard notation, we shall write
$A \overline{\otimes} M$ for the weak$^*$-closure of the algebraic tensor product $A\otimes M$ in the usual von Neumann tensor product $A
\overline{\otimes} B(H)$ of $A$ and $B(H)$. Clearly $A \overline{\otimes} M$ is a JBW$^*$-subtriple of $A \overline{\otimes} B(H)$. It is well
known that every Cartan factor $C$ of type 1, 2, 3, or 4 is a JBW$^*$-subtriple of some $B(H)$ (compare \cite{horn1987classification} and
section \ref{sec:9}
for a more detailed presentation of Cartan factors), and thus, the von Neumann tensor product $A \overline{\otimes} C$
is a JBW$^*$-triple.\smallskip

The exceptional Cartan factors of type 5 and 6 can not be represented as JB$^*$-subtriples of some $B(H)$. Fortunately, these factors are all finite-dimensional. Henceforth, if $C$ denotes a finite-dimensional JB$^*$-triple, $A \overline{\otimes} C$ will stand for the injective tensor product of $A$ and $C$, which is clearly identified with the space $C(\Omega, C)$, of all continuous functions on $\Omega$ with values in $C$ endowed with the pointwise operations and the supremum norm, where $\Omega$ denotes the spectrum of $A$ (cf. \cite[p. 49]{ryan2013introduction}). This convention is consistent with the definitions in the previous paragraph, because if $C$ is a finite-dimensional Cartan factor which can be also embedded as a JBW$^*$-subtriple of some $B(H)$ both definitions above give the same object (cf. \cite[Theorem IV.4.14]{Tak}).\smallskip

In the setting of JBW$^*$-triples, a much more precise description than that derived from the Gelfand-Naimark theorem was found by G. Horn and E. Neher in \cite[(1.7)]{horn1987classification}, \cite[(1.20)]{horn1988classification}, where they proved that every JBW$^*$-triple $M$ writes  in the form
\begin{equation}\label{eq decomposition of JBW*-triples}
M = \left(\bigoplus_{j\in \mathcal{J}} A_j \overline{\otimes} C_j \right)_{\ell_{\infty}}
\oplus_{\ell_{\infty}} H(W,\alpha)\oplus_{\ell_{\infty}} pV,
\end{equation} where each $A_j$ is a commutative von Neumann algebra, each $C_j$ is a Cartan factor, $W$ and $V$ are continuous von Neumann algebras, $p$ is a projection in $V$, $\alpha$ is {a linear} involution on $W$ commuting with $^*$, that is, a linear $^*$-antiautomorphism of period 2 on $W$, and $H(W,\alpha)=\{x\in W: \alpha(x)=x\}$.

We conclude this section by the following result originated from \cite[Proposition 2]{chu1988weakly} and \cite[Theorem D.20]{Rodriguez94}.

\begin{prop}\label{P:triple 1-complemented}{\rm\cite[Proposition 5.10.137]{Cabrera-Rodriguez-vol2}}
Let $M$ be any JBW$^*$-triple. Then $M$ is triple-isomorphic to a weak$^*$-closed subtriple of a JBW$^*$-algebra which is $1$-complemented by a weak$^*$-to-weak$^*$ continuous projection.
\end{prop}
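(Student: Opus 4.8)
The plan is to base the whole argument on the Horn--Neher decomposition \eqref{eq decomposition of JBW*-triples} and to produce, for each kind of summand occurring there, a weak$^*$-closed subtriple of a JBW$^*$-algebra together with a contractive weak$^*$-continuous (normal) projection onto it; the global statement is then obtained by assembling $\ell_\infty$-sums. The reduction is legitimate because an $\ell_\infty$-sum $\left(\bigoplus_i B_i\right)_{\ell_\infty}$ of JBW$^*$-algebras is again a JBW$^*$-algebra -- the Jordan product is coordinatewise and the space is the dual of the $\ell_1$-sum $\left(\bigoplus_i (B_i)_*\right)_{\ell_1}$ of the preduals -- and because a coordinatewise product $\bigoplus_i \pi_i$ of contractive normal projections $\pi_i\colon B_i\to M_i$ onto weak$^*$-closed subtriples is again a contractive normal projection onto $\left(\bigoplus_i M_i\right)_{\ell_\infty}$. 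Hence it suffices to treat $pV$, $H(W,\alpha)$ and each $A_j\ov{\otimes}C_j$ separately.

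The two algebra-type summands are immediate, and both sit inside von Neumann algebras, which are JBW$^*$-algebras. For $pV$, left multiplication $L_p\colon v\mapsto pv$ is a norm-one normal idempotent on $V$; since $p^*=p$ one checks directly that $\{pV,pV,pV\}\subset pV$, so its (weak$^*$-closed) range $pV$ is a subtriple and $L_p$ is the required projection. For $H(W,\alpha)$, the map $\tfrac12(\mathrm{id}+\alpha)$ is idempotent with range exactly $H(W,\alpha)=\{x\in W:\alpha(x)=x\}$; as $\alpha$ is a normal isometric $^*$-antiautomorphism, this map is normal and contractive, and the identity $\alpha(ab^*c)=\alpha(c)\alpha(b)^*\alpha(a)$ shows that the fixed-point space is a subtriple.

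The Cartan-factor summands require the single lemma that each Cartan factor $C$ is a weak$^*$-closed subtriple of a JBW$^*$-algebra $B$ admitting a contractive normal projection $\pi\colon B\to C$. For types $1$--$4$ one takes $B$ a von Neumann algebra: a type $1$ factor $L(H,K)$ is the corner $p\,(\cdot)\,q$ of $B(H\oplus K)$; types $2$ and $3$ are the ranges of $\tfrac12(\mathrm{id}\pm\tau)$ in $B(H)$, where $\tau$ is the normal isometric transpose; and a spin factor is $1$-complemented in the von Neumann algebra it generates. The exceptional factors are finite-dimensional, with $C_6$ itself a JB$^*$-algebra and $C_5$ its Peirce-$1$ space at a minimal tripotent, hence the range of the contractive Peirce projection $P_1$. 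Amplifying by $A_j$, the set $A_j\ov{\otimes}C_j$ is the range of $\mathrm{id}_{A_j}\ov{\otimes}\pi$, a contractive normal projection of the JBW$^*$-algebra $A_j\ov{\otimes}B$ (for the finite-dimensional exceptional $B$ one uses the injective-tensor description $C(\Omega,B)$ with the fiberwise projection). Collecting the summands gives the JBW$^*$-algebra $\mathcal B=\left(\bigoplus_j A_j\ov{\otimes}B_j\right)_{\ell_\infty}\oplus_{\ell_\infty}W\oplus_{\ell_\infty}V$ together with a contractive normal projection $\Pi$ onto a weak$^*$-closed subtriple triple-isomorphic to $M$.

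I expect the main obstacle to lie entirely in the Cartan-factor step: verifying that each factor -- in particular the spin factor and the exceptional $C_5$ -- really admits a \emph{contractive} and \emph{normal} projection from an ambient JBW$^*$-algebra, and, above all, that these projections tensor up correctly, i.e.\ that $\mathrm{id}_{A_j}\ov{\otimes}\pi$ stays contractive and normal with range exactly $A_j\ov{\otimes}C_j$ and that $A_j\ov{\otimes}B_j$ is genuinely a JBW$^*$-algebra in both the von Neumann and the finite-dimensional conventions introduced above. By contrast, the weak$^*$-continuity of the assembled triple isomorphism $M\to\mathcal B$ is automatic from the cited fact that triple isomorphisms between JBW$^*$-triples are weak$^*$-continuous, so that once the contractive normal projection $\Pi$ is in hand the proof is complete.
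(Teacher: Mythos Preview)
The paper does not supply its own proof of this proposition; it merely cites \cite[Proposition 5.10.137]{Cabrera-Rodriguez-vol2} and notes that the result originates from \cite[Proposition 2]{chu1988weakly} and \cite[Theorem D.20]{Rodriguez94}. So there is no in-paper argument to compare against; your proposal is a genuine proof sketch where the paper offers only a reference.

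Your Horn--Neher-based strategy is sound and essentially different from the route taken in the cited sources, which proceed more abstractly (roughly: embed the underlying JB$^*$-triple into a JB$^*$-algebra via the Friedman--Russo Gelfand--Naimark theorem and then pass to biduals, rather than decomposing $M$ first). Your approach has the merit of making the ambient JBW$^*$-algebra and the contractive normal projection explicit summand by summand; the cost is the case analysis and the tensor-amplification bookkeeping you already flag. Two simplifications are available to you from material in the paper itself. First, Cartan factors of type $3$, type $4$ (spin), type $6$, and type $2$ with $\dim H$ even or infinite contain a unitary tripotent and are therefore already JBW$^*$-algebras (see the discussion preceding Proposition~\ref{P:representation}); for these take $B=C$ and $\pi=\mathrm{id}$, which dissolves your spin-factor worry. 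Second, for type $1$ the paper writes $A\ov\otimes B(H,K)=\hat p\,(A\ov\otimes B(H))$, so the tensored case is literally an instance of your $pV$ mechanism. This leaves only type $5$ and the odd-dimensional finite type $2$ factors as the cases needing an actual ambient projection, and since both are finite-dimensional, the fiberwise $C(\Omega,B)\to C(\Omega,C)$ description makes the contractivity and normality of $\mathrm{id}_{A_j}\ov\otimes\pi$ elementary. With those observations your outline closes without further obstacles.
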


\section{Main results}\label{sec:main}

Our main result is the following theorem on coincidence of
measures of weak non-compactness in preduals of JBW$^*$-triples. It is proved at the end of Section \ref{sec:8} below.

\begin{thm}\label{T:main}
Let $M$ be a  JBW$^*$-triple and let $A\subset M_*$ be a bounded set. Then
$\omega(A)=\wk[M_*]{A}=\wck[M_*]{A}$.
\end{thm}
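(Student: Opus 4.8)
The plan is to reduce everything to the single inequality
$$\omega_{M_*}(A)\le\wck[M_*]{A},$$
because the opposite chain $\wck[M_*]{A}\le\wk[M_*]{A}\le\omega_{M_*}(A)$ holds for every bounded subset of every Banach space by the inequalities recorded in Section~\ref{sec:2}; the two together force all three quantities to coincide. A convenient first step is the $1$-complementation furnished by Proposition~\ref{P:triple 1-complemented}: realizing $M$ as a weak$^*$-closed subtriple of a JBW$^*$-algebra $\mathcal A$ that is the range of a contractive weak$^*$-to-weak$^*$ continuous projection $P$, the pre-adjoint $P_*$ exhibits $M_*$ as a $1$-complemented subspace of $\mathcal A_*$, so by Lemma~\ref{L:measures 1-complemented} none of the three measures of $A$ changes if it is computed in $\mathcal A_*$ instead of $M_*$. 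This makes the Jordan-algebraic tools (projections, local functional calculus, Lemma~\ref{L:JC*}) available whenever the construction below calls for them, while the structural decomposition~\eqref{eq decomposition of JBW*-triples} remains at our disposal.

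The core of the proof is a subsequence splitting property, made quantitative in Proposition~\ref{p splitting}, which is exactly the device that converts the intractable infimum defining $\omega_{M_*}$ into a statement about \emph{mutually orthogonal} sequences. Given a bounded sequence $(x_n)$ in $A$, I would pass to a subsequence and write
$$x_{n_k}=y_k+z_k,$$
where $(y_k)$ is relatively weakly compact and $(z_k)$ is carried by a sequence of pairwise orthogonal tripotents. The weakly compact summand contributes nothing to $\omega_{M_*}$, while the orthogonal tail behaves like a disjointly supported sequence in an $L^1$-space: its weak$^*$-cluster points in $M^{**}$ are governed by the $\ell^1$-like geometry, so the distance from $(z_k)$ to the weakly compact subsets of $M_*$ is controlled, with constant $1$, by $\dist\big(\clu{z_k},M_*\big)$ and hence by $\wck[M_*]{A}$. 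Taking the supremum over all such sequences and the infimum over approximating weakly compact sets then yields $\omega_{M_*}(A)\le\wck[M_*]{A}$.

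The sharpness of the splitting is where the analysis of Section~\ref{sec:strong*} enters. The characterization of relatively weakly compact subsets of $M_*$ by the preHilbertian control seminorms $\|\cdot\|_{\varphi_1,\varphi_2}$ lets me identify the weakly compact part $y_k$ as the portion of $x_{n_k}$ that is ``seen'' by a fixed pair of control functionals, and the order structure of the defining seminorms (Proposition~\ref{P:seminorms order}), together with the cofinality of weakly compact sets among the relevant tripotent truncations (Proposition~\ref{P:wcompact cofinal}), is what permits the orthogonal tail to be split off with no loss in the constant. The decomposition~\eqref{eq decomposition of JBW*-triples} organizes the construction summand by summand: on the atomic pieces the orthogonal tail can be extracted directly from minimal tripotents, whereas on the continuous summands $H(W,\alpha)$ and $pV$ one must manufacture the orthogonal sequence from the strong$^*$ topology, since minimal tripotents are no longer available.

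I expect this last point to be the main obstacle. For atomic preduals the orthogonal tail is essentially read off from a basis of minimal projections, which recovers the already known atomic case; in the continuous setting the asymptotically orthogonal sequence has to be produced by a careful strong$^*$ exhaustion, arranged so that the weakly compact remainder absorbed into $(y_k)$ never inflates the estimate. Securing the \emph{sharp} constant here---rather than the constant $2$ that the general Eberlein--\v{S}mulyan inequalities of Section~\ref{sec:2} would yield---is the crux, and it is precisely where Propositions~\ref{P:seminorms order} and~\ref{P:wcompact cofinal} carry the weight of the argument.
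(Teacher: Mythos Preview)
Your reduction to the JBW$^*$-algebra case via Proposition~\ref{P:triple 1-complemented} and Lemma~\ref{L:measures 1-complemented} matches the paper exactly, and you are right that the subsequence splitting property is the engine. But there is a genuine gap in how you propose to use it.

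Given an arbitrary bounded sequence $(x_n)$ in $A$, you split a subsequence as $x_{n_k}=y_k+z_k$; the tail $(z_k)$ spans $\ell^1$ almost isometrically, so $\dist(\clu{x_{n_k}},M_*)\ge c_J(x_{n_k})\approx\limsup\|z_k\|$. But nothing ties $\limsup\|z_k\|$ to $\omega(A)$: if the sequence you started from happens to sit inside a weakly compact part of $A$, the tail vanishes and you learn nothing. Your sentence ``taking the supremum over all such sequences and the infimum over approximating weakly compact sets'' does not close this; you need a mechanism ensuring that \emph{some} sequence in $A$ has tail norms accumulating near $\omega(A)$ \emph{after} passing to the splitting subsequence.

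The paper isolates exactly this missing piece as condition (II) in Proposition~\ref{p splitting}: a countable $C\subset A$ with $\omega(C')>\omega(A)-\varepsilon$ for every infinite $C'\subset C$. Once (II) holds, splitting an enumeration of $C$ gives $\omega(A)-\varepsilon<\omega(\{x_{n_k}\})\le\sup_k\|z_k\|$, and the $c_J$-calculus of Section~\ref{sec:ssp} finishes. The real work is Lemma~\ref{L:omega on JBW*algebra}, which manufactures such a $C$ by choosing $\gamma_n\in A$ far from an increasing chain of weakly compact sets $K_{p_n}$ indexed by $\sigma$-finite projections; Propositions~\ref{P:seminorms order} and~\ref{P:wcompact cofinal} are used \emph{there}---to show these $nK_{p_n}$ are cofinal among weakly compact subsets of the relevant $\sigma$-finite predual---not to sharpen the splitting itself.

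Two smaller points. The Horn--Neher decomposition~\eqref{eq decomposition of JBW*-triples} plays no role in proving Theorem~\ref{T:main}; it enters only in Section~\ref{sec:9}. Your plan to treat atomic and continuous summands separately is unnecessary: the $\sigma$-finite-projection argument in Lemma~\ref{L:omega on JBW*algebra} works uniformly. And the splitting property for JBW$^*$-algebra preduals is imported wholesale from \cite{FePeRa2015}; you need not re-engineer it around tripotent supports.
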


Since JBW$^*$-algebras and von Neumann algebras are special cases of JBW$^*$-triples, the following two corollaries are immediate.

\begin{cor}\label{c:main JBW}
 Let $M$ be a JBW$^*$-algebra and let $A\subset M_*$ be a bounded set. Then
$\omega(A)=\wk[M_*]{A}=\wck[M_*]{A}$.
\end{cor}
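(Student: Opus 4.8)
The plan is to read this off from Theorem~\ref{T:main} directly, since a JBW$^*$-algebra is just a JBW$^*$-triple in disguise. First I would recall from Section~\ref{sec:3} that any JB$^*$-algebra carries a canonical JB$^*$-triple structure, namely the one given by the triple product $\J xyz = (x\circ y^*)\circ z + (z\circ y^*)\circ x - (x\circ z)\circ y^*$. When the JB$^*$-algebra $M$ is moreover a dual Banach space, i.e.\ a JBW$^*$-algebra, this triple product turns $M$ into a JB$^*$-triple whose underlying Banach space is still a dual space, hence into a JBW$^*$-triple. The decisive point is that this passage leaves the Banach space $M$ untouched, so the algebra $M$ and the triple $M$ have literally the same predual $M_*$.

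With this in hand, the remaining observation is that the three quantities $\omega(A)$, $\wk[M_*]{A}$ and $\wck[M_*]{A}$ are defined solely in terms of the Banach-space structure of $M_*$ together with its canonical embedding into the bidual, and do not involve the Jordan or triple operations at all. Their values on a given bounded set $A\subset M_*$ are therefore insensitive to whether we view $M$ as a JBW$^*$-algebra or as a JBW$^*$-triple. Applying Theorem~\ref{T:main} to $M$ regarded as a JBW$^*$-triple then gives $\omega(A)=\wk[M_*]{A}=\wck[M_*]{A}$, which is precisely the claim. I do not expect any genuine obstacle: the entire mathematical content sits in Theorem~\ref{T:main}, and the only thing to verify is that the identification of JBW$^*$-algebras with (particular) JBW$^*$-triples is purely formal at the level of preduals, which it is.
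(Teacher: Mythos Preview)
Your argument is correct and matches the paper's presentation exactly: the paper also derives Corollary~\ref{c:main JBW} as an immediate specialization of Theorem~\ref{T:main}, using only that every JBW$^*$-algebra is a JBW$^*$-triple with the same underlying Banach space and predual. (It is worth noting, though, that in the paper's actual proof architecture the logical flow is reversed---Theorem~\ref{T:main} is established by first proving the JBW$^*$-algebra case via Lemma~\ref{L:omega on JBW*algebra} and Proposition~\ref{p splitting}, and then reducing the general triple case to it via Proposition~\ref{P:triple 1-complemented} and Lemma~\ref{L:measures 1-complemented}---so your deduction, while formally valid once Theorem~\ref{T:main} is granted, is not how the content is ultimately proved.)
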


\begin{cor}
 Let $M$ be a von Neuman algebra and let $A\subset M_*$ be a bounded set. Then
$\omega(A)=\wk[M_*]{A}=\wck[M_*]{A}$.
\end{cor}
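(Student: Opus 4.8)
The plan is to deduce this directly from the main result, Theorem~\ref{T:main}, by checking that a von Neumann algebra falls within the class of JBW$^*$-triples to which that theorem applies. First I would recall from Section~\ref{sec:3} that any C$^*$-algebra carries a JB$^*$-triple structure given by the triple product $\{x,y,z\}=\frac12(xy^*z+zy^*x)$. When $M$ is a von Neumann algebra it is, by definition, a C$^*$-algebra which is at the same time a dual Banach space, so with this triple product it becomes a JBW$^*$-triple.

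The only point worth an explicit remark is that the predual appearing in the conclusion is unambiguous. By Sakai's theorem the predual of a von Neumann algebra is unique, and the same uniqueness holds for JBW$^*$-triples (recalled in Section~\ref{sec:3}); hence the Banach space $M_*$ is the same object whether $M$ is regarded as a von Neumann algebra or as a JBW$^*$-triple. Consequently the three quantities $\omega(A)$, $\wk[M_*]{A}$ and $\wck[M_*]{A}$ are computed in one and the same Banach space, and no identification issue arises.

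With these observations in place the conclusion is immediate: applying Theorem~\ref{T:main} to $M$ viewed as a JBW$^*$-triple yields $\omega(A)=\wk[M_*]{A}=\wck[M_*]{A}$ for every bounded $A\subset M_*$. I do not expect any genuine obstacle, since all the substance is already contained in Theorem~\ref{T:main}; the function of this corollary is simply to record the special case of von Neumann algebra preduals, which (as noted in the abstract) is itself new.
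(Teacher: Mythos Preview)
Your proposal is correct and matches the paper's approach exactly: the paper simply notes that von Neumann algebras are special cases of JBW$^*$-triples, so the corollary is immediate from Theorem~\ref{T:main}. Your additional remark about uniqueness of the predual is a harmless (and accurate) clarification, though the paper does not bother to spell it out.
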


Note that Corollary~\ref{c:main JBW} easily implies the main result Theorem~\ref{T:main} using Proposition~\ref{P:triple 1-complemented} and Lemma~\ref{L:measures 1-complemented}. This is in fact the way the main result will be proved in Section~\ref{sec:8}. \smallskip

We further easily obtain the same results for real variants of the respective structures.

\begin{cor}
Let $M$ be a real JBW$^*$-triple and let $A\subset M_*$ be a bounded set. Then
$\omega(A)=\wk[M_*]{A}=\wck[M_*]{A}$.
\end{cor}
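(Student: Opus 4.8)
The plan is to reduce the real case to the already-established complex case by complexification. A real JBW$^*$-triple $M$ is, by definition, the fixed-point set of a conjugate-linear triple automorphism (a \emph{conjugation}) $\tau$ of period $2$ on a complex JBW$^*$-triple $M_{\ce}$; equivalently, $M_{\ce}$ is the complexification of $M$ and $M=\{x\in M_{\ce}\setsep \tau(x)=x\}$. The key point is that this real/complex correspondence dualizes nicely: the predual $M_*$ embeds as a real-linear, real $1$-complemented subspace of the predual $(M_{\ce})_*$, with the complementing projection given by the real part map $P(\varphi)=\frac12(\varphi+\overline{\varphi\circ\tau})$ associated to $\tau$. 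First I would verify that $P$ is a real-linear norm-one projection from $(M_{\ce})_*$ (viewed as a real Banach space) onto a copy of $M_*$.

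Once this $1$-complementation is in place, the strategy is to invoke Theorem~\ref{T:main} for the complex JBW$^*$-triple $M_{\ce}$ together with the real analogue of Lemma~\ref{L:measures 1-complemented}. Concretely, for a bounded set $A\subset M_*$, I would compare the three measures computed in $M_*$ with those computed in $(M_{\ce})_*$. Because $M_*$ is $1$-complemented in $(M_{\ce})_*$, Lemma~\ref{L:measures 1-complemented} (applied in the category of real Banach spaces) gives
$$\wk[M_*]{A}=\wk[(M_{\ce})_*]{A},\quad \wck[M_*]{A}=\wck[(M_{\ce})_*]{A},\quad \omega_{M_*}(A)=\omega_{(M_{\ce})_*}(A).$$
Then Theorem~\ref{T:main}, applied to the complex JBW$^*$-triple $M_{\ce}$, yields $\omega_{(M_{\ce})_*}(A)=\wk[(M_{\ce})_*]{A}=\wck[(M_{\ce})_*]{A}$, and the three displayed identities chain together to give the desired equalities in $M_*$.

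The main obstacle I anticipate is the interaction between complex-linear and real-linear structures in the definitions of the measures. The quantities $\wk{\cdot}$ and $\wck{\cdot}$ are defined via weak$^*$ closures and weak$^*$-cluster points in the bidual; I must check that taking the real scalar restriction does not distort these, i.e.\ that the weak and weak$^*$ topologies relevant to computing $\omega$, $\wk{\cdot}$ and $\wck{\cdot}$ are insensitive to whether we regard the spaces over $\er$ or over $\ce$. This is essentially the standard fact that a complex Banach space and its underlying real space share the same weakly compact sets and the same weak topology (the real and complex dual functionals determine each other via real parts), so the three measures are unchanged under scalar restriction; but it deserves a careful statement. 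The only genuine verification is therefore that $P$ is well defined, real-linear, idempotent, and contractive, and that its range is (real-linearly isometric to) $M_*$ --- after which Lemma~\ref{L:measures 1-complemented} and Theorem~\ref{T:main} do all the remaining work.
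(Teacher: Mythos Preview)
Your proposal is correct and follows essentially the same route as the paper: complexify, identify $M_*$ with the $\tau$-symmetric part of $(M_{\ce})_*$ via the real part map, observe that the projection $P(\varphi)=\frac12(\varphi+\overline{\varphi\circ\tau})$ makes this a real $1$-complemented subspace, and then combine Theorem~\ref{T:main} with Lemma~\ref{L:measures 1-complemented}. The paper handles the real/complex compatibility you flag by citing \cite[Section~2.5]{qdpp}, which records exactly the fact you need that the three measures are unchanged under scalar restriction.
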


\begin{proof} We use the following arguments explained in detail in \cite[Section 6]{BHKPP-triples}.\smallskip

Let $M$ be a real JBW$^*$-triple. Then there is a JBW$^*$-triple $N$ and a weak$^*$-to-weak$^*$ continuous
conjugate-linear isometry $\tau:N\to N$ such that $\tau^2=\operatorname{id}_N$ and
$$M=\{x\in N\setsep x=\tau(x)\}.$$
For each $\varphi\in N_*$ define
$$\tau^\#(\varphi)(x)=\overline{\varphi(\tau(x))},\quad x\in N.$$
Then $\tau^\#$ is a conjugate-linear isometry of $N_*$ onto $N_*$ such that $(\tau^\#)^2=\operatorname{id}_{N_*}$.
Set
$$N_*^\tau=\{\varphi\in N_*\setsep \tau^\#(\varphi)=\varphi\}.$$
Then $N_*^\tau$ is a closed real-linear subspace of $N_*$ and the mapping $\varphi\mapsto\Re\varphi$ is a real-linear isometry of $N_*^\tau$ onto $M_*$.\smallskip

Hence the formula $$P(\varphi)=\frac12(\varphi+\tau^\#(\varphi)),\quad \varphi\in N_*$$ defines a real-linear norm-one projection of $N_*$ onto $N_*^\tau$.\smallskip

If $X$ is a complex Banach space, denote by $X_R$ the underlying real space.\smallskip

Now, by Theorem~\ref{T:main} we know that the measures of weak non-compactness coincide for subsets of $N_*$.
By the discussion at the end of Section 2.5 of \cite{qdpp} the measures of weak non-compactness with respect to $N_*$ coincide with those with respect for $(N_*)_R$. Finally, $N_*^\tau$ is $1$-complemented in $(N_*)_R$, hence Lemma~\ref{L:measures 1-complemented} yields that the measures of weak non-compactness coincide for subsets of $N_*^\tau$. Since this space is isometric to $M_*$, the proof is complete.
\end{proof}

Since JBW-algebras are among the examples of a real JBW$^*$-triple, the following corollary follows immediately.

\begin{cor}
Let $M$ be a JBW-algebra and let $A\subset M_*$ be a bounded set. Then
$\omega(A)=\wk[M_*]{A}=\wck[M_*]{A}$.
\end{cor}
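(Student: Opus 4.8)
The plan is to observe that a JBW-algebra is merely a particular instance of a real JBW$^*$-triple, so that the statement reduces at once to the previous corollary. The only genuine content is to produce, for a given JBW-algebra $M$, the data $(N,\tau)$ required in the definition of a real JBW$^*$-triple used above: a JBW$^*$-triple $N$ together with a weak$^*$-to-weak$^*$ continuous conjugate-linear isometry $\tau\colon N\to N$ satisfying $\tau^2=\operatorname{id}_N$ and $M=\{x\in N\setsep x=\tau(x)\}$.

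First I would recall, as noted in Section~\ref{sec:3}, that the JB-algebras are precisely the self-adjoint parts of JB$^*$-algebras; passing to the weak$^*$-closed category, a JBW-algebra $M$ is the self-adjoint part $B_{sa}$ of a suitable JBW$^*$-algebra $B$. I would then take $N=B$, viewed as a JBW$^*$-triple through the triple product $\J xyz=(x\circ y^*)\circ z+(z\circ y^*)\circ x-(x\circ z)\circ y^*$ recorded in Section~\ref{sec:3}, and let $\tau$ be the involution $^*$ of $B$. By the very definition of an involution, $\tau$ is conjugate-linear and satisfies $\tau^2=\operatorname{id}_N$, while its fixed-point set is exactly $B_{sa}=M$.

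The point that actually has to be verified --- and which I expect to be the main (if modest) obstacle --- is that $\tau$ is a weak$^*$-to-weak$^*$ continuous isometry. That $\tau$ is isometric is the isometry property of the involution of a JB$^*$-algebra, which (as recorded in Section~\ref{sec:3}, following \cite{youngson1978vidav}) holds automatically and so requires no extra hypothesis. Weak$^*$-continuity of the involution on the JBW$^*$-algebra $B$ is the Jordan analogue of the corresponding fact for von Neumann algebras and can be invoked from the structure theory of JBW$^*$-algebras. With $(N,\tau)$ in hand, the previous corollary applies verbatim and yields $\omega(A)=\wk[M_*]{A}=\wck[M_*]{A}$ for every bounded set $A\subset M_*$, which completes the proof.
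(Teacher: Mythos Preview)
Your proof is correct and follows exactly the approach taken in the paper, which simply notes that JBW-algebras are examples of real JBW$^*$-triples so that the previous corollary applies. You have merely spelled out the verification of the data $(N,\tau)$ in more detail than the paper does.
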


The following corollary also is a special case of our previous result, since the self-adjoint part of a von Neumann algebra $M$ is a real JBW$^*$-triple and its predual is formed exactly by the self-adjoint elements of $M_*$.

\begin{cor}
 Let $M$ be a von Neuman algebra and let $A\subset M_{*,sa}$  (where $M_{*,sa}$ denotes the subspace of $M_*$ formed by self-adjoint functionals) be a bounded set. Then
$\omega(A)=\wk[M_{*,sa}]{A}=\wck[M_{*,sa}]{A}$.
\end{cor}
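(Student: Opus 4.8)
The plan is to reduce the self-adjoint case to the already-established main result Theorem~\ref{T:main} by exhibiting $M_{*,sa}$ as a $1$-complemented real subspace of $(M_*)_R$ and invoking Lemma~\ref{L:measures 1-complemented}, exactly in the spirit of the preceding corollary on real JBW$^*$-triples. First I would recall that the self-adjoint part $M_{sa}$ of a von Neumann algebra $M$ is a real JBW$^*$-triple (indeed a JBW-algebra), and that its predual identifies isometrically with $M_{*,sa}$, the space of self-adjoint normal functionals. Thus the statement is literally a special case of the corollary for real JBW$^*$-triples applied to $M_{sa}$, which in turn rests on Theorem~\ref{T:main}.

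Alternatively, to make the argument self-contained at the level of $M$ itself, I would use the canonical conjugate-linear involution on $M_*$ given by $\varphi\mapsto\varphi^*$, where $\varphi^*(x)=\overline{\varphi(x^*)}$ for $x\in M$. This is a conjugate-linear isometry of $M_*$ onto $M_*$ whose square is the identity, and its fixed-point set is precisely $M_{*,sa}$. The map
\begin{equation}\label{eq:sa-projection}
P(\varphi)=\tfrac12(\varphi+\varphi^*),\quad \varphi\in M_*,
\end{equation}
is then a real-linear norm-one projection of $(M_*)_R$ onto $M_{*,sa}$, so that $M_{*,sa}$ is $1$-complemented in $(M_*)_R$.

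The key steps in order are then as follows. By Theorem~\ref{T:main} the three measures $\omega$, $\wk[M_*]{\cdot}$ and $\wck[M_*]{\cdot}$ coincide on bounded subsets of $M_*$. Next, as observed at the end of Section~2.5 of \cite{qdpp}, passing from the complex space $M_*$ to its underlying real space $(M_*)_R$ does not change any of the three measures of weak non-compactness; this is because the weak and weak$^*$ topologies, and hence the relevant cluster sets and distances, are unaffected by restricting scalars. Then, since $M_{*,sa}$ is $1$-complemented in $(M_*)_R$ via the projection $P$ from \eqref{eq:sa-projection}, Lemma~\ref{L:measures 1-complemented} gives that the measures computed in $(M_*)_R$ agree with those computed in $M_{*,sa}$. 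Chaining these equalities yields $\omega(A)=\wk[M_{*,sa}]{A}=\wck[M_{*,sa}]{A}$ for every bounded $A\subset M_{*,sa}$.

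I do not expect any genuine obstacle here, as every ingredient is already in place; the only point requiring a little care is the verification that $P$ in \eqref{eq:sa-projection} is well defined (i.e.\ that $\varphi^*$ is again normal, which follows since $x\mapsto x^*$ is weak$^*$-continuous on $M$) and has norm one, together with the standard identification of $M_{*,sa}$ as the predual of the JBW-algebra $M_{sa}$. Given these, the result is a direct corollary and the proof is essentially a citation of Theorem~\ref{T:main} combined with Lemma~\ref{L:measures 1-complemented}.
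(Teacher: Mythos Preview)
Your proposal is correct and matches the paper's approach exactly: the paper simply observes that $M_{sa}$ is a real JBW$^*$-triple with predual $M_{*,sa}$ and invokes the preceding corollary on real JBW$^*$-triples. Your alternative ``self-contained'' argument via the involution $\varphi\mapsto\varphi^*$ is just the specialization of that corollary's proof to the present setting, so it adds nothing new but is a perfectly valid unpacking.
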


\section{Subsequence splitting property}\label{sec:ssp}

In \cite[Proposition 7.1]{qdpp} a special case of Theorem~\ref{T:main} was proved -- the equality of measures of weak non-compactness for subsets of $L^1(\mu)$ for a finite measure $\mu$.
A key role in the proof is played by a variant of Rosenthal's subsequence splitting lemma.
This is a motivation to define the following
property of Banach spaces which turns out to be closely connected with the equality of measures of weak non-compactness.\smallskip

Let $X$ be a Banach space. We say that $X$ has
\begin{enumerate}[$\bullet$]
    \item the \emph{subsequence splitting property} if for any bounded sequence $(x_n)$ in $X$ and any $\varepsilon>0$ there is a subsequence $(x_{n_k})$
    which can be expressed as $x_{n_k}=y_k+z_k$,
    where $(y_k)$ is weakly convergent in $X$ and
    \begin{equation}
    \norm{\sum_{j=1}^n \alpha_j z_j}\ge (1-\varepsilon)\sum_{j=1}^n \abs{\alpha_j}\norm{z_j}\label{eq def splitting}
    \end{equation}
    for any $n\in\en$ and any choice of scalars $\alpha_1,\dots,\alpha_n$;
    \item the \emph{isometric subsequence splitting property} if for any bounded sequence $(x_n)$ in $X$ there is a subsequence $(x_{n_k})$
    which can be expressed as $x_{n_k}=y_k+z_k$,
    where $(y_k)$ is weakly convergent in $X$ and
    $$\norm{\sum_{j=1}^n \alpha_j z_j}=\sum_{j=1}^n \abs{\alpha_j}\norm{z_j}$$
    for any $n\in\en$ and any choice of scalars $\alpha_1,\dots,\alpha_n$.
\end{enumerate}

That $L^1([0,1])$ has the isometric subsequence splitting property has been mentioned by Bourgain and Rosenthal in \cite{BourgRos}
but a forerunner can be found in \cite{KadPel}. There are also splitting versions for $L^p$-spaces, $0<p<\infty$ but in this paper
we naturally restrict to the case $p=1$ because preduals of JBW$^*$-triples are generalizations of (non-commutative)
$L^1$-spaces.
The generalization of the isometric subsequence splitting property to preduals of von Neumann algebras was proved by
Randrianantoanina \cite{Randri2002} and, by different methods, by Raynaud and Xu \cite{Raynaud-Xu}.
Then Fern\'{a}ndez-Polo, Ram\'{i}rez and the third mentioned author \cite{FePeRa2015} showed the isometric subsequence splitting
property for preduals of JBW$^*$-algebras and the two last named authors for JBW$^*$-triples \cite{Pe-Pfi-splitting}.\smallskip

The predual of a JBW$^*$-triple is L-embedded (see \cite[Theorem 5.7.36]{Cabrera-Rodriguez-vol2}
or \cite{Horn1987}, \cite{BaTi}).
Recall that a Banach space $X$ is called L-embedded if there is a projection $P$ on $X^{**}$ with image $X$ such that
$\norm{x^{**}}=\norm{Px^{**}}+\norm{x^{**}-Px^{**}}$ for all $x^{**}\in X^{**}$; the standard reference for
L-embedded spaces is \cite[Chapter\ IV]{hawewe}.
An L-embedded Banach space $X$ is weakly sequentially complete \cite[Theorem IV.2.2]{hawewe}.
Hence if a bounded sequence $(x_n)$ of $X$ does not contain
any $\ell^1$-subsequence then by Rosenthal's theorem \cite[Theorem 5.37]{fhhmpz} it contains a weak Cauchy subsequence $(x_{n_k})$
and in this case the splitting property is trivially satisfied with $y_k=x_{n_k}$ and $z_k=0$.
The interesting case appears when $(x_n)$ contains an $\ell^1$-sequence.
This explains why we use Lemma \ref{l weak perturb} which says that the quality of an $\ell^1$-sequence (i.e., its James constant, see below)
remains invariant, up to a subsequence, under a perturbation by a weak Cauchy sequence.\smallskip

The aim of this section is to show the following proposition whose main purpose for this paper is part $(b)$.

\begin{prop}\label{p splitting}
Let $X$ be a Banach space. Consider the following possible properties of $X$.
\begin{itemize}
    \item[(I)] For any bounded set $A\subset X$ we have $\wck{A}=\omega(A)$.
    \item[(II)] For any bounded set $A\subset X$ and any $\ep>0$ there is a countable subset $C\subset A$ such that
    $$\omega(C')>\omega(A)-\ep\mbox{ for any infinite }C'\subset C.$$
\end{itemize}
Then the following assertions are true.
\begin{enumerate}[$(a)$]
    \item (I)$\Rightarrow$(II);
    \item If $X$ has the subsequence splitting property, then (II)$\Rightarrow$(I);
    \item If $X$ is L-embedded and enjoys (I), then $X$ has the subsequence splitting property.
\end{enumerate}
\end{prop}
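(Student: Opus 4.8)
The plan is to treat the three implications in turn, relying throughout on the general inequalities $\wck{A}\le\wk{A}\le\omega(A)$ from Section~\ref{sec:2} and on the observation that $\dist(\clu{x_n},X)$ is the ordinary distance from the nonempty weak$^*$-compact cluster set to $X$, so that it can only increase when $(x_n)$ is replaced by a subsequence. For $(a)$ I would argue directly: assuming (I), fix a bounded $A$ and $\ep>0$; since $\wck{A}=\omega(A)$ is a supremum over sequences in $A$, I would choose $(x_n)$ in $A$ with $\dist(\clu{x_n},X)>\omega(A)-\ep$ and set $C=\{x_n\setsep n\in\en\}$. Any infinite $C'\subset C$ contains an increasingly indexed subsequence $(x_{n_k})$, and since $\clu{x_{n_k}}\subset\clu{x_n}$ I obtain $\omega(C')\ge\wck{C'}\ge\dist(\clu{x_{n_k}},X)\ge\dist(\clu{x_n},X)>\omega(A)-\ep$, which is exactly (II).

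For $(b)$ it suffices to prove $\omega(A)\le\wck{A}$. I would fix $\ep>0$, apply (II) to obtain a countable $C\subset A$ as in its statement, and enumerate $C$ as a bounded sequence. Applying the subsequence splitting property with a small parameter $\ep'>0$ gives a subsequence $x_{n_k}=y_k+z_k$ with $y_k\to y\in X$ weakly and $(z_k)$ obeying the lower $\ell^1$-estimate with constant $1-\ep'$; refining once more I may assume $\norm{z_k}\to r$. Since $\{y_k\}$ is relatively weakly compact, perturbation by it does not change the De Blasi measure, so $\omega(\{z_k\})=\omega(\{x_{n_k}\})>\omega(A)-\ep$ (the set $\{x_{n_k}\}$ being an infinite subset of $C$); together with $\omega(\{z_k\})\le\limsup_k\norm{z_k}=r$ this forces $r>\omega(A)-\ep$. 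Because $y\in X$ and $y_k\to y$ weakly, one checks $\clu{x_{n_k}}=\clu{z_k}+y$, whence $\dist(\clu{x_{n_k}},X)=\dist(\clu{z_k},X)$. The crux is then the quantitative $\ell^1$-lemma that \emph{every} weak$^*$-cluster point of a lower $\ell^1$-sequence of constant $1-\ep'$ lies at distance at least $(1-\ep')\liminf_k\norm{z_k}=(1-\ep')r$ from $X$ — the content of the James constant $\cj{(z_k)}$ — which I expect to be the main obstacle; I would prove it by norming the tail sums $\sum_{k\ge N}z_k$ via Hahn--Banach and noting that such functionals stay small on any fixed element of $X$. Granting it, $\wck{A}\ge\dist(\clu{z_k},X)\ge(1-\ep')r>(1-\ep')(\omega(A)-\ep)$, and letting $\ep,\ep'\to0$ yields $\omega(A)\le\wck{A}$.

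For $(c)$ I would use that an L-embedded space is weakly sequentially complete, so that Rosenthal's $\ell^1$-theorem splits the analysis of a bounded $(x_n)$ into two cases. If some subsequence is weakly Cauchy it is in fact weakly convergent, and the splitting is trivial with $z_k=0$. In the remaining case there is an $\ell^1$-subsequence, and here I would exploit the L-decomposition $X^{**}=X\oplus_1 X_s$, under which $\dist(\xi,X)=\norm{\xi-P\xi}$ is the norm of the singular part of $\xi$. In this language (I) reads $\sup_{(x_n)}\inf_{\xi\in\clu{x_n}}\norm{\xi-P\xi}=\omega(\{x_n\})$, so it produces a subsequence all of whose weak$^*$-cluster points have singular part of norm arbitrarily close to $\omega(\{x_n\})$. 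Peeling off the regular parts $P\xi$ yields a weakly convergent sequence $(y_k)$ and a remainder $(z_k)$ carrying the full singular mass, and the remaining task — promoting $(z_k)$ to an $\ell^1$-sequence of quality $1-\ep$ — is the principal obstacle of $(c)$. This is where (I) and the invariance of the James constant under a weak Cauchy perturbation (Lemma~\ref{l weak perturb}) combine: (I) guarantees that the singular mass is attained sequentially and uniformly, so that the James constant of $(z_k/\norm{z_k})$ can be driven arbitrarily close to $1$, which is precisely the required lower $\ell^1$-estimate.
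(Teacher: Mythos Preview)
Parts $(a)$ and $(b)$ are correct and follow the paper's argument closely; in $(b)$ you bypass the paper's appeal to Lemma~\ref{l weak perturb} by estimating directly on $(z_k)$ and translating the cluster set by the weak limit $y$, which is a harmless variant.

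The gap is in $(c)$. The sentence ``peeling off the regular parts $P\xi$ yields a weakly convergent sequence $(y_k)$'' is not a construction: the cluster points $\xi$ live in $X^{**}$, the L-projection $P$ sends them to \emph{points} of $X$ (and $P$ is not weak$^*$-continuous), but nothing here produces a sequence $(y_k)$ indexed along $(x_{n_k})$. Even fixing a single cluster point $\xi$ and setting $y_k\equiv P\xi$ gives $z_k=x_{n_k}-P\xi$ with the correct James constant but with no control on $\norm{z_k}$; without an upper bound on $\norm{z_k}$ the inequality $\norm{\sum\alpha_jz_j}\ge(1-\ep)\sum|\alpha_j|\norm{z_j}$ cannot follow from a lower $\ell^1$-estimate of the form $\norm{\sum\alpha_jz_j}\ge c\sum|\alpha_j|$.

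The paper supplies precisely this missing upper bound. It first proves the identity $\wck{A}=\tilde c_J(x_n)$ for $A=\{x_n\}$; the inequality $\wck{A}\le\tilde c_J(x_n)$ uses a construction from \cite[formula~(8)]{wesecom} that genuinely requires L-embeddedness. Then (I) gives $\omega(A)=\tilde c:=\tilde c_J(x_n)$, and now the \emph{definition} of the De~Blasi measure produces a weakly compact $K$ with $A\subset K+(1+\ep)\tilde c\,B_X$. The $y_k$ are chosen in $K$ with $\norm{z_k}=\norm{x_{n_k}-y_k}\le(1+\ep)\tilde c$, and Lemma~\ref{l weak perturb} transfers $c_J(x_{n_k})>(1-\tfrac{\ep}{2})\tilde c$ to $c_J(z_k)$. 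The ratio $c_J(z_k)/\sup_k\norm{z_k}$ is then at least $(1-\tfrac{\ep}{2})/(1+\ep)$, which is what forces the splitting constant up to $1-\ep$. Your final paragraph gestures at Lemma~\ref{l weak perturb}, but without $\omega(A)=\tilde c$ and the resulting $K$ there is no source for the upper bound on $\norm{z_k}$.
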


\noindent
Before we pass to the proof of Proposition \ref{p splitting} we recall some known facts on $\ell^1$-sequences.\smallskip

To a bounded sequence $(x_{n})$ in a Banach space $X$ we associate its \textquoteleft \emph{James constant}\textquoteright
\begin{equation}\label{eq:c_J def}
c_J (x_{n}) =\sup c_m \quad\mbox{ where the }\quad
c_m=\inf\left\{\norm{\sum_{n\geq m} \alpha_n x_n}\setsep \sum_{n\geq m}\betr{\alpha_n} =1\right\}
\end{equation}
form an increasing sequence.
If $(x_{n})$ is equivalent to the canonical basis of $\ell^1$ then $c_J(x_{n})>0$
and more specifically, $c_J(x_{n})>0$ if and only if there is an integer $m$ such that $(x_{n})_{n\geq m}$ is
equivalent to the canonical basis of $\ell^1$.
The number $c_J (x_{n})$ may be thought of as the approximately best $l^1$-basis constant of $(x_{n})$
in the sense that for each $\ep>0$ there is $m\in\en$ such that
$\norm{\sum_{n=m}^{\infty} \alpha_n x_n}\geq (1-\ep)c_J(x_n)\sum_{n=m}^{\infty}\betr{\alpha_n}$ for all $(\alpha_n)\in \ell^1$,
and $c_J (x_{n})$ cannot be replaced by a strictly greater constant.\smallskip

Further, a sequence $(z_l)$ will be called a \emph{block sequence} of $(x_n)$ if there
are successive finite sets $A_l\subset\en$ (i.e., $\max A_l<\min A_{l+1}$ for $l\in\en$) and a sequence of scalars $(\lambda_n)$ such that for each $l\in\en$ we have
$$\sum_{k\in A_l}\betr{\lambda_k}=1\mbox{ and }z_l=\sum_{k\in A_l}\lambda_k x_k.$$

\begin{lemma}\label{L:distortion}
Let $(x_n)$ be a bounded sequence in a Banach space $X$ such that $c_J(x_n)>0$. Then there is a block sequence $(z_l)$ of $(x_n)$ such that
$\norm{z_l}\to c_J(x_n)$ as $l\to\infty$ and, moreover,
\begin{equation}\label{gl-alm-isom}
(1-2^{-m})c_J (x_{n})\sum_{l=m}^{\infty}\betr{\alpha_l}
\leq\Norm{\sum_{l=m}^{\infty} \alpha_l {z_l}}
\leq(1+2^{-m})c_J (x_{n})\sum_{l=m}^{\infty}\betr{\alpha_l}
\end{equation}
for all $m\in\en$ and all $(\alpha_n)\in \ell^1$.
\end{lemma}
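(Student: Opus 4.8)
The plan is to construct the blocks $A_l$ by induction so as to control two quantities at once. Write $c=c_J(x_{n})$ (so $c>0$ by hypothesis) and recall from the definition \eqref{eq:c_J def} that the numbers $c_m$ increase to $c$, and that by homogeneity $\norm{\sum_{n\geq m}\beta_n x_n}\geq c_m\sum_{n\geq m}\betr{\beta_n}$ for every absolutely summable family of scalars supported in $\{n\geq m\}$ (normalize so that $\sum\betr{\beta_n}=1$). I will arrange that, first, $\min A_l$ is pushed so far to the right that $c_{\min A_l}\geq (1-2^{-l})c$, which will immediately deliver the lower estimate in \eqref{gl-alm-isom}; and second, that each $z_l$ is a \emph{near-optimal} normalized block, namely $\norm{z_l}\leq (1+2^{-l})c$, which will deliver the upper estimate.

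The crux is the construction of near-optimal finite blocks. Fix $m$ and $\eta>0$. Since $c_m\leq c$, the infimum in \eqref{eq:c_J def} is approximated by some normalized family, i.e. there are scalars $(\beta_n)_{n\geq m}$ with $\sum_{n\geq m}\betr{\beta_n}=1$ and $\norm{\sum_{n\geq m}\beta_n x_n}<c_m+\eta\leq c+\eta$. As $(x_n)$ is bounded and $(\beta_n)\in\ell^1$, this series converges absolutely, so a finite truncation $\sum_{n=m}^{N}\beta_n x_n$ approximates it arbitrarily well once $N$ is large; dividing by $\sum_{n=m}^{N}\betr{\beta_n}$ (a factor tending to $1$) produces a genuine normalized finite block, supported in $[m,N]$, whose norm still does not exceed $c+\eta'$ with $\eta'$ as small as we please. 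Choosing $\eta'\leq 2^{-l}c$ (legitimate since $c>0$) yields a normalized block of norm at most $(1+2^{-l})c$.

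With these two ingredients the induction is routine. Given $A_1,\dots,A_{l-1}$ with $\max A_{l-1}=p$, pick $m_l>p$ with $c_{m_l}\geq (1-2^{-l})c$ (possible as $c_m\to c$) and apply the previous step with $m=m_l$ to obtain a normalized block $z_l=\sum_{k\in A_l}\lambda_k x_k$ with $A_l\subset[m_l,N_l]$ and $\norm{z_l}\leq (1+2^{-l})c$. Then $\min A_l=m_l>\max A_{l-1}$, so $(z_l)$ is indeed a block sequence, and $(1-2^{-l})c\leq c_{m_l}\leq\norm{z_l}\leq (1+2^{-l})c$ forces $\norm{z_l}\to c$. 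To verify \eqref{gl-alm-isom}, fix $m\in\en$ and $(\alpha_l)\in\ell^1$; since the supports are successive, $\sum_{l\geq m}\alpha_l z_l=\sum_{n\geq\min A_m}\gamma_n x_n$ with $\sum_n\betr{\gamma_n}=\sum_{l\geq m}\betr{\alpha_l}$, whence the definition of $c_{\min A_m}$ gives $\norm{\sum_{l\geq m}\alpha_l z_l}\geq c_{\min A_m}\sum_{l\geq m}\betr{\alpha_l}\geq(1-2^{-m})c\sum_{l\geq m}\betr{\alpha_l}$, while the triangle inequality together with $\norm{z_l}\leq(1+2^{-l})c\leq(1+2^{-m})c$ for $l\geq m$ gives the matching upper bound. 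Both estimates pass from finite to $\ell^1$ sums by continuity of the norm.

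The only genuinely delicate point is the second step: converting the approximate attainment of the infimum defining $c_m$ into an honest \emph{finite}, \emph{normalized} block while keeping its norm below $(1+2^{-l})c$. This is exactly where the truncation of the $\ell^1$-tail and the control of the renormalizing factor (both relying on the boundedness of $(x_n)$) come in. Everything else is bookkeeping resting on the monotone convergence $c_m\uparrow c$.
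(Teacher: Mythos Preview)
Your proof is correct and follows essentially the same route as the paper's: choose the starting index of each block far enough out that $c_{\min A_l}\ge(1-2^{-l})c$, and choose the block itself to nearly realize the infimum so that $\norm{z_l}\le(1+2^{-l})c$; the two inequalities in \eqref{gl-alm-isom} then drop out from the definition of $c_{\min A_m}$ and the triangle inequality. One small slip: you write ``$\min A_l=m_l$'' but your construction only gives $A_l\subset[m_l,N_l]$, hence $\min A_l\ge m_l$; this is harmless because $c_k$ is increasing, so $c_{\min A_l}\ge c_{m_l}$ still holds. Your explicit truncation-and-renormalization argument (passing from a near-optimal $\ell^1$ combination to a finitely supported one) is a point the paper leaves implicit, so in that respect your write-up is more careful.
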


\begin{proof}
Set $c=c_J(x_n)$ and let $c_m$ have the meaning from \eqref{eq:c_J def}. Since $c_m\nearrow c$, we can fix  an increasing sequence $(k_m)$ of natural numbers such that $c_{k_m}>c(1-2^{-m})$.\smallskip

Next, having in mind that $c_k$ is defined as an infimum and  $c_k\le c$ for each $k\in\en$, we can find finite sets $A_m\subset \en$ and constants $\lambda_n$, $n\in A_m$, such that for each $m\in\en$ we have
\begin{itemize}
    \item $k_m\le\min A_m\le\max A_m<\min A_{m+1}$,
    \item $\sum_{n\in A_m}\abs{\lambda_n}=1$,
    \item $\norm{\sum_{n\in A_m}\lambda_n x_n}<c(1+2^{-m}).$
\end{itemize}
Set $z_m=\sum_{n\in A_m}\lambda_n x_n$. Then clearly $(z_m)$ is a block sequence of $(x_n)$ with $\norm{z_m}\to c$ as $m\to\infty$.
Moreover, fix any sequence $(\alpha_l)\in\ell^1$ and $m\in\en$.
Then
$$\norm{\sum_{l=m}^{\infty} \alpha_l {z_l}}
\le \sum_{l=m}^\infty \abs{\alpha_l}\norm{z_l}\le (1+2^{-m})c \sum_{l=m}^\infty \abs{\alpha_l},$$
and
$$\begin{aligned}
\norm{\sum_{l=m}^{\infty} \alpha_l {z_l}}&=
\norm{\sum_{l=m}^\infty\sum_{n\in A_l}\alpha_l \lambda_n x_n}\ge (1-2^{-m})c \sum_{l=m}^\infty\sum_{n\in A_l}\abs{\alpha_l}\cdot\abs{\lambda_n}
\\&=(1-2^{-m})c\sum_{l=m}^{\infty}\betr{\alpha_l},
\end{aligned}$$
which completes the proof.
\end{proof}

It is elementary but useful that if one passes to a subsequence $(x_{n_k})$ of $(x_n)$ then $c_J(x_{n_k})\geq c_J(x_n)$;
in particular it makes sense to define
\begin{equation}
\tilde{c}_J(x_n)=\sup_{n_k}c_J(x_{n_k}).\label{eq def tilde c}
\end{equation}
A diagonal argument shows that every bounded sequence $(x_n)$ admits a subsequence $(x_{n_k})$ which is $c_J$-stable
in the sense that $c_J(x_{n_k})=\tilde c_J(x_{n_k})$. If one passes to a block sequence $(z_n)$ of $(x_n)$ then $c_J(z_n)\geq c_J(x_n)$.\

\begin{lemma}\label{l weak perturb}
Let $(x_n)$ be a bounded sequence in a Banach space $X$. Let $(y_n)$ be a weak Cauchy sequence in $X$.
Then for each $\ep\in(0,1)$ there is a subsequence $(n_k)$ such that $(x_{n_k})$ and $(x_{n_k}+y_{n_k})$ are $c_J$-stable and
\begin{eqnarray}
(1-\ep)\tilde c_J(x_n)\le c_J(x_{n_k})\le\tilde c_J(x_n).\label{eq perturb1}\\
c_J(x_{n_k}+y_{n_k})=c_J(x_{n_k}).\label{eq perturb2}
\end{eqnarray}
\end{lemma}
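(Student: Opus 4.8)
## Proof proposal for Lemma~\ref{l weak perturb}

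The plan is to extract the subsequence in two stages, dealing first with the $c_J$-stability and the lower bound \eqref{eq perturb1}, and then with the perturbation invariance \eqref{eq perturb2}. The fundamental tool throughout will be the definition of $\tilde c_J$ as a supremum over subsequences \eqref{eq def tilde c}, together with the elementary monotonicity facts recalled just before the statement: passing to a further subsequence does not decrease $c_J$, and every bounded sequence has a $c_J$-stable subsequence. First I would fix a subsequence realizing \eqref{eq perturb1}: by the definition of $\tilde c_J(x_n)$ as the supremum of $c_J$ over subsequences, choose a subsequence whose $c_J$ exceeds $(1-\ep)\tilde c_J(x_n)$, and then refine it to a $c_J$-stable one; stability forces $\tilde c_J$ of the refined subsequence to equal its own $c_J$, and monotonicity keeps everything sandwiched in $[(1-\ep)\tilde c_J(x_n),\tilde c_J(x_n)]$, giving \eqref{eq perturb1}.

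The heart of the matter is \eqref{eq perturb2}: perturbing by the weak Cauchy sequence $(y_n)$ should not change the $\ell^1$-basis quality. The key structural observation is that if $(y_n)$ is weak Cauchy, then the \emph{differences} $y_{n_k}-y_{n_{k-1}}$ tend to $0$ weakly, so after passing to a further subsequence I can arrange, via a standard gliding-hump / small-perturbation argument, that the tails $\sum_{k\ge m}\alpha_k(y_{n_k}-y_{n_1})$ contribute negligibly to norms of normalized $\ell^1$-combinations when $m$ is large. Concretely, I would estimate
$$
\Norm{\sum_{k\ge m}\alpha_k(x_{n_k}+y_{n_k})}
$$
by writing $y_{n_k}=y_{n_1}+(y_{n_k}-y_{n_1})$ and noting that the constant shift by $y_{n_1}$ multiplied against a zero-sum-free $\ell^1$-tail is harmless in the limit defining $c_m$, while the fluctuating part has weakly null increments. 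Choosing the subsequence so that $\|y_{n_{k}}-y_{n_{k-1}}\|$-type contributions are summable in the relevant weak sense lets the perturbation be absorbed into the $(1\pm 2^{-m})$-factors, yielding that the $c_m$-values (hence $c_J$) of $(x_{n_k}+y_{n_k})$ and $(x_{n_k})$ agree. Finally I would pass to one more subsequence to make $(x_{n_k}+y_{n_k})$ itself $c_J$-stable, which does not disturb the equalities already secured.

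The main obstacle I anticipate is making the perturbation estimate uniform over all normalized scalar choices $(\alpha_k)$ with $\sum|\alpha_k|=1$ simultaneously, since weak convergence of the increments only gives control against finitely many functionals at a time, not a norm bound. The standard remedy is a diagonal selection: along the subsequence, successively choose indices so that each new $y_{n_k}$ is close to the weak limit on an expanding family of test functionals while the residual is controlled by the quantity $c_m$ being an infimum over \emph{tails}. Because $c_J$ is defined through tails $n\ge m$ rather than whole sequences, one only needs the perturbation to be small from some point on, which is exactly what the weak Cauchy hypothesis delivers after the appropriate subsequence extraction; the inequality \eqref{gl-alm-isom}-style block estimates from Lemma~\ref{L:distortion} can be invoked to translate these tail estimates back into the two-sided comparison of $c_J(x_{n_k}+y_{n_k})$ and $c_J(x_{n_k})$.
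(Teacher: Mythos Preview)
Your first stage---choosing a subsequence realizing \eqref{eq perturb1} and then refining to $c_J$-stability---is fine and matches the paper. The problem is in your second stage, where you try to establish \eqref{eq perturb2} by treating the weak-Cauchy perturbation $(y_{n_k})$ as asymptotically negligible via subsequence extraction.

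The gap is precisely the obstacle you yourself flag: weak convergence gives no norm control, and no amount of diagonal subsequence selection will change that. A weakly null sequence need not have any norm-small subsequence, so writing $y_{n_k}=y_{n_1}+(y_{n_k}-y_{n_1})$ and hoping the second summand becomes small after refinement does not work. (The first summand is not harmless either: in the definition of $c_m$ the scalars satisfy $\sum|\alpha_k|=1$, not $\sum\alpha_k=0$, so a constant shift by $y_{n_1}$ contributes $(\sum\alpha_k)y_{n_1}$, which can have norm up to $\|y_{n_1}\|$; the invariance of $c_J$ under a constant shift is itself a special case of the lemma and needs its own argument.)

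What the paper does instead is pass to \emph{block} sequences rather than subsequences, because Mazur's theorem turns a weakly null sequence into a \emph{norm}-null sequence of convex combinations. Concretely: apply Lemma~\ref{L:distortion} to $(x_{n_k})$ to get blocks $x_n^{(1)}$ with $\|x_n^{(1)}\|\to c_J(x_{n_k})$, then apply it again to the corresponding blocks of $z_{n_k}=x_{n_k}+y_{n_k}$ to get $z_n^{(2)}$ with $\|z_n^{(2)}\|\to c_J(z_n^{(1)})$. The matching blocks $y_n^{(2)}$ of the weak-Cauchy sequence admit a weak-Cauchy subsequence, whose successive differences are weakly null; Mazur then yields further blocks $y_n^{(3)}$ with $\|y_n^{(3)}\|\to 0$. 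Since the corresponding $x_n^{(3)}$ still have norm tending to $c_J(x_{n_k})$ (they are blocks of $x_n^{(1)}$) and $z_n^{(3)}=x_n^{(3)}+y_n^{(3)}$ have norm tending to $c_J(z_n^{(1)})\ge c_J(z_{n_k})$, one obtains $c_J(x_{n_k})\ge c_J(x_{n_k}+y_{n_k})$. The reverse inequality follows by running the same argument with $(z_{n_k})$ and $(-y_{n_k})$, using $c_J$-stability to control the constants. Your sketch mentions Lemma~\ref{L:distortion} only in passing at the end; in fact it is the engine that makes the whole argument go, precisely because it links $c_J$ to norms of block sequences, and block sequences are where Mazur bites.
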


\begin{proof}
Let $(x_n)$ be a bounded sequence. If $\tilde c_J(x_n)=0$ then by Rosenthal's $\ell^1$-theorem
$(x_n)$ contains a weak Cauchy subsequence which remains
weak Cauchy when a weak Cauchy sequence is added; hence the $c_J$-value of the sum is still $0$ and we are done
in this case.\smallskip

Assume now that $\tilde c_J(x_n)>0$. Let $(y_n)$ be weakly Cauchy and let $0<\ep<1$. Set $z_n=x_n+y_n$.
Choose a subsequence $(x_{n_k})$ such that \eqref{eq perturb1} holds.
Passing to another subsequence, if necessary, we assume further that $(x_{n_k})$ and $(z_{n_k})$ are $c_J$-stable.
Write $c=c_J(x_{n_k})$ for short.
Note that $c\ge(1-\ep)\tilde c_J(x_n)>0$. In particular we may assume that $(x_{n_k})$ is an $\ell^1$-sequence (by omitting, if necessary, finitely many $x_{n_k}$).
Passing to another subsequence again, if necessary, we also get that $c_J(z_{n_k})>0$ because otherwise $(z_{n_k})$ would have no $\ell^1$-subsequence and would therefore contain
a weak Cauchy subsequence $(z_{n_{k_l}})$ by Rosenthal's theorem in which case $x_{n_{k_l}}=z_{n_{k_l}}-y_{n_{k_l}}$ would form a weak Cauchy sequence which is not possible.
By Lemma~\ref{L:distortion} take blocks $x_n^{(1)}$ of the sequence $(x_{n_k})$ such that
\begin{equation}
(1-2^{-m})c\sum_{l=m}^{\infty}\betr{\alpha_l}\leq
\Norm{\sum_{l=m}^{\infty} \alpha_l x_l^{(1)}}   \leq(1+2^{-m})c\sum_{l=m}^{\infty}\betr{\alpha_l}
\label{eq-alm-isom}
\end{equation}
for all $m\in\en$ and $(\alpha_l)\in\ell^1$.
For the corresponding blocks $z_n^{(1)}$ we have $c_J(z_n^{(1)})\ge c_J(z_{n_k})>0$.
By Lemma~\ref{L:distortion} take blocks $z_n^{(2)}$ of  $(z_n^{(1)})$ such that
\begin{equation}
(1-2^{-m})c_J(z_n^{(1)})\sum_{l=m}^{\infty}\betr{\alpha_l}\leq
\Norm{\sum_{l=m}^{\infty} \alpha_l z_l^{(2)}}   \leq(1+2^{-m})c_J(z_n^{(1)})\sum_{l=m}^{\infty}\betr{\alpha_l}
\label{eq-alm-isom2}
\end{equation}
for all $m\in\en$ and $(\alpha_l)\in\ell^1$.
Note that every block sequence of $(y_n)$ admits a weak Cauchy subsequence [for, if $u_n=\sum_{k\in A_n}\lambda_ky_k$,
(with finite pairwise disjoint $A_n$, $\sum_{k\in A_n}\betr{\lambda_k}=1$) and if $(u_{n_m})$ is a subsequence such that
$\lambda=\lim_m\sum_{k\in A_{n_m}}\lambda_k$ exists then, given $x^*\in X^*$, the sequence $x^*(u_{n_m})$ converges to $\lambda\mu$
where $\mu=\lim_nx^*(y_n)$:
$\betr{x^*(u_{n_m})-\lambda\mu}
=\betr{\sum_{k\in A_{n_m}}\lambda_k(x^*(y_k)-\mu) + ((\sum_{k\in A_{n_m}}\lambda_k)-\lambda)\mu}
\le \max_{k\in A_{n_m}}\betr{x^*(y_k)-\mu}+\betr{(\sum_{k\in A_{n_m}}\lambda_k)-\lambda}\betr{\mu}\to0$ as $m\to\infty]$.
Hence the blocks $y_n^{(2)}$ which correspond to  $z_n^{(2)}$ admit a weak Cauchy subsequence $(y_{n_k}^{(2)})$
whose differences $\frac12(y_{n_{2k+1}}^{(2)}-y_{n_{2k}}^{(2)})$ are weakly null and, by the Mazur theorem,
there are blocks $y_n^{(3)}$ of $(y_n^{(2)})$ such that $\norm{y_n^{(3)}}\to0$.
Note that for the corresponding blocks $x_n^{(3)}$ (which are blocks of the $x_n^{(1)}$)
we have $\norm{x_n^{(3)}}\to c$ by \eqref{eq-alm-isom}.
Hence the norm of the right hand side in
$$z_n^{(3)}=x_n^{(3)}+y_n^{(3)}$$
converges to $c$ while the norm of the left hand side converges to $c_J(z_n^{(1)})$ by \eqref{eq-alm-isom2}.
Thus $c=c_J(z_n^{(1)})$.
This shows that
$c_J(x_{n_k})=c_J(z_n^{(1)})\ge c_J(x_{n_k}+y_{n_k})$.\smallskip

Apply now what has been shown so far to the bounded sequence $(z_{n_k})$ and the weak Cauchy sequence $(-y_{n_k})$
in order to obtain subsequences $(z_{n_{k_l}})$ and $(y_{n_{k_l}})$ such that,
by $c_J$-stability of $(x_{n_k})$ and $(z_{n_k})$,
$$c_J(x_{n_k})
=c_J(x_{n_{k_l}})=c_J(z_{n_{k_l}}+(-y_{n_{k_l}}))\le c_J(z_{n_{k_l}})=c_J(z_{n_k}).$$
Now \eqref{eq perturb2} follows.
\end{proof}

\smallskip

\begin{proof}[Proof of Proposition \ref{p splitting}]~\smallskip
$(a)$ The proof of the implication (I)$\Rightarrow$(II) is almost immediate from the definition of $\operatorname{wck}_X$.
For, given $\ep>0$, it is enough to set $C=\{x_n\}$ where $(x_n)$ is a sequence in $A$ such that $\operatorname{dist}(x^{**},X)>\wck[X]{A}-\ep$
for all weak$^*$-cluster points $x^{**}$ of $(x_n)$ which leads to $\omega(C')>\wck[X]{A}-\ep=\omega(A)-\ep$ for all infinite
$C'\subset C$.\smallskip

$(b)$ Suppose that $X$ has the subsequence splitting property
and (II) is satisfied. Let $A$ be bounded in $X$. We need to prove that $\wck A\ge\omega(A)$. If $\omega(A)=0$, the inequality is trivial.
So, assume $\omega(A)>0$ and fix an arbitrary $\varepsilon\in(0,\min\{1,\omega(A)\})$.
Let $C=\{x_n\}$ be a countable subset of $A$ provided by (II).
In particular, $\omega(\{x_{n_k}\})>0$ for every subsequence $(x_{n_k})$ of $(x_n)$.\smallskip

Take a subsequence $(x_{n_k})$ of $(x_n)$ such that $\tilde c_J(x_n)-\ep<c_J(x_{n_k})\le\tilde c_J(x_n)$.
Take another subsequence (still denoted by $(x_{n_k})$) and write $x_{n_k}=y_k+z_k$ according to the subsequence splitting property
where $(y_k)$ converges weakly and $\norm{\sum\alpha_kz_k}\ge(1-\ep/2)\sum\betr{\alpha_k}\norm{z_k}$
for all $(\alpha_k)\in\ell^1$.
Note that the sequence $(z_k)$ is bounded, so without loss of generality $\lambda:=\lim\|z_k\|$ exists.
We have that $\lambda>0$ because otherwise $(x_{n_k})$ would be weakly convergent which would contradict $\omega(\{x_{n_k}\})>0$.
Hence, without loss of generality $\lambda(1-\ep/2)<\norm{z_k}<\lambda(1+\varepsilon)$ for $k\in\en$.
Since the sequence $(y_k)$ forms a relatively weakly compact set,
by the assumptions we have
$$\omega(A)-\varepsilon<\omega(\{x_{n_k},k\in\en\})\le \sup_{k}\norm{z_k}\le\lambda(1+\varepsilon).$$
Moreover, $c_J(z_k)\ge(1-\ep/2)\lambda(1-\ep/2)>(1-\ep)\lambda$.\smallskip

We apply Lemma \ref{l weak perturb} and obtain further subsequences (still denoted by $(x_{n_k})$, $(y_k)$, $(z_k)$). Then \eqref{eq perturb2} yields $$c_J(x_{n_k})=c_J(z_k)\ge\lambda(1-\varepsilon)\ge\frac{1-\varepsilon}{1+\varepsilon}(\omega(A)-\varepsilon).$$
By \cite[Lemma 5(ii)]{wesecom} $\dist(\clu{x_{n_k}},X)\ge c_J(x_{n_k})$, hence
$$\wck{A}\ge \dist(\clu{x_{n_k}},X)\ge c_J(x_{n_k})\ge\frac{1-\varepsilon}{1+\varepsilon}(\omega(A)-\varepsilon).$$
Since $\ep$ is arbitrary we are done.\smallskip

$(c)$ Assume $X$ is $L$-embedded and satisfies (I).
Fix a bounded sequence $(x_n)$ in $X$.
If $\tilde c_J(x_n)=0$ then by Rosenthal's $\ell^1$-theorem, $(x_n)$ contains a weak Cauchy subsequence $(x_{n_k})$ which,
by weak sequential completeness of L-embedded spaces \cite[Theorem IV.2.2]{hawewe} converges weakly.
The case is settled by putting $y_k=x_{n_k}$ and $z_k=0$.\smallskip

Let us now suppose that $\tilde c:=\tilde c_J(x_n)>0$.
In order to prove the subsequence splitting property in this case it is enough to produce, given $\varepsilon>0$, a decomposition $x_{n_k}=y_k+z_k$
where $(y_k)$ converges weakly and where
\begin{eqnarray}
(1-\ep)\tilde c\sum\betr{\alpha_k}&\le&\Norm{\sum\alpha_kz_k}\le (1+\ep)\tilde c\sum\betr{\alpha_k}
\label{eq converse1}
\end{eqnarray}
for all $(\alpha_k)\in\ell^1$.
Set $A=\{x_n; n\in\en\}$.\\
First we claim that $\wck{A}=\tilde c$. The inequality ``$\ge$'' follows from \cite[Lemma 5(ii)]{wesecom}.
For the other inequality note that for any $\eta>0$, the construction (which works also for complex scalars) leading to
\cite[formula (8)]{wesecom} yields an $x\in X$ and a subsequence $(x_{n_k})$ such that $c_J(x_{n_k}-x)>(1-\eta)\wck{A}$. (Note that here the assumption that $X$ is $L$-embedded is used).
It follows $c_J(x_{n_k})>(1-\eta)\wck{A}$ (cf. \cite[Proposition 4.2]{KnaustOdell-1989}
or Lemma \ref{l weak perturb} for constant $y_n=-x$).
This proves ``$\le$'' and the claim.\\
Let  $(x_{n_k})$ be a subsequence such that $c_J(x_{n_k})>(1-\frac{\ep}{2})\tilde c$.
Since $\omega(A)=\tilde c$ by (I) and the claim, there is a weakly compact set $K$ of $X$ such that $A\subset K+ (1+\ep)\tilde cB_X$.
Choose a sequence $(y_k)$ in $K$ (which can be supposed to converge weakly) such that $x_{n_k}=y_k+z_k$
with $\norm{z_k}\le(1+\ep)\tilde c$. The latter inequality implies the second one of \eqref{eq converse1}.
By Lemma \ref{l weak perturb} we pass to subsequences (still denoted by $(x_{n_k})$, $(y_k)$, $(z_k)$) in order to get
$c_J(z_k)=c_J(x_{n_k})$.
Now, by the definition of $c_J(z_k)$, if we omit at most finitely many $z_k$ then we have
$$\Norm{\sum\alpha_kz_k}\ge(1-\frac{\ep}{2})c_J(z_k)\sum\betr{\alpha_k}
\ge(1-\frac{\ep}{2})^2\tilde c\sum\betr{\alpha_k}$$
and the first inequality of \eqref{eq converse1} follows.
\end{proof}

Proposition~\ref{p splitting}$(b)$ will play a key role in the proof of Theorem~\ref{T:main} in Section~\ref{sec:8}. Let us remark that some special cases may be proved already now, as there are some spaces which are easily seen to satisfy (II).\smallskip

Recall that a Banach space $X$ is said to be \emph{weakly compactly generated} (shortly \emph{WCG}) if there is a weakly compact subset $K\subset X$ with $\overline{\span} K=X$. Further, $X$ is called \emph{strongly WCG} (see \cite{SWCG}) if there is a weakly compact set $K\subset X$ such that
$$\forall L\subset X\mbox{ weakly compact}\,\forall\varepsilon>0\,\exists n\in\en: L\subset nK+\varepsilon B_X.$$
By the Krein theorem we may assume without loss of generality that $K$ is absolutely convex.\\
We have the following easy lemma

\begin{lemma}
Let $X$ be a strongly WCG Banach space. Then for any bounded set $A\subset X$ there is a countable subset $C\subset A$ such that
$\omega(C')=\omega(A)$ for each $C'\subset C$ infinite.
\end{lemma}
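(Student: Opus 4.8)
The plan is to reduce the De~Blasi measure $\omega$ on bounded sets to a single decreasing one-parameter family of quantities built from the generating set, and then to run an ``escape to infinity'' selection in which the relevant level grows together with the index.

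\emph{Step 1: a formula for $\omega$ in terms of $K$.} By the Krein theorem (as noted just above) fix a weakly compact, absolutely convex set $K$ witnessing strong WCG. Since $K$ is absolutely convex and nonempty we have $0\in K$, and consequently $mK\subset (m+1)K$ for every $m$; thus each $mK$ is weakly compact and the sequence $\bigl(\dh(A,mK)\bigr)_m$ is nonincreasing. I claim that
$$\omega(A)=\inf_{m\in\en}\dh(A,mK)=\lim_{m\to\infty}\dh(A,mK).$$
The inequality $\omega(A)\le\inf_m\dh(A,mK)$ is immediate, since every $mK$ is a weakly compact competitor in the definition of $\omega$. For the reverse inequality I would take an arbitrary weakly compact $L\subset X$ and $\varepsilon>0$, use strong WCG to find $n$ with $L\subset nK+\varepsilon B_X$, and observe that for each $a\in A$ one has $\dist(a,nK)\le\dist(a,L)+\varepsilon$ (write a near-nearest point of $L$ as $y+z$ with $y\in nK$, $\norm z\le\varepsilon$). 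Taking the supremum over $a\in A$ gives $\dh(A,nK)\le\dh(A,L)+\varepsilon$; letting $\varepsilon\to0$ and then infimizing over all weakly compact $L$ yields $\inf_m\dh(A,mK)\le\omega(A)$.

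\emph{Step 2: the construction.} Put $r=\omega(A)$. Because the sequence decreases to $r$, we have $\dh(A,mK)\ge r$ for every $m$, so for each $k\in\en$ I may pick a point $x_k\in A$ with $\dist(x_k,kK)>r-\tfrac1k$. Set $C=\{x_k\setsep k\in\en\}$, a countable subset of $A$. The decisive feature is that the level $kK$ against which $x_k$ is chosen to be far \emph{grows with the index} $k$; this is what forces every infinite subset to detect every fixed level.

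\emph{Step 3: verification.} By monotonicity of $\omega$ any infinite $C'\subset C$ satisfies $\omega(C')\le\omega(A)=r$. For the converse, fix an infinite $C'=\{x_{k_i}\setsep i\in\en\}$ with $k_1<k_2<\cdots$ and fix $m\in\en$. For all $i$ with $k_i\ge m$ the nesting $mK\subset k_iK$ gives $\dist(x_{k_i},mK)\ge\dist(x_{k_i},k_iK)>r-\tfrac1{k_i}$; letting $i\to\infty$ shows $\dh(C',mK)\ge r$. As this holds for every $m$, the formula of Step~1 (applied to $C'$) gives $\omega(C')=\lim_m\dh(C',mK)\ge r$, and hence $\omega(C')=r=\omega(A)$. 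Finally, when $r>0$ the set $C$ is automatically infinite: for each fixed $x\in X=\overline{\operatorname{span}}\,K=\overline{\bigcup_m mK}$ one has $\dist(x,mK)\to0$, so no single value can satisfy $\dist(x_k,kK)>r-\tfrac1k$ for arbitrarily large $k$; when $r=0$ every subset of $A$ is relatively weakly compact, so any countable $C$ works.

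I expect the only genuine obstacle to be Step~1, as this is the one place where strong WCG is used in an essential way and where care is needed with the $\varepsilon B_X$ fattening of $nK$. Once the formula $\omega(A)=\lim_m\dh(A,mK)$ is secured, the selection argument is routine, its single subtlety being precisely the decision in Step~2 to let the level track the index, so that every infinite subsequence eventually meets each $mK$.
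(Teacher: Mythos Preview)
Your proof is correct and follows essentially the same approach as the paper: the paper also establishes the formula $\omega(A)=\inf_{n}\dh(A,nK)$ (declaring it ``clearly''), then chooses $x_n\in A$ with $\dist(x_n,nK)>\omega(A)-\tfrac1n$, and takes $C=\{x_n\}$. You have simply supplied the details the paper omits, including the verification that every infinite $C'\subset C$ inherits $\omega(C')=\omega(A)$ and the observation that $C$ is infinite when $\omega(A)>0$.
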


\begin{proof} Let $K\subset X$ be a weakly compact set witnessing the strong WCG property. Then clearly
$$\omega(A)=\inf_{n\in\en} \dh(A,nK)$$
for any bounded set $A\subset X$. Given any bounded set $A\subset X$, we can find a sequence $(x_n)$ in $A$ such that $\dist(x_n,nK)>\omega(A)-\frac1n$. It is enough to take $C=\{x_n\setsep n\in\en\}$.
\end{proof}

Since $L^1(\mu)$ is strongly WCG for any finite measure $\mu$ (by \cite[Example 2.3(d)]{SWCG}) and $L^1$ spaces have the subsequence splitting property, then the essential part of \cite[Proposition 7.1]{qdpp} follows immediately from the previous lemma and Proposition~\ref{p splitting}. \smallskip

Further, preduals of $\sigma$-finite von Neumann algebras and, more generally, preduals of $\sigma$-finite JBW$^*$-algebras are seen to be strongly WCG
by combining \cite[Theorem 2.1]{SWCG}, \cite[Appendice 6, Lemma 2]{iochumbook} (cf. Lemma~\ref{L:JBW*algebra sigma finite} below), and \cite[Theorem D.21]{Rodriguez94} (see Proposition~\ref{P:strong*=Mackey} below).
Hence the validity of Theorem~\ref{T:main} for $\sigma$-finite JBW$^*$-algebras easily follows.
The case of $\sigma$-finite JBW$^*$-triples is more complicated, see Theorem~\ref{T:SWCG} below.

\section{Tripotents and projections}\label{sec:tripotents}

A \emph{projection} in a $C^*$-algebra $A$ is a self-adjoint idempotent, i.e., an element $p\in A$ satisfying $p^*=p=p^2$. If $A$ is represented as a $C^*$-subalgebra of $B(H)$, then $p\in A$ is a projection if and only if it is, when viewed as an operator on $H$,
an orthogonal projection.\smallskip

Similarly, a \emph{projection} in a JB$^*$-algebra $A$ is an element $p\in A$ satisfying $p^*=p$ and $p\circ p=p$; in particular, $p$ is positive (by Lemma \ref{L:JC*}).
Two projections $p,q\in A$ are said to be \emph{orthogonal} if $p\circ q=0$ or, equivalently, if $p+q$ is also a projection. In case $A$ is a $C^*$-algebra, this is just equivalent to $pq=0$, which means that the ranges of the projections are orthogonal subspaces of $H$.\smallskip

We shall consider the usual partial order on the set of projections in a JB$^*$-algebra defined by $p\le q$ if $q-p$ is a projection. In a $C^*$-algebra this order coincides with the standard one.\smallskip

In a JB$^*$-triple there is no natural notion of projections, but tripotents play a similar role.
As a motivation for the latter notion, suppose $A$ is a C$^*$-algebra regarded as a
JB$^*$-triple with respect to the triple product $\{a,b,c\}= \frac12 (a b^* c + c b^* a)$.
It is well known that an element $u$ in $A$ is a partial isometry (i.e., $u^*u$ is a projection,
or equivalently, $u u^*$ is a projection) if and only if $\{u,u,u\}= uu^*u= u$.
Given a partial isometry $u$ in a C$^*$-algebra $A$, the elements $p_i=u^*u$ and $p_f=uu^*$ are called the initial and final projection of $u$, respectively.
An element $u$ of a JB$^*$-triple $M$ is called a \emph{tripotent} if $\J uuu=u$. If $M$ is
a JB$^*$-algebra, then
$u\in M$ is a tripotent if and only if
$$ u = 2 (u\circ u^*)\circ u - (u\circ u)\circ u^*,$$
which cannot be simplified. Note that the projections of a JB$^*$-algebra $A$ are precisely those tripotents in $A$ which are positive elements.

\subsection{Peirce decomposition}

From a purely algebraic point of view, a complex linear space $E$ equipped with a triple product $\{.,.,.\}: E^3 \to E$, which is symmetric and bilinear in the outer variables and conjugate-linear in the middle one satisfying the axiom $(JB^*$-$1)$ in the definition of JB$^*$-triple is called a complex Jordan triple system.
An element $u$ in a Jordan triple system $E$ is a tripotent if $\{u,u,u\}=u$. Each tripotent $u$ in a complex Jordan triple system $E$ induces a decomposition of $E$ in terms of the eigenspaces of the mapping $L(e,e)$
(this purely algebraic result can be seen, for example, in \cite[1.3 in page 7]{neher1987jordan} or \cite[page 32]{chubook}).\smallskip

In our concrete setting, the algebraic structure assures that for each tripotent $u$ in a JB$^*$-triple $M$, the eigenvalues of the operator $L(u,u)$ are contained in the set $\{0,\frac12,1\}$. If $u\ne0$, then $1$ is always an eigenvalue, the witnessing eigenvector is $u$. For $j=0,1,2$ we shall denote by $M_j(u)$ the eigenspace of $M$ with respect to the eigenvalue $\frac{j}{2}$. Then $M$ is the direct sum $M_0(u)\oplus M_1(u)\oplus M_2(u)$  of the three eigenspaces of $L(u,u)$; this decomposition is called the \emph{Peirce decomposition} of $M$ with respect to $u$. The canonical projection, $P_j(u),$ of $M$ onto $M_j(u)$ is called the ($j$-)\emph{Peirce projection} associated with $u$. Peirce projections are explicitly determined by the following formulae:\label{eq Fla Peirce projections}
$$\begin{aligned}
P_2(u)&=L(u,u)(2L(u,u)-\operatorname{id}_M)=Q(u)^2,\\
P_1(u)&=4L(u,u)(\operatorname{id}_M-L(u,u))=2(L(u,u)-Q(u)^2),\\
P_0(u)&=(\operatorname{id}_M-L(u,u))(\operatorname{id}_M-2L(u,u))=\operatorname{id}_M-2 L(u,u)+Q(u)^2
\end{aligned}
$$
where the quadratic operator $Q(u):M\to M$ is defined by $Q(u)(x)=\{u,x,u\}$
(compare \cite[1.3 in page 7]{neher1987jordan} or \cite[page 7]{chubook}). In the setting of JB$^*$-triples, Peirce projections are all contractive (see \cite[Corollary 1.2]{Friedman-Russo} or \cite[3.2.1]{chubook}).\smallskip

In case $M$ is a $C^*$-algebra and $u\in M$ is a tripotent (i.e., a partial isometry with initial projection $p_i$ and final projection $p_f$), the Peirce projections are given by the following expressions:
$$
P_2(u)x=p_f x p_i, \quad P_1(u)x= p_f x(1-p_i)+(1-p_f)x p_i, \quad P_0(u)x=(1-p_f)x(1-p_i),
$$
where $x$ runs through $M$.\smallskip

If $i,j,k\in\{0,1,2\}$, then the so-called \emph{Peirce arithmetic} or \emph{Peirce multiplication rules} say that
\begin{equation}\label{eq Peirce arithmetic}  \begin{cases}
\{M_i(u), M_j(u), M_k(u)\} \subseteq M_{i-j+k}(u), & \mbox{ if } i-j+k\in\{0,1,2\},\\
\{M_i(u), M_j(u), M_k(u)\} =0, & \mbox{ if } i-j+k\notin\{0,1,2\},\\
\{M_2(u), M_0(u), M\}= \{M_0(u), M_2(u), M\}=0,&
\end{cases}
\end{equation}
(see \cite[$(1.20)$--$(1.22)$ in pages 7-8]{neher1987jordan} or \cite[Theorem~1.2.44]{chubook}).\smallskip

It follows immediately from the Peirce multiplication rules that $M_j(u)$ is a JB$^*$-subtriple for $j=0,1,2$. In the case $j=2$ something
more can be said. In this case $M_2(u)$ is even a unital JB$^*$-algebra with respect to the product and involution given by
$$a\circ b=\J aub, \quad a^* = \J uau,\qquad a,b\in M_2(u),$$ respectively (cf.\cite[\S1.2 and Remark 3.2.2]{chubook} or \cite[Corollary~4.2.30]{Cabrera-Rodriguez-vol1}).
\label{eq Peirce2 is a JBstaralgebra}
It is further known that $u$ is the unit of this JB$^*$-algebra. (If we wish to stress that the operations are with respect to $u$, we write $a\circ_u b$ and $a^{*_u}$.)\smallskip

\subsection{Complete tripotents}

A tripotent $u$ in a JB$^*$-triple $M$ is called \emph{complete} if $M_0(u)=\{0\}$ and \emph{unitary} if $M=M_2(u)$ (that is if $\{u,u,x\}=x$ for all $x\in M$).
It follows from the above structure results that $M$ admits a unitary tripotent if and only if it admits a structure of unital JB$^*$-algebra (cf.~ \cite[Theorem~4.1.55]{Cabrera-Rodriguez-vol1}).
Further, if $M$ is a unital JB$^*$-algebra, then $u\in M$ is a unitary tripotent if and only if it is a unitary element, i.e., an element satisfying that $u$ is invertible in the Jordan sense (i.e., there exists a unique element $b= u^{-1}$ in $M$ such that $b\circ u = 1$ and $u^2\circ b = u$) and $u^{-1} = u^*$
(compare \cite[\S4.1.1]{Cabrera-Rodriguez-vol1}).
In a $C^*$-algebra this reduces to $u^*u=uu^*=1$.\smallskip

A complete tripotent need not be unitary, even in the $C^*$-algebra case. Indeed, if $u\in B(H)$ is a partial isometry whose initial projection is the identity on $H$ but its final projection is strictly smaller than the identity on $H$ (or vice versa), i.e., if $u^*u=1\ne uu^*$ (or vice versa),
then $u$ is a complete non-unitary tripotent. This is not the unique possibility, but it is an important case as witnessed by the forthcoming lemmata. We observe that the extreme points of the closed unit ball of a C$^*$-algebra $A$ are precisely the complete partial isometries (tripotents) of $A$ (see \cite[Theorem 1]{kadison1951isometries} or \cite[Theorem~I.10.2]{Tak}).
The same result remains true for the closed unit ball of a JB$^*$-triple $E$, that is, the complete tripotents in $E$ are the extreme points of its closed unit ball (cf. \cite[Lemma 4.1]{braun1978holomorphic} and \cite[Proposition 3.5]{kaup1977jordan} or \cite[Theorem 3.2.3]{chubook}).\smallskip

We recall that a \emph{conjugation} on a complex Banach space $X$ is a conjugate-linear isometry $\tau:X\to X$ of period-2 (i.e., $\tau^2 = Id_{X}$). Let $H$ be a complex Hilbert space, and let us fix an orthonormal basis $(\xi_k)_{k\in \Lambda}$ in $H$. Given $\xi\in H,$ let $\tau(\xi)\in H$ be the vector defined by $\tau(\xi)=\sum_{k\in \Lambda} \overline{\ip \xi{\xi_k}}\xi_k.$ Then $\tau$ is a  conjugation on $H$ and, moreover, any conjugation is of that form (with a properly chosen orthonormal basis). 

\begin{lemma}\label{L:Jembed C*}
Let $M$ be a unital $C^*$-algebra, and let $u\in M$ be a complete tripotent. Then there exist a complex Hilbert space $H$ and an isometric unital Jordan $*$-monomorphism $\psi:M\to B(H)$ such that $\psi(u)^*\psi(u)=1$.
\end{lemma}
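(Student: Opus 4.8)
The plan is to realize $\psi$ as a direct sum of a $*$-representation and a $*$-anti-representation: on one piece a genuine representation will make the \emph{initial} projection of $u$ full, while on the other piece an anti-representation (a representation composed with an adjoint) will turn the full \emph{final} projection into a full initial projection. First I would pass to the enveloping von Neumann algebra $W=M^{**}$, which contains $M$ as a weak$^*$-dense unital subalgebra, and fix a faithful normal representation $W\subset B(K)$. Write $p=u^*u$ and $q=uu^*$ for the initial and final projections of the partial isometry $u$. By the formula for $P_0(u)$ recalled just before the statement, the completeness hypothesis $M_0(u)=\{0\}$ means exactly $(1-q)\,M\,(1-p)=0$; since multiplication is separately weak$^*$-continuous and $M$ is weak$^*$-dense in $W$, this upgrades to $(1-q)\,W\,(1-p)=0$.

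The key step is to convert this annihilation into an orthogonality of central supports. Writing $z(e)$ for the central cover of a projection $e\in W$, the standard fact that $eWf=0$ if and only if $z(e)z(f)=0$ yields $z(1-q)\,z(1-p)=0$. Setting $z_0=z(1-q)$, from $1-p\le z(1-p)$ I get $z_0(1-p)\le z_0\,z(1-p)=0$, hence $z_0 p=z_0$; and from $1-q\le z_0$ I get $(1-z_0)q=1-z_0$. In words, on the central summand $z_0W$ the initial projection of $u$ is full, while on $(1-z_0)W$ the final projection is full.

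Now I would assemble $\psi$. Put $H_1=z_0K$, $H_2=(1-z_0)K$, and let $\psi_1\colon M\to B(H_1)$, $\psi_1(a)=z_0a|_{H_1}$, be the associated unital $*$-homomorphism. Fix a conjugation $J$ on $H_2$, let $\pi_2\colon M\to B(H_2)$, $\pi_2(a)=(1-z_0)a|_{H_2}$, be the unital $*$-homomorphism on the complementary summand, and set $\rho(a)=J\,\pi_2(a)^*\,J$, a unital $*$-anti-homomorphism into $B(H_2)$. With $H=H_1\oplus H_2$ and $\psi=\psi_1\oplus\rho$, the map $\psi$ is a unital Jordan $*$-homomorphism, being the direct sum of a $*$-homomorphism and a $*$-anti-homomorphism; it is injective because $z_0a=0$ together with $(1-z_0)a=0$ forces $a=0$, and an injective Jordan $*$-homomorphism between C$^*$-algebras is automatically isometric. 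Finally $\psi_1(u)^*\psi_1(u)=\psi_1(u^*u)=\psi_1(p)=z_0p|_{H_1}=1_{H_1}$, while, using that $\rho$ reverses products, $\rho(u)^*\rho(u)=\rho(u^*)\rho(u)=\rho(uu^*)=\rho(q)=J\,\pi_2(q)\,J=1_{H_2}$; hence $\psi(u)^*\psi(u)=1_H$.

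The main obstacle is the central-support step: recognizing that the abstract completeness condition $M_0(u)=\{0\}$, once transported to the bidual, is precisely the orthogonality $z(1-q)z(1-p)=0$, and seeing that this is exactly what permits a $*$-homomorphism to handle the corner where the initial projection is full and a $*$-anti-homomorphism to handle the corner where the final projection is full. The remaining verifications are routine.
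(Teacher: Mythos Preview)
Your argument is correct. The core idea is the same as the paper's: split $M$ into a summand on which the initial projection of $u$ is full (handled by a $*$-homomorphism) and a summand on which the final projection is full (handled by a $*$-anti-homomorphism via a conjugation), then take the direct sum.

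The difference lies in how the splitting is produced. The paper uses the atomic representation $\Phi=\bigoplus_i\Phi_i$ into irreducibles; in each factor $B(H_i)$ the image of $u$ is again a complete tripotent, and since $B(H_i)$ is a factor one of $\Phi_i(u)^*\Phi_i(u)$, $\Phi_i(u)\Phi_i(u)^*$ must equal $1$. The index set is then partitioned accordingly and a transposition is applied on the second part. You instead pass to $W=M^{**}$, upgrade $(1-q)M(1-p)=0$ to $(1-q)W(1-p)=0$ by separate weak$^*$-continuity, and read this as $z(1-q)z(1-p)=0$ via central covers; the single central projection $z_0=z(1-q)$ then does the job globally. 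Your route avoids the irreducible decomposition entirely and is a bit more economical, while the paper's route is more hands-on. Both yield an isometric unital Jordan $*$-monomorphism for the same reason: the direct sum of a $*$-homomorphism and a $*$-anti-homomorphism is a Jordan $*$-homomorphism, injectivity follows from $z_0a=0=(1-z_0)a\Rightarrow a=0$, and isometry follows either from the general fact you cite (injective Jordan $*$-homomorphisms between C$^*$-algebras are isometric) or, more directly, from $\|\psi(a)\|=\max(\|z_0a\|,\|(1-z_0)a\|)=\|a\|$.
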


\begin{proof} By applying the Gelfand-Naimark-Segal construction, we can find a family of complex Hilbert spaces $\{H_i\}_{i\in I}$ and
irreducible representations $\Phi_i:M\to B(H_i)$, such that
$$\Phi=\bigoplus_{i\in I}\Phi_i:M\to \bigoplus_{i\in I}^{\ell_{\infty}} B(H_i)\subset B(\bigoplus_{i\in I}^{\ell_2} H_i)$$
is an isometric $^*$-monomorphism
(we can consider, for example, the \emph{atomic representation} of $M$ \cite[4.3.7]{pedersen1979c}, where the family $I$ is precisely the set of all pure states of $M$ and each $\Phi_i$ is the irreducible representation associated with the pure state $i$ \cite[Theorem 3.13.2]{pedersen1979c}).

Fix $i\in I$. Since $u$ is tripotent in $M$, $\Phi_i(u)$ is a tripotent in $\Phi_i(M)$ as well. Moreover, $u$ is complete, i.e.,
$$(1-uu^*)a(1-u^*u)=0\mbox{ for }a\in M,$$
hence
$$(1-\Phi_i(u)\Phi_i(u)^*)x(1-\Phi_i(u)^*\Phi_i(u))=0\mbox{ for }x\in \Phi_i(M).$$
Since $\Phi_i$ is irreducible,  its range is weak$^*$-dense in $B(H_i)$, thus the above formula holds for all $x\in B(H_i)$. In other words, $\Phi_i(u)$ is a complete tripotent in $B(H_i)$.
Having in mind that $B(H_i)$ is a factor, it follows that $\Phi_i (u)^* \Phi_i (u) =1$ or $\Phi_i (u) \Phi_i (u)^* =1$ (this follows  e.g. from \cite[Lemma V.1.7]{Tak} applied to the projections $1-\Phi_i(u)^*\Phi_i(u)$ and $1-\Phi_i(u)\Phi_i(u)^*$).\smallskip

Let $I_1:=\{j \in I : \Phi_j (u)^* \Phi_j (u) =1_j \}$ and $I_2:= I\setminus I_1$.\smallskip

For each $j\in I_2$, we can find a $^*$-anti-homomorphism $\Psi_j : B(H_j) \to B(H_j)$ (consider, for example a transposition on $B(H_j)$ defined by $\Psi_j (a) := \tau a^* \tau$, where $\tau$ is the conjugation on $H_j$ described before the statement of the lemma).
For $j\in I_1$, $\Psi_j$ will stand for the identity on $B(H_j)$. Let $\Psi = \bigoplus_{j\in I} \Psi_j : \bigoplus_j^{\ell_{\infty}} B(H_j)\to \bigoplus_j^{\ell_{\infty}} B(H_j)$. Clearly $\Psi$ is a Jordan $^*$-isomorphism. By construction we have $\Psi(\Phi(u))^* \Psi(\Phi(u)) = 1$, the identity in $\bigoplus_j^{\ell_{\infty}} B(H_j)$.  Finally, we can embed the C$^*$-algebra $\bigoplus_j^{\ell_{\infty}} B(H_j)$ inside $B(\bigoplus_j^{\ell_{2}} H_j)$ via a $^*$-monomorphism $\theta$, and the Jordan $^*$-monomorphism  $\psi = \theta \circ \Psi \circ \Phi: M\to  B(\bigoplus_j^{\ell_{2}} H_j)$ satisfies the desired property.
\end{proof}

We continue with a technical result relating complete tripotents in a JC$^*$-algebra $A$ with the complete tripotents in the C$^*$-algebra generated by $A$.

\begin{lemma}\label{l complete tripotents in the Cstar algebra generated} Let $M$ be a unital JB$^*$-algebra. Let $u$ be a complete tripotent in $M$, and let $N$ denote the JB$^*$-subalgebra of $M$ generated by $u$ and the unit element. Then $N$ is a JC$^*$-subalgebra of some C$^*$-algebra $B$, and $u$ is a complete tripotent in the C$^*$-subalgebra of $B$ generated by $N$.
\end{lemma}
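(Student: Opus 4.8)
The goal is to realize the JB$^*$-subalgebra $N$ generated by a complete tripotent $u$ and the unit as a concrete JC$^*$-algebra, and then verify that $u$ remains complete when we pass to the C$^*$-algebra it generates. The plan is to apply Lemma~\ref{L:JC*}$(i)$ to the element $u$ (together with the unit $1$): this produces a complex Hilbert space $H$ and an isometric unital Jordan $*$-monomorphism identifying $N$ with a JB$^*$-subalgebra $\psi(N)$ of $B(H)$. Thus $N$ is a JC$^*$-algebra, sitting inside the C$^*$-algebra $B=B(H)$. Let $A$ denote the (associative, $*$-closed) C$^*$-subalgebra of $B$ generated by $\psi(N)$; equivalently, by $\psi(u)$ and the identity. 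What must be shown is that $\psi(u)$ is a complete tripotent \emph{in $A$}, i.e.\ that $A_0(\psi(u))=\{0\}$, where completeness is understood relative to the triple structure of $A$.

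The first step is to observe that completeness is intrinsic to $N$ in the following sense: since $\psi$ is an isometric Jordan $*$-isomorphism onto its image and a triple isomorphism onto $\psi(N)$, and since $u$ is a complete tripotent in $M$, the element $\psi(u)$ is a complete tripotent \emph{in the JB$^*$-triple $\psi(N)$} (here I use that completeness of a tripotent $u$ in a JBW$^*$-triple $M$ means $M_0(u)=\{0\}$, and that this Peirce-$0$ condition is preserved by triple isomorphisms onto subtriples, because the Peirce projections are determined by the triple product via the formulae on page~\pageref{eq Fla Peirce projections}). Note however that this only gives completeness relative to $\psi(N)$, not relative to the larger C$^*$-algebra $A$, so an additional argument is needed.

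To upgrade completeness from $\psi(N)$ to $A$, I would exploit the concrete description of the Peirce projections for a partial isometry in a C$^*$-algebra: with $p_i=\psi(u)^*\psi(u)$ and $p_f=\psi(u)\psi(u)^*$ the initial and final projections, one has $A_0(\psi(u))=(1-p_f)A(1-p_i)$. Completeness of $\psi(u)$ in $A$ is therefore equivalent to $(1-p_f)\,x\,(1-p_i)=0$ for all $x\in A$. The key point is that $p_i$ and $p_f$ both lie in $\psi(N)$ (indeed $\psi(u)^*\psi(u)=\psi(u)^{*_u}\circ_u\psi(u)$ and similarly for $p_f$ are computed from the Jordan operations, which are expressible through the triple product, so they belong to $\psi(N)\subset A$), and that $A$ is generated by $\psi(u)$, $\psi(u)^*$ and $1$. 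Since the relation $(1-p_f)\psi(u)(1-p_i)=0$ already holds (it is exactly the completeness of $\psi(u)$ within $\psi(N)$, rewritten via the $C^*$ Peirce formula $P_0(\psi(u))=(1-p_f)(\cdot)(1-p_i)$ restricted to $\psi(N)$), one checks by the Peirce arithmetic \eqref{eq Peirce arithmetic} that $(1-p_f)w(1-p_i)=0$ for every word $w$ in $\psi(u),\psi(u)^*$; this propagates to the norm-closed span, namely all of $A$, giving $A_0(\psi(u))=\{0\}$.

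The main obstacle I anticipate is precisely this last propagation step: completeness is a genuinely C$^*$-algebraic (associative) statement, whereas the hypothesis lives in the Jordan/triple world of $N$, which is not closed under the associative product. The danger is that enlarging $\psi(N)$ to the associative algebra $A$ could create new Peirce-$0$ elements. I would resolve this by checking directly that $p_i$ and $p_f$ are \emph{central enough} in $A$ relative to $\psi(u)$ — concretely, that $\psi(u)=p_f\psi(u)=\psi(u)p_i$ forces every associative product of the generators to land in $p_fA p_i\oplus (\text{lower Peirce pieces with at least one full projection})$ — so that the corner $(1-p_f)A(1-p_i)$ collapses. If a clean associative argument proves awkward, the robust alternative is to embed $A$ into $B(\tilde H)$ for a representation in which $\psi(u)$ becomes a partial isometry with $p_i=1$ (as in Lemma~\ref{L:Jembed C*}), whence $1-p_i=0$ and $A_0(\psi(u))=(1-p_f)A\cdot 0=\{0\}$ trivially. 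This reduction to the case $\psi(u)^*\psi(u)=1$ supplied by the previous lemma is, I expect, the cleanest route past the obstacle.
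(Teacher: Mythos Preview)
Your overall strategy---embed $N$ as a JC$^*$-algebra via Lemma~\ref{L:JC*}$(i)$ and then show $(1-p_f)w(1-p_i)=0$ for all words $w$ in $\psi(u),\psi(u)^*$---is exactly the paper's. However, your execution has a genuine gap and your proposed fallback is circular.

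The gap is in the propagation step. You write that ``one checks by the Peirce arithmetic \eqref{eq Peirce arithmetic}'' that $P_0(\psi(u))$ vanishes on every word. But the Peirce rules \eqref{eq Peirce arithmetic} govern \emph{triple} products, not the associative product $ab$; there is no rule placing $ab$ in a prescribed Peirce subspace from $a\in A_i(u)$ and $b\in A_j(u)$, so this appeal does not do the work. The paper instead runs a direct induction on the degree of the monomial. The base cases $1,u,u^*$ lie in $N$, where $P_0(u)$ vanishes by hypothesis. For a monomial $a$ of degree $n+1$ that is not $u^{n+1}$ or $(u^*)^{n+1}$ (those lie in $N$), one can factor $a=buu^*c$ or $a=bu^*uc$ with $\deg b+\deg c=n+1$, and then, e.g.,
\[
(1-p_f)\,buu^*c\,(1-p_i)=(1-p_f)bc(1-p_i)-(1-p_f)b\,(1-p_f)c(1-p_i)=P_0(u)(bc)-(1-p_f)b\,P_0(u)(c),
\]
which vanishes by the inductive hypothesis applied to the shorter monomials $bc$ and $c$. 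This concrete trick is the missing idea in your sketch.

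Your alternative route---``embed $A$ into $B(\tilde H)$ so that $p_i=1$, as in Lemma~\ref{L:Jembed C*}''---is circular: the hypothesis of Lemma~\ref{L:Jembed C*} is precisely that $u$ be a complete tripotent in the ambient C$^*$-algebra, which is exactly what you are trying to establish for $A$. The logical order in the paper is that the present lemma is used to prove Lemma~\ref{L:Jembed JB*}, and only \emph{after} completeness in $A$ is secured there does one invoke Lemma~\ref{L:Jembed C*}.

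A minor aside: your justification that $p_i,p_f\in\psi(N)$ confuses the operations $\circ_u,{}^{*_u}$ of the JB$^*$-algebra $M_2(u)$ (whose unit is $u$) with the Jordan operations of $N$ (whose unit is $1$); in general there is no reason for $u^*u$ or $uu^*$ individually to lie in $\psi(N)$, only their symmetrization $u\circ u^*$. Fortunately this claim is not needed---the paper never asserts it, and the induction above uses only that $p_i,p_f\in A$.
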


\begin{proof} The first statement follows from Lemma~\ref{L:JC*}$(i)$, so fix a $C^*$-algebra $B$ conitaing $N$ as a JC$^*$-subalgebra. Let $A$ be the C$^*$-subalgebra of $B$ generated by $N$. Furhter, let $1$ denote the unit of $M$ (which belongs to $N$). Then for any $x\in N$ we have
$1x1=\J 1x1=x$, hence $x=1x=x1$. Since $A$ is generated by $N$, it follows that $1$ is the unit of $A$.

Clearly $u$ is a tripotent (hence a partial isometry) in $A$, so its Peirce-$0$ projection is given by
$$P_0(u)(a)=(1-uu^*)a(1-u^*u),\quad a\in A.$$
To prove that $u$ is complete in $A$ it is enough to show that $P_0(u)$ vanishes on all the (associative) monomials in $u$ and $u^*$. To this end, we will consider the formal degree of such monomials in the obvious way ($1$ is the unique monomial of degree $0$, monomials of degree $1$ are $u$ and $u^*$, monomials of degree $2$ are $u^2$, $(u^*)^2$, $uu^*$ and $u^*u$ etc.).

Since $u$ is complete in $N$ and $1,u,u^*\in N$, we deduce that $P_0(u)$ vanishes on monomials of degree $0$ or $1$. Assume that $n\in\en$ and $P_0(u)$ vanishes on all the monomials of degree at most $n$. Let $a$ be a monomial of degree $n+1$. If $a=u^{n+1}$ or $a=(u^*)^{n+1}$, then $a\in N$, hence $P_0(u)(a)=0$. Otherwise there are two monomials $b,c$ with $\deg(b)+\deg(c)=n+1$ such that either $a=buu^*c$ or $a=bu^*uc$. If the first possibility takes place, then
$$\begin{aligned}P_0(u)(a)&=(1-uu^*)buu^*c(1-u^*u)\\&=(1-uu^*)bc(1-u^*u)-(1-uu^*)b(1-uu^*)c(1-u^*u)\\&=
P_0(u)(bc)-(1-uu^*)bP_0(u)(c)=0
\end{aligned}$$
by the induction hypothesis. The second case is analogous.
\end{proof}

\begin{lemma}\label{L:Jembed JB*}
Let $M$ be a unital JB$^*$-algebra and $u$ a complete tripotent in $M$. Let $N$ be the closed unital Jordan $^*$-subalgbebra of $M$ generated by $u$. Then there is a unital Jordan $^*$-monomorphism $\psi:N\to B(H),$ where $H$ is a complex Hilbert space, such that $\psi(u)^*\psi(u)=1$.
\end{lemma}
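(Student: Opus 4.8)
The goal is to extend Lemma~\ref{L:Jembed C*}, which handles unital $C^*$-algebras, to the unital JB$^*$-algebra $N$ generated by a single complete tripotent $u$. The natural strategy is to reduce the JB$^*$-algebra situation to the associative $C^*$-algebra situation, where the previous lemma already applies, and this is exactly what Lemma~\ref{l complete tripotents in the Cstar algebra generated} was set up to enable. First I would invoke Lemma~\ref{L:JC*}$(i)$ to realize $N$ as a JC$^*$-subalgebra of some $C^*$-algebra $B$; then, following Lemma~\ref{l complete tripotents in the Cstar algebra generated}, I would pass to the $C^*$-subalgebra $A$ of $B$ generated by $N$, noting that $1$ is the unit of $A$ and that $u$ is a \emph{complete} tripotent in $A$ (not merely in $N$).

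Once $u$ is known to be a complete tripotent in the unital $C^*$-algebra $A$, the heavy lifting is already done by the earlier result. I would apply Lemma~\ref{L:Jembed C*} to the pair $(A,u)$ to obtain a complex Hilbert space $H$ and an isometric unital Jordan $^*$-monomorphism $\Theta:A\to B(H)$ with $\Theta(u)^*\Theta(u)=1$. The desired map is then simply the restriction $\psi:=\Theta|_N$. Since $N\subset A$ is a unital Jordan $^*$-subalgebra, $\psi$ is automatically a unital Jordan $^*$-homomorphism; it is injective and isometric because $\Theta$ is; and the crucial normalization $\psi(u)^*\psi(u)=\Theta(u)^*\Theta(u)=1$ holds because $u\in N$.

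The only point that requires genuine care—and the step I would flag as the main (albeit modest) obstacle—is the promotion of completeness of $u$ from $N$ to $A$, since completeness is not in general inherited when one enlarges the ambient structure by taking the generated associative $C^*$-algebra. This is precisely the content of Lemma~\ref{l complete tripotents in the Cstar algebra generated}, whose inductive argument on the degree of monomials in $u,u^*$ shows that the Peirce-$0$ projection $P_0(u)(a)=(1-uu^*)a(1-u^*u)$ vanishes on all of $A$. Given that lemma, the present statement is essentially a corollary: reduce to $A$, apply Lemma~\ref{L:Jembed C*}, and restrict. I would therefore organize the proof as a short two-move reduction, citing Lemma~\ref{l complete tripotents in the Cstar algebra generated} for the completeness in $A$ and Lemma~\ref{L:Jembed C*} for the embedding, with the verification that the restriction retains all required properties being routine.
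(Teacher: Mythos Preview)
Your proposal is correct and follows exactly the paper's approach: invoke Lemma~\ref{l complete tripotents in the Cstar algebra generated} to place $N$ inside a unital $C^*$-algebra $A$ in which $u$ remains complete, then apply Lemma~\ref{L:Jembed C*} to $A$ and restrict. The paper's proof is simply a two-sentence version of what you wrote.
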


\begin{proof} By Lemma \ref{l complete tripotents in the Cstar algebra generated} we can assume that $N$ is a unital JC$^*$-subalgebra of some $C^*$-algebra $A$ such that $u$ is a complete tripotent in $A$ as well. The desired conclusion follows now from Lemma~\ref{L:Jembed C*}.
\end{proof}

\subsection{Orders on tripotents}

There is a natural partial order on tripotents which we recall below. We start by analyzing a coarser ordering
(see the subsequent Proposition~\ref{P:M2 inclusion}) which will be useful in the next section. We start by the following easy lemma.

\begin{lemma}\label{L:comuting Laa Lxx}
Let $M$ be a JB$^*$-triple, $e\in M$ a tripotent and $x\in M_j(e)$ for some $j\in\{0,1,2\}$. Then the following assertions hold.
\begin{enumerate}[$(a)$]
\item The operators $L(e,e)$ and $L(x,x)$ commute.
\item The operator $L(x,x)$ commutes with the Peirce projections induced by $e$.
\item If $x$ is moreover a tripotent, then the Peirce projections induced by $x$ commute with the Peirce projections induced by $e$.
\end{enumerate}
\end{lemma}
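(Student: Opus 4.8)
The plan is to reduce all three assertions to part $(a)$, which is the only genuine computation; parts $(b)$ and $(c)$ will then be purely formal consequences of the explicit formulae for the Peirce projections, since those are polynomials in the single operator $L(e,e)$ (respectively $L(x,x)$).

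For part $(a)$ I would invoke the Jordan identity $(JB^*$-$1)$, written with neutral variables as
$$L(p,q)L(a,b)=L(L(p,q)(a),b)-L(a,L(q,p)(b))+L(a,b)L(p,q),$$
and apply it with $(p,q)=(e,e)$ and $(a,b)=(x,x)$. After transposing the last summand, this reads
$$L(e,e)L(x,x)-L(x,x)L(e,e)=L(L(e,e)(x),x)-L(x,L(e,e)(x)).$$
The key observation is that $x\in M_j(e)$ means exactly that $x$ is an eigenvector of $L(e,e)$ for the eigenvalue $\tfrac{j}{2}$, i.e. $L(e,e)(x)=\tfrac{j}{2}x$. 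Substituting this into the right-hand side and using that $L(\cdot,\cdot)$ is linear in its first slot and conjugate-linear in its second, both terms become $\tfrac{j}{2}L(x,x)$; here it is essential that the eigenvalue $\tfrac{j}{2}$ is real, so the conjugation in the middle variable leaves it unchanged. Hence the right-hand side vanishes and $L(e,e)$ commutes with $L(x,x)$.

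Parts $(b)$ and $(c)$ follow immediately. By the explicit formulae for the Peirce projections recalled above, $P_0(e),P_1(e),P_2(e)$ are polynomials in $L(e,e)$; since $L(x,x)$ commutes with $L(e,e)$ by $(a)$, it commutes with every such polynomial, which is $(b)$. If in addition $x$ is a tripotent, then likewise $P_0(x),P_1(x),P_2(x)$ are polynomials in $L(x,x)$, and any polynomial in $L(x,x)$ commutes with any polynomial in $L(e,e)$ because the two generators commute by $(a)$; this gives $(c)$.

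Since the argument is short, there is no serious obstacle. The only point requiring care is the bookkeeping in the Jordan identity together with the conjugate-linearity of the middle variable, which is harmless here precisely because the relevant eigenvalue $\tfrac{j}{2}$ is real, so the two correction terms genuinely cancel.
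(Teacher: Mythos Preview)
Your proof is correct and follows essentially the same approach as the paper: both use the Jordan identity with the eigenvalue relation $L(e,e)x=\tfrac{j}{2}x$ to show $[L(e,e),L(x,x)]=0$, and then derive $(b)$ and $(c)$ from the fact that the Peirce projections are polynomials in $L(e,e)$ (respectively $L(x,x)$). The paper phrases the computation by evaluating both sides on an arbitrary $y\in M$, but this is only a cosmetic difference from your operator-level formulation.
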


\begin{proof}
(a) Let $j\in\{0,1,2\}$ be such that $x\in M_j(e)$. It means that $L(e,e)x=
\frac{j}{2}x$. By the Jordan identity we deduce that given any $y\in M$ we have
$$
\begin{aligned}
L(e,e)L(x,x)y &=L(e,e)\J xxy
\\&=\J {L(e,e)x}xy- \J x{L(e,e)x}y+ \J xx{L(e,e)y}
\\&=\J {\frac j2 x}xy- \J x{\frac j2 x}y+ \J xx{L(e,e)y}
=L(x,x)L(e,e)y.
\end{aligned}$$
This completes the proof of (a). Assertions (b) and (c) follow from (a) using the formulae for Peirce projections.
\end{proof}

Let us remark that statement $(c)$ was already established by G. Horn in \cite[$(1.10)$]{Horn1987}. The previous result and its proof are included here for completeness reasons.\smallskip

A coarser ordering on the set of tripotents is considered in our next result.

\begin{prop}\label{P:M2 inclusion} Let $M$ be a JB$^*$-triple and $e,u$ be two tripotents in $M$.
The following assertions are equivalent:
\begin{enumerate}[$(1)$]
\item $u\in M_2(e)$;
\item $P_2(u)P_2(e)=P_2(e)P_2(u)=P_2(u)$, $P_1(u)P_1(e)=P_1(e)P_1(u)$ and
$P_0(u)P_0(e)=P_0(e)P_0(u)=P_0(e)$;
\item $M_2(u)\subset M_2(e)$ and $M_0(e)\subset M_0(u)$;
\item $M_2(u)\subset M_2(e)$.
\end{enumerate}
\end{prop}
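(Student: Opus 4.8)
The plan is to establish a web of implications around the four conditions, concentrating the genuine content in $(1)\Rightarrow(3)$ and $(1)\Rightarrow(2)$ and reducing everything else to one-line observations. The implication $(3)\Rightarrow(4)$ is a trivial weakening, and $(4)\Rightarrow(1)$ is immediate: since $\J uuu=u$ we have $L(u,u)u=u$, so $u\in M_2(u)\subset M_2(e)$ by $(4)$. To fold $(2)$ into the web I would prove $(1)\Rightarrow(2)$ and close with the one-liner $(2)\Rightarrow(4)$ described below, so that all four statements become equivalent via $(1)\Rightarrow(2)\Rightarrow(4)\Rightarrow(1)$ together with $(1)\Rightarrow(3)\Rightarrow(4)$.

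For $(1)\Rightarrow(3)$ I would argue directly from the Peirce arithmetic \eqref{eq Peirce arithmetic} relative to $e$. For the inclusion $M_2(u)\subset M_2(e)$, recall $P_2(u)=Q(u)^2$ with $Q(u)x=\J uxu$. Writing any $x\in M$ as $x_0+x_1+x_2$ with $x_j\in M_j(e)$ and using $u\in M_2(e)$, the multiplication rules give $\J u{x_j}u\in M_{2-j+2}(e)$, which equals $M_2(e)$ for $j=2$ and vanishes for $j=0,1$ (since $4-j\notin\{0,1,2\}$). Hence $Q(u)M\subset M_2(e)$, and therefore $M_2(u)=Q(u)^2M\subset M_2(e)$. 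For the inclusion $M_0(e)\subset M_0(u)$, take $x\in M_0(e)$; using that the triple product is symmetric in its outer variables, $\J uux=\J xuu$, and the right-hand side lies in $\{M_0(e),M_2(e),M\}=0$ by the last Peirce rule. Thus $L(u,u)x=0$, i.e.\ $x\in M_0(u)$.

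For $(1)\Rightarrow(2)$ I would first invoke Lemma~\ref{L:comuting Laa Lxx}$(c)$ with $x=u\in M_2(e)$, obtaining that every Peirce projection of $u$ commutes with every Peirce projection of $e$; this already gives $P_1(u)P_1(e)=P_1(e)P_1(u)$. The remaining identities then follow by translating the inclusions from $(3)$: since $M_2(u)=\operatorname{range}P_2(u)\subset M_2(e)=\operatorname{range}P_2(e)$ and $P_2(e)$ acts as the identity on $M_2(e)$, we get $P_2(e)P_2(u)=P_2(u)$, and commutativity yields $P_2(u)P_2(e)=P_2(u)$; symmetrically, $M_0(e)\subset M_0(u)$ gives $P_0(u)P_0(e)=P_0(e)$ and hence $P_0(e)P_0(u)=P_0(e)$. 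Finally, $(2)\Rightarrow(4)$ is immediate: the identity $P_2(e)P_2(u)=P_2(u)$ shows that $P_2(u)x=P_2(e)(P_2(u)x)\in M_2(e)$ for every $x\in M$, so $M_2(u)\subset M_2(e)$.

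The routine bookkeeping with the Peirce multiplication rules accounts for the bulk of the work, but the one genuinely delicate point is the inclusion $M_0(e)\subset M_0(u)$: the naive estimate $\J uux\in\{M_2(e),M_2(e),M_0(e)\}\subset M_0(e)$ only confines $\J uux$ to $M_0(e)$ and does \emph{not} yield vanishing, so one must first exploit the outer symmetry of the triple product to rewrite $\J uux=\J xuu$ and thereby land in the genuinely vanishing cell $\{M_0(e),M_2(e),M\}$. The second thing to handle carefully is the passage between range inclusions and the projection identities in $(2)$, which relies on the commutativity furnished by Lemma~\ref{L:comuting Laa Lxx}$(c)$ together with the elementary fact that a projection restricts to the identity on any subspace contained in its range.
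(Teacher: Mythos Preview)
Your proof is correct and follows essentially the same route as the paper: commutation of Peirce projections via Lemma~\ref{L:comuting Laa Lxx}, the identity $Q(u)M\subset M_2(e)$ from Peirce arithmetic to get $P_2(u)P_2(e)=P_2(u)$, and the vanishing of $\J uux$ for $x\in M_0(e)$ to get $P_0(u)P_0(e)=P_0(e)$, with the remaining implications being trivial. The only difference is organizational---you separate out $(1)\Rightarrow(3)$ as an intermediate step and make the appeal to outer symmetry in $\J uux=\J xuu\in\{M_0(e),M_2(e),M\}=0$ explicit, whereas the paper writes $\{M_2(e),M_2(e),M_0(e)\}=\{0\}$ directly (which of course relies on the same symmetry).
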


\begin{proof}
(1)$\Rightarrow$(2) Assume that $u\in M_2(e)$. By Lemma~\ref{L:comuting Laa Lxx} the Peirce projections induced by $u$ commute with the Peirce projections induced by $e$. Further, given $x\in M$ the Peirce rules \eqref{eq Peirce arithmetic} yield
$$\J u{P_1(e)x}u = \J u{P_0(e)x}u=0,$$
thus
$$\J uxu=\J u{P_2(e)x}u.$$
It follows that $Q(u)=Q(u)P_2(e)$, so
$$P_2(u)=Q(u)^2=Q(u)^2P_2(e)=P_2(u)P_2(e).$$

Let $x\in M_0(e)$, another application of Peirce arithmetic yields
$$\J uu{x}\in\J {M_2(e)}{M_2(e)}{M_0(e)}=\{0\},$$ so $M_0(e)\subset M_0(u)$, and hence $P_0(u)P_0(e)=P_0(e)$.\smallskip

The implications (2)$\Rightarrow$(3)$\Rightarrow$(4)$\Rightarrow$(1) are obvious.
\end{proof}

\begin{prop}\label{P:M2 equality}
Let $M$ be a JB$^*$-triple, and let $e,u$ be two tripotents in $M$.
The following assertions are equivalent:
\begin{enumerate}[$(1)$]
\item $u\in M_2(e)$ and $e\in M_2(u)$;
\item $M_2(e)=M_2(u)$;
\item The Peirce decompositions induced by $e$ and $u$ coincide {\rm(}i.e., $M_j (e) = M_j(u)$ for all $j=0,1,2${\rm)}.
\end{enumerate}
\end{prop}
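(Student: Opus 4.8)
The plan is to establish the cycle of implications $(3)\Rightarrow(2)\Rightarrow(1)\Rightarrow(3)$, leaning throughout on the already-proved Proposition~\ref{P:M2 inclusion}, which tells us that for tripotents the single inclusion $M_2(u)\subset M_2(e)$ is equivalent both to $u\in M_2(e)$ and to the full list of Peirce-projection identities in its item $(2)$.

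The implication $(3)\Rightarrow(2)$ is immediate, being just the case $j=2$. For $(2)\Rightarrow(1)$ the only thing I would record is that every tripotent lies in its own Peirce-$2$ space: since $L(u,u)u=\{u,u,u\}=u$, the element $u$ is an eigenvector of $L(u,u)$ for the eigenvalue $1$, so $u\in M_2(u)$, and likewise $e\in M_2(e)$. Hence if $M_2(e)=M_2(u)$, then $u\in M_2(u)=M_2(e)$ and $e\in M_2(e)=M_2(u)$, which is exactly $(1)$.

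The heart of the matter is $(1)\Rightarrow(3)$, and the idea is to upgrade the two inclusions contained in $(1)$ to honest equalities of Peirce projections as operators. Applying Proposition~\ref{P:M2 inclusion}, item $(2)$, to the hypothesis $u\in M_2(e)$ yields
$$P_2(e)P_2(u)=P_2(u)\quad\text{and}\quad P_0(e)P_0(u)=P_0(e);$$
applying the same proposition with the roles of $e$ and $u$ interchanged, now using $e\in M_2(u)$, yields
$$P_2(e)P_2(u)=P_2(e)\quad\text{and}\quad P_0(e)P_0(u)=P_0(u).$$
Comparing the two expressions for $P_2(e)P_2(u)$ gives $P_2(e)=P_2(u)$, and comparing the two for $P_0(e)P_0(u)$ gives $P_0(e)=P_0(u)$. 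Since the three Peirce projections of a tripotent sum to the identity, I then obtain
$$P_1(e)=\operatorname{id}_M-P_0(e)-P_2(e)=\operatorname{id}_M-P_0(u)-P_2(u)=P_1(u),$$
so all three Peirce projections of $e$ and $u$ coincide. Taking ranges gives $M_j(e)=M_j(u)$ for $j=0,1,2$, which is $(3)$, closing the cycle.

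I do not expect a genuine obstacle here: the entire content is the passage from the inclusions in $(1)$ to operator identities, and that passage is handed to us for free by applying Proposition~\ref{P:M2 inclusion} symmetrically in $e$ and $u$. The one point worth flagging is that it is cleaner to argue through the operator identities of item $(2)$ than to compare the middle eigenspaces $M_1(e)$ and $M_1(u)$ directly, since two direct-sum decompositions of $M$ sharing only their outer summands need not share the middle one on purely linear-algebraic grounds; obtaining $P_1(e)=P_1(u)$ as operators is precisely what makes the complementary summand canonical and settles $M_1(e)=M_1(u)$ at once.
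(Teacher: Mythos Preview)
Your proof is correct and follows essentially the same approach as the paper: both argue the obvious implications $(3)\Rightarrow(2)\Rightarrow(1)$ and then prove $(1)\Rightarrow(3)$ by applying Proposition~\ref{P:M2 inclusion} symmetrically to obtain $P_2(e)=P_2(u)$ and $P_0(e)=P_0(u)$, whence $P_1(e)=P_1(u)$. You have simply spelled out the comparison of the operator identities that the paper leaves implicit.
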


\begin{proof}
The implications (3)$\Rightarrow$(2)$\Rightarrow$(1) are obvious.

(1)$\Rightarrow$(3) Assume $u\in M_2(e)$ and $e\in M_2(u)$. It follows from Proposition~\ref{P:M2 inclusion} (the implication (1)$\Rightarrow$(2)) that $P_2(e)=P_2(u)$ and $P_0(e)=P_0(u)$. Hence also $P_1(u)=P_1(e)$.
\end{proof}

\begin{prop}\label{P:OG trip}
Let $M$ be a JB$^*$-triple, and let $e,u$ be two tripotents in $M$.
The following assertions are equivalent:
\begin{enumerate}[$(1)$]
\item $u\in M_0(e)$;
\item $e\in M_0(u)$;
\item $M_2(u)\subset M_0(e)$ and $M_2(e)\subset M_0(u)$;
\item $P_2(u)P_0(e)=P_0(e)P_2(u)=P_2(u)$ and $P_0(u)P_2(e)=P_2(e)P_0(u)=P_2(e)$.
\end{enumerate}
\end{prop}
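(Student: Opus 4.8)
The plan is to route all four equivalences through the symmetry of the orthogonality relation, using the two vanishing rules of the Peirce arithmetic \eqref{eq Peirce arithmetic} and the commutation Lemma~\ref{L:comuting Laa Lxx}, and to treat condition $(3)$ as the hub.

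The heart of the matter is the implication $(1)\Rightarrow(3)$. Assuming $u\in M_0(e)$, I would fix $y\in M_2(e)$; since $y\in M_2(e)$ and $u\in M_0(e)$, the third rule in the Peirce arithmetic \eqref{eq Peirce arithmetic} gives $\J yuz=0$ for every $z\in M$, that is $L(y,u)=0$. Using that the triple product is symmetric in its outer variables, $\J uuy=\J yuu=L(y,u)(u)=0$, so $y\in M_0(u)$; this proves $M_2(e)\subset M_0(u)$. Taking in particular $y=e$ (legitimate, as $e\in M_2(e)$) yields $e\in M_0(u)$. Now the very same computation with the roles of $e$ and $u$ interchanged, which is available precisely because we have just extracted $e\in M_0(u)$, gives $M_2(u)\subset M_0(e)$, completing $(3)$. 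I expect this bootstrapping to be the only real subtlety: one must derive $e\in M_0(u)$ from the first inclusion before the symmetric computation for $M_2(u)\subset M_0(e)$ can be run. As a byproduct this step already establishes $(1)\Rightarrow(2)$.

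The implications toward $(1)$ and $(2)$ are then immediate: $(3)\Rightarrow(1)$ and $(3)\Rightarrow(2)$ hold because $u\in M_2(u)\subset M_0(e)$ and $e\in M_2(e)\subset M_0(u)$, while $(2)\Rightarrow(3)$ follows by applying the symmetric form of $(1)\Rightarrow(3)$. For $(4)\Rightarrow(1)$ I would apply the operator identity $P_0(e)P_2(u)=P_2(u)$ to the vector $u$, noting that $P_2(u)u=u$ (since $u=\J uuu=Q(u)u$ forces $Q(u)^2u=u$, and $P_2(u)=Q(u)^2$); this gives $P_0(e)u=u$, i.e.\ $u\in M_0(e)$.

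Finally, for $(3)\Rightarrow(4)$ I would invoke Lemma~\ref{L:comuting Laa Lxx}$(c)$: condition $(3)$ already yields $u\in M_0(e)$, so the Peirce projections induced by $u$ commute with those induced by $e$. The inclusion $M_2(u)\subset M_0(e)$ then reads $P_0(e)P_2(u)=P_2(u)$, because $P_2(u)x\in M_2(u)\subset M_0(e)$ is fixed by $P_0(e)$, and commutativity upgrades this to $P_2(u)P_0(e)=P_0(e)P_2(u)=P_2(u)$; the inclusion $M_2(e)\subset M_0(u)$ disposes of the pair $P_0(u),P_2(e)$ in the same way. Everything here is routine once the vanishing rules and the commutation lemma are in hand, so the genuine work is confined to the symmetric argument in $(1)\Rightarrow(3)$.
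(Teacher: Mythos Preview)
Your proof is correct and follows essentially the same route as the paper, relying on the vanishing rule $\{M_2(e),M_0(e),M\}=0$ from the Peirce arithmetic. The only notable difference is organizational: the paper proves $(1)\Leftrightarrow(2)$ first and then goes $(1)\Rightarrow(4)\Rightarrow(3)\Rightarrow(2)$, deriving the operator identity $P_2(u)P_0(e)=P_2(u)$ directly from $P_2(u)=Q(u)^2$ and the vanishing of $\{u,M_j(e),u\}$ for $j=1,2$, whereas you route through $(3)$ as a hub and invoke Lemma~\ref{L:comuting Laa Lxx}$(c)$ to obtain the commutativity half of $(4)$; both devices amount to the same Peirce-arithmetic computation.
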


\begin{proof}
(1)$\Rightarrow$(2) Assume $u\in M_0(e)$. Then $\J uue=\J euu=0$ by the Peirce arithmetics (note that $e\in M_2(e)$ and $u\in M_0(e)$). Hence $e\in M_0(u)$.

(2)$\Rightarrow$(1) follows by symmetry.

(1)$\Rightarrow$(4) Assume $u\in M_0(e)$. By the already proved implications we know that also $e\in M_0(u)$.
It follows from Peirce arithmetic that $M_2(u)\subseteq M_0(e)$ and $M_2(e)\subseteq M_0(u)$. Therefore $P_2(u) = P_0(e) P_2(u)$ and $P_2(e) = P_0(u) P_2(e)$.
Since, by Peirce arithmetics, we also have $\{u,M_2(e),u\}= \{u,M_1(e),u\}=\{0\}$, and $P_2(u) = Q(u)^2$, we deduce that $P_2(u) P_j(e) =0,$ for $j=1,2$.
Therefore, $P_2(u)= P_2(u) P_0(e),$ and similarly $P_2(e)= P_2(e) P_0(u).$\smallskip

The implications (4)$\Rightarrow$(3)$\Rightarrow$(2) are obvious.
\end{proof}

\begin{remark} Propositions~\ref{P:M2 inclusion} and~\ref{P:M2 equality} show that the Peirce subspace $M_2(e)$ determines the whole Peirce decomposition. This is not the case for $M_0(e)$ as there may exist complete tripotents with different Peirce decompositions.\smallskip
\end{remark}

Tripotents $u,e\in M$ satisfying any of the equivalent conditions in Proposition~\ref{P:OG trip} are called \emph{orthogonal} ($u\perp e$ in short). In particular, $e\pm u$ are again tripotents. It is actually known that given two tripotents $e,u\in M$, then $e\perp u$ if and only if $e\pm u$ is a tripotent (cf. \cite[Lemma 3.6]{isidro1995real}).\smallskip


We are now in position to recall the natural partial order on the set of tripotents. If $M$ is a JB$^*$-triple and $e,u$ are two tripotents in $M$, we say that $u\le e$ if $e-u$ is a tripotent orthogonal to $u$.
This order is finer than the one derived from Proposition~\ref{P:M2 inclusion} as can be seen from the last of the
characterizations in the following proposition (originally due to Y. Friedman and B. Russo \cite[Corollary~1.7]{Friedman-Russo},
compare also \cite[Proposition~1.2.43]{chubook}, \cite[Corollary 5.10.56]{Cabrera-Rodriguez-vol2}).

\begin{prop}\label{p characterization of triple order}{\rm(essentially \cite[Corollary~1.7]{Friedman-Russo})}
Let $M$ be a JB$^*$-triple and $e,u\in M$ two tripotents. Then the following assertions are equivalent:
\begin{enumerate}[$\bullet$]
\item $u\le e$;
\item $u=P_2(u)e$;
    \item $u= \J ueu$;
    \item $u$ is a projection in the JB$^*$-algebra $M_2(e)$;
    \item $M_2(u)$ is a JB$^*$-subalgebra of $M_2(e)$.
\end{enumerate}
\end{prop}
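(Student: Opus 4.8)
The plan is to prove the five items equivalent by running a short cycle and isolating the single analytic step. I label them $(1)$--$(5)$ in the order listed.

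\emph{The formal equivalences.} First I would settle $(2)\Leftrightarrow(3)$ purely algebraically. Writing $Q(u)x=\J uxu$, the Peirce rules~\eqref{eq Peirce arithmetic} give $Q(u)=Q(u)P_2(u)$ (since $\J u{M_1(u)}u\subseteq M_3(u)=\{0\}$ and $\J u{M_0(u)}u=\{0\}$), while $Q(u)^2=P_2(u)$ and $Q(u)u=\J uuu=u$; hence $Q(u)^3=Q(u)$. Applying $Q(u)$ to $u=\J ueu$ gives $u=Q(u)^2e=P_2(u)e$, and applying it to $u=P_2(u)e=Q(u)^2e$ gives $u=Q(u)^3e=\J ueu$. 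Next, $(1)\Rightarrow(3)$ is immediate: if $u\le e$ then $v:=e-u$ is a tripotent with $v\perp u$, so $v\in M_0(u)$ and $\J ueu=\J uuu+Q(u)v=u$, the last term vanishing because $Q(u)$ annihilates $M_0(u)$. Finally $(4)\Rightarrow(3)$ is just $\J ueu=u\circ_e u=u$.

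\emph{The crux $(3)\Rightarrow(1)$.} Decompose $u=u_0+u_1+u_2$ with $u_j\in M_j(e)$. Comparing Peirce components in $u=\J ueu$ --- using~\eqref{eq Peirce arithmetic}, symmetry of the triple product in its outer variables, and $\J{M_0(e)}{M_2(e)}{M}=\{0\}$ --- yields
$$u_2=\J{u_2}e{u_2},\qquad u_1=2\,\J{u_2}e{u_1},\qquad u_0=\J{u_1}e{u_1}.$$
From the first identity and the contractivity of $P_2(e)$ one gets $\|u_2\|\le\|u_2\|^2$, so $u_2=0$ or $\|u_2\|=1$; as $u_2$ is a norm-one idempotent of the JB$^*$-algebra $M_2(e)$, embedding the (associative) subalgebra it generates into some $B(H)$ via Lemma~\ref{L:JC*} shows that $u_2$ is in fact a projection of $M_2(e)$. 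The main obstacle is to kill $u_1$ and $u_0$. I would reduce, through the Horn--Neher decomposition~\eqref{eq decomposition of JBW*-triples} (equivalently the Friedman--Russo Gelfand--Naimark theorem), to the case $M=B(H)$, the finitely many exceptional Cartan factors being finite-dimensional and the order relation being detected in each summand; there, with $e$ a partial isometry, $\J ueu=ue^*u=u$ gives $p_ie^*up_i=p_i$ for $p_i=u^*u$, and a Cauchy--Schwarz argument with equality (exploiting that $u$ is isometric on $p_iH$ and $\|e\|=1$) forces $ux=ex$ for all $x\in p_iH$, i.e.\ $u=ep_i$ with $p_i\le e^*e$, which is exactly $u\le e$. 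This is precisely the substance of \cite[Corollary~1.7]{Friedman-Russo}, which may alternatively be cited directly.

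\emph{The JB$^*$-algebra formulations $(1)\Leftrightarrow(4)\Leftrightarrow(5)$.} These are routine once $u\le e$ is available. Writing $e=u+f$ with $f=e-u\perp u$, so $f\in M_0(u)$, the Peirce rules give, for $a,b\in M_2(u)$, both $\J aeb=\J aub$ and $\J eae=\J uau$ (the cross terms lie in $\J a{M_0(u)}b\subseteq M_4(u)=\{0\}$ and in $\J{M_0(u)}{M_2(u)}{M}=\{0\}$); thus on $M_2(u)\subseteq M_2(e)$ the operations $\circ_e,\,{}^{*_e}$ coincide with $\circ_u,\,{}^{*_u}$. This exhibits $M_2(u)$ as a JB$^*$-subalgebra of $M_2(e)$, giving $(1)\Rightarrow(5)$, and in particular $u=u\circ_e u=u^{*_e}$, i.e.\ $(1)\Rightarrow(4)$. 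Conversely, in $(5)$ the unit $u$ of the subalgebra $M_2(u)$ is a self-adjoint $\circ_e$-idempotent of $M_2(e)$, hence a projection, so $(5)\Rightarrow(4)$; combined with $(4)\Rightarrow(3)\Rightarrow(1)$ the cycle closes. The only genuinely non-formal ingredient is the off-diagonal vanishing in $(3)\Rightarrow(1)$, and that is where I expect the real work --- and the need either for a concrete representation or for the Friedman--Russo machinery --- to lie.
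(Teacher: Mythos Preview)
The paper does not prove this proposition at all: it is stated with the attribution ``essentially \cite[Corollary~1.7]{Friedman-Russo}'' and pointers to \cite[Proposition~1.2.43]{chubook} and \cite[Corollary~5.10.56]{Cabrera-Rodriguez-vol2}, with no argument given. So there is nothing in the paper to compare your argument against beyond the bare citation.

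Your sketch is considerably more than the paper offers, and the formal parts are correct. The equivalences $(2)\Leftrightarrow(3)$, $(1)\Rightarrow(3)$, $(4)\Rightarrow(3)$ are fine as written. For $(1)\Rightarrow(5)$ your computation that $\circ_e$ and ${}^{*_e}$ restrict to $\circ_u$ and ${}^{*_u}$ on $M_2(u)$ is right; the one point to state cleanly is that $\J uaf=\J fau$ vanishes because $\J{M_0(u)}{M_2(u)}M=\{0\}$, not merely because it lands in $M_0(u)$. The $B(H)$ argument in your crux is also correct: from $e^*u=p_i$ and the equality case in $\|e^*y\|\le\|y\|$ you get $ux\in ee^*H$ and then $ux=ee^*ux=e(e^*ux)=ex$ for $x\in p_iH$, which is exactly $u=ep_i$ with $p_i\le e^*e$.

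The only genuine soft spot is the passage through the structure theory for $(3)\Rightarrow(1)$. The Horn--Neher decomposition is for JBW$^*$-triples, so for a general JB$^*$-triple you must use the Friedman--Russo Gelfand--Naimark embedding into an $\ell_\infty$-sum of Cartan factors; the reduction to componentwise verification does work (the components of a tripotent are tripotents, and orthogonality is detected coordinatewise), but your $B(H)$ argument covers only Cartan factors of types~1--4, while the exceptional factors of types~5 and~6 require a separate check that you do not supply. Since you explicitly offer the alternative of citing \cite[Corollary~1.7]{Friedman-Russo} directly for this implication, and since that is precisely what the paper itself does, this is not a defect relative to the paper --- but if you intend a self-contained proof you should either handle the exceptional factors or invoke the citation.
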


\subsection{More on the Peirce-2 subspace}

Our next result gathers some properties of the Peirce-2 subspace associated with a tripotent. Most of the statements are part of the folklore in the theory of JB$^*$-triples, we include here the properties and basic references for completeness reasons.

\begin{lemma}\label{L:properties of M2(e)}
Let $M$ be a JB$^*$-triple and let $e\in M$ be a tripotent. Consider $M_2(e)$ equipped with its structure of JB$^*$-algebra.
\begin{enumerate}[$(a)$]
\item  Assume that $v\in M$ is a tripotent such that $e\le v$. Then for any $a,b\in M$ we have
$$\begin{aligned}
P_2(e)\J abv & = \J{P_2(e)a}{P_2(e)b}e + \J{P_1(e)a}{P_1(e)b}e,\\
P_1(e)\J abv & = \J{P_1(e)a}{P_2(e)b}e + \J{P_0(e)a}{P_1(e)b}e
\\ &\qquad+\J{P_2(e)a}{P_1(e)b}{v-e} + \J{P_1(e)a}{P_0(e)b}{v-e},\\
P_0(e)\J abv & =\J{P_0(e)a}{P_0(e)b}{v-e}+\J{P_1(e)a}{P_1(e)b}{v-e},\end{aligned}$$
in particular
$$\begin{aligned}
P_2(e)\J abe & = \J{P_2(e)a}{P_2(e)b}e + \J{P_1(e)a}{P_1(e)b}e,\\
P_1(e)\J abe & = \J{P_1(e)a}{P_2(e)b}e + \J{P_0(e)a}{P_1(e)b}e,\\
P_0(e)\J abe & =0.\end{aligned}$$
\item Assume $j\in\{1,2\}$ and $a,b\in M_j(e)$. Then
$$\J abe \in M_2(e)\mbox{ and }\J abe ^*=\J bae.$$
\item  Assume $a,b\in M_2(e)$. Then $a\circ b^*=\J abe$.
\item If $a\in M_2(e)\cup M_1(e)$, then $\J aae$ is a positive element of the JB$^*$-algebra $M_2(e)$. Moreover, $\J aae=0$ if and only if $a=0$.
\end{enumerate}
\end{lemma}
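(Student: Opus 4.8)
The plan is to establish the four assertions in order, exploiting the Peirce arithmetic \eqref{eq Peirce arithmetic} and the structure of $M_2(e)$ as a JB$^*$-algebra with unit $e$ and operations $a\circ b=\{a,e,b\}$, $a^*=\{e,a,e\}$.

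For part $(a)$, the strategy is purely computational: decompose $a=\sum_{i}P_i(e)a$ and $b=\sum_{k}P_k(e)b$, expand $\{a,b,v\}$ by trilinearity, and sort the resulting terms by their Peirce component. Since $e\le v$ means $e\in M_2(v)$ and $v-e$ is a tripotent orthogonal to $e$, we have $e\in M_2(e)$ and $v-e\in M_0(e)$, so $v=e+(v-e)$ splits cleanly with $e\in M_2(e)$, $v-e\in M_0(e)$. The point is then to apply the Peirce multiplication rule $\{M_i(e),M_k(e),M_l(e)\}\subseteq M_{i-k+l}(e)$ (with the convention that the term vanishes when $i-k+l\notin\{0,1,2\}$) to each summand $\{P_i(e)a,P_k(e)b,w\}$ where $w\in\{e,v-e\}$ carries Peirce-index $2$ or $0$ respectively, and collect those landing in $M_2$, $M_1$, $M_0$. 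I would carry this out term by term; the rule also kills mixed products like $\{M_2(e),M_0(e),\cdot\}=0$, which eliminates most cross terms. The ``in particular'' display is the special case $v=e$ (so $v-e=0$).

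For part $(b)$, I would first note that $\{a,b,e\}\in M_2(e)$: for $a,b\in M_j(e)$ the rule gives $\{M_j(e),M_j(e),M_2(e)\}\subseteq M_{j-j+2}(e)=M_2(e)$. The involution identity $\{a,b,e\}^*=\{b,a,e\}$ is then verified by applying the formula $x^*=\{e,x,e\}$ and using the Jordan identity $(JB^*\text{-}1)$ together with the fact that $e$ is the unit of $M_2(e)$; alternatively one invokes the standard identity relating $Q(e)$ to the involution. Part $(c)$ is immediate from the definition of the JB$^*$-algebra product on $M_2(e)$: for $a,b\in M_2(e)$ one has $a\circ b^*=\{a,e,b^*\}=\{a,e,\{e,b,e\}\}$, which collapses to $\{a,b,e\}$ by Peirce arithmetic and the unitality of $e$ (this is a short manipulation using that $L(e,e)$ acts as the identity on $M_2(e)$).

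Part $(d)$ is where the real work lies, and I expect it to be the main obstacle. For $a\in M_2(e)$, by $(c)$ we have $\{a,a,e\}=a\circ a^*=a^*\circ a$, which is positive in the JB$^*$-algebra $M_2(e)$ by Lemma~\ref{L:JC*}$(ii)$ (applied inside the unital JB$^*$-algebra $M_2(e)$), and equals $0$ iff $a=0$; this case is clean. The difficulty is the case $a\in M_1(e)$, where $\{a,a,e\}\in M_2(e)$ by $(b)$ but $a$ itself lies outside $M_2(e)$, so Lemma~\ref{L:JC*} does not apply directly. Here the plan is to pass to a larger tripotent: embed the situation into a unital JB$^*$-algebra (using Proposition~\ref{P:triple 1-complemented} or by working in $M_2(v)$ for a suitable complete/unitary $v\ge e$, so that $a$ becomes an honest algebra element), and then realize the positivity concretely. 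The cleanest route is probably to reduce to the C$^*$-algebra (or $B(H)$) picture via Lemma~\ref{L:Jembed JB*}: in $B(H)$, with $e$ a partial isometry of initial/final projections $p_i,p_f$, an element $a\in M_1(e)$ has the explicit form $a=p_f x(1-p_i)+(1-p_f)xp_i$, and a direct computation shows $\{a,a,e\}=\tfrac12(a a^* e+ e a^* a)=p_f a a^* p_f\, /\,\dots$ reduces to a manifestly positive element of $M_2(e)=p_fB(H)p_i$, vanishing precisely when $a=0$ because the two summands of $a$ have orthogonal supports. Transferring positivity back along the Jordan $^*$-monomorphism (which preserves the order and the triple product) completes the argument.
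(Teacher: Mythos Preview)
Your outline for parts $(a)$, $(b)$, and the $M_2(e)$ case of $(d)$ matches the paper's approach. For $(c)$ your route is actually different and more elementary than the paper's: the paper invokes Kaup's theorem on uniqueness of the triple product (the identity map between the two JB$^*$-triple structures on $M_2(e)$ is an isometry, hence a triple isomorphism), whereas your direct computation via the Jordan identity works too --- applying $(JB^*\text{-}1)$ to $L(e,b)\{a,e,e\}$ with $a,b\in M_2(e)$ gives $\{a,e,\{e,b,e\}\}=\{a,b,e\}$ in one line.

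The gap is in part $(d)$ for $a\in M_1(e)$. Your proposed reduction to $B(H)$ via Lemma~\ref{L:Jembed JB*} does not apply: that lemma only represents the unital JB$^*$-subalgebra generated by a \emph{complete} tripotent and the unit, and even after embedding $M$ into a JBW$^*$-algebra $B$ (via the bidual and Proposition~\ref{P:triple 1-complemented}), the tripotent $e$ need not be complete in $B$, nor would the element $a\in M_1(e)$ lie in the subalgebra generated by $e$ and $1_B$. More fundamentally, the subalgebra of $B$ generated by $e$, $a$, and $1_B$ may require four hermitian generators, so Shirshov--Cohn does not guarantee it is special; since exceptional JB$^*$-algebras exist, no $B(H)$ model is available in general and your explicit $p_f x(1-p_i)$ computation cannot be carried out. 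The paper does not give a self-contained argument here either: it cites \cite[Lemma~1.5(b)]{Friedman-Russo} and \cite{peralta2015positive}, where positivity of $\{a,a,e\}$ for $a\in M_1(e)$ is established by intrinsic Jordan-triple methods (ultimately resting on axiom $(JB^*\text{-}2)$ that $L(a,a)$ has non-negative spectrum). You should either cite those sources or reproduce that intrinsic argument; the representation-theoretic shortcut is not available.
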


\begin{proof}
Fix a tripotent $v\in M$ with $e\le v$. Then
$P_2(e)v=e$, $P_1(e)v=0$, and $P_0(e)v=v-e$.
Hence the Peirce arithmetic implies that, given $x\in M_j(e)$ and $y\in M_k(e)$ for some $j,k\in \{0,1,2\}$, we have
\begin{equation}\label{eq:Peirce}
\J xye \begin{cases}
\in M_2(e) & \mbox{ if }j=k=2\mbox{ or }j=k=1,\\
\in M_1(e) & \mbox{ if }j=1,k=2\mbox{ or }j=0,k=1,\\
=0 &\mbox{ otherwise,}
\end{cases}
\end{equation}
and
\begin{equation}\label{eq:Peirce*}
\J xy{v-e} \begin{cases}
\in M_0(e) & \mbox{ if }j=k=0\mbox{ or }j=k=1,\\
\in M_1(e) & \mbox{ if }j=1,k=0\mbox{ or }j=2,k=1,\\
=0 &\mbox{ otherwise.}
\end{cases}
\end{equation}
Assertion $(a)$ now follows from \eqref{eq:Peirce} and \eqref{eq:Peirce*}.
Further, the first statement of assertion $(b)$ follows from \eqref{eq:Peirce}.
Let us continue by proving the second statement from $(b)$. We deduce from the Jordan identity, the definition of the involution in $M_2(e)$, and the fact that $\J bae\in M_2(e)$, that
$$\begin{aligned}
\J bae&=L(b,a)e=L(b,a)\J eee\\&=\J{L(b,a)e}ee-\J e{L(a,b)e}e+\J ee{L(b,a)e}
\\&=2 L(e,e)\J bae- (L(a,b)e)^*=2\J bae - \J abe ^*.
\end{aligned}$$

$(c)$ The Peirce-2 subspace $M_2(e)$ is a JB$^*$-algebra, and hence it is a JB$^*$-triple with respect to the triple product given by $\{a,b,c\}_1= (a\circ b^*)\circ c + (c\circ b^*)\circ a- (a\circ c)\circ b^*$. It is also a JB$^*$-triple with the triple product inherited from $M$. Since the identity mapping from $(M_2(e),\{.,.,.\}_1)$ onto $(M_2(e),\{.,.,.\})$ is a surjective isometry, it follows from Kaup's Riemann mapping theorem (see \cite[Proposition 5.5]{kaup1983riemann} or \cite[Theorem~3.1.7]{chubook}) that $\{a,b,c\}_1= \{a,b,c\}$, for all $a,b,c\in M_2(e)$. Consequently, $\J abe = \{a,b,e\}_1= (a \circ b^*)\circ e + (e\circ b^*)\circ a - (a\circ e)\circ b^* = a \circ b^*$, because $e$ is the unit of $M_2(e)$.\smallskip

$(d)$ If $a\in M_2(e)$, then $\J aae=a\circ a^*$ by $(c)$, hence the assertion follows from Lemma~\ref{L:JC*}(ii).
The case $a\in M_1(e)$ is covered by \cite[Lemma 1.5(b)]{Friedman-Russo}, and both cases ($a\in M_1(e)$ and $a\in M_2(e)$) are fully studied in \cite{peralta2015positive} (see also \cite[Proposition 4.2.32]{Cabrera-Rodriguez-vol1}), where a simple proof based on the axioms of JB$^*$-triples can be found.
\end{proof}

When we combine the previous result with the properties of the functionals in the dual space of a JB$^*$-triple we get additional properties. We recall that a functional $\varphi$ in the dual space of a JB$^*$-algebra $M$ is called faithful if $\varphi (a) = 0$ for $a\geq 0$ implies $a=0$.

\begin{lemma}\label{L:functionals and seminorms}
Let $M$ be a JB$^*$-triple and let $e\in M$ be a tripotent. Consider $M_2(e)$ equipped with its structure of unital JB$^*$-algebra. Let $\varphi\in M^*$. Then the following assertions hold:
\begin{enumerate}[$(a)$]
\item $\norm{\varphi \circ (P_2(e)+P_0(e))}=\norm{\varphi\circ P_2(e)}+\norm{\varphi\circ P_0(e)}$.
\end{enumerate}
Moreover, if $\norm{\varphi}=\varphi(e)$, then the following assertions are valid, too:
\begin{enumerate}[$(b)$]
\item  $\varphi=\varphi\circ P_2(e)$;
\item[$(c)$]  $\varphi|_{M_2(e)}$ is a positive linear functional on the JB$^*$-algebra $M_2(e)$;
\item[$(d)$] The mapping
$$(x,y)\mapsto \varphi(\J xye),\qquad x,y\in M,$$
is a positive semidefinite sesquilinear form on $M$, and if $z\in M$ is
a norm-one element satisfying $\varphi (z) = \|\varphi\|$ then $\varphi(\J xye) = \varphi(\J xyz)$ for all $x,y\in M$;
\item[$(e)$] The formula
$$\norm{x}_{e,\varphi}=\sqrt{\varphi(\J xxe)},\quad x\in M$$
defines a pre-Hilbert seminorm on $M$ which is zero on $M_0(e)$.\smallskip

\noindent If moreover $\varphi|_{M_2(e)}$ is faithful, then the kernel of  $\norm{\cdot}_{e,\varphi}$ is exactly $M_0(e)$.
\end{enumerate}
\end{lemma}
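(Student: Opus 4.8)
The plan is to deduce all five assertions from the Peirce calculus of Lemma~\ref{L:properties of M2(e)}, the positivity statements of Lemma~\ref{L:JC*}, and the basic identity $\norm{a}^3=\norm{\J aaa}$. For $(a)$ I would first record that $M_2(e)$ and $M_0(e)$ are orthogonal subtriples: the Peirce rules \eqref{eq Peirce arithmetic} give $\J{M_2(e)}{M_0(e)}{M}=\J{M_0(e)}{M_2(e)}{M}=\{0\}$, so that for $a\in M_2(e)$ and $c\in M_0(e)$ one has the standard orthogonal-sum identity $\norm{a+c}=\max\{\norm{a},\norm{c}\}$; in particular $P_2(e)+P_0(e)$ is contractive. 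The inequality ``$\le$'' in $(a)$ is the triangle inequality. For ``$\ge$'', given $\varepsilon>0$ I would choose $a\in M_2(e)$ and $c\in M_0(e)$ in the unit ball, with phases arranged so that $\varphi(a)>\norm{\varphi\circ P_2(e)}-\varepsilon$ and $\varphi(c)>\norm{\varphi\circ P_0(e)}-\varepsilon$ are real and positive (using $\norm{\varphi\circ P_j(e)}=\norm{\varphi|_{M_j(e)}}$, since $P_j(e)$ maps $B_M$ onto $B_{M_j(e)}$). Then $\norm{a+c}=\max\{\norm{a},\norm{c}\}\le 1$, while $\varphi\circ(P_2(e)+P_0(e))$ takes the value $\varphi(a)+\varphi(c)$ at $a+c$, and letting $\varepsilon\to0$ yields $(a)$.

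Assume now $\norm{\varphi}=\varphi(e)$. Since $P_2(e)e=e$ and $P_2(e)$ is contractive we get $\norm{\varphi\circ P_2(e)}=\norm{\varphi}$, and combining the contractivity of $P_2(e)+P_0(e)$ with $(a)$ forces $\norm{\varphi\circ P_0(e)}=0$. The cleverest point of $(b)$ is $\varphi\circ P_1(e)=0$, for which I would use a second-order norm estimate at $e$. Fix $x\in M_1(e)$ with $\norm{x}=1$ and put $a_t=e+tx$. The Peirce rules give $\J eex=\frac12 x$ and $\J exe=0$, so expanding shows $\J{a_t}{a_t}{a_t}=e+tx+O(t^2)$; hence, with $g(t)=\norm{e+tx}$, the identity $\norm{a_t}^3=\norm{\J{a_t}{a_t}{a_t}}$ gives $g(t)^3=g(t)+O(t^2)$. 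As $g$ is convex, $c:=g_+'(0)$ exists, and substituting $g(t)=1+ct+o(t)$ produces $2ct+o(t)=O(t^2)$, so $c=0$ and $\norm{e+tx}=1+o(t)$. Rotating $x$ by a unimodular scalar so that $\varphi(x)\ge0$, the bound $1+t\varphi(x)=\varphi(e+tx)\le\norm{e+tx}=1+o(t)$ forces $\varphi(x)=0$. Therefore $\varphi=\varphi\circ P_2(e)$.

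For $(c)$ note that $e$ is the unit of $M_2(e)$ and $\varphi(e)=\norm{\varphi}=\norm{\varphi|_{M_2(e)}}$; to obtain $\varphi(a)\ge0$ for $a\ge0$ in $M_2(e)$, I would pass to the closed subalgebra generated by $a$ and $e$, which by Lemma~\ref{L:JC*}$(i)$ is a commutative (hence associative) JC$^*$-algebra, where positivity of a functional attaining its norm at the unit is classical. For the first half of $(d)$, sesquilinearity is built into the triple product, and for positive semidefiniteness I use $(b)$: writing $x_j=P_j(e)x$, Lemma~\ref{L:properties of M2(e)}$(a)$ gives $P_2(e)\J xxe=\J{x_2}{x_2}{e}+\J{x_1}{x_1}{e}=x_2\circ x_2^{*}+\J{x_1}{x_1}{e}$ by parts $(c),(d)$ of that lemma, a sum of two positive elements of $M_2(e)$; hence $\varphi(\J xxe)=\varphi(P_2(e)\J xxe)\ge0$ by $(c)$, and Cauchy--Schwarz records $\abs{\varphi(x)}^2\le\varphi(\J xxe)$. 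Granting the full $(d)$, part $(e)$ is then immediate: $\norm{\cdot}_{e,\varphi}$ is a pre-Hilbert seminorm, it vanishes on $M_0(e)$ because $\J xxe=0$ there (the refined Peirce identity of Lemma~\ref{L:properties of M2(e)}$(a)$ gives $P_j(e)\J xxe=0$ for all $j$ when $x\in M_0(e)$), and if $\varphi|_{M_2(e)}$ is faithful then $\norm{x}_{e,\varphi}=0$ forces $x_2\circ x_2^{*}=0$ and $\J{x_1}{x_1}{e}=0$, whence $x_2=0$ (Lemma~\ref{L:JC*}$(ii)$) and $x_1=0$ (Lemma~\ref{L:properties of M2(e)}$(d)$), so $x\in M_0(e)$ and the kernel is exactly $M_0(e)$.

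The hard part will be the remaining assertion of $(d)$: that the form is unchanged when $e$ is replaced by an arbitrary norm-one $z$ with $\varphi(z)=\norm{\varphi}$. The difficulty is that $z$ need not be a tripotent, so it carries no Peirce decomposition of its own, and the clean reduction available for $e$ is unavailable for $z$. My plan is to pass to the support tripotent $s$ of $\varphi$ in the JBW$^*$-triple $M^{**}$, for which $\varphi=\varphi\circ P_2(s)$ and $\varphi|_{M_2(s)}$ is a faithful normal state. A short state/GNS argument (a norm-$\le 1$ element on which a faithful state takes the value $1$ must be the unit of the relevant algebra) shows $P_2(s)e=P_2(s)z=s$, and in particular $s\le e$. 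Applying Lemma~\ref{L:properties of M2(e)}$(a)$ in $M^{**}$ to the pair $s\le e$ then rewrites $\varphi(\J xye)$ in terms of $s$ alone; the corresponding reduction of $\varphi(\J xyz)$ to the same $s$-expression is precisely where the faithful normal state on $M_2(s)$ must be exploited, and I would invoke the Friedman--Russo structure theory of the predual of a JBW$^*$-triple to carry it through.
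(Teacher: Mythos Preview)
Your proposal is correct, and in several places more explicit than the paper's own proof, which is almost entirely by citation (Friedman--Russo for $(a)$ and $(b)$; the standard positivity criterion for $(c)$; Barton--Friedman and Edwards--R\"uttimann for $(d)$ and $(e)$). Two points deserve comment.

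Your argument for $(b)$ is genuinely different from the literature route the paper takes. The expansion $\J{e+tx}{e+tx}{e+tx}=e+tx+O(t^2)$ together with $\norm{a}^3=\norm{\J aaa}$ gives $g(t)^3=g(t)+O(t^2)$ directly, and factoring $g(t)^3-g(t)=g(t)(g(t)-1)(g(t)+1)$ yields $g(t)-1=O(t^2)$ without any appeal to convexity or one-sided derivatives; the rest goes through as you wrote. This is a clean elementary replacement for the Friedman--Russo citation. (A minor slip: the Cauchy--Schwarz line should read $\abs{\varphi(x)}^2\le\norm{\varphi}\,\varphi(\J xxe)$, but you do not use it later.)

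For the $z$-independence in $(d)$, your bidual outline is the right one, but the step you leave to ``Friedman--Russo structure theory'' can be made concrete and is short. With $s=s(\varphi)$ in $M^{**}$ and $P_2(s)z=s$ as you argue, compute
\[
\varphi(\J zzs)=\varphi\bigl(P_2(s)\J zzs\bigr)=\varphi(\J sss)+\varphi(\J{P_1(s)z}{P_1(s)z}{s})=\norm{\varphi}+\norm{P_1(s)z}_{s,\varphi}^2,
\]
using Lemma~\ref{L:properties of M2(e)}$(a)$. On the other hand $\varphi(\J zzs)\le\norm{\varphi}\,\norm{\J zzs}\le\norm{\varphi}\,\norm{z}^2\le\norm{\varphi}$. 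Hence $\norm{P_1(s)z}_{s,\varphi}=0$, and faithfulness of $\varphi|_{M_2^{**}(s)}$ combined with Lemma~\ref{L:properties of M2(e)}$(d)$ gives $P_1(s)z=0$. Thus $z\in s+M_0^{**}(s)$, and since $P_2(s)\J{x}{y}{w}=0$ for $w\in M_0^{**}(s)$ (Peirce arithmetic), one gets $\varphi(\J xyz)=\varphi(\J xys)$; the same applies to $e$ because $s\le e$ forces $e-s\in M_0^{**}(s)$. This is essentially the Edwards--R\"uttimann argument the paper cites, and it fits seamlessly into your framework.
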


\begin{proof} Assertions $(a)$ and $(b)$ are proved in \cite[Lemma 1.3(b) and Proposition 1$(a)$]{Friedman-Russo}, compare also \cite[Lemma 5.7.11, Fact 5.10.53]{Cabrera-Rodriguez-vol2}.\smallskip

$(c)$ Since $\norm{\varphi}=\varphi(e)$ we also have $\norm{\varphi\circ P_2(e)} =\norm{\varphi} = \norm{\varphi|_{M_2(e)}}= \varphi|_{M_2(e)}(e).$
Therefore $\varphi|_{M_2(e)}$ is a positive functional on the JB$^*$-algebra $M_2(e)$ (cf. \cite[Lemma 1.2.2]{hanche1984jordan} or
\cite[Lemma 5.10.2]{Cabrera-Rodriguez-vol2}).\smallskip

$(d)$ and $(e)$ are consequences of $(c)$, $(b)$ and Lemma \ref{L:properties of M2(e)}$(a)$ and $(d)$. They are also explicitly proved in \cite[Proposition 1.2]{barton1987grothendieck} and \cite[Lemma 4.1]{Edw-Rut-exposed}.
See also \cite[Proposition 5.10.60]{Cabrera-Rodriguez-vol2}  for the JBW$^*$-case.\smallskip
\end{proof}

\section{Strong$^*$ topology and weakly compact sets}\label{sec:strong*}

In the previous section we collected many results on projections and tripotents. However, it may happen that there are no nontrivial projections or tripotents.
For example, the $C^*$-algebra $\C_0(\er)$ contains no nonzero projection or tripotent and in the unital $C^*$-algebra $\C([0,1])$ the only projections are $0$ and $1$
and the only nonzero tripotents are the unitary ones (which coincide with the continuous functions with values in the unit circle).
The situation is different in the dual case -- in a von Neumann algebra projections form a complete lattice and their linear span is norm-dense,
 and, as we previously commented, any JBW$^*$-triple provides a rich supply of tripotents 
(cf.  \cite[Theorem 3.2.3]{chubook} or \cite[Theorem 4.2.34]{Cabrera-Rodriguez-vol1}).\smallskip

If $M$ is a JBW$^*$-triple and $u\in M$ is a tripotent,
then the Peirce projections are weak$^*$-to-weak$^*$ continuous and the Peirce subspaces are weak$^*$-closed.
This follows from the separate weak$^*$-to-weak$^*$ continuity of the triple product and the explicit formulae for the Peirce projections displayed in page \pageref{eq Fla Peirce projections}. In particular, $M_2(u)$
is a JBW$^*$-algebra.

\subsection{Strong$^*$ topology on JBW$^*$-triples}

Assume that $M$ is a JBW$^*$-triple and $\varphi\in M_*\setminus\{0\}$. By \cite[Proposition 2]{Friedman-Russo} (see also \cite[Proposition\ 5.10.57]{Cabrera-Rodriguez-vol2}) there is a unique tripotent $s(\varphi)\in M$, called the \emph{support tripotent} of $\varphi$, such that
\begin{enumerate}[$\bullet$]
\item $\varphi=\varphi\circ P_2(s(\varphi))$,
\item $\varphi|_{M_2(s(\varphi))}$ is a faithful positive functional on the JBW$^*$-algebra $M_2(s(\varphi))$.
\end{enumerate}
Furthermore, $\norm{\varphi}=\varphi(s(\varphi))$.
According to the notation from Lemma~\ref{L:functionals and seminorms}, we set
$\norm{\cdot}_\varphi=\norm{\cdot}_{s(\varphi),\varphi}.$\smallskip

Note that if $M$ is a JBW$^*$-algebra (or even a von Neumann algebra) and $\varphi\in M_*$ is a positive functional, then its support tripotent $s(\varphi)$ is even a projection because in such a case $\varphi$ attains its norm at a positive element. Observe that in the latter case the seminorm $\norm{\cdot}_\varphi$ writes in the form
$$\norm{x}_\varphi=\sqrt{\varphi \J xx{s(\varphi)}}
=\sqrt{\varphi \J xx1}=\sqrt{\varphi(x^*\circ x)}.$$

Introduced in \cite{BaFr}, the \emph{strong$^*$ topology} on $M$ is the locally convex topology generated by the seminorms
$\norm{\cdot}_\varphi$ where $\varphi$ runs in the set $M_*\setminus\{0\}.$ It should be noted that in the original definition (see \cite[Definition 3.1]{BaFr}) only norm-one functionals are considered, but both definitions obviously give the same notion. Since each $\|.\|_\varphi$ is a preHilbertian seminorm, it follows from the Cauchy-Schwarz inequality (cf.\ \ref{L:functionals and seminorms}(d)) and the properties of the support tripotent that, with $x_2=P_2(s(\varphi))x$,
\begin{eqnarray*}
|\varphi(x)|&=&|\varphi(x_2)|
=|\varphi(x_2\circ_{s(\varphi)}s(\varphi))|
=|\varphi(\{x_2,s(\varphi),s(\varphi)\})|\\
&\stackrel{\ref{L:properties of M2(e)}}{=}&
|\varphi(P_2(s(\varphi))\{x,s(\varphi),s(\varphi)\})|
=|\varphi \J x{s(\varphi)}{s(\varphi)}|\\
&\leq& \|x\|_{\varphi} \ \|s(\varphi)\|_{\varphi} = \sqrt{\|\varphi\|} \ \|x\|_{\varphi}, \quad  x\in M, \varphi\in M_*\setminus\{0\}.
\end{eqnarray*}
Consequently, the strong$^*$ topology is stronger than the weak$^*$ topology.\smallskip

Given $\varphi_1,\ldots,\varphi_n\in M_*$, we shall write $\|.\|_{\varphi_1,\ldots,\varphi_n}$ for the seminorm on $M$ defined by $$\|x\|_{\varphi_1,\ldots,\varphi_n}^2:=\sum_{k=1}^n \|x\|^2_{\varphi_k}\ \ (x\in M).$$

The following lemma summarizes some known properties of the strong$^*$ topology.

\begin{lemma}\label{L:strong* topology}
Let $M$ be a JBW$^*$-triple.
\begin{enumerate}[$(a)$]
\item If $M$ is even a JBW$^*$-algebra, then the strong$^*$ topology on $M$ coincides with the algebra strong$^*$ topology, i.e., with the locally convex topology generated by seminorms
$$x\mapsto \sqrt{\varphi(x^*\circ x)},\qquad \varphi\in M_*,\varphi\ge 0;$$
\item If $N$ is a weak$^*$ closed subtriple of $M$, then the strong$^*$ topology of $N$ coincides with the restriction to $N$ of the strong$^*$ topology of $M$;
\item If $M$ is even a von Neumann algebra (embedded to some $B(H)$), the strong$^*$ topology coincides on bounded sets  with the locally convex topology generated by the seminorms
$$x\mapsto\norm{x\xi}+\norm{x^*\xi},\qquad \xi\in H;$$
\item A linear functional $\varphi:M\to \mathbb{C}$ is strong$^*$ continuous if and only if it is weak$^*$ continuous. Furthermore a linear mapping between JBW$^*$-triples is strong$^*$-to-strong$^*$ continuous if and only if it is weak$^*$-to-weak$^*$ continuous.
In particular, the Peirce projections associated with a tripotent are strong$^*$-to-strong$^*$ continuous;
\item If $s(\varphi)$ is complete, then $\norm{x}_{\varphi}$ is a norm on $M$.
\end{enumerate}
\end{lemma}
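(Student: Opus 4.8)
The plan is to treat items $(a)$--$(d)$ as essentially known facts quoted from the literature, and to concentrate the genuine argument on the new ingredient, item $(e)$. For $(a)$ I would invoke the description of the algebra strong$^*$ topology of a JBW$^*$-algebra from \cite{BaFr} (see also \cite{Cabrera-Rodriguez-vol2}), observing that when $M$ is a JBW$^*$-algebra and $\varphi\ge 0$ its support tripotent is a projection, so that $\norm{\cdot}_\varphi$ reduces to $x\mapsto\sqrt{\varphi(x^*\circ x)}$ exactly as in the displayed computation preceding the lemma. For $(b)$ I would use that every normal functional on a weak$^*$-closed subtriple $N$ extends to a normal functional on $M$, together with the compatibility of support tripotents and Peirce-$2$ projections under the inclusion $N\subset M$. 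Item $(c)$ is the classical identification, on bounded sets, of the strong$^*$ topology of a von Neumann algebra with the strong$^*$ operator topology, and $(d)$ is the standard coincidence of strong$^*$- and weak$^*$-continuous functionals; both are cited from the references.

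The substantive point is $(e)$, and here I would argue directly. Write $e=s(\varphi)$. By the defining properties of the support tripotent recalled just above the lemma, we have $\norm{\varphi}=\varphi(e)$ and $\varphi|_{M_2(e)}$ is a faithful positive functional on the JBW$^*$-algebra $M_2(e)$. These are precisely the hypotheses under which Lemma~\ref{L:functionals and seminorms}$(e)$ applies, and its concluding sentence then yields that the kernel of the pre-Hilbert seminorm $\norm{\cdot}_\varphi=\norm{\cdot}_{e,\varphi}$ is exactly $M_0(e)$.

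Finally I would invoke the definition of a complete tripotent: $e=s(\varphi)$ being complete means $M_0(e)=\{0\}$. Hence the kernel of $\norm{\cdot}_\varphi$ is trivial, so the pre-Hilbert seminorm is in fact a norm on $M$, which is the assertion of $(e)$. I do not expect a genuine obstacle here: once the faithfulness of $\varphi|_{M_2(e)}$ and the equality $\norm{\varphi}=\varphi(e)$ are read off from the definition of the support tripotent, the statement is an immediate consequence of Lemma~\ref{L:functionals and seminorms}$(e)$ combined with the definition of completeness. The only point deserving any care is the verification that the hypothesis $\norm{\varphi}=\varphi(e)$ of Lemma~\ref{L:functionals and seminorms} is met, which holds by the third bulleted property of the support tripotent.
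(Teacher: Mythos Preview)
Your proposal is correct and follows essentially the same approach as the paper: items $(a)$--$(d)$ are handled by citation to the literature (the paper cites \cite{RodrPal-strong*}, \cite{bunce2001norm}, \cite{Tak}, and \cite{peralta2001grothendieck} rather than your slightly different sources, but the strategy is identical), and item $(e)$ is derived exactly as you describe, by combining the faithfulness property of $\varphi|_{M_2(s(\varphi))}$ from the definition of the support tripotent with Lemma~\ref{L:functionals and seminorms}$(e)$ and the fact that completeness of $s(\varphi)$ means $M_0(s(\varphi))=\{0\}$.
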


\begin{proof}
Assertion $(a)$ is proved in \cite[Proposition 3]{RodrPal-strong*}, while $(b)$ is established in \cite[COROLLARY]{bunce2001norm}.\smallskip

Let us justify assertion $(c)$. In \cite[Definition II.2.3]{Tak} the name \emph{$\sigma$-strong$^*$ operator topology} is used for the algebra strong$^*$ topology in $B(H)$. By \cite[Lemma II.2.5(iii)]{Tak} this topology coincides on bounded sets with the topology generated by the given seminorms. Hence we can conclude by applying $(a)$ and $(b)$.\smallskip

Statement $(d)$ is proved in \cite[Corollary 9]{peralta2001grothendieck} and \cite[Corollary 3]{RodrPal-strong*} and the comments before \cite[Theorem 9]{peralta2001grothendieck}.\smallskip

$(e)$ If $s(\varphi)$ is a complete tripotent, then $M_0(s(\varphi))=\{0\}$, thus the statement follows from
Lemma~\ref{L:functionals and seminorms}$(e)$.
\end{proof}

The description of the strong$^*$ topology is closely related to $\sigma$-finite projections and tripotents. Recall that a projection $p$ in a JBW$^*$-algebra is called \emph{$\sigma$-finite} if any family of pairwise orthogonal smaller nonzero projections is at most countable. If the unit of a JBW$^*$-algebra is $\sigma$-finite, the respective algebra is called \emph{$\sigma$-finite}. The classical definitions in von Neumann algebras are exactly the same. Let us note that some authors also employ the term \emph{countably decomposable} to refer to $\sigma$-finite projections in a von Neumann algebra (cf. \cite[Definition 2.1.8]{sakai2012c} or \cite[Definition 5.5.14]{KR1}).\smallskip

Similarly, a tripotent $u$ in a JBW$^*$-triple is \emph{$\sigma$-finite} if any family of pairwise orthogonal nonzero smaller tripotents is at most countable. A JBW$^*$-triple is itself called \emph{$\sigma$-finite} if it admits a $\sigma$-finite complete tripotent (cf. \cite[\S 3]{Edw-Rut-exposed}).
The next result gathers some basic facts on $\sigma$-finite tripotents.\smallskip

Let us recall a couple of notions. A subspace $I$ of a JB$^*$-triple $E$ is an \emph{inner ideal} if $\J IEI\subseteq I$. Every inner ideal of $E$ is a subtriple.\smallskip

Given a norm-one element $a$ in a JBW$^*$-triple $M$, there exists a smallest tripotent $e\in M$ satisfying that $a$ is a positive element in the JBW$^*$-algebra $M_{2} (e)$, this tripotent is called the \emph{range tripotent} of $a,$ and it will be denoted by $r(a)$ (see, for example, \cite[comments before Lemma 3.1]{edwards1988facial}). For a non-zero element $b\in M$, the range tripotent of $b$, $r(b)$, is defined as the range tripotent of $\frac{b}{\|b\|}$ and we set $r(0)=0$. It follows from the same reference that if $M$ is a JBW$^*$-algebra and $x$ is a non-zero positive element, then $r(x)$ is a projection and it coincides with the range projection in \cite[Lemma 4.2.6]{hanche1984jordan}.

\begin{lemma}\label{L:sigma finite tripotent}
\begin{enumerate}[$(a)$]
\item Let $u$ be a tripotent in a JBW$^*$-triple $M$. Then $u$ is $\sigma$-finite if and only if $u=s(\varphi)$ for some norm-one functional $\varphi\in M_*$;
\item Let $M$ be a JBW$^*$-algebra and $p\in M$ a projection. Then $p$ is a $\sigma$-finite projection if and only if it is a $\sigma$-finite tripotent;
\item Let $M$ be a JBW$^*$-algebra and $p\in M$ a projection. Then $p$ is $\sigma$-finite if and only if $p=s(\varphi)$ for some normal state {\rm(}i.e., a positive norm-one functional{\rm)} $\varphi\in M_*$;
\item Let $M$ be a JBW$^*$-algebra and $e\in M$ a $\sigma$-finite tripotent. Then there is a $\sigma$-finite projection $p\in M$ such that $e\in M_2(p)$.
\end{enumerate}
\end{lemma}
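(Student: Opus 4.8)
\textbf{Proof plan for Lemma~\ref{L:sigma finite tripotent}.}

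The plan is to prove the four parts essentially in order, since the later parts draw on the earlier ones together with the machinery assembled in Section~\ref{sec:tripotents}. For part $(a)$, the key observation is the correspondence between $\sigma$-finite tripotents and support tripotents of normal functionals. In one direction, if $u=s(\varphi)$ for some norm-one $\varphi\in M_*$, then any orthogonal family $(u_i)$ of nonzero tripotents below $u$ lands inside $M_2(u)$ (by Proposition~\ref{p characterization of triple order}), where $\varphi|_{M_2(u)}$ is a faithful positive functional; faithfulness forces $\varphi(\{u_i,u_i,u\})=\|u_i\|_\varphi^2>0$ for each $i$, and since these are mutually orthogonal positive contributions summing to at most $\|\varphi\|=1$, only countably many can be nonzero, so the family is countable. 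For the converse, I would pass to the JBW$^*$-algebra $M_2(u)$, take a maximal orthogonal family of $\sigma$-finite projections whose supremum (using $\sigma$-finiteness of $u$) is the unit $u$, and assemble a faithful normal state on $M_2(u)$ as a countable convex combination of states supported on these pieces, then extend via $P_2(u)$ to obtain $\varphi\in M_*$ with $s(\varphi)=u$.

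For part $(b)$, I would show both implications by comparing the two notions of ``smaller'': a projection $p$ is $\sigma$-finite as a \emph{projection} when every orthogonal family of smaller nonzero \emph{projections} is countable, and as a \emph{tripotent} when every orthogonal family of smaller nonzero \emph{tripotents} is countable. Since every projection below $p$ (in the projection order) is in particular a tripotent below $p$ (in the tripotent order), the tripotent condition trivially implies the projection condition. For the reverse, the point is that a tripotent $v\le p$ lies in $M_2(p)$, where $p$ is the unit; since $M_2(p)$ is a JBW$^*$-algebra, orthogonality and the count of tripotents can be controlled by the count of their range projections $r(v)$, which are genuine projections below $p$, reducing the tripotent count to the projection count. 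Part $(c)$ then follows by combining $(b)$ with $(a)$: $p$ is $\sigma$-finite iff (by $(b)$) it is a $\sigma$-finite tripotent iff (by $(a)$) it equals $s(\varphi)$ for some norm-one $\varphi$, and in the algebra case the support tripotent of a norm-attaining functional on a positive element is a projection, so one may take $\varphi$ to be a normal state.

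Part $(d)$ is where I expect the main obstacle. Given a $\sigma$-finite tripotent $e$ in the JBW$^*$-algebra $M$, I want a $\sigma$-finite \emph{projection} $p$ with $e\in M_2(p)$. The natural candidate is $p=r(e)$, a suitable range projection associated with $e$; since $e$ is a tripotent, the relevant projection should be the one dominating $e$ in the sense that $M_2(e)\subset M_2(p)$, so that $e$ becomes a complete tripotent of $M_2(p)$. The difficulty is twofold: first, producing the correct projection $p$ canonically attached to $e$ (in the $C^*$-algebra case this is $ee^*$ or a join of initial/final projections, and in the Jordan setting one must argue through Lemma~\ref{L:Jembed JB*} or the range-tripotent formalism), and second, verifying that this $p$ inherits $\sigma$-finiteness from $e$. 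For the latter I would argue that any orthogonal family of nonzero projections below $p$ produces, via compression into $M_2(e)$ or via the structure of $M_2(p)$ relative to $e$, a corresponding orthogonal family of nonzero tripotents controlled by $e$; the $\sigma$-finiteness of $e$ then bounds the cardinality. The cleanest route is probably to invoke part $(a)$: realize $e=s(\varphi)$ for a norm-one $\varphi\in M_*$, and let $p$ be the support projection of the positive functional $\psi$ on $M$ obtained by extending $\varphi|_{M_2(e)}$ (composed with a conditional expectation or via $\varphi\circ P_2(e)$ read as a state after adjusting by the unit), so that $p=s(\psi)$ is automatically $\sigma$-finite by $(c)$ while $M_2(e)\subset M_2(p)$ holds by construction, giving $e\in M_2(p)$.
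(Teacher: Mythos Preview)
Your plan for parts $(a)$--$(c)$ is essentially correct and aligns with the paper, which largely cites external references for $(a)$ and derives $(c)$ from $(a)$. One simplification you miss in $(b)$: by Proposition~\ref{p characterization of triple order}, a tripotent $v\le p$ with $p$ a projection is \emph{automatically} a projection in $M_2(p)$ (hence in $M$), so the passage through range projections is unnecessary --- any orthogonal family of nonzero tripotents below $p$ \emph{is already} an orthogonal family of nonzero projections below $p$, and the two notions of $\sigma$-finiteness coincide immediately.

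Part $(d)$ has a genuine gap. Your ``cleanest route'' proposes to build a positive normal functional $\psi$ from $\varphi$ via ``$\varphi\circ P_2(e)$ read as a state after adjusting by the unit,'' but $\varphi\circ P_2(e)=\varphi$ already, and there is no evident Jordan-algebraic mechanism to extract from $\varphi$ a \emph{positive} functional whose support projection dominates $e$ in the required sense. (In a von Neumann algebra one might reach for the polar decomposition $\varphi=|\varphi|(\,\cdot\, e)$, but even there $s(|\varphi|)=p_i(e)$ is only the initial projection, and $e\notin M_2(p_i(e))$ in general; one needs the join $p_i(e)\vee p_f(e)$, which has no direct Jordan analog.) Your first candidate, $p=r(e)$, also fails as stated since the range tripotent of a tripotent is itself.

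The paper takes a different and more robust route for $(d)$: it shows, invoking \cite{BHKPP-triples}, that the set $M_\sigma=\{x\in M: r(x)\text{ is $\sigma$-finite}\}$ coincides with the set of elements lying in $M_2(u)$ for some $\sigma$-finite tripotent $u$, and that $M_\sigma$ is a norm-closed hereditary Jordan $*$-subalgebra of $M$. Then for the given $e\in M_\sigma$ one writes $e=h+ik$ with $h,k\in(M_\sigma)_{sa}$, sets $p=r(h^2+k^2)$, and checks that $p$ is a $\sigma$-finite \emph{projection} (because $h^2+k^2\in M_\sigma$ is positive) with $h,k\in M_2(p)$, hence $e\in M_2(p)$. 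This is precisely the Jordan-algebraic incarnation of the ``join of initial and final projections'' you intuited; the decisive technical input is the closure of $M_\sigma$ under $\circ$ and $*$, which is what guarantees $h^2+k^2\in M_\sigma$.
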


\begin{proof}
Assertion $(a)$ is proved in \cite[Theorem 3.2]{Edw-Rut-exposed}. Statement $(b)$ follows from the fact that any projection is also a tripotent, and from the property that a tripotent $u$ is smaller than or equal to a projection $p$ if and only if $u$ is a projection and $u\leq p$ (cf. Proposition \ref{p characterization of triple order}).\smallskip

Statement $(c)$ is a consequence of $(a)$.
\smallskip

$(d)$ Let us consider the sets
$$\begin{aligned}
S&=\{x\in M: \ \exists p\in M\mbox{ a $\sigma$-finite projection such that } x\in M_2(p)\},\\
M_{\sigma}&=\{x\in M: \ \exists u\in M\mbox{ a $\sigma$-finite tripotent such that } x\in M_2(u)\}.
\end{aligned}$$

Clearly $S\subseteq M_{\sigma}$.
By \cite[page 667 and Theorems 4.1 and 5.1]{BHKPP-triples} we have $M_{\sigma}=\{x\in M : r(x) \hbox{ is $\sigma$-finite}\}$ is (a \emph{$1$-norming $\Sigma$-subspace} and) a norm-closed inner ideal of $M=(M_*)^*$ (see the quoted paper for definitions). Since $M$ is a JBW$^*$-algebra, we get $M_{\sigma}\circ M_{\sigma} = \{M_{\sigma}, 1, M_{\sigma}\}\subseteq M_{\sigma},$ and hence $M_{\sigma}$ is a Jordan subalgebra of $M$.\smallskip

It can be seen easily that a tripotent $u\in M$ is $\sigma$-finite if and only if $u^*$ is, therefore $M_{\sigma}^*\subseteq M_{\sigma}$, and hence $M_{\sigma}^*= M_{\sigma}$ is a norm-closed JB$^*$-subalgebra of $M,$ which is also hereditary in the Jordan terminology, that is, if $0\leq a\leq b$ in $M$ with $b\in M_{\sigma}$, then $a\in M_{\sigma}$.\smallskip

Let $a= h+ i k$ be an element in $M_{\sigma}$, where $h,k\in (M_{\sigma})_{sa}$. The elements $h^2,k^2$ are positive and belong to $M_{\sigma}$, thus $h^2 + k^2\in M_{\sigma}$. Therefore, the range tripotent $p=r(h^2 + k^2)$ is a $\sigma$-finite projection in $M$. Clearly, $h^2,k^2\leq h^2+k^2\in M_2 (p)\subseteq M_{\sigma},$ and consequently, $h,k\in M_2 (p)$, which implies that $p \circ h =h$ and $p\circ k = k$. Therefore $\{p,p,a\}=p\circ a = a$ as desired.
\end{proof}

Lemme 2 in Appendice 6 in \cite{iochumbook} offers a sufficient condition to guarantee the metrizability of the strong$^*$ topology on the bounded subsets of a JBW-algebra. We shall next adapt the result to JBW$^*$-algebras.

\begin{lemma}\label{L:JBW*algebra sigma finite} Let $M$ be a $\sigma$-finite JBW$^*$-algebra.
\begin{enumerate}[$(a)$]
\item $M$ admits a faithful normal state;
\item Let $\varphi$ be a faithful normal state on $M$. Then the topology induced by the norm
$$x\mapsto\sqrt{\varphi(x^*\circ x)}= \|x\|_{\varphi}, \qquad x\in M,$$
coincides with the strong$^*$ topology on bounded sets.
\end{enumerate}
\end{lemma}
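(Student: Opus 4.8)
The plan is to prove Lemma~\ref{L:JBW*algebra sigma finite} in two parts, treating existence of a faithful normal state first and then the metrizability statement.

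For part $(a)$, I would invoke the $\sigma$-finiteness of $M$ directly. Since $M$ is $\sigma$-finite, its unit $1$ is a $\sigma$-finite projection, and by Lemma~\ref{L:sigma finite tripotent}$(c)$ (applied with $p=1$) there is a normal state $\varphi$ with $s(\varphi)=1$. The condition $s(\varphi)=1$ forces $M_2(s(\varphi))=M_2(1)=M$, and the defining property of the support tripotent says that $\varphi|_{M_2(s(\varphi))}$ is \emph{faithful}. Hence $\varphi$ is a faithful normal state on all of $M$, which is exactly what is required.

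For part $(b)$, fix a faithful normal state $\varphi$. By Lemma~\ref{L:strong* topology}$(a)$ the strong$^*$ topology on $M$ coincides with the algebra strong$^*$ topology generated by the seminorms $x\mapsto\sqrt{\psi(x^*\circ x)}$ as $\psi$ ranges over the normal positive functionals. Since $\|\cdot\|_\varphi$ is just one of these seminorms, the inequality that the $\|\cdot\|_\varphi$-topology is \emph{coarser} than the strong$^*$ topology is immediate; the real content is the reverse inclusion on bounded sets. Here I would follow the strategy of the cited Lemme~2 of Appendice~6 in \cite{iochumbook}: it suffices to show that for every normal positive $\psi$ and every $\varepsilon>0$ there is $\delta>0$ so that on the unit ball $\|x\|_\varphi<\delta$ implies $\psi(x^*\circ x)<\varepsilon$. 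By faithfulness of $\varphi$ together with normality, one reduces to comparing $\psi$ and $\varphi$ through a Radon--Nikodym type argument in the JBW$^*$-algebra setting, exploiting that any normal positive functional is dominated (after an approximation controlled by $\varepsilon$) by a multiple of $\varphi$ on a suitable $\sigma$-finite piece, and that the relevant Cauchy--Schwarz estimate from Lemma~\ref{L:functionals and seminorms}$(d)$ lets one pass from linear control $|\psi(\cdot)|$ to quadratic control of the seminorm.

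The main obstacle I anticipate is precisely this reverse estimate: faithfulness of $\varphi$ gives that $\|\cdot\|_\varphi$ separates points, but separating points does not automatically yield uniform domination of the other seminorms on bounded sets. The standard route is to adapt the von Neumann algebra argument (where one uses that a single faithful normal state induces a metric agreeing with the strong$^*$ topology on bounded balls, via approximation of an arbitrary normal state in the predual norm by states absolutely continuous with respect to $\varphi$) to the Jordan framework, replacing associative products $x^*x$ by the Jordan expressions $x^*\circ x$ and using the JB$^*$-algebra Cauchy--Schwarz inequality. Since \cite{iochumbook} proves the analogous statement for JBW-algebras (the self-adjoint part), the adaptation should amount to complexifying and checking that the seminorm identities of Lemma~\ref{L:functionals and seminorms} carry the real argument over to the self-adjoint part of $M$, from which the bounded-set metrizability for all of $M$ follows.
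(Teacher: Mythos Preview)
Your argument for part $(a)$ is exactly the paper's proof.

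For part $(b)$, your final paragraph lands on the paper's actual route, but the paper executes it more directly than your main discussion suggests. The paper does \emph{not} re-run any Radon--Nikodym type estimate in the complex setting; it simply cites \cite[Appendice~6, Lemme~2]{iochumbook} as a black box for the self-adjoint part $M_{sa}$ (which is a JBW-algebra), obtaining that $\|\cdot\|_\varphi$ metrizes the strong$^*$ topology on $B_{M_{sa}}$. The complexification step is then the elementary pointwise inequality
\[
\varphi((h_\lambda-h)^2),\ \varphi((k_\lambda-k)^2)\ \le\ \varphi((h_\lambda-h)^2+(k_\lambda-k)^2)\ =\ \|a_\lambda-a\|_\varphi^2,
\]
where $a_\lambda=h_\lambda+ik_\lambda$ and $a=h+ik$ are the decompositions into self-adjoint parts. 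This shows that $\|a_\lambda-a\|_\varphi\to0$ forces $h_\lambda\to h$ and $k_\lambda\to k$ strong$^*$ in $M_{sa}$, and then Lemma~\ref{L:strong* topology}$(b)$ (the Bunce inheritance result) transfers strong$^*$ convergence from $M_{sa}$ back to $M$.

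So your instinct in the closing paragraph is right, and the ``obstacle'' you flag is handled by a two-line inequality rather than by adapting the full Iochum argument. Your earlier plan to redo the Radon--Nikodym estimate directly for $x^*\circ x$ would presumably also work, but it is unnecessary.
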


\begin{proof} $(a)$ Since $1$ is a $\sigma$-finite projection, by Lemma~\ref{L:sigma finite tripotent}(c) there is a positive norm-one functional $\varphi\in M_*$ (i.e., a normal state) such that $s(\varphi)=1$. It follows that $\varphi$ is faithful.\smallskip

$(b)$ We know that $M_{sa}$ is a JBW-algebra and a real JBW$^*$-subtriple of $M$, and that $\varphi|_{M_{sa}}$ is a faithful normal state on $M_{sa}$. \cite[Appendice 6, Lemme 2]{iochumbook} implies that the strong$^*$ topology on the closed unit ball of $M_{sa}$ is metrized by the seminorm $\|x\|^2_{\varphi}=\varphi(x\circ x)=\varphi(x^2) ,$ $x\in M_{sa}$. Let $(a_{\lambda})_{\lambda}$ be a net in $B_M$, and $a\in B_M$ such that $\|a_{\lambda}-a\|_{\varphi} \to 0$. If we write $a_{\lambda} = h_{\lambda} + i k_{\lambda}$ and $a= h +i k$ with $h_{\lambda}, k_{\lambda}, h, k\in B_{M_{sa}}$, then we get the inequalities $$\varphi ((h_{\lambda} - h)^2), \varphi ((k_{\lambda} - k)^2) \leq \varphi ((h_{\lambda} - h)^2 + (k_{\lambda} - k)^2) = \|a_{\lambda}-a\|^2_{\varphi}.$$
Therefore, $(h_{\lambda})_{\lambda}\to h$ and $(k_{\lambda})_{\lambda}\to k$ in the strong$^*$ topology of $M_{sa}$, and by \cite[COROLLARY]{bunce2001norm} they also converge to the same limits with respect to the strong$^*$ topology of $M$, and consequently, $(a_{\lambda})_{\lambda}\to a$ in the strong$^*$ topology of $M$.
\end{proof}

The previous lemma says, in particular, that the strong$^*$ topology is metrizable on bounded sets of a $\sigma$-finite JBW$^*$-algebra. The analogous statement for von Neumann algebras is proved already in \cite[Proposition III.5.3]{Tak}.
The analogy for JBW$^*$-triples fails, as we will explain below (see Example \ref{l example where the strong* topology is not metrizable on bounded sets}).\smallskip

We continue now with a lemma characterizing strong$^*$ convergence of bounded positive nets.

\begin{lemma}\label{L:strong* convergence for positive}
Let $(x_\nu)$ be a bounded net of positive elements in a JBW$^*$-algebra $M$.
Then the following assertions are equivalent.
\begin{enumerate}[$(1)$]
\item\label{it:strong* conv1} $x_\nu\overset{\mbox{strong}^*}{\longrightarrow}0$;
\item\label{it:strong* conv2} $\varphi(x_\nu)\to 0$ for each positive $\varphi\in M_*$;
\item\label{it:strong* conv3} $x_\nu\overset{\mbox{weak}^*}{\longrightarrow}0$.
\end{enumerate}
\end{lemma}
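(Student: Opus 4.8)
The plan is to prove the cycle of implications (1)$\Rightarrow$(3)$\Rightarrow$(2)$\Rightarrow$(1), with only the last step carrying any real content. Throughout I would set $C=\sup_\nu\norm{x_\nu}<\infty$ and exploit two simplifications available for a JBW$^*$-algebra $M$ and a positive $\varphi\in M_*$: first, by the computation recorded just before Lemma~\ref{L:strong* topology} (using that $s(\varphi)$ is then a projection and Lemma~\ref{L:functionals and seminorms}$(d)$) the associated seminorm takes the algebraic form $\norm{x}_\varphi=\sqrt{\varphi(x^*\circ x)}$; second, by Lemma~\ref{L:strong* topology}$(a)$ the strong$^*$ topology coincides with the algebra strong$^*$ topology, so it is generated already by the seminorms $\norm{\cdot}_\varphi$ with $\varphi\geq 0$. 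Hence to detect strong$^*$-convergence it suffices to control $\norm{x_\nu}_\varphi$ for positive functionals only.

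The implication (1)$\Rightarrow$(3) is immediate from the fact that the strong$^*$ topology is finer than the weak$^*$ one: the inequality $\betr{\varphi(x)}\le\sqrt{\norm{\varphi}}\,\norm{x}_\varphi$ established right after the definition of the strong$^*$ topology shows that strong$^*$-convergence of $(x_\nu)$ to $0$ forces $\varphi(x_\nu)\to 0$ for every $\varphi\in M_*$, which is weak$^*$-convergence. The implication (3)$\Rightarrow$(2) is trivial, being just the definition of weak$^*$-convergence specialised to positive functionals. Note that neither of these two steps uses positivity of the $x_\nu$; that hypothesis enters only in the converse direction.

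The heart of the matter is (2)$\Rightarrow$(1), and here positivity is essential. Fix a positive $\varphi\in M_*$. Since each $x_\nu$ is positive with $\norm{x_\nu}\le C$, the unital Jordan $^*$-subalgebra generated by $x_\nu$ is associative and, by Lemma~\ref{L:JC*}$(i)$, isometrically a commutative C$^*$-algebra of continuous functions on the spectrum $\sigma(x_\nu)\subset[0,C]$, with $x_\nu$ corresponding to the identity function; since $t^2\le Ct$ on $[0,C]$, this yields the order inequality $x_\nu^2\le C\,x_\nu$ in $M$. Applying the positive functional $\varphi$ and using that $x_\nu^*\circ x_\nu=x_\nu^2$ (as $x_\nu$ is self-adjoint) gives
$$\norm{x_\nu}_\varphi^2=\varphi(x_\nu^*\circ x_\nu)=\varphi(x_\nu^2)\le C\,\varphi(x_\nu)\to 0,$$
where the final limit is precisely hypothesis (2). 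Thus $\norm{x_\nu}_\varphi\to 0$ for every positive $\varphi\in M_*$, and by Lemma~\ref{L:strong* topology}$(a)$ this is exactly strong$^*$-convergence of $(x_\nu)$ to $0$.

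The only point demanding care — the main, and rather mild, obstacle — is the order estimate $x_\nu^2\le C\,x_\nu$ in the Jordan setting, where one cannot manipulate products freely. I would dispatch it by passing to the subalgebra generated by the single element $x_\nu$, which is associative and realised as a function algebra through Lemma~\ref{L:JC*}, thereby reducing the claim to the scalar inequality $t^2\le Ct$ on $[0,C]$. With that in hand the quadratic control $\varphi(x_\nu^2)\le C\varphi(x_\nu)$ converts the linear assumption (2) into the square-seminorm decay defining strong$^*$-convergence, and the remaining implications are immediate.
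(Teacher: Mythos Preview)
Your proof is correct and follows essentially the same route as the paper: the same cycle of implications, with (2)$\Rightarrow$(1) handled via the order inequality $x_\nu^2\le\norm{x_\nu}\,x_\nu$ combined with Lemma~\ref{L:strong* topology}$(a)$. The paper simply states that inequality without justification, while you supply the functional-calculus argument; the substance is identical.
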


\begin{proof}
The implication $(1)\Rightarrow(3)$ follows from the fact that the strong$^*$ topology is stronger than the weak$^*$ one,
and $(3)\Rightarrow(2)$ is trivial.\\
(2)$\Rightarrow$(1): By Lemma~\ref{L:strong* topology}(a) the net $(x_\nu)$ strong$^*$ converges to zero if and only if for any positive $\varphi\in M_*$ we have $\varphi(x_\nu^2)\to 0$. Now the double inequality
$0\le x_\nu^2\le \norm{x_\nu}x_\nu$
proves the implication (2)$\Rightarrow$(1).
\end{proof}

\subsection{Order on seminorms generating the strong$^*$ topology}

To describe the strong$^*$ topology it is not necessary (in some cases) to use all the defining seminorms. This is witnessed by Lemma~\ref{L:strong* topology}(a) and, on bounded sets, by Lemma~\ref{L:JBW*algebra sigma finite}. In this section we investigate this feature in detail.
A key result is the following proposition relating the order on seminorms with the order from Proposition~\ref{P:M2 inclusion}.

\begin{prop}\label{P:seminorms order} Let $M$ be a JBW$^*$-triple and $\varphi,\psi\in M_*\setminus\{0\}$.
\begin{enumerate}[$(i)$]
\item Assume $s(\psi)\in M_2(s(\varphi))$ {\rm(}or, equivalently, $M_2(s(\psi))\subset M_2(s(\varphi))${\rm)}. Then the seminorm $\norm{\cdot}_\psi$ is weaker than the seminorm $\norm{\cdot}_\varphi$ on bounded sets.
\item Assume  $M_2(s(\psi))\subsetneqq M_2(s(\varphi))$. Then on $B_M$, the seminorm $\norm{\cdot}_\psi$ is strictly weaker than the seminorm $\norm{\cdot}_\varphi$.
\end{enumerate}
\end{prop}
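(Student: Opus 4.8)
The plan is to set $e=s(\varphi)$, $f=s(\psi)$ and to transport the whole question into the JBW$^*$-algebra $A:=M_2(e)$. Here $e$ is a $\sigma$-finite tripotent (Lemma~\ref{L:sigma finite tripotent}$(a)$), so $A$ is a $\sigma$-finite JBW$^*$-algebra on which $\varphi$ restricts to a faithful normal positive functional (Lemma~\ref{L:functionals and seminorms}). The hypothesis $f\in M_2(e)$ gives, through Proposition~\ref{P:M2 inclusion}$(2)$, the relations $P_2(f)=P_2(f)P_2(e)=P_2(e)P_2(f)$ and $M_0(e)\subset M_0(f)$; combined with the support identity $\psi=\psi\circ P_2(f)$ these force $\psi=\psi\circ P_2(e)$.

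First I would record the Peirce splitting of both seminorms relative to $e$. Writing $x_j=P_j(e)x$ and using $\varphi=\varphi\circ P_2(e)$, $\psi=\psi\circ P_2(e)$ together with the ``in particular'' formula in Lemma~\ref{L:properties of M2(e)}$(a)$ (for $\varphi$) and the Peirce arithmetic, where the rule $\J{M_0(e)}{M_0(e)}{M_2(e)}=0$ kills the $M_0(e)$ contribution (for $\psi$), one obtains
\[
\norm{x}_\varphi^2=\norm{x_2}_\varphi^2+\norm{x_1}_\varphi^2,\qquad \norm{x}_\psi^2=\norm{x_2}_\psi^2+\norm{x_1}_\psi^2 .
\]
Thus it suffices to dominate $\norm{\cdot}_\psi$ by $\norm{\cdot}_\varphi$ on bounded subsets of $A=M_2(e)$ and, separately, on bounded subsets of $M_1(e)$. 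On $A$ this is immediate: by Lemma~\ref{L:JBW*algebra sigma finite}$(b)$ the strong$^*$ topology of $A$ is metrized on bounded sets by $\norm{\cdot}_\varphi$, while $\norm{\cdot}_\psi|_A$ is strong$^*$-continuous (Lemma~\ref{L:strong* topology}$(b),(d)$), so $\norm{x_2^\nu}_\varphi\to0$ yields $\norm{x_2^\nu}_\psi\to0$ along bounded nets.

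The Peirce-one part is the crux. For $x\in M_1(e)$ the element $a:=\J xxe$ is positive in $A$ (Lemma~\ref{L:properties of M2(e)}$(d)$) and $\norm{x}_\varphi^2=\varphi(a)$; hence if $(x^\nu)$ is bounded in $M_1(e)$ with $\norm{x^\nu}_\varphi\to0$, then $\varphi(a_\nu)\to0$ with $a_\nu\in A_+$ bounded, and from $0\le a_\nu^2\le\norm{a_\nu}\,a_\nu$ I get $\norm{a_\nu}_\varphi^2=\varphi(a_\nu^2)\to0$, i.e. $a_\nu\to0$ strong$^*$ in $A$. What remains --- and is the main obstacle --- is to pass from $a_\nu=\J{x^\nu}{x^\nu}{e}$ to $c_\nu:=\J{x^\nu}{x^\nu}{f}$, i.e. to show $\psi(c_\nu)=\norm{x^\nu}_\psi^2\to0$; note that $\J xxf$ is \emph{not} a triple expression built from $\J xxe$, $e$, $f$, being recoverable only through a one-sided product. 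I would resolve this in a concrete picture: realizing the relevant part of $M$ inside some $B(H)$ (via Proposition~\ref{P:triple 1-complemented} and a faithful representation), one has $a_\nu=w_\nu e$ and $c_\nu=w_\nu f$ with $w_\nu\ge0$, so that $c_\nu=a_\nu(e^*f)$ is obtained from $a_\nu$ by multiplication with the fixed element $e^*f$; since one-sided multiplication is strong$^*$-continuous on bounded sets, $a_\nu\to0$ strong$^*$ gives $c_\nu\to0$ strong$^*$, whence $\psi(c_\nu)\to0$ by normality of $\psi$. The exceptional summands of $A$ are finite-dimensional over their centre and are dealt with fibrewise.

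For part $(ii)$, assuming $M_2(f)\subsetneq M_2(e)$, I would produce a bounded witness against equivalence. Since $P_0(f)$ commutes with $P_2(e)$ (Lemma~\ref{L:comuting Laa Lxx}$(c)$), either $P_0(f)$ is nonzero on $M_2(e)$, in which case any nonzero $b\in M_0(f)\cap M_2(e)$ has $\norm{b}_\psi=0$ (as $\ker\norm{\cdot}_\psi=M_0(f)$ by Lemma~\ref{L:functionals and seminorms}$(e)$) while $\norm{b}_\varphi>0$, so the constant net $b$ already shows strictness; or $f$ is a complete non-unitary tripotent of $M_2(e)$, forcing $M_2(e)$ to be infinite-dimensional with $M_1(f)\cap M_2(e)\ne\{0\}$. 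In the latter case, working in a representation, I would choose a bounded sequence $b_n\in M_1(f)\cap M_2(e)$ whose ``initial'' part $b_n^*b_n$ is weakly null while its ``final'' part $b_nb_n^*$ is fixed and nonzero; then $\norm{b_n}_\varphi$ stays bounded away from $0$ (faithfulness of $\varphi$) whereas $\norm{b_n}_\psi^2=\tfrac12\psi(b_nb_n^*f+fb_n^*b_n)\to0$ (normality of $\psi$, together with $(1-ff^*)f=0$, which annihilates the first summand). Combined with $(i)$ this shows that $\norm{\cdot}_\psi$ is strictly weaker than $\norm{\cdot}_\varphi$ on $B_M$.
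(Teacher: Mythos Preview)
Your overall plan for part $(i)$---split both seminorms along the Peirce decomposition of $e=s(\varphi)$ and handle the $M_2(e)$-piece via metrizability of the strong$^*$ topology on the $\sigma$-finite JBW$^*$-algebra $A=M_2(e)$---is sound and is essentially the content of Lemma~\ref{L:seminorm characterization}. The genuine gap is the Peirce-$1$ step. Your claimed identity $c_\nu=a_\nu(e^*f)$ is false. Even after arranging $e$ to be a projection in some $B(H)$, for $x\in M_1(e)$ one has
\[
a_\nu=\tfrac12\bigl(x_\nu x_\nu^*e+ex_\nu^*x_\nu\bigr),\qquad
c_\nu=\tfrac12\bigl(x_\nu x_\nu^*f+fx_\nu^*x_\nu\bigr),
\]
and while the first summand of $a_\nu f$ matches that of $c_\nu$, the second summands $ex_\nu^*x_\nu f$ and $fx_\nu^*x_\nu$ differ in general. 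Thus $c_\nu$ is \emph{not} a one-sided product of $a_\nu$ with a fixed element, and strong$^*$-continuity of multiplication does not transfer $a_\nu\to0$ to $c_\nu$. (Note also that Proposition~\ref{P:triple 1-complemented} only puts $M$ inside a JBW$^*$-algebra; $e$ becomes a tripotent, not a projection, so your ``$a_\nu=w_\nu e$'' is already problematic.) The paper's Lemma~\ref{L:semin-order-positive} circumvents this by first embedding $M$ in a JB$^*$-algebra $B$ where $e$ becomes a projection, and then using the \emph{symmetrized} identity
\[
\{x,x,f\}+\{x^*,x^*,f\}=2(x\circ x^*)\circ f\qquad(x\in B),
\]
so that $P_2(f)\bigl(\{x,x,f\}+\{x^*,x^*,f\}\bigr)=2G(P_2(e)\{x,x,e\})$ with $G(y)=P_2(f)(y\circ f)$ weak$^*$-continuous on $M$; one then recovers the unsymmetrized term by positivity in $M_2(f)$. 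The involution here is that of the ambient $B$, not of $M_2(e)$ or $M_2(f)$, and this symmetrization is precisely the missing idea. The remark about ``exceptional summands dealt with fibrewise'' is neither needed nor helpful at this point.

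For part $(ii)$ your first case ($M_0(f)\cap M_2(e)\ne\{0\}$) matches the paper. In the second case you only \emph{postulate} a bounded sequence $b_n\in M_1(f)\cap M_2(e)$ with $b_n^*b_n$ weakly null and $b_nb_n^*$ fixed and nonzero; that existence is exactly the hard part. The paper obtains it through Lemma~\ref{L:Jembed JB*}, which Jordan-embeds the unital JB$^*$-subalgebra of $M_2(e)$ generated by $f$ into some $B(H)$ with $f^*f=1$, then takes $a_n=(1-ff^*)(f^*)^n$ and computes directly. Without first arranging $f^*f=1$ (or $ff^*=1$), a complete tripotent in a generic representation satisfies neither, and your ``initial/final part'' heuristic does not produce a sequence with the required properties.
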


To prove this proposition we will need some lemmata.
The first lemma characterizes convergence of sequences
in a fixed seminorm. Note that the same characterization
applies to nets, but since we are comparing seminorms, sequences are enough.

\begin{lemma}\label{L:seminorm characterization}
 Let $M$ be a JBW$^*$-triple and $\varphi\in M_*\setminus\{0\}$. Let $e=s(\varphi)$ and let $(a_n)$ be a bounded sequence in $M$.
Then the following assertions are equivalent:
\begin{enumerate}[$(1)$]
\item $\norm{a_n}_\varphi\to 0$;
\item  $\J {P_2(e)(a_n)}{P_2(e)(a_n)}e\overset{\mbox{strong}^*}{\longrightarrow}0$  and $\J {P_1(e)(a_n)}{P_1(e)(a_n)}e \overset{\mbox{strong}^*}{\longrightarrow} 0$;
\item  $\J {P_2(e)(a_n)}{P_2(e)(a_n)}e\overset{\mbox{weak}^*}{\longrightarrow}0$  and $\J {P_1(e)(a_n)}{P_1(e)(a_n)}e \overset{\mbox{weak}^*}{\longrightarrow} 0$;
\item  $\J {P_2(e)(a_n)}{P_2(e)(a_n)}e+\J {P_1(e)(a_n)}{P_1(e)(a_n)}e\overset{\mbox{weak}^*}{\longrightarrow}0$.
\end{enumerate}
\end{lemma}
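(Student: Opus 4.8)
The plan is to push everything down to the two positive elements
\[
b_n:=\J{P_2(e)a_n}{P_2(e)a_n}{e},\qquad c_n:=\J{P_1(e)a_n}{P_1(e)a_n}{e}
\]
of the JBW$^*$-algebra $M_2(e)$, and to read each of (1)--(4) off their behaviour. First I would record the basic identity
\[
\norm{a_n}_\varphi^2=\varphi(\J{a_n}{a_n}{e})=\varphi\bigl(P_2(e)\J{a_n}{a_n}{e}\bigr)=\varphi(b_n)+\varphi(c_n),
\]
where the middle equality uses the support-tripotent relation $\varphi=\varphi\circ P_2(e)$ (Lemma~\ref{L:functionals and seminorms}$(b)$, valid since $\norm{\varphi}=\varphi(e)$), and the last one uses the ``in particular'' formula of Lemma~\ref{L:properties of M2(e)}$(a)$, namely $P_2(e)\J xxe=\J{P_2(e)x}{P_2(e)x}{e}+\J{P_1(e)x}{P_1(e)x}{e}$. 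By Lemma~\ref{L:properties of M2(e)}$(d)$ both $b_n$ and $c_n$ are positive elements of $M_2(e)$; they are bounded (Peirce projections are contractive and the triple product is continuous), and since $\varphi|_{M_2(e)}$ is positive (Lemma~\ref{L:functionals and seminorms}$(c)$) the two summands are nonnegative. Thus $\norm{a_n}_\varphi\to0$ is equivalent to $\varphi(b_n)\to0$ \emph{and} $\varphi(c_n)\to0$.

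Next I would settle the equivalences among (2), (3) and (4), which involve only $b_n$ and $c_n$. Since $M_2(e)$ is a weak$^*$-closed subtriple, its weak$^*$ and strong$^*$ topologies are the restrictions of those of $M$ (the latter by Lemma~\ref{L:strong* topology}$(b)$), so the convergence statements may be read in $M_2(e)$. Applying Lemma~\ref{L:strong* convergence for positive} separately to the bounded positive sequences $(b_n)$ and $(c_n)$ gives at once the equivalence (2)$\iff$(3). The implication (3)$\Rightarrow$(4) is trivial. For (4)$\Rightarrow$(3) I would use the domination $0\le b_n\le b_n+c_n$ and $0\le c_n\le b_n+c_n$: testing against an arbitrary positive normal functional $\rho$ on $M_2(e)$ yields $0\le\rho(b_n)\le\rho(b_n+c_n)\to0$, so $\rho(b_n)\to0$ for every positive $\rho$, and Lemma~\ref{L:strong* convergence for positive} converts this back into weak$^*$-convergence of $(b_n)$; the same works for $(c_n)$.

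It remains to tie (1) to this block; I would do it through (3). The easy direction is (3)$\Rightarrow$(1): as $\varphi$ is weak$^*$-continuous, weak$^*$-nullity of $b_n$ and $c_n$ forces $\varphi(b_n)+\varphi(c_n)=\norm{a_n}_\varphi^2\to0$. The converse (1)$\Rightarrow$(3) is the step I expect to be the \emph{main obstacle}, because convergence under the single functional $\varphi$ does not obviously control all of $M_*$; the key extra input is that $\varphi|_{M_2(e)}$ is \emph{faithful} (the second defining property of $s(\varphi)$). From (1) I have $\varphi(b_n)\to0$ (and $\varphi(c_n)\to0$). Choosing $r\ge\sup_n\norm{b_n}$, the sequence $(b_n)$ lies in the weak$^*$-compact set $rB_M\cap M_2(e)$; any weak$^*$-cluster point $b$ of $(b_n)$ is a positive element of $M_2(e)$ (weak$^*$-limits of positive elements are positive) satisfying $\varphi(b)=\lim\varphi(b_{n})=0$ along the relevant subnet, hence $b=0$ by faithfulness. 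Thus $0$ is the unique weak$^*$-cluster point of $(b_n)$ in a compact set, so $b_n\to0$ weak$^*$, and symmetrically $c_n\to0$ weak$^*$, which is (3). Combining the implications (1)$\Rightarrow$(3)$\Rightarrow$(1), (2)$\iff$(3) and (3)$\iff$(4) establishes the full equivalence.
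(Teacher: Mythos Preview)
Your proof is correct and largely parallels the paper's, but the key step $(1)\Rightarrow\{(2),(3)\}$ is handled differently. The paper argues $(1)\Rightarrow(2)$ directly: from $\varphi(b_n)\to0$ and $0\le b_n^2\le\norm{b_n}\,b_n$ it obtains $\varphi(b_n^2)\to0$, i.e.\ $\norm{b_n}_{\varphi}\to0$ in the JBW$^*$-algebra $M_2(e)$, and then invokes Lemma~\ref{L:JBW*algebra sigma finite}$(b)$ (which applies because $e=s(\varphi)$ makes $M_2(e)$ $\sigma$-finite with faithful normal state $\varphi/\norm{\varphi}$) to conclude that $b_n\to0$ strong$^*$. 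You instead prove $(1)\Rightarrow(3)$ via weak$^*$-compactness of bounded sets and faithfulness of $\varphi|_{M_2(e)}$: every weak$^*$-cluster point of $(b_n)$ is a positive element of $M_2(e)$ annihilated by $\varphi$, hence zero. Your route is a bit more self-contained, avoiding the metrizability Lemma~\ref{L:JBW*algebra sigma finite}; the paper's route reaches the stronger conclusion $(2)$ in one step. Your argument for $(4)\Rightarrow(3)$ via the order domination $0\le b_n\le b_n+c_n$ and Lemma~\ref{L:strong* convergence for positive} is also a valid alternative to the paper's cycle $(4)\Rightarrow(1)\Rightarrow(2)\Rightarrow(3)$.
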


\begin{proof}
First notice, that it does not matter whether the convergence is considered in the JBW$^*$-triple $M$ or in the JBW$^*$-algebra $M_2(e)$ (cf. \cite[COROLLARY]{bunce2001norm}). The strong$^*$ case follows from Lemma~\ref{L:strong* topology}$(a)$, $(b)$, the weak$^*$ case is obvious.\smallskip

$(1)\Rightarrow(2)$
Assume that $\norm{a_n}_\varphi\to 0$, i.e., $\varphi(\J {a_n}{a_n}e)\to 0$. Since $\varphi=\varphi\circ P_2(e)$, by Lemma~\ref{L:properties of M2(e)}$(a)$ we have
$$\varphi(\J {a_n}{a_n}e)=\varphi(\J {P_2(e)(a_n)}{P_2(e)(a_n)}e + \J {P_1(e)(a_n)}{P_1(e)(a_n)}e).$$ We actually know that the elements
$$w_n=\J {P_2(e)(a_n)}{P_2(e)(a_n)}e\mbox{ and }z_n=\J {P_1(e)(a_n)}{P_1(e)(a_n)}e$$
are positive in the JBW$^*$-algebra $M_2(e)$ by Lemma~\ref{L:properties of M2(e)}$(d)$. Note that $0\le w_n^2\le\norm{w_n}\cdot w_n$. Since the sequence $(w_n)$ is bounded, we deduce $\varphi(w_n^2)\to 0$, hence $w_n \overset{\mbox{strong}^*}{\longrightarrow} 0$ by Lemma~\ref{L:JBW*algebra sigma finite}$(b)$.
Similarly we get  $z_n \overset{\mbox{strong}^*}{\longrightarrow} 0$.\smallskip

$(2)\Rightarrow(3)$ This is clear, as the weak$^*$ topology is weaker than the strong$^*$ one.\smallskip

$(3)\Rightarrow(4)$ This follows by the linearity of the weak$^*$ topology.\smallskip

$(4)\Rightarrow(1)$ This follows from the fact that
$$\norm{a_n}_\varphi^2=\varphi(\J {P_2(e)(a_n)}{P_2(e)(a_n)}e + \J {P_1(e)(a_n)}{P_1(e)(a_n)}e).$$
\end{proof}

The following lemma, together with the preceding one, provides a proof of assertion $(i)$ of Proposition~\ref{P:seminorms order}.

\begin{lemma}\label{L:semin-order-positive} Let $M$ be a JBW$^*$-triple and $p,u\in M$ tripotents such that $u\in M_2(p)$. Then for any bounded sequence $(a_n)$ in $M$ we have
\begin{multline*}
\J{P_2(p)(a_n)}{P_2(p)(a_n)}p+\J{P_1(p)(a_n)}{P_1(p)(a_n)}p \overset{\mbox{weak}^*}{\longrightarrow}0\\
\Longrightarrow \J{P_2(u)(a_n)}{P_2(u)(a_n)}u+\J{P_1(u)(a_n)}{P_1(u)(a_n)}u \overset{\mbox{weak}^*}{\longrightarrow}0.
\end{multline*}
\end{lemma}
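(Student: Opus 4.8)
The plan is to test the desired weak$^*$-null convergence against positive functionals and to reduce everything to the strong$^*$ topology. The element $\J{P_2(u)(a_n)}{P_2(u)(a_n)}u+\J{P_1(u)(a_n)}{P_1(u)(a_n)}u$ is positive in the JBW$^*$-algebra $M_2(u)$ (Lemma~\ref{L:properties of M2(e)}$(d)$) and bounded, so by Lemma~\ref{L:strong* convergence for positive} it is weak$^*$-null as soon as $\phi$ applied to it tends to $0$ for every positive $\phi\in (M_2(u))_*$. Fix such a $\phi$ and set $\tilde\phi=\phi\circ P_2(u)\in M_*$; then $\|\tilde\phi\|=\tilde\phi(u)=\phi(u)$, so Lemma~\ref{L:functionals and seminorms}$(d),(e)$ applies with $e=u$, and Lemma~\ref{L:properties of M2(e)}$(a)$ gives
$$\phi\big(\J{P_2(u)(a_n)}{P_2(u)(a_n)}u+\J{P_1(u)(a_n)}{P_1(u)(a_n)}u\big)=\tilde\phi(\J {a_n}{a_n}u)=\norm{a_n}_{u,\tilde\phi}^2.$$
Since the support tripotent of $\tilde\phi$ lies below $u$ and $\tilde\phi$ attains its norm at $u$, Lemma~\ref{L:functionals and seminorms}$(d)$ shows $\norm{\cdot}_{u,\tilde\phi}=\norm{\cdot}_{\tilde\phi}$, a genuine strong$^*$-continuous seminorm. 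Thus it suffices to prove $\norm{a_n}_{\tilde\phi}\to0$.

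First I would exploit the hypothesis. The two summands of $\J{P_2(p)(a_n)}{P_2(p)(a_n)}p+\J{P_1(p)(a_n)}{P_1(p)(a_n)}p$ are positive in $M_2(p)$, so each is separately weak$^*$-null (a positive element dominated by a weak$^*$-null positive one is weak$^*$-null, Lemma~\ref{L:strong* convergence for positive}). For the first summand, Lemma~\ref{L:properties of M2(e)}$(c)$ gives $\J{P_2(p)(a_n)}{P_2(p)(a_n)}p=P_2(p)(a_n)\circ_p (P_2(p)(a_n))^{*_p}$; being positive and weak$^*$-null it is strong$^*$-null (Lemma~\ref{L:strong* convergence for positive}), which by Lemma~\ref{L:strong* topology}$(a)$ means exactly that $P_2(p)(a_n)\to0$ strong$^*$ in $M_2(p)$, hence in $M$ (Lemma~\ref{L:strong* topology}$(b)$). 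As $M_2(u)\subseteq M_2(p)$ and the Peirce projections of $u$ and $p$ commute (Lemma~\ref{L:comuting Laa Lxx}), we get $P_2(u)P_1(p)=P_2(u)P_0(p)=0$, so $P_2(u)(a_n)=P_2(u)P_2(p)(a_n)$; applying the strong$^*$-continuous projections $P_2(u),P_1(u)$ (Lemma~\ref{L:strong* topology}$(d)$) shows that $\gamma_n:=(P_2(u)+P_1(u))P_2(p)(a_n)\in M_2(p)$ tends to $0$ strong$^*$, whence $\norm{\gamma_n}_{\tilde\phi}\to0$.

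It remains to control the contribution of $y_n:=P_1(p)(a_n)$. Since $P_2(u)(y_n)=0$, we have $(P_2(u)+P_1(u))(a_n)=\gamma_n+\beta_n$ with $\beta_n:=P_1(u)(y_n)$; because $\norm{\cdot}_{u,\tilde\phi}$ vanishes on $M_0(u)$, the triangle inequality gives $\norm{a_n}_{\tilde\phi}^2\le 2\norm{\gamma_n}_{\tilde\phi}^2+2\norm{\beta_n}_{\tilde\phi}^2$, and by Lemma~\ref{L:properties of M2(e)}$(a)$, $\norm{\beta_n}_{\tilde\phi}^2=\tilde\phi(\J{y_n}{y_n}u)=\phi\big(P_2(u)\J{y_n}{y_n}u\big)$. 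The second summand of the hypothesis yields $\J{y_n}{y_n}p\to0$ weak$^*$, so the point is to pass from $\J{y_n}{y_n}p$ to $P_2(u)\J{y_n}{y_n}u$. The plan is to use the splitting of the $M_2(p)$-module $M_1(p)$ determined by the initial and final projections of $u$ inside $M_2(p)$: the positive element $\J{y_n}{y_n}p$ then decomposes as a sum of two positive elements of $M_2(p)$ (each still weak$^*$-null), while $P_2(u)\J{y_n}{y_n}u$ is the sum of the images of these two elements under two weak$^*$-continuous linear maps manufactured from $u$ (in the $C^*$-picture these are $h\mapsto q_f h u$ and $h\mapsto u h q_i$, with $q_f,q_i$ the final and initial projections of $u$). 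Separate weak$^*$-continuity of the triple product then forces $\phi(P_2(u)\J{y_n}{y_n}u)\to0$, completing the argument.

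The hard part will be exactly this last step. There is no weak$^*$-continuous \emph{quadratic} control of $\beta_n$ by $y_n$ — indeed $y_n$ need not converge in any useful topology — so one cannot simply push a convergence of $y_n$ through; instead the single positive quantity $\J{y_n}{y_n}p$ must be split along the two halves of $M_1(p)$ and $P_2(u)\J{y_n}{y_n}u$ recognised as a sum of weak$^*$-continuous \emph{linear} images of those halves. Carrying this out intrinsically in a JBW$^*$-triple — justifying the module decomposition by Peirce arithmetic (Lemma~\ref{L:properties of M2(e)}) and verifying that the resulting functionals on $M_2(p)$ are positive and weak$^*$-continuous — is where the real work lies; passing to a concrete representation through the structure theory (e.g.\ Lemma~\ref{L:Jembed JB*} and Proposition~\ref{P:triple 1-complemented}) is a convenient way to make this computation transparent.
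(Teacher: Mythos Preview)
Your reduction to positive functionals and your treatment of the $P_2(p)$--contribution are correct: once $\J{P_2(p)(a_n)}{P_2(p)(a_n)}p\to0$ weak$^*$, positivity and Lemma~\ref{L:strong* convergence for positive} give $P_2(p)(a_n)\to0$ strong$^*$, and the Peirce projections of $u$ pass this through. The difficulty, as you correctly isolate, is the $P_1(p)$--part: from $\J{y_n}{y_n}p\to0$ weak$^*$ you need $\phi\big(P_2(u)\J{y_n}{y_n}u\big)\to0$.

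Here your outline has a real gap. The maps $h\mapsto q_f h u$ and $h\mapsto u h q_i$ need an associative product and separate initial/final projections of $u$; these do not exist intrinsically in a JBW$^*$-triple, nor in the JBW$^*$-algebra $M_2(p)$ (only the symmetric $u\circ u^*$ is available). The tools you cite do not repair this: Lemma~\ref{L:Jembed JB*} concerns only the subalgebra generated by a single element, and Proposition~\ref{P:triple 1-complemented} lands in a JBW$^*$-algebra, not a von Neumann algebra, so $q_f,q_i$ still cannot be separated.

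The paper resolves exactly this point in a different way. It first uses an embedding of $M$ into a unital JB$^*$-algebra $B$ in which $p$ becomes a \emph{projection} (this comes from \cite[Proposition~2.4]{BuFPMaMoPe}, not from the lemmata you mention). Then it introduces the single linear map $G:B\to B_2(u)$, $G(x)=P_2(u)(x\circ u)$, which uses only the Jordan product; in a $C^*$-picture $G(h)=\tfrac12(q_f h u+u h q_i)$, i.e.\ precisely the symmetric sum of your two maps. The key algebraic identity, proved by Peirce arithmetic, is
\[
P_2(u)\big(\J xxu+\J{x^*}{x^*}u\big)=2\,G\big(P_2(p)\J xxp\big),
\]
and one checks that $G$ maps $M$ into $M_2(u)$ weak$^*$-continuously. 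Applying this to $x=a_n$, all four summands on the left are positive in $M_2(u)$, so domination and Lemma~\ref{L:strong* convergence for positive} finish the job. Thus the missing idea in your sketch is to \emph{not} separate $q_f$ and $q_i$: their symmetric combination is Jordan-expressible as $G$, and the auxiliary $x^*$--terms it produces are harmless because of positivity.
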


\begin{proof}
Let us start by noticing that we can identify $M$ with a JB$^*$-subtriple of a unital JB$^*$-algebra $B$ in such a way that $p$ is a projection in $B$. This is proved in the first paragraph of the proof of \cite[Proposition 2.4]{BuFPMaMoPe} (using \cite[Corollary 1]{Friedman-Russo-GN} and \cite[Lemma 2.3]{BuFPMaMoPe}). (Note that $B$ can be assumed to be a JBW$^*$-algebra -- just pass to $B^{**}$.)\smallskip

Let us consider the bounded linear mapping $G: B\to B_2(u)$ defined by
$$G(x)=P_2(u)(x\circ u),\quad (x\in B).$$

Further, observe that for any $x\in B$ we have
$$\begin{aligned}\J xxu + \J {x^*}{x^*}u&=
(x\circ x^*)\circ u + (x^*\circ u)\circ x - (x\circ u)\circ x^* \\&\qquad + (x\circ x^*)\circ u + (u\circ x)\circ x^* - (x^*\circ u)\circ x
\\&=2(x\circ x^*)\circ u,\end{aligned}$$
thus
$$\begin{aligned}
P_2(u)(\J xxu + \J {x^*}{x^*}u)&= 2 P_2(u)((x\circ x^*)\circ u)
\\&=2 G(x\circ x^*)=2 G(\J xx1).
\end{aligned}$$
Further, given any $x\in B$, we have
$$\begin{aligned}
P_2(p)(\J xx1\circ u) & =
 P_2(p) \J {\J xx1}1u
\\&=\J{P_2(p)\J xx1}{P_2(p)(1)}u + \J{P_1(p)\J xx1}{P_1(p)(1)}u \\&=\J{P_2(p)\J xx1}{p}u
=\J{P_2(p)\J xx1}{1}u\\&= (P_2(p)\J xx1)\circ u=(P_2(p)\J xxp)\circ u.
\end{aligned}$$
Indeed, the first equality is obvious, the second one follows from Peirce arithmetic as $u\in M_2(p)\subset B_2(p)$. In the third equality we use the facts that $P_2(p)(1)=p$ and $P_1(p)(1)=0$.
The fourth equality follows by Peirce arithmetic using the fact that $1-p\in B_0(p)$, and thus $1-p\perp u$. The fifth equality is obvious and the sixth one follows from Lemma~\ref{L:properties of M2(e)}$(a)$.\smallskip

Thus, for each $x\in B$ we have
$$\begin{aligned}
G(\J xx1)&=P_2(u)(\J xx1\circ u)
\stackrel{\ref{P:M2 inclusion}(2)}{=}P_2(u)P_2(p)(\J xx1\circ u)\\&=
P_2(u)((P_2(p)\J xxp)\circ u)=G(P_2(p)\J xxp),\end{aligned}$$
so
$$P_2(u)(\J xxu + \J {x^*}{x^*}u)=2 G(P_2(p)\J xxp).$$

Let us check what happens in $M$. If $x\in M$, then
$$\begin{aligned}
 G(x)&=P_2(u)(x\circ u)=P_2(u)P_2(p)\J x1u
\\&=P_2(u)(\J{P_2(p)(x)}{P_2(p)(1)}u+\J{P_1(p)(x)}{P_1(p)(1)}u)\\&=P_2(u)\J{P_2(p)x}pu.
\end{aligned}$$
It follows that $G$ maps $M$ into $P_2(u)(M)=M_2(u)$ and, moreover, $G$ restricted to $M$ is weak$^*$-to-weak$^*$ continuous.\smallskip

So, assume $(a_n)$ is a bounded sequence in $M$ such that
$$\J{P_2(p)(a_n)}{P_2(p)(a_n)}p+\J{P_1(p)(a_n)}{P_1(p)(a_n)}p \overset{\mbox{weak}^*}{\longrightarrow}0,$$
equivalently,
$$P_2(p)\J{a_n}{a_n}p\overset{\mbox{weak}^*}{\longrightarrow}0.$$
Since this sequence lives in $M$, using weak$^*$-to-weak$^*$ continuity of $G$ we get
$$G(P_2(p)\J{a_n}{a_n}p)\overset{\mbox{weak}^*}{\longrightarrow}0,$$
and this sequence is contained in $M_2(u)$. Note that by the above calculation and Lemma~\ref{L:properties of M2(e)}(a) we have
$$\begin{aligned}
 G(P_2(p)\J{a_n}{a_n}p)&=\frac12 P_2(u)(\J {a_n}{a_n}u + \J {a_n^*}{a_n^*}u)\\&=
\frac12\Big(\J{P_2(u)(a_n)}{P_2(u)(a_n)}u+\J{P_1(u)(a_n)}{P_1(u)(a_n)}u
\\&\quad+\J{P_2(u)(a_n^*)}{P_2(u)(a_n^*)}u+\J{P_1(u)(a_n^*)}{P_1(u)(a_n^*)}u
\Big).\end{aligned}$$
Moreover, all the four summands in the right hand side are positive elements in the JB$^*$-algebra $B_2(u)$ by Lemma~\ref{L:properties of M2(e)}$(d)$, hence their sum is positive as well. Moreover, the sum belongs to $M_2(u)$ and the first two summands as well, and thus  $$\J{P_2(u)(a_n^*)}{P_2(u)(a_n^*)}u+\J{P_1(u)(a_n^*)}{P_1(u)(a_n^*)}u\in M_2(u),$$ too. Since
$$\begin{aligned}
0&
\le \J{P_2(u)(a_n)}{P_2(u)(a_n)}u+\J{P_1(u)(a_n)}{P_1(u)(a_n)}u \\&\le
\J{P_2(u)(a_n)}{P_2(u)(a_n)}u+\J{P_1(u)(a_n)}{P_1(u)(a_n)}u \\&\qquad + \J{P_2(u)(a_n^*)}{P_2(u)(a_n^*)}u+\J{P_1(u)(a_n^*)}{P_1(u)(a_n^*)}u,\end{aligned}$$
the equivalence $(\ref{it:strong* conv2})\Leftrightarrow(\ref{it:strong* conv3})$ in Lemma~\ref{L:strong* convergence for positive}$(2)$ shows that
$$\J{P_2(u)(a_n)}{P_2(u)(a_n)}u+\J{P_1(u)(a_n)}{P_1(u)(a_n)}u\overset{\mbox{weak}^*}{\longrightarrow}0,$$
which completes the proof.
\end{proof}

The following lemma together with Lemma~\ref{L:seminorm characterization} provides the proof for assertion $(ii)$ in Proposition~\ref{P:seminorms order}.

\begin{lemma}
Let $M$ be a JBW$^*$-triple and $e,u\in M$ two tripotents such that $M_2(u)\subsetneqq M_2(e)$. Then there is a bounded sequence $(a_n)$ in $M$ such that
$$\J{P_2(u)(a_n)}{P_2(u)(a_n)}u\overset{\mbox{strong}^*}{\longrightarrow}0\mbox{ and }\J{P_1(u)(a_n)}{P_1(u)(a_n)}u \overset{\mbox{strong}^*}{\longrightarrow}0$$
but
$$\J{P_2(e)(a_n)}{P_2(e)(a_n)}e+\J{P_1(e)(a_n)}{P_1(e)(a_n)}e \overset{\mbox{strong}^*}{\not\hskip-3pt\longrightarrow}0.$$
\end{lemma}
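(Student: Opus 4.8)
The plan is to reduce everything to the JBW$^*$-algebra $N:=M_2(e)$ and then split into two cases according to the size of the Peirce-$0$ space of $u$ inside $N$. Since $M_2(u)\subset M_2(e)$, Proposition~\ref{P:M2 inclusion} gives $u\in M_2(e)=N$, so $u$ is a tripotent of the JBW$^*$-algebra $N$ (with unit $e$), and the hypothesis $M_2(u)\subsetneqq M_2(e)$ says exactly that $u$ is \emph{not} unitary in $N$, i.e.\ $N_2(u):=M_2(u)\subsetneqq N$. As the Peirce projections of $u$ commute with those of $e$ (Lemma~\ref{L:comuting Laa Lxx}), each Peirce space of $u$ splits $N$, so $N=M_2(u)\oplus(N\cap M_1(u))\oplus(N\cap M_0(u))$, and non-unitarity forces $N\cap M_0(u)\neq\{0\}$ or $N\cap M_1(u)\neq\{0\}$. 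I will build the sequence $(a_n)$ inside $N$, so that $P_2(e)a_n=a_n$, $P_1(e)a_n=0$, and the target $e$-expression is simply $\J{a_n}{a_n}e=a_n\circ a_n^{*}$ by Lemma~\ref{L:properties of M2(e)}$(c)$. By Lemma~\ref{L:strong* topology}$(b)$ strong$^*$ convergence inside $N$ (or inside $M_2(u)$) agrees with that in $M$, and for the positive elements arising here I will pass freely between strong$^*$ and weak$^*$ convergence via Lemma~\ref{L:strong* convergence for positive}.

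The easy case is $N\cap M_0(u)\neq\{0\}$. Here I take a nonzero $a\in N\cap M_0(u)$ and the \emph{constant} sequence $a_n=a$. Since $a\in M_0(u)$ we have $P_2(u)a=P_1(u)a=0$, so both $u$-expressions vanish identically; and since $a\in M_2(e)$ is nonzero, Lemma~\ref{L:properties of M2(e)}$(d)$ gives $\J aae\neq0$, so the constant $e$-expression does not converge to $0$.

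The remaining case, $N\cap M_0(u)=\{0\}$ and $N\cap M_1(u)\neq\{0\}$, is where the real work lies, and a constant sequence provably cannot succeed (for $0\neq a\in N\cap M_1(u)$ one has $\J aau\neq0$ by Lemma~\ref{L:properties of M2(e)}$(d)$ applied to $u$). The idea is to produce a \emph{drifting} sequence in $N\cap M_1(u)$ whose ``final part'' stays fixed while its ``initial part'' spreads out. Concretely I would like tripotents $a_n\leq e$ lying in $N\cap M_1(u)$, all sharing one fixed nonzero final projection $q$ with $q\,u=0$, while their initial projections $p_n$ are pairwise orthogonal. Then $\J{a_n}{a_n}e=\tfrac12(a_na_n^{*}+a_n^{*}a_n)$ keeps the constant summand $\tfrac12 q\neq0$ and hence stays away from $0$, whereas $\J{a_n}{a_n}u=\tfrac12\,u\,p_n$ (the other summand dying because $q u=0$), and $u\,p_n\to0$ weak$^*$, hence strong$^*$, because the $p_n$ are pairwise orthogonal. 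The existence of such a family is precisely where non-unitarity is used: an isometry that is not a co-isometry forces $e$ to be an infinite projection, so $e$ dominates countably many pairwise orthogonal copies of $q$, yielding the partial isometries $a_n$.

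The main obstacle is making this last construction rigorous for a general JBW$^*$-algebra $N$ rather than a von Neumann algebra. I would first argue, using the Horn--Neher decomposition \eqref{eq decomposition of JBW*-triples} together with the fact that a non-unitary tripotent in a finite-dimensional Cartan factor has nonzero Peirce-$0$ space, that the condition $N\cap M_0(u)=\{0\}$ confines the proper inclusion $M_2(u)\subsetneqq N$ to a JW$^*$-summand of $N$; that summand can be realized as a weak$^*$-closed Jordan $^*$-subalgebra of a von Neumann algebra $B$ (via the Gelfand--Naimark theorem, cf.\ Lemma~\ref{L:Jembed JB*}), where the partial-isometry bookkeeping can be carried out, handling by a central reduction the possibility that $u$ is a non-surjective isometry on one central part and a non-injective co-isometry on another. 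The delicate point throughout is to keep the constructed elements $a_n$ and the projections $p_n,q$ inside $N$ (not merely in $B$), for which I would select $q$ and the $p_n$ among projections of $N$ itself and build the $a_n$ from $u$ and these projections.
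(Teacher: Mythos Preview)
Your easy case ($N\cap M_0(u)\neq\{0\}$) coincides with the paper's. For the hard case your geometric picture is exactly right --- in the paper's construction the sequence $a_n=(1-q)(u^*)^n$ indeed consists of partial isometries with common final projection $1-q$ and pairwise orthogonal initial projections $u^n(1-q)(u^*)^n$ --- but your route to building them has a genuine gap that you yourself flag: keeping the $a_n$, $q$, $p_n$ inside the Jordan algebra $N$ rather than in an ambient von Neumann algebra, and the proposed detour through the Horn--Neher decomposition to isolate a JW$^*$-summand is neither carried out nor obviously feasible (nothing in the hypotheses rules out exceptional summands of $N$ a priori, and central reductions may not respect the condition $N\cap M_0(u)=\{0\}$).

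The paper sidesteps this entirely. Instead of embedding all of $N$ (or a summand), it applies Lemma~\ref{L:Jembed JB*} only to the unital JB$^*$-subalgebra generated by $u$; this is automatically a JC$^*$-algebra, and the lemma arranges a representation in $B(H)$ with $u^*u=1$. Setting $q=uu^*=2u\circ u^*-1$ and defining the sequence \emph{via Jordan products}, $x_0=1-q$, $x_n=x_{n-1}\circ u^*$, $a_n=2^nx_n$, ensures by construction that every $a_n$ lies in the original algebra, while the $B(H)$ picture gives the closed formula $a_n=(1-q)(u^*)^n$ used for the computations. Membership $a_n\in M_1(u)$ is then checked by induction purely inside $M$ using Peirce arithmetic, and the strong$^*$ convergence $\J{a_n}{a_n}u=\tfrac12 u^{n+1}(1-q)(u^*)^n\to0$ is verified by a direct SOT argument in $H$. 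The point is that you do not need to find infinitely many orthogonal copies of $q$ by abstract means: the powers $(u^*)^n$ of a non-unitary isometry manufacture them for you, and the Jordan recursion keeps everything inside $N$.
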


\begin{proof} If $M_0(u)\cap M_2(e)$ contains a nonzero element $a$, then
$$\J{P_2(u)(a)}{P_2(u)(a)}u=  \J{P_1(u)(a)}{P_1(u)(a)}u=0$$ but
$$\J{P_2(e)(a)}{P_2(e)(a)}e+\J{P_1(e)(a)}{P_1(e)(a)}e=\J{a}{a}e\ne0$$
by Lemma~\ref{L:properties of M2(e)}$(d)$. It follows that the constant sequence $a_n=a$ works.\smallskip

Next assume that $M_0(u)\cap M_2(e)$ is trivial, hence $u$ is a complete tripotent in $M_2(e)$. Since $M_2(e)$ is a JBW$^*$-algebra, by Lemma~\ref{L:strong* topology}$(b)$ it is enough to consider the case in which $M=M_2(e)$. We shall therefore assume that $M$ is a JBW$^*$-algebra, $e=1$ and $u\in M$ is a complete non-unitary tripotent.\smallskip

Let $N$ denote the unital JB$^*$-subalgebra of $M$ generated by $u$. By Lemma~\ref{L:Jembed JB*} we can assume without loss of generality that  $N$ is a $JB^*$-subalgebra of $B(H)$ for a suitable complex Hilbert space $H$ and, moreover, $u^*u=1$ in $B(H)$. Since $u$ is not unitary, necessarily $uu^*\ne 1$.\smallskip

Set $q=uu^*$. Then $q$ is a projection in $B(H)$. Moreover, $q\in N$, as
$$q=uu^*=uu^*+u^*u-1=2 u\circ u^*-1.$$
Let us define a sequence in $N$ by
$x_0=1-q$ and $x_n=x_{n-1}\circ u^*$ for $n\in\en$. We claim that
$$x_n=2^{-n} (1-q) (u^*)^n,\qquad n\in\en\cup\{0\}.$$
Indeed, for $n=0$ the equality holds. Assume that $n\in\en$ and the equality holds for $n-1$. Then
$$\begin{aligned}
x_n&= x_{n-1}\circ u^*=\frac12(x_{n-1}u^*+u^*x_{n-1})=2^{-n}((1-q)(u^*)^n+u^*(1-q)(u^*)^{n-1})\\&=2^{-n}(1-q)(u^*)^n,
\end{aligned}$$
as obviously $u^*(1-q)=0$.\smallskip

Set $a_n=2^{n}x_n=(1-q)(u^*)^n$. Then $(a_n)$ is a bounded sequence in $N$, and hence in $M$. Further,
$$\begin{aligned}
\J{a_n}{a_n}1&=a_n\circ a_n^*=\frac12(a_n a_n^*+a_n^*a_n)\\&=
\frac12(u^n(1-q)(u^*)^n+(1-q)(u^*)^n u^n(1-q))\\&=\frac12(u^n(1-q)(u^*)^n+(1-q))\end{aligned}$$
as $u^*u=1$. Therefore,
$$\J{a_n}{a_n}1\ge 1-q,$$
and it then follows from Lemma~\ref{L:strong* convergence for positive} that $\J{a_n}{a_n}1$ does not converge to zero in the strong$^*$ topology.\smallskip

Further, observe that $q=P_2(u)(1)$, hence $q\in M_2(u)$ and $1-q\in M_1(u)$. We claim that $x_n\in M_1(u)$ for every $n\in \mathbb{N}\cup \{0\}$.
The case, $n=0$ is clear. The Peirce arithmetic yields by the induction hypothesis that
$$\begin{aligned}
 x_n&=x_{n-1}\circ u^*=\J{x_{n-1}}u1=\J{x_{n-1}}uq+\J{x_{n-1}}u{1-q}\\&=\J{x_{n-1}}uq\in M_1(u),\end{aligned}$$
where we used that $\J{x_{n-1}}u{1-q}\in M_0(u)=\{0\}$. So $a_n\in M_1(u)$ for each $n\in\en\cup\{0\}$ as well.
It follows by Lemma~\ref{L:properties of M2(e)}$(a)$ that
$$\begin{aligned}
P_2(u)\J{a_n}{a_n}u&=\J{a_n}{a_n}u=\frac12(a_na_n^*u+ua_n^*a_n)
\\&=
\frac12((1-q)(u^*)^nu^n(1-q)u+u u^n(1-q)(1-q)(u^*)^n)\\&=\frac12u^{n+1}(1-q)(u^*)^n\end{aligned}$$
as $(1-q)u=0$. We shall show that the sequence 	$\frac12u^{n+1}(1-q)(u^*)^n$ strong$^*$ converges to zero.\smallskip

To this end note that $u^n$ is a partial isometry for each $n\in\en$ and, moreover, $(u^n)^* u^n=1$. Thus its final projection $q_n=u^n (u^n)^*$ belongs to $N$. Let
$$Y=\overline{\bigcup_{n\in\en}\ker ((u^n)^*)}.$$
Then $Y$ is a closed subspace of $H$, and $\displaystyle \lim_n(u^*)^n(\xi) = 0$ for each $\xi\in Y$.\smallskip

Furthermore,
$$Y^\perp=\bigcap_{n\in\en}(\ker ((u^n)^*))^\perp=\bigcap_{n\in\en}(\ker q_n)^\perp=\bigcap_{n\in\en} q_n(H).$$
Thus for any $\xi\in Y^\perp$ and $n\in\en$ we have $\xi=q_n (\xi)$ and so
$$\begin{aligned}
(1-q)(u^*)^n(\xi)&=(1-q)(u^*)^nq_{n+1}(\xi)=
(1-q)(u^*)^n u^{n+1} (u^*)^{n+1}(\xi)\\&=(1-q)u(u^*)^{n+1}(\xi)
=(1-q)q (u^*)^n(\xi)=0.\end{aligned}$$
It follows that the sequence $(1-q)(u^*)^n$ SOT converges to zero, hence clearly
$(\frac12u^{n+1}(1-q)(u^*)^n)$ strong$^*$ converges to zero.
This completes the proof.
\end{proof}

\subsection{Weakly compact sets in the predual of a JBW$^*$-triple}
There is a close connection of the strong$^*$ topology, the generating seminorms and the weakly compact subsets of the predual. It is witnessed, for example, by the following proposition which is proved in \cite[Theorem D.21]{Rodriguez94}, see also \cite[Theorem 5.10.138]{Cabrera-Rodriguez-vol2}.

\begin{prop}\label{P:strong*=Mackey}
Let $M$ be a JBW$^*$-triple. The strong$^*$ topology on bounded subsets of $M$ coincides with the Mackey topology {\rm(}i.e., with the topology of uniform convergence on weakly compact subsets of $M_*${\rm)}.
\end{prop}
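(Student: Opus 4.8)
The plan is to prove the two inclusions separately, obtaining one of them on all of $M$ and the other only on bounded sets. The main point to keep in mind is the duality: since $M=(M_*)^*$, the topology $\sigma(M_*,M)$ is exactly the weak topology of the Banach space $M_*$, so the Mackey topology $\tau(M,M_*)$ of the dual pair $\langle M,M_*\rangle$ is precisely the topology of uniform convergence on absolutely convex $\sigma(M_*,M)$-compact subsets of $M_*$; this is what is meant by ``uniform convergence on weakly compact subsets of $M_*$'', and by Krein's theorem it suffices to run over \emph{absolutely convex} weakly compact sets.

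First I would record that the strong$^*$ topology is a locally convex topology on $M$ whose topological dual is exactly $M_*$: this is precisely Lemma~\ref{L:strong* topology}$(d)$, which identifies the strong$^*$-continuous linear functionals with the weak$^*$-continuous ones. The Mackey--Arens theorem characterizes $\tau(M,M_*)$ as the finest locally convex topology on $M$ whose dual is $M_*$; hence any locally convex topology on $M$ with dual $M_*$ is automatically coarser than $\tau(M,M_*)$ (and finer than $\sigma(M,M_*)$, i.e. than the weak$^*$ topology). Applying this to the strong$^*$ topology settles the inclusion $\mathrm{strong}^*\subseteq\tau(M,M_*)$ on the \emph{whole} of $M$. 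This half is soft and uses nothing beyond Lemma~\ref{L:strong* topology}$(d)$.

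The reverse inclusion is the substantial part and is claimed only on bounded sets. It is enough to show that the identity map is continuous from $(B_M,\mathrm{strong}^*)$ to $(B_M,\tau(M,M_*))$, i.e. that each generating Mackey seminorm $p_K(x)=\sup_{\psi\in K}\abs{\psi(x)}$, for $K\subset M_*$ absolutely convex and weakly compact, is controlled on $B_M$ by finitely many strong$^*$ seminorms. The key input is the characterization of relatively weakly compact subsets of a JBW$^*$-triple predual recalled in the introduction (Barton--Friedman \cite{barton1987grothendieck}, \cite{peralta2006some}): for such a $K$ there exist $\varphi_1,\varphi_2\in M_*$ whose preHilbertian seminorm $\norm{\cdot}_{\varphi_1,\varphi_2}$ controls the functionals of $K$ uniformly on $B_M$, in the sense that for every $\ep>0$ there is $\delta>0$ with $\abs{\psi(x)}<\ep$ whenever $x\in B_M$, $\norm{x}_{\varphi_1,\varphi_2}<\delta$ and $\psi\in K$. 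Granting this, fix $x_0\in B_M$ and $\ep>0$; after adjusting the constants and using homogeneity, the set $\{x\in B_M:\norm{x-x_0}_{\varphi_1,\varphi_2}<\delta\}$ is a strong$^*$-neighbourhood of $x_0$ in $B_M$ on which $p_K(x-x_0)\le\ep$. Thus $\tau(M,M_*)$ is coarser than the strong$^*$ topology on $B_M$, and by scaling on every bounded set; combined with the first inclusion, the two topologies coincide on bounded subsets of $M$.

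I expect the main obstacle to be this second inclusion, whose whole force is concentrated in the weak-compactness/control-seminorm characterization: the delicate point is that a single pair $(\varphi_1,\varphi_2)$ must dominate the entire weakly compact family $K$ uniformly over the unit ball. It is precisely this uniform control that is later refined to a single functional in Theorem~\ref{t weakly compact with a single control functional}; here, however, the pair version of \cite{peralta2006some} already suffices.
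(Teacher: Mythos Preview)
The paper does not give its own proof of this proposition: it is simply quoted from \cite[Theorem D.21]{Rodriguez94} (see also \cite[Theorem 5.10.138]{Cabrera-Rodriguez-vol2}). So there is no ``paper's proof'' to compare against; what matters is whether your argument stands on its own.

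Your two-halves strategy is the standard one and is sound in outline. The inclusion $\mathrm{strong}^*\subseteq\tau(M,M_*)$ via Lemma~\ref{L:strong* topology}$(d)$ and Mackey--Arens is clean and correct. For the reverse inclusion you invoke the implication $(a)\Rightarrow(b)$ of Theorem~\ref{weakly compact characterization} (the weak-compactness characterisation from \cite{peralta2006some}), and granting that input your deduction is fine.

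The one point you should be careful about is potential circularity. Proposition~\ref{P:strong*=Mackey} dates to \cite{Rodriguez94} (1994), while the implication $(a)\Rightarrow(b)$ you quote is from \cite{peralta2006some} (2006); in the literature the latter is commonly obtained \emph{using} the former (indeed, ``$K$ weakly compact $\Rightarrow$ $p_K$ is Mackey-continuous $\Rightarrow$ $p_K|_{B_M}$ is strong$^*$-continuous $\Rightarrow$ some $\norm{\cdot}_{\varphi_1,\varphi_2}$ controls $K$'' is the most natural route). So as written your second half may be assuming what you want to prove. To make the argument self-contained you would need either to check that \cite{peralta2006some} establishes $(a)\Rightarrow(b)$ without appealing to the Mackey $=$ strong$^*$ identification, or to supply an independent proof of that implication (for instance by reducing to the JBW$^*$-algebra case via Proposition~\ref{P:triple 1-complemented} and then invoking the Akemann-type argument directly). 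Absent that check, the safe course is to do what the paper does and cite \cite[Theorem D.21]{Rodriguez94}.
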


We shall analyze the relationship in more detail in the following lemma.

\begin{lemma}\label{L:normal functional}
Assume that $M$ is a JBW$^*$-triple, $e\in M$ is a tripotent, and $\varphi\in M_*$ satisfies $\norm{\varphi}=\varphi(e)$.
Define the mapping $\Phi=\Phi_{e,\varphi}:M\to M_*$ by
$$\Phi(a)(x)=\varphi(\J xae),\qquad x\in M, a\in M.$$
\begin{enumerate}[$(a)$]
\item $\Phi$ is a conjugate linear mapping of $M$ into $M_*$ which is moreover weak$^*$-to-weak continuous and $\norm{\Phi}\le\norm{\varphi}$;
\item Set $K=K(e,\varphi)=\Phi(B_M)\subset \|\varphi\| \ B_{M_*}$. Then $K$ is an absolutely convex weakly compact subset of $M_*$;
\item Let $a\in M$ be arbitrary. Then
$$\sup\{\abs{\psi(a)}\setsep \psi\in K\}=\norm{\Phi(a)};$$
\item For each $x\in B_M$ we have
$$\norm{x}_{e,\varphi}^2\le \norm{\Phi(x)}\le\sqrt{\norm{\varphi}}\cdot\norm{x}_{e,\varphi}.$$
In particular, the topologies induced by the seminorms $\norm{\cdot}_{e,\varphi}$ and $\norm{\Phi(\cdot)}$ coincide on $B_M$.
\end{enumerate}
\end{lemma}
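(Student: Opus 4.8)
The plan is to organise everything around the single sesquilinear form
$\langle x,y\rangle:=\varphi(\J xye)$, which by Lemma~\ref{L:functionals and seminorms}$(d)$ is positive semidefinite, so that $\Phi(a)(x)=\langle x,a\rangle$ and $\norm{x}_{e,\varphi}^2=\langle x,x\rangle$. With this notation the four assertions become statements about this form. For $(a)$, conjugate-linearity of $\Phi$ is immediate from the conjugate-linearity of $\J xae$ in the middle variable $a$; boundedness follows from $\abs{\Phi(a)(x)}=\abs{\varphi(\J xae)}\le\norm{\varphi}\,\norm{\J xae}\le\norm{\varphi}\,\norm{x}\,\norm{a}$ (using $\norm e=1$), giving $\norm{\Phi(a)}\le\norm{\varphi}\norm a$ and hence $\norm\Phi\le\norm\varphi$. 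For the weak$^*$-to-weak continuity I would fix $x\in M=(M_*)^*$ and note that $a\mapsto\J xae$ is weak$^*$-to-weak$^*$ continuous by the separate weak$^*$-continuity of the triple product of a JBW$^*$-triple, so its composition with $\varphi\in M_*$ is weak$^*$-continuous; as this holds for every $x$, the map $\Phi$ is $w^*$-to-$w$ continuous.

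For $(b)$, the norm estimate from $(a)$ yields $K=\Phi(B_M)\subset\norm\varphi\,B_{M_*}$; absolute convexity follows from conjugate-linearity of $\Phi$ together with absolute convexity of $B_M$, since $\lambda_1\Phi(a_1)+\lambda_2\Phi(a_2)=\Phi(\overline{\lambda_1}a_1+\overline{\lambda_2}a_2)$ and $\abs{\lambda_1}+\abs{\lambda_2}\le1$ keeps the argument in $B_M$; and weak compactness is automatic because $B_M$ is weak$^*$-compact by Banach--Alaoglu and $\Phi$ is $w^*$-to-$w$ continuous by $(a)$, so $K$ is the continuous image of a compact set.

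For $(c)$ I would write both sides through the form: $\sup\{\abs{\psi(a)}\setsep\psi\in K\}=\sup_{b\in B_M}\abs{\Phi(b)(a)}=\sup_{b\in B_M}\abs{\langle a,b\rangle}$, whereas $\norm{\Phi(a)}=\sup_{x\in B_M}\abs{\langle x,a\rangle}$. The identity then reduces to the symmetry $\abs{\langle a,b\rangle}=\abs{\langle b,a\rangle}$, and this is the one genuinely non-formal point: a positive semidefinite \emph{sesquilinear} form over $\mathbb C$ is automatically Hermitian, so $\langle a,b\rangle=\overline{\langle b,a\rangle}$ and the two suprema coincide. (Alternatively, Hermiticity can be read off from Lemma~\ref{L:properties of M2(e)}$(b)$, i.e.\ $\J abe^*=\J bae$, combined with the positivity of $\varphi|_{M_2(e)}$ from Lemma~\ref{L:functionals and seminorms}$(c)$.)

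Finally, for $(d)$ the upper estimate is the Cauchy--Schwarz inequality $\abs{\langle y,x\rangle}\le\norm{y}_{e,\varphi}\norm{x}_{e,\varphi}$ for the form, combined with $\norm{y}_{e,\varphi}^2=\varphi(\J yye)\le\norm\varphi\norm y^2\le\norm\varphi$ for $y\in B_M$; taking the supremum over $y\in B_M$ gives $\norm{\Phi(x)}\le\sqrt{\norm\varphi}\,\norm{x}_{e,\varphi}$. The lower estimate comes from testing the supremum at $y=x\in B_M$, namely $\norm{\Phi(x)}\ge\abs{\langle x,x\rangle}=\norm{x}_{e,\varphi}^2$. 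To conclude that the two seminorm topologies agree on $B_M$, I would apply the two-sided estimate to $\tfrac12(x-x')\in B_M$ for $x,x'\in B_M$, using homogeneity of the seminorms and $\Phi(x)-\Phi(x')=\Phi(x-x')$ (valid since $\Phi$ is conjugate-linear); this produces $\norm{x-x'}_{e,\varphi}^2\le 2\norm{\Phi(x)-\Phi(x')}\le 2\sqrt{\norm\varphi}\,\norm{x-x'}_{e,\varphi}$, showing that each topology controls the other. Apart from the Hermitian-symmetry observation in $(c)$, every step is routine bookkeeping with the defining form, Cauchy--Schwarz, and the separate weak$^*$-continuity of the triple product.
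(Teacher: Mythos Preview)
Your proof is correct and follows essentially the same approach as the paper: both organise everything around the positive semidefinite sesquilinear form $\langle x,y\rangle=\varphi(\J xye)$, use separate weak$^*$-continuity of the triple product for $(a)$ and $(b)$, invoke that positive semidefinite implies Hermitian for $(c)$, and apply Cauchy--Schwarz plus evaluation at $y=x$ for $(d)$. One small point to make explicit in $(a)$: you should note (by the same separate-continuity argument, but in the variable $x$) that each $\Phi(a)$ is weak$^*$-continuous on $M$, i.e.\ that $\Phi$ really lands in $M_*$ rather than just $M^*$; the paper records this step separately.
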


\begin{proof}
$(a)$ The mapping
$$(x,y)\mapsto \varphi(\J xye),\qquad x,y\in M,$$
is a separately weak$^*$-continuous sesquilinear form on $M$ (cf. Lemma \ref{L:functionals and seminorms}). Indeed, the separate weak$^*$ continuity follows from the assumption $\varphi\in M_*$ together with the separate weak$^*$-to-weak$^*$ continuity of the Jordan product.\smallskip

It follows that for each $a\in M$ its image $\Phi(a)$ is a weak$^*$ continuous linear functional on $M$, hence $\Phi(a)\in M_*$. Further, $\Phi$ is clearly conjugate linear. The estimate of the norm is immediate from the inequality
$\norm{\{x,y,z\}}\le\norm{x}\norm{y}\norm{z}$ ($x,y,z\in M$) \cite[Corollary\ 3]{Friedman-Russo-GN}, \cite[Corollary\ 4.1.114]{Cabrera-Rodriguez-vol1}.
Finally, $\Phi$ is weak$^*$-to-weak continuous because for each $x\in (M_*)^*=M$ the mapping
$$a\mapsto\Phi(a)(x)=\varphi(\J xae)$$
is weak$^*$ continuous on $M$.\smallskip

$(b)$ This follows from $(a)$ as $B_M$ is weak$^*$ compact and absolutely convex.\smallskip

$(c)$ Let us compute:
$$\begin{aligned}
\sup\{\abs{\psi(a)}\setsep \psi\in K\}&
=\sup\{\abs{\Phi(x)(a)}\setsep x\in B_M\}
=\sup\{\abs{\varphi(\J axe)}\setsep x\in B_M\}
\\&=\sup\{\abs{\overline{\varphi(\J xae)}}\setsep x\in B_M\}
=\sup\{\abs{\overline{\Phi(a)(x)}}\setsep x\in B_M\}
\\&=\norm{\Phi(a)},\end{aligned}$$
where we used that the sesquilinear form from the proof of $(a)$ is
hermitian (because it is even positive semidefinite by Lemma~\ref{L:functionals and seminorms}$(d)$).
\smallskip

$(d)$ Fix any $x\in B_M$. Then
$$\norm{x}_{e,\varphi}^2=\varphi(\J xxe)=\Phi(x)(x)\le \norm{\Phi(x)},$$
which proves the first inequality. Further, for any $y\in B_M,$ by the Cauchy-Schwarz inequality, we have
$$\abs{\Phi(x)(y)}=\abs{\varphi(\J yxe)}\le \norm{x}_{e,\varphi}\cdot\norm{y}_{e,\varphi}\le \norm{x}_{e,\varphi}\cdot \sqrt{\norm{\Phi(y)}}\le \sqrt{\norm{\varphi}}\cdot\norm{x}_{e,\varphi}.$$
This proves the second inequality. The \textquoteleft in particular part\textquoteright then follows immediately.
\end{proof}

If $\varphi\in M_*\setminus\{0\}$, we set
$$\Phi_\varphi=\Phi_{s(\varphi),\varphi},\  K(\varphi)=K(s(\varphi),\varphi),\ \norm{\cdot}_\varphi^K=\norm{\Phi_{\varphi}(\cdot)},$$
where we use the notation from the previous lemma.\smallskip

We can next state
a characterization of relatively weakly compact subsets in the predual of a JBW$^*$-triple.

\begin{prop}\label{P:wcompact cofinal}
Let $M$ be a JBW$^*$-triple. Let $A\subset M_*\setminus\{0\}$ be such that the topology on $M$ generated by the family $\{\norm{\cdot}_\varphi:  \varphi\in A\}$ coincides on bounded sets with the strong$^*$ topology.
Then the following assertions are satisfied.
\begin{enumerate}[$(a)$]
\item Let $L\subset M_*$ be a weakly compact subset and $\varepsilon>0$. Then there are $\varphi_1,\dots,\varphi_k\in A$ and $n\in\en$ such that $$L\subset n\cdot \co(K({\varphi_1})\cup \dots \cup K({\varphi_k})) +\varepsilon B_{M_*};$$
\item Assume moreover that the family $\{M_2(s(\varphi))\setsep \varphi\in A\}$ is up-directed by inclusion. Then for any weakly compact set $L\subset M_*$ and any $\varepsilon>0$ there are $\varphi\in A$ and $n\in\en$ such that
     $L\subset n K(\varphi)+\varepsilon B_{M_*}$.
\end{enumerate}
In particular, the assumption is satisfied if
 $$\forall\psi\in M_*\setminus\{0\},\  \exists\varphi\in A: s(\psi)\in M_2(s(\varphi)).$$
\end{prop}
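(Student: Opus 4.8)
The plan is to prove part $(a)$ by a Hahn--Banach separation argument, then to deduce $(b)$ from $(a)$ through an absorption lemma that exploits the up-directedness, and finally to verify the concluding ``in particular'' clause directly from Proposition~\ref{P:seminorms order}$(i)$. Throughout I would work with the objects of Lemma~\ref{L:normal functional}, writing $\norm{x}_\varphi^K=\norm{\Phi_\varphi(x)}=\sup\{\abs{\psi(x)}\setsep\psi\in K(\varphi)\}$ by Lemma~\ref{L:normal functional}$(c)$, and using that, by Lemma~\ref{L:normal functional}$(d)$, the seminorms $\norm\cdot_\varphi$ and $\norm\cdot_\varphi^K$ induce the same topology on $B_M$. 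The crucial input is that, by hypothesis together with Proposition~\ref{P:strong*=Mackey}, the topology generated on $B_M$ by $\{\norm\cdot_\varphi\setsep\varphi\in A\}$ (equivalently by $\{\norm\cdot_\varphi^K\setsep\varphi\in A\}$) is precisely the Mackey topology of uniform convergence on the weakly compact subsets of $M_*$.

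For $(a)$, I would first replace $L$ by its closed absolutely convex hull, which stays weakly compact by Krein's theorem, and put $R=\sup\{\norm\psi\setsep\psi\in L\}<\infty$. Suppose the conclusion fails, so that for every finite $F=\{\varphi_1,\dots,\varphi_k\}\subset A$ and every $n\in\en$ there is $\psi_{F,n}\in L$ lying outside $nK_F+\varepsilon B_{M_*}$, where $K_F=\co(K(\varphi_1)\cup\dots\cup K(\varphi_k))$. Since $nK_F+\varepsilon B_{M_*}$ is convex and norm-closed it is weakly closed (Mazur), so Hahn--Banach separation in $(M_*,\sigma(M_*,M))$ produces a normalized $x_{F,n}\in B_M$ with $\Re\,\psi_{F,n}(x_{F,n})>n\max_i\norm{x_{F,n}}_{\varphi_i}^K+\varepsilon$, whence $\max_i\norm{x_{F,n}}_{\varphi_i}^K<R/n$. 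Ordering the pairs $(F,n)$ by inclusion of $F$ and size of $n$ yields a directed set, and for each fixed $\varphi\in A$ one eventually has $\varphi\in F$, so $\norm{x_{F,n}}_\varphi^K\le\max_i\norm{x_{F,n}}_{\varphi_i}^K<R/n\to0$; thus the net $(x_{F,n})$ tends to $0$ in the $A$-topology on $B_M$, i.e.\ in the Mackey topology. This forces $\sup\{\abs{\psi(x_{F,n})}\setsep\psi\in L\}\to0$, contradicting $\abs{\psi_{F,n}(x_{F,n})}>\varepsilon$.

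For $(b)$ I would first isolate an \emph{absorption lemma}: if $\varphi,\varphi'\in A$ satisfy $M_2(s(\varphi'))\subseteq M_2(s(\varphi))$, then for each $\delta>0$ there is $m\in\en$ with $K(\varphi')\subseteq mK(\varphi)+\delta B_{M_*}$. Its proof repeats the separation scheme, but now with $\varphi$ \emph{fixed}: failure yields $\eta_m\in K(\varphi')$ and normalized $y_m\in B_M$ with $\norm{y_m}_\varphi^K<\norm{\varphi'}/m\to0$, hence $\norm{y_m}_\varphi\to0$; since by Proposition~\ref{P:seminorms order}$(i)$ the seminorm $\norm\cdot_{\varphi'}$ is weaker than $\norm\cdot_\varphi$ on bounded sets, $\norm{y_m}_{\varphi'}\to0$ and so $\norm{y_m}_{\varphi'}^K=\sup\{\abs{\zeta(y_m)}\setsep\zeta\in K(\varphi')\}\to0$, contradicting $\abs{\eta_m(y_m)}>\delta$. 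With this lemma I would apply $(a)$ with $\varepsilon/2$ to obtain $\varphi_1,\dots,\varphi_k\in A$ and $n$, use the up-directedness to pick $\varphi\in A$ with $M_2(s(\varphi_i))\subseteq M_2(s(\varphi))$ for all $i$, absorb each $K(\varphi_i)$ into $mK(\varphi)+\delta B_{M_*}$ for a common large $m$ and small $\delta$, pass to the convex hull (which leaves the already convex $mK(\varphi)+\delta B_{M_*}$ unchanged), and choose $\delta\le\varepsilon/(2n)$ so that the accumulated error stays below $\varepsilon$, landing in $nm\,K(\varphi)+\varepsilon B_{M_*}$.

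Finally, for the ``in particular'' clause I would observe that the $A$-topology is automatically coarser than the strong$^*$ topology, as $\{\norm\cdot_\varphi\setsep\varphi\in A\}$ is a subfamily of the full generating family; conversely, given any $\psi\in M_*\setminus\{0\}$ the hypothesis supplies $\varphi\in A$ with $s(\psi)\in M_2(s(\varphi))$, and Proposition~\ref{P:seminorms order}$(i)$ renders $\norm\cdot_\psi$ weaker than $\norm\cdot_\varphi$ on bounded sets, so each generating strong$^*$ seminorm is dominated on $B_M$ by the $A$-topology; taking the supremum over $\psi$ gives coincidence on bounded sets. I expect the main obstacle to be part $(a)$: one must check that $nK_F+\varepsilon B_{M_*}$ is genuinely weakly closed, that the separating functionals can be normalized into $B_M$, and above all that the constructed net really converges in the $A$-topology — the delicate point being that membership $\varphi\in F$ holds eventually along the directed index, so that every individual seminorm is controlled — after which the identification of this topology with the Mackey topology closes the argument.
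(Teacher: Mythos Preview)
Your argument is correct. Both your proof and the paper's rest on the same ingredients --- Proposition~\ref{P:strong*=Mackey}, Lemma~\ref{L:normal functional} (in particular the identity $\norm{\cdot}_\varphi^K=q_{K(\varphi)}$), and Proposition~\ref{P:seminorms order}$(i)$ --- but extract the conclusion differently. For $(a)$ the paper argues \emph{directly}: the inclusion $\{x\in B_M:\max_j\norm{x}_{\varphi_j}^K\le\delta\}\subset\{x\in B_M:q_L(x)\le\varepsilon\}$ is turned, by an explicit polar computation, into $\tfrac1\varepsilon L\subset\overline{\tfrac1\delta\co(\bigcup_j K(\varphi_j))+B_{M_*}}$; this is short and yields the constant $n$ explicitly as (essentially) $1/\delta$. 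Your route --- contradiction, Hahn--Banach separation, and a net converging to $0$ in the Mackey topology on $B_M$ --- is a legitimate alternative that avoids the bipolar calculus at the cost of being non-constructive. For $(b)$ the paper again proceeds directly: after choosing $\varphi$ dominating the $\varphi_j$, it obtains a single-seminorm neighborhood inclusion and repeats the polar step; your path through an absorption lemma $K(\varphi')\subset mK(\varphi)+\delta B_{M_*}$ followed by an application of $(a)$ is slightly more roundabout but equally valid. One small remark on your write-up: the quickest way to see that $nK_F+\varepsilon B_{M_*}$ is weakly closed is ``weakly compact $+$ weakly closed $=$ weakly closed'', rather than first arguing norm-closedness and then invoking Mazur.
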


\begin{proof}
$(a)$ We may assume that $0<\varepsilon \leq 1$. We will use the following notation. For a bounded set $D\subset M_*$ denote by $q_D$ the seminorm on $M$ defined by
$$q_D(x)=\sup\{\abs{\varphi(x)}\setsep \varphi\in D\}.$$
Then $q_L$ is a Mackey continuous seminorm on $M$, so $q_L|_{B_M}$ is strong$^*$ continuous by Proposition~\ref{P:strong*=Mackey}.
The assumption together with Lemma~\ref{L:normal functional}$(d)$ yields the existence of $\varphi_1,\dots,\varphi_k\in A$  and a natural number $m$ such that for $\delta= \frac{1}{m}>0$ we have
$$\{x\in B_M\setsep \norm{x}_{\varphi_j}^K\le\delta\mbox{ for }j=1,\dots,k\}\subset
\{x\in B_M\setsep q_L(x)\le\varepsilon\},$$
hence
$$\{x\in B_M\setsep \norm{x}_{\varphi_j}^K\le\delta\mbox{ for }j=1,\dots,k\}_\circ\supset
\{x\in B_M\setsep q_L(x)\le\varepsilon\}_\circ.$$
Clearly
$$\{x\in B_M\setsep q_L(x)\le\varepsilon\}_\circ\supset\frac1\varepsilon L.$$
Further, by Lemma~\ref{L:normal functional}$(c)$ we have $\norm{\cdot}_{\varphi}^K=q_{K(\varphi)}$ for any $\varphi\in A$, hence
$$
\begin{aligned}
\{x\in B_M\setsep &\norm{x}_{\varphi_j}^K\le\delta\mbox{ for }j=1,\dots,k\}_\circ
=\left(B_M\cap \bigcap_{j\le k} \{x\in M\setsep q_{K(\varphi_j)}(x)\le \delta\}\right)_\circ
\\&=\left(B_M\cap\bigcap_{j\le k}\left(\frac1\delta K(\varphi_j)\right)^\circ\right)_\circ =\ \left(\left(B_{M_*}\cup\frac1\delta\bigcup_{j\le k} K(\varphi_j)\right)^\circ\right)_\circ\\&
\subset\overline{\frac1\delta\co(K(\varphi_1)\cup\dots\cup K(\varphi_k))+B_{M_*}}.
\end{aligned}$$
It follows that
$$\begin{aligned}
 L&\subset \overline{\frac\varepsilon\delta\co(K(\varphi_1)\cup\dots\cup K(\varphi_k))+\varepsilon B_{M_*}} \\&\subset m\cdot\co(K(\varphi_1)\cup\dots\cup K(\varphi_k))+2\varepsilon B_{M_*}, \end{aligned}$$
which completes the proof.\smallskip

$(b)$ We proceed in the same way as in the proof of $(a)$. We find $\varphi_1,\dots,\varphi_k$ and $\delta$. The assumption then yields $\varphi\in A$ such that $M_2(s(\varphi))$ contains $s(\varphi_1),\dots,s(\varphi_k)$.
By Proposition~\ref{P:seminorms order}$(i)$ and Lemma~\ref{L:normal functional}$(d)$ we get some $\eta>0$ such that
$$\begin{aligned}
\{x\in B_M\setsep \norm{x}_\varphi^K\le\eta\}&\subset\{x\in B_M\setsep \norm{x}_{\varphi_j}^K\le\delta\mbox{ for }j=1,\dots,k\}\\&\subset
\{x\in B_M\setsep q_L(x)\le\varepsilon\}.\end{aligned}$$
The arguments in the second part of the proof of $(a)$ complete the proof here.\smallskip

The `in particular' statement concerning the family $\{\norm{\cdot}_\varphi:  \varphi\in A\}$ follows from  Proposition~\ref{P:seminorms order}$(i)$.
\end{proof}

\section{Proof of the main result}\label{sec:8}

In this section we provide a proof of Theorem~\ref{T:main}. We begin with a technical lemma.

\begin{lemma}\label{L:sequence of projections}
Let $M$ be a JBW$^*$-algebra and let $(p_n)$ be an increasing sequence of projections in $M$ with supremum $p$. Then for any bounded sequence $(a_k)$ in $M$ we have
$$\mbox{strong$^*$-}\lim_{k} P_2(p)\J {a_k}{a_k}p =0
\Leftrightarrow \forall n\in\en,\  \mbox{strong$^*$-}\lim_{k} P_2(p_n)\J {a_k}{a_k}{p_n} =0.$$
\end{lemma}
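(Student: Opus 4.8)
The plan is to strip the statement down to a fact about \emph{positive} elements and then reduce the whole equivalence to a single norm estimate for functionals.

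\textbf{Step 1 (reduction to squares).} First I would replace the awkward quantities $P_2(q)\J{a_k}{a_k}{q}$ by $P_2(q)(a_k\circ a_k^*)$. Indeed, by Lemma~\ref{L:properties of M2(e)}$(a)$ the element $P_2(q)\J a a v=\J{P_2(q)a}{P_2(q)a}q+\J{P_1(q)a}{P_1(q)a}q$ does not depend on the choice of a tripotent $v\ge q$; taking $v=q$ and $v=1$ (recall $q\le 1$ for a projection) and using $\J a a 1=a\circ a^*$ gives $P_2(q)\J a a q=P_2(q)(a\circ a^*)$. Hence, putting $c_k=a_k\circ a_k^*\ge 0$ (a bounded sequence of positive elements by Lemma~\ref{L:JC*}$(ii)$), the two sides of the claimed equivalence read $P_2(p)(c_k)\overset{\mbox{strong}^*}{\to}0$ and, for every $n$, $P_2(p_n)(c_k)\overset{\mbox{strong}^*}{\to}0$. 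Since each $P_2(q)(c_k)$ is positive (Lemma~\ref{L:properties of M2(e)}$(d)$) and bounded, Lemma~\ref{L:strong* convergence for positive} lets me replace strong$^*$ convergence to $0$ throughout by $\psi(\cdot)\to 0$ for every positive $\psi\in M_*$.

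\textbf{Step 2 (the easy implication).} Since $p_n\le p$ we have $p_n\in M_2(p)$, so $P_2(p_n)P_2(p)=P_2(p_n)$ by Proposition~\ref{P:M2 inclusion}. Thus $P_2(p_n)(c_k)=P_2(p_n)\big(P_2(p)(c_k)\big)$, and as $P_2(p_n)$ is weak$^*$-to-weak$^*$ continuous, $P_2(p)(c_k)\to0$ weak$^*$ forces $P_2(p_n)(c_k)\to0$ weak$^*$ for each $n$. This settles ``$\Rightarrow$''.

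\textbf{Step 3 (the hard implication).} Fix a positive $\psi\in M_*$ and set $q_n=p-p_n$; since $p_n\to p$ weak$^*$ and $\psi\in M_*$, we have $\psi(q_n)=\psi(p)-\psi(p_n)\to0$. Write $\tilde\psi=\psi\circ P_2(p)$ and $\tilde\psi_n=\psi\circ P_2(p_n)$. The hypothesis gives $\tilde\psi_n(c_k)\to0$ as $k\to\infty$ for each $n$, and I want $\tilde\psi(c_k)=\psi\big(P_2(p)(c_k)\big)\to0$. The whole weight of the proof rests on the estimate $\norm{\tilde\psi-\tilde\psi_n}\to0$: once this is available, $|\tilde\psi(c_k)|\le|\tilde\psi_n(c_k)|+\norm{\tilde\psi-\tilde\psi_n}\sup_j\norm{c_j}$, and taking $\limsup_k$ and then $n\to\infty$ finishes the argument.

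\textbf{Step 4 (the main obstacle: $\norm{\tilde\psi-\tilde\psi_n}\to0$).} For $x\in B_M$ put $y=P_2(p)(x)\in M_2(p)$; using $P_2(p_n)=P_2(p_n)P_2(p)$ gives $(P_2(p)-P_2(p_n))(x)=\big(P_1(p_n)+P_0(p_n)\big)(y)$. Because $p_n\perp q_n$ one has $L(p,p)=L(p_n,p_n)+L(q_n,q_n)$ on $M$, whence $P_0(p_n)(y)\in M_2(p)\cap M_0(p_n)=M_2(q_n)$ and $P_1(p_n)(y)\in M_2(p)\cap M_1(p_n)\subseteq M_1(q_n)$. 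For $z\in M_2(q_n)$ we have $z=q_n\circ z=\J z{q_n}1$, and for $w\in M_1(q_n)$ we have $w=2\,q_n\circ w=2\,\J w{q_n}1$; applying the Cauchy--Schwarz inequality for the positive semidefinite form $(a,b)\mapsto\psi(a\circ b^*)=\psi(\J ab1)$ (Lemma~\ref{L:functionals and seminorms}$(d)$ with $e=1$) bounds $|\psi(z)|$ and $|\psi(w)|$ by a fixed multiple of $\psi(q_n\circ q_n)^{1/2}=\psi(q_n)^{1/2}$. Summing the two contributions yields $|\psi((P_2(p)-P_2(p_n))(x))|\le 3\sqrt{\psi(1)}\,\psi(q_n)^{1/2}$ uniformly in $x\in B_M$, that is, $\norm{\tilde\psi-\tilde\psi_n}\le 3\sqrt{\psi(1)}\,\psi(q_n)^{1/2}\to0$. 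I expect the bookkeeping in this step --- identifying the Peirce components of $y$ relative to $q_n$ (rather than $p_n$) and choosing the Cauchy--Schwarz factorisation that exposes the factor $\psi(q_n)$ --- to be the only genuinely delicate point; everything else is formal.
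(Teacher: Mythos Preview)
Your proof is correct and follows the same overall skeleton as the paper's: reduce to positivity, use Lemma~\ref{L:strong* convergence for positive} to pass to testing against positive normal functionals, and then show that $\norm{\psi\circ P_2(p)-\psi\circ P_2(p_n)}\to 0$ for every positive $\psi\in M_*$. The forward implication is essentially identical in both proofs.

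Where you genuinely differ is in Step~4. The paper simply invokes \cite[Lemma~3.2]{BHK-JBW} for the key fact that $\norm{P_2(p)^*\varphi-P_2(p_n)^*\varphi}\to 0$, treating it as a black box. You instead reprove this from scratch: you decompose $y=P_2(p)x\in M_2(p)$ along the Peirce spaces of $p_n$, identify the pieces with $M_2(q_n)$ and $M_1(q_n)$ via $L(p,p)=L(p_n,p_n)+L(q_n,q_n)$, and then apply the Cauchy--Schwarz inequality for the positive sesquilinear form $(a,b)\mapsto\psi(a\circ b^*)$ to extract the factor $\psi(q_n)^{1/2}$. This yields the explicit quantitative bound $\norm{\psi\circ P_2(p)-\psi\circ P_2(p_n)}\le 3\sqrt{\psi(1)}\sqrt{\psi(p-p_n)}$, which is more informative than what the paper records and makes the argument entirely self-contained. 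Your preliminary reduction in Step~1 to $P_2(q)(a_k\circ a_k^*)$ via Lemma~\ref{L:properties of M2(e)}$(a)$ is also a clean touch the paper does not make explicit. The trade-off is that the paper's route is shorter (a one-line citation), while yours avoids dependence on an external reference and exposes the mechanism.
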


\begin{proof}
($\Rightarrow$) Fix $n\in\en$. By  Lemma~\ref{L:properties of M2(e)}$(a)$ we have
$$P_2(p_n) \{a_k,a_k,p_n\} = P_2(p_n) \{a_k,a_k,p\} = P_2(p_n) P_2(p) \{a_k,a_k,p\}.$$
Thus we can conclude by the strong$^*$-to-strong$^*$ continuity of $P_2(p_n)$.
\smallskip

$(\Leftarrow)$ Arguing by contradiction, we assume that $(a_k)$ is a bounded sequence in $M$ such that
$$\forall n\in\en: \mbox{strong$^*$-}\lim_{k} P_2(p_n)\J {a_k}{a_k}{p_n} =0$$
 but $P_2(p)\J {a_k}{a_k}p \overset{\mbox{strong}^*}{\not\hskip-3pt\longrightarrow}0$.  We may assume, without loss of generality that $(a_k)\subseteq B_M$.
Since
$$P_2(p)\J{a_k}{a_k}p=\J{P_2(p)(a_k)}{P_2(p)(a_k)}p+\J{P_1(p)(a_k)}{P_1(p)(a_k)}p$$
is a positive element of $M$ (by Lemma~\ref{L:properties of M2(e)}$(a)$,$(d)$), there is, due to Lemma~\ref{L:strong* convergence for positive}, a positive norm-one functional $\varphi\in M_*$ (i.e., a normal state on $M$) such that
$\varphi(P_2(p)\J{a_k}{a_k}p)\not\to0$. Up to passing to a subsequence we may assume that there is some $c>0$ such that
$$\varphi(P_2(p)\J{a_k}{a_k}p)>c,\mbox{ for all }k\in\en.$$
By \cite[Lemma 3.2]{BHK-JBW} there is some $m\in\en$ with
$\norm{P_2(p)^*\varphi-P_2(p_m)^*\varphi}<\frac c2$. Then
$$\begin{aligned}
\varphi(P_2(p_m)\J {a_k}{a_k}{p_m})&= \varphi(P_2(p_m)\J {a_k}{a_k}{p})
\\& >c + \varphi(P_2(p_m)\J {a_k}{a_k}{p})-\varphi(P_2(p)\J {a_k}{a_k}{p})
\\&=c + P_2(p_m)^*\varphi  \J {a_k}{a_k}{p} -  P_2(p)^*\varphi  \J {a_k}{a_k}{p}
\\&\ge c- \norm{P_2(p_m)^*\varphi-P_2(p)^*\varphi}>\frac c2
\end{aligned}$$
for all $k\in\mathbb{N}$.
Thus, Lemma~\ref{L:strong* convergence for positive} implies that $(P_2(p_m)\J {a_k}{a_k}{p_m})_k \overset{\mbox{strong}^*}{\not\hskip-3pt\longrightarrow}0$, leading to a contradiction.
\end{proof}

\begin{lemma}\label{L:sequence of seminorms}
Let $M$ be a $\sigma$-finite JBW$^*$-algebra and let $(\varphi_n)$ be a sequence of nonzero positive functionals in $M_*$ such that their support projections $s(\varphi_n)$ form an increasing sequence with supremum $1$. Then the strong$^*$ topology on bounded subsets of $M$ coincides with the topology generated by the seminorms $\norm{\cdot}_{\varphi_n}$, $n\in\en$.
\end{lemma}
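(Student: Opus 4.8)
The plan is to reduce the statement to an equivalence about null sequences and then to obtain that equivalence by chaining several of the facts already at our disposal. Since every $\varphi_n$ is a nonzero positive functional, each $\norm{\cdot}_{\varphi_n}$ is one of the seminorms defining the strong$^*$ topology, so the topology $T$ generated by $\{\norm{\cdot}_{\varphi_n}:n\in\en\}$ is automatically weaker than the strong$^*$ topology, and only the reverse comparison on bounded sets requires work. Both topologies are metrizable on each bounded set (the strong$^*$ one by Lemma~\ref{L:JBW*algebra sigma finite}, where $\sigma$-finiteness enters, and $T$ because it is generated by countably many seminorms), so, using that both are linear topologies and translating, it suffices to prove for an arbitrary bounded sequence $(a_k)$ in $M$ that
$$a_k\overset{\mbox{strong}^*}{\longrightarrow}0\quad\Longleftrightarrow\quad \norm{a_k}_{\varphi_n}\to0\ \text{ for every }n\in\en .$$

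For the equivalence I would pass through the positive elements $\J{a_k}{a_k}{p_n}$, where $p_n=s(\varphi_n)$, recalling that $(p_n)$ is an increasing sequence of projections with $\sup_n p_n=1$. First, by Lemma~\ref{L:strong* topology}$(a)$ the convergence $a_k\to0$ in strong$^*$ amounts to $\psi(a_k^*\circ a_k)\to0$ for every positive $\psi\in M_*$; since $a_k^*\circ a_k=a_k\circ a_k^*=\J{a_k}{a_k}1$ is positive, Lemma~\ref{L:strong* convergence for positive} rewrites this as $\J{a_k}{a_k}1\overset{\mbox{strong}^*}{\longrightarrow}0$. Next, as $1=\sup_n p_n$ and $P_2(1)=\operatorname{id}$, Lemma~\ref{L:sequence of projections} turns $\J{a_k}{a_k}1\overset{\mbox{strong}^*}{\longrightarrow}0$ into the family of conditions $P_2(p_n)\J{a_k}{a_k}{p_n}\overset{\mbox{strong}^*}{\longrightarrow}0$, one for each $n$. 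Finally, fixing $n$ and using Lemma~\ref{L:properties of M2(e)}$(a)$ to write $P_2(p_n)\J{a_k}{a_k}{p_n}=\J{P_2(p_n)a_k}{P_2(p_n)a_k}{p_n}+\J{P_1(p_n)a_k}{P_1(p_n)a_k}{p_n}$ as a sum of two positive elements, Lemma~\ref{L:strong* convergence for positive} lets me replace strong$^*$- by weak$^*$-convergence, and then the equivalence $(1)\Leftrightarrow(4)$ of Lemma~\ref{L:seminorm characterization} (applied with $e=p_n=s(\varphi_n)$) identifies this last condition precisely with $\norm{a_k}_{\varphi_n}\to0$. Concatenating these steps yields the displayed equivalence.

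The hard part is really packaged into Lemma~\ref{L:sequence of projections}, which performs the passage to the supremum $p=1$ of the support projections and carries the genuine analytic content (its proof rests on the normality estimate $\norm{P_2(p)^*\varphi-P_2(p_m)^*\varphi}\to0$). The remaining steps are Peirce-calculus bookkeeping together with the repeated use of the fact that, for bounded positive sequences, strong$^*$- and weak$^*$-convergence to $0$ coincide (Lemma~\ref{L:strong* convergence for positive}). A secondary point that needs care is the reduction from topologies to sequences: it relies on metrizability on bounded sets, and this is exactly where the $\sigma$-finiteness hypothesis (through the faithful normal state of Lemma~\ref{L:JBW*algebra sigma finite}) is indispensable, since without it the strong$^*$ topology need not be metrizable on bounded sets and the sequential argument would not suffice.
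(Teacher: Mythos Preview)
Your proof is correct and follows essentially the same route as the paper's: both reduce to comparing null sequences (using the metrizability on bounded sets granted by Lemma~\ref{L:JBW*algebra sigma finite}) and then chain Lemma~\ref{L:sequence of projections} with Lemma~\ref{L:seminorm characterization} applied at $e=1$ and at each $e=p_n$. The only cosmetic difference is that the paper reaches the condition $\J{a_k}{a_k}{1}\to 0$ via the single faithful normal state $\varphi$ with $s(\varphi)=1$ and Lemma~\ref{L:seminorm characterization}, whereas you obtain it from Lemma~\ref{L:strong* topology}$(a)$ together with Lemma~\ref{L:strong* convergence for positive}; both paths are valid and lead to the same place.
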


\begin{proof} Since $M$ is $\sigma$-finite, there exists a normal state $\varphi\in M_*$ with $s(\varphi)=1$ and, moreover, the norm $\norm{\cdot}_\varphi$ generates the strong$^*$ topology on bounded sets of $M$ (cf. Lemma~\ref{L:JBW*algebra sigma finite}).
Hence we can conclude using Lemmata \ref{L:seminorm characterization} and \ref{L:sequence of projections}.
\end{proof}

\begin{lemma}\label{L:omega on JBW*algebra}
Let $M$ be a JBW$^*$-algebra and $A\subset M_*$ a bounded set. Then there is a countable set $B\subset A$ such that $\omega(B')=\omega(A)$ for any $B'\subset B$ infinite.
\end{lemma}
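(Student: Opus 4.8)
The plan is to reduce everything to a $\sigma$-finite JBW$^*$-subalgebra whose predual is strongly WCG and $1$-complemented in $M_*$, where the statement is already available. Concretely, it suffices to produce a single $\sigma$-finite projection $p\in M$ together with a countable set $A_0\subset A$ such that \emph{(i)} the support tripotent of every $\varphi\in A_0$ lies in $M_2(p)$, and \emph{(ii)} $\omega(A_0)=\omega(A)$. Indeed, \emph{(i)} forces $A_0\subset N_*$, where $N:=M_2(p)$ is a $\sigma$-finite JBW$^*$-algebra: for $\varphi\in A_0$ we have $\varphi=\varphi\circ P_2(s(\varphi))$ and, since $s(\varphi)\in M_2(p)$, Proposition~\ref{P:M2 inclusion} gives $P_2(s(\varphi))P_2(p)=P_2(s(\varphi))$, whence $\varphi=\varphi\circ P_2(p)$. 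The map $R_p\colon\psi\mapsto\psi\circ P_2(p)$ is then a norm-one projection of $M_*$ onto $N_*$, so $N_*$ is $1$-complemented and, by the results recalled in Section~\ref{sec:ssp}, strongly WCG. Applying the strongly WCG lemma of Section~\ref{sec:ssp} to $A_0\subset N_*$ produces a countable $B\subset A_0\subset A$ with $\omega_{N_*}(B')=\omega_{N_*}(A_0)$ for every infinite $B'\subset B$, and Lemma~\ref{L:measures 1-complemented} upgrades these to equalities of $\omega=\omega_{M_*}$; together with \emph{(ii)} this yields $\omega(B')=\omega(A)$.

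The only genuine difficulty is securing \emph{(i)} and \emph{(ii)} at once: the projection $p$ needed for \emph{(i)} depends on which countable $A_0$ is chosen, whereas for \emph{(ii)} the countable set must already ``see'' all the weakly compact sets relevant to $\omega(A)$, and those are governed by $N=M_2(p)$. The hard part is breaking this circularity, which I would do by a simultaneous closing-off construction. I build increasing $\sigma$-finite projections $p_1\le p_2\le\cdots$, positive functionals $\psi_k\in M_*$ with $s(\psi_k)=p_k$ (possible by Lemma~\ref{L:sigma finite tripotent}(c),(d)), and an increasing chain of countable sets $A_0^{(k)}\subset A$. At stage $k$, having $p_1,\dots,p_k$ and $\psi_1,\dots,\psi_k$, I form for each pair $j,n\le k$ the weakly compact set $K_M(\psi_j):=\Phi_{\psi_j}(B_M)\subset M_*$ of Lemma~\ref{L:normal functional}, \emph{computed in the full algebra} $M$, and adjoin to $A_0^{(k+1)}$ a sequence in $A$ realizing the supremum defining $\dh(A,nK_M(\psi_j))$. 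I then choose $p_{k+1}\ge p_k$ a $\sigma$-finite projection dominating the support tripotents of all elements collected so far (such a projection exists by Lemma~\ref{L:sigma finite tripotent}(d) together with the fact that a countable supremum of $\sigma$-finite projections is $\sigma$-finite, seen by adding the corresponding normal states) and pick $\psi_{k+1}$ with $s(\psi_{k+1})=p_{k+1}$. Setting $A_0=\bigcup_k A_0^{(k)}$, $p=\bigvee_k p_k$ and $N=M_2(p)$, one has $p_k\uparrow p=1_N$ and $A_0\subset N_*$, so \emph{(i)} holds.

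It remains to verify \emph{(ii)}, and this is where the device of using two ambient algebras pays off. By Lemma~\ref{L:sequence of seminorms} the seminorms $\{\norm{\cdot}_{\psi_k}\}$ generate the strong$^*$ topology on bounded subsets of $N$, and since $\{M_2(p_k)\}$ is up-directed, Proposition~\ref{P:wcompact cofinal}(b) shows that the family $\{nK_N(\psi_k):n,k\in\en\}$, with $K_N(\psi_k):=\Phi_{\psi_k}(B_N)$ computed \emph{inside} $N$, is cofinal among the weakly compact subsets of $N_*$; hence $\omega_{N_*}(A_0)=\inf_{n,k}\dh(A_0,nK_N(\psi_k))$. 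Because $B_N\subset B_M$ we have $K_N(\psi_k)\subset K_M(\psi_k)$, so for all $n,k$
$$\dh(A_0,nK_N(\psi_k))\ge\dh(A_0,nK_M(\psi_k))=\dh(A,nK_M(\psi_k))\ge\omega(A),$$
where the middle equality is precisely the closing-off performed at stage $\max(j,n)$ and the last inequality holds because $nK_M(\psi_k)$ is weakly compact in $M_*$. Thus $\omega(A_0)=\omega_{N_*}(A_0)\ge\omega(A)$, and as $A_0\subset A$ gives the reverse inequality, \emph{(ii)} follows. The main obstacle, as flagged, is the circular dependence of $p$ and $A_0$; it is resolved by realizing the sup-distances against the sets $K_M(\psi_k)$ computed in $M$ (available at every finite stage, hence non-circular) while reading cofinality off the sets $K_N(\psi_k)$ computed in the limit algebra $N$, the inclusion $K_N(\psi_k)\subset K_M(\psi_k)$ making the two viewpoints compatible.
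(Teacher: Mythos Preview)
Your proof is correct and follows essentially the same route as the paper's. Both arguments build an increasing sequence of $\sigma$-finite projections $p_k$ while selecting countably many elements of $A$ that realize the distances to the weakly compact sets $K(\psi_{p_k})$ (computed in $M$), then pass to $p=\sup_k p_k$ and use Lemma~\ref{L:sequence of seminorms} together with Proposition~\ref{P:wcompact cofinal}(b) to see that the corresponding sets computed in $N=M_2(p)$ are cofinal among the weakly compact subsets of $N_*$; the inclusion $K_N(\psi_k)\subset K_M(\psi_k)$ is exactly the observation $\Phi_{p_m}(B_{M_2(p)})\subset K_{p_m}$ used in the paper. The only organizational difference is that the paper produces $B$ directly (one $\gamma_{n+1}$ per stage, chosen far from $n\,\co(K_{p_1}\cup\cdots\cup K_{p_n})$), whereas you first build a larger countable $A_0$ with $\omega(A_0)=\omega(A)$ and then invoke the strongly WCG lemma of Section~\ref{sec:ssp} to extract $B$; this extra step is harmless but also unnecessary, since the strongly WCG lemma is itself proved by exactly the same one-element-per-stage selection.
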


\begin{proof}
For any $\sigma$-finite projection $p\in M$, the Peirce-2 subspace $M_2(p)$ is a $\sigma$-finite JBW$^*$-algebra, hence by Lemma~\ref{L:JBW*algebra sigma finite} we can fix a faithful normal state $\omega_p$ on $M_2(p)$. Let us set $\varphi_p=\omega_p\circ P_2(p)$. Then $\varphi_p$ is a normal positive functional on $M$ such that $\norm{\varphi_p}=\varphi_p(p)=1$ and $\varphi_p|_{M_2(p)}$    is faithful.\smallskip

Let $\Phi_p=\Phi_{p,\varphi_p}$ using the notation from Lemma~\ref{L:normal functional}.
Let $K_p=K(p,\varphi_p)=\Phi_p(B_M)$. Then $K_p$ is a weakly compact set in $M_*$ (by Lemma~\ref{L:normal functional}(b)).\smallskip

Let $A\subset M_*$ be a bounded set such that $c=\omega(A)>0$. Let us construct, by induction, two sequences $(\gamma_n)\subset A$ and $(p_n)\subset M$ such that
\begin{enumerate}[$(i)$]
\item $\norm{\gamma_1}>c-1$;
\item $p_n$ is a $\sigma$-finite projection such that $\gamma_n=\gamma_n\circ P_2(p_n)$;
\item $p_n\ge p_k$ for $k<n$;
\item $\dist(\gamma_{n+1}, n\co(K_{p_1}\cup\dots\cup K_{p_n}))> c-\frac{1}{n+1}$.
\end{enumerate}
This construction can be done by just applying the definition of $\omega(A)$. Indeed, the existence of $\gamma_1\in A$ satisfying (i) is obvious. Assume that $n\in\en$ and we have already constructed $\gamma_j$ for $1\le j\le n$ and $p_j$ for $1\le j<n$.

By \cite[Lemma 3.6]{BHK-JBW} there is a $\sigma$-finite projection $r\in M$ such that $\gamma_n=\gamma_n\circ P_2(r)$. By  \cite[Lemma 3.5]{BHK-JBW}, there is a $\sigma$-finite projection $p_n\ge r$ satisfying (iii). Clearly $p_n$ satisfies (ii) as well. Finally, find $\gamma_{n+1}\in A$ satisfying (iv) by the definition of $\omega(A)$.\smallskip

Set $B=\{\gamma_n\setsep n\in\en\}$. Then $B$ is a countable subset of $A$ and we claim that $\omega(B')=c$ for each infinite subset $B'\subset B$.\smallskip

Let $p=\sup_n p_n$. Then $p$ is $\sigma$-finite (see, e.g., \cite[Theorem 3.4]{Edw-Rut-exposed} or \cite[Lemma 3.5]{BHK-JBW}) and $B\subset P_2(p)^*M_*$. Since $P_2(p)$ is a norm-one projection, an application of Lemma~\ref{L:measures 1-complemented}  shows that $\omega(B)=\omega_{M_2(p)}(B)$. We continue by working in the JBW$^*$-algebra $M_2(p)$.\smallskip

Lemma~\ref{L:sequence of seminorms} implies that the strong$^*$ topology on $B_{M_2(p)}$ is generated by the sequence of seminorms $\norm{\cdot}_{\varphi_{p_n}|_{M_2(p)}}$. Let $L\subset (M_2(p))_*=P_2(p)^*M_*$ a weakly compact set and $\varepsilon>0$. By Proposition~\ref{P:wcompact cofinal}(b) there are $m,n\in\en$ such that
$$L\subset n \Phi_{p_m}(B_{M_2(p)})+\varepsilon B_{M_2(p)_*}
\subset n(K_{p_m}\cap P_2(p)^*M_*)+\varepsilon B_{M_2(p)_*}.$$
It follows that
$$\dh(B,L)\ge \dh(B, n K_{p_m})-\varepsilon \ge \dh (B,k K_{p_m})-\varepsilon\ge c-\frac1{k+1}-\varepsilon$$
for each $k\ge \max\{n,m\}$. Hence $\dh(B,L)\ge c-\varepsilon$.
Since $L$ is an arbitrary weakly compact set, we get $\omega(B)\ge c-\varepsilon$, and by the arbitrariness of $\varepsilon>0$  we have
$\omega(B)\ge c$.\smallskip

The same procedure applies to each infinite subset $B'\subset B$, so the proof is completed.
\end{proof}

\begin{proof}[Proof of Theorem~\ref{T:main}]\label{eq proof of Theorem 4.1}
If $M$ is a JBW$^*$-algebra, the result follows from Lemma~\ref{L:omega on JBW*algebra}, \cite[Corollary~4.3]{FePeRa2015}, and Proposition~\ref{p splitting}$(b)$.
The general case of a JBW$^*$-triple follows from the JBW$^*$-algebra case applying Lemma~\ref{L:measures 1-complemented} and Proposition~\ref{P:triple 1-complemented}.
\end{proof}

\section{Preduals of JBW$^*$-triples which are strongly WCG}\label{sec:9}

Strongly WCG spaces (see the end of Section~\ref{sec:ssp} for definitions) are a nice class of Banach spaces in which the computation of the De Blasi measure of weak non-compactness is easy. As explained in the end of Section~\ref{sec:ssp} they include the spaces $L^1(\mu)$ for a $\sigma$-finite measure $\mu$ or, more generally, preduals of $\sigma$-finite von Neumann algebras and preduals of $\sigma$-finite JBW$^*$-algebras. In the present section we characterize JBW$^*$-triples whose preduals are strongly WCG. \smallskip

Let us explain why it is not clear. By \cite[Theorem 2.1]{SWCG}
a Banach space $X$ is strongly WCG if and only if the Mackey topology on $X^*$ is metrizable on bounded sets. Therefore, it follows from Proposition~\ref{P:strong*=Mackey} that the predual $M_*$ of a JBW$^*$-triple $M$ is strongly WCG if and only if the strong$^*$ topology on $B_M$ is metrizable. For $\sigma$-finite JBW$^*$-algebras this is the case by Lemma~\ref{L:JBW*algebra sigma finite}. However, for $\sigma$-finite JBW$^*$ triples it need not be the case as witnessed by the following example.
 \smallskip

\begin{example}\label{l example where the strong* topology is not metrizable on bounded sets}
Let $\Gamma$ be an uncountable set and $C\subset\Gamma$ be an infinite countable set. Then $M=B(\ell^2(\Gamma),\ell^2(C))$ is a $\sigma$-finite JBW$^*$-triple whose strong$^*$ topology is not metrizable on the closed unit ball of $M$.
\end{example}

\begin{proof}
A family of pairwise orthogonal partial isometries of $M=B(\ell^2(\Gamma),\ell^2(C))$
has pairwise orthogonal final projections and is therefore countable as $\ell^2(C)$ is separable.
Hence $M$ is $\sigma$-finite.
$M$ is $1$-complemented in $B(\ell^2(\Gamma))$ by a weak$^*$-to-weak$^*$ continuous projection, thus by \cite[\S 1.15]{sakai2012c} and \cite[COROLLARY]{bunce2001norm}, the strong$^*$ topology on bounded sets of $M$ is given by the seminorms
$a\mapsto \norm{a(e_\gamma)}+ \norm{a^*(e_\gamma)}$ ($a\in M$), where $\gamma$ is a fixed element in $\Gamma$ and $\{e_{\gamma}:\gamma\in \Gamma\}$ is the canonical orthonormal basis of $\ell^2(\Gamma)$.\smallskip

If the strong$^*$ topology of $B_M$ were metrizable, it would be first countable, hence there would exist a countable set $D\subset \Gamma$ such that the seminorms
$a\mapsto \norm{a(e_\gamma)}+\norm{a^*(e_\gamma)}$, $\gamma\in D$, generate this topology. We may assume without loss of generality that $D\supset C$. Let $C=(c_n)_{n\in\en}$. Let $\{\gamma_n: n\in\en\}$ be a set of pairwise distinct elements in $\Gamma\setminus D$. Define the sequence of operators $a_k\in M$ by
$$a_k (e_\gamma)=\begin{cases}
e_{c_{n+k}} & \mbox{ if }\gamma=\gamma_n\\
0&\mbox{ otherwise.}
\end{cases}
$$
Then
$$a_k^*(e_\gamma)=\begin{cases}
e_{\gamma_{n-k}} & \mbox{ if }\gamma=c_n\mbox{ and }n>k\\
0&\mbox{ otherwise.}
\end{cases}$$

In this case $\norm{a_k (e_{\gamma_n})}=1$, so $a_k$ do not converge strong$^*$ to zero. However $a_k^*\to 0$ in SOT and $a_k(e_\gamma)=0$ for $\gamma\in D$, so $\norm{a_k (e_\gamma)}+\norm{a_k^*(e_\gamma)}\to 0$ for $\gamma\in D$, leading to a contradiction.
\end{proof}

On the other hand, $\sigma$-finiteness of a JBW$^*$-triple is a necessary condition for its predual to be strongly WCG. Indeed, any strongly WCG space is clearly WCG and the predual of a JBW$^*$-triple is WCG if and only if the triple is $\sigma$-finite by \cite[Theorem 1.1]{BHKPP-triples}. \smallskip

In order to find a sufficient and necessary condition we get back to the structure results of JBW$^*$-triples due to G. Horn and E. Neher presented in \eqref{eq decomposition of JBW*-triples} in page \pageref{eq decomposition of JBW*-triples} (see \cite[(1.7)]{horn1987classification}, \cite[(1.20)]{horn1988classification}). Every JBW$^*$-triple $M$ decomposes (uniquely) as an (orthogonal) $\ell_{\infty}$-sum of the form $M = \left(\bigoplus_{j\in \mathcal{J}} A_j \overline{\otimes} C_j \right)_{\ell_{\infty}}
\oplus_{\ell_{\infty}} H(W,\alpha)\oplus_{\ell_{\infty}} pV,$ where each $A_j$ is a commutative von Neumann algebra, each $C_j$ is a Cartan factor, $W$ and $V$ are continuous von Neumann algebras, $p$ is a projection in $V$, $\alpha$ is {a linear} involution on $W$ commuting with $^*$, that is, a linear $^*$-antiautomorphism of period 2 on $W$, and $H(W,\alpha)=\{x\in W: \alpha(x)=x\}$. Clearly, $H(W,\alpha)$ is a JBW$^*$-subalgebra of $W$ when the latter is equipped whit its natural structure of JBW$^*$-algebra.\smallskip

A Cartan factor of type 1 is a JBW$^*$-triple $C_1$ which coincides with the space $B(H,K)$ of all bounded linear operators between two complex Hilbert spaces $H$ and $K$. We can always assume that $K$ is a closed subspace of $H$. Therefore, denoting by $p$ the orthogonal projection of $H$ onto $K$, we have $C_1=B(H,K) =p B(H)$. Suppose $A$ is a commutative von Neumann algebra. Now taking $\widehat{p} = 1\otimes p\in A \overline{\otimes} C_1,$ we deduce that $A \overline{\otimes} C_1= \widehat{p} \left(A \overline{\otimes} B(H)\right)$ is a right ideal of the von Neumann algebra $ A \overline{\otimes} B(H)$.\smallskip

Cartan factors of types 2 and 3 are the subtriples of $B(H)$ defined by $C_2 = \{ x\in B(H) : x=- j x^* j\} $ and
$C_3 = \{ x\in B(H) : x= j x^* j\}$, respectively, where $j$ is a conjugation (i.e., a conjugate-linear isometry of period $2$) on $H$. By a little abuse of notation, each $x\in B(H)$ can be identified with a ``\emph{matrix}'' $(x_{\gamma\delta})_{\gamma,\delta\in\Gamma}$. It is easy to check that the representing matrix of $jx^*j$ is the transpose of the representing matrix of $x$. Hence, $C_2$ consists of operators with antisymmetric representing matrix and $C_3$ of operators with symmetric ones.\smallskip

The properties around Peirce decomposition show that, if a JBW$^*$-triple $M$ admits a unitary element $u$, then $M= M_2(u)$ is a JBW$^*$-algebra with product $\circ_u$ and involution ${*_u}$ (cf. page \pageref{eq Peirce2 is a JBstaralgebra}).
It is shown in the proof of \cite[Proposition 2]{ho2002derivations} that every Cartan factor of type 2 with dim$(H)$ even, or infinite, and every Cartan factor of type 3 contains a unitary element. The same result actually proves that Cartan factors of type 2 with dim$(H)$ even, or infinite, and all Cartan factors of type 3 are JBW$^*$-algebras. Consequently, if $C$ is a Cartan factor of type 2 with dim$(H)$ even, or infinite, or a Cartan factor of type 3, and $A$ is a commutative von Neumann algebra, then $A\overline{\otimes} C$ is a JBW$^*$-algebra.\smallskip

A Cartan factor of type 4 (also called a \emph{spin factor}) is a complex Hilbert space (with inner product $\langle .,. \rangle$) equipped with a conjugation $x \mapsto \overline{x}$, triple product $$\J x y z
= \langle x , y \rangle z + \langle z , y \rangle x - \langle x ,
\overline{z} \rangle  \overline{y},$$ and norm given by $\| x\|^2=\langle x , x
\rangle+\sqrt {\langle x , x \rangle^2-|\langle x , \overline x
\rangle|^2}$. Let $u$ be an element in a spin factor $C_4$ satisfying $u = \overline{u}$ and $\|u\| = \langle u,u\rangle =1$. It is not hard to check that $\{u,u,x\} = \langle u , u \rangle x + \langle x , u \rangle u - \langle u ,
\overline{x} \rangle  \overline{u}= \langle u , u \rangle x + \langle x , u \rangle u - \langle x ,
u \rangle  u =x,$ for all $x\in C_4$. This shows that $u$ is a unitary in $C_4$, and consequently $A\overline{\otimes} C_4$ is a JBW$^*$-algebra whenever $A$ is a commutative von Neumann algebra.\smallskip

Finally, assume that $C$ is a finite-dimensional Cartan factor and $A$ is a commutative von Neumann algebra.
Then $A$ is isomorphic to $\bigoplus_{j\in J}^{\ell_\infty} L^\infty(\mu_j)$, where $(\mu_j)_{j\in J}$ is a family of finite (or, equivalently, probability) measures (cf. \cite[\S 1.18]{sakai2012c}). Thus
the JBW$^*$-triple $A\overline{\otimes} C$ can be identified with $\bigoplus_{j\in J}^{\ell_\infty} L^\infty(\mu_j,C)$, (cf. \cite{horn1984dissertation,horn1987classification}). Let us observe that the remaining Cartan factors, that is, the exceptional Cartan factors of types 5 and 6, are all finite-dimensional (they have dimensions 16 and 27, respectively).\smallskip

Combining the arguments in the previous paragraphs we get the following representation of JBW$^*$-triples.

\begin{prop}\label{P:representation} Let $M$ be any JBW$^*$-triple. Then $M$ is (isometrically) JB$^*$triple isomorphic to a JBW$^*$-triple of the form
\begin{equation}\label{eq decomp in L 9.1} \left( \bigoplus_{k\in\Lambda_1}^{\ell_{\infty}} L^\infty(\mu_k,C_k)\right)\bigoplus^{\ell_{\infty}} \left(\bigoplus_{j\in\Lambda_2}^{\ell_{\infty}} L^\infty(\mu_j,D_j)\right)\bigoplus^{\ell_{\infty}} N\bigoplus^{\ell_{\infty}} pV,
\end{equation}
where
\begin{enumerate}[$\bullet$]
    \item $(\mu_k)_{k\in\Lambda_1}$ and $(\mu_j)_{j\in\Lambda_2}$ are two (possibly empty) families of probability measures;
    \item Each $C_k$ is a Cartan factor of type 5 or 6 for any $k\in\Lambda_1$, and each $D_j$ is a finite-dimensional Cartan factor of type 2 with dim$(H)\in \mathbb{N}$ odd for any $j\in\Lambda_2$;
    \item $N$ is a JBW$^*$-algebra;
    \item $V$ is a von Neumann algebra and $p\in V$ is a projection such that the triple $pV$ has no nonzero direct summand triple-isomorphic to a JBW$^*$-algebra.
\end{enumerate}
Moreover, such a representation is unique, in the sense that if $M$ admits two such representations, the respective four summands in one of them are triple-isomorphic to the respective four summands in the second one.
\end{prop}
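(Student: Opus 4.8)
The plan is to start from the Horn--Neher decomposition \eqref{eq decomposition of JBW*-triples} and to sort its constituents according to the type of the Cartan factors, using the case analysis carried out in the paragraphs preceding the proposition. Partition the index set $\mathcal J=\mathcal J_1\cup\mathcal J_2\cup\mathcal J_3\cup\mathcal J_4$, where $\mathcal J_1$ collects the indices with $C_j$ of type $5$ or $6$, $\mathcal J_2$ those with $C_j$ of type $2$ and $\dim H$ finite and odd, $\mathcal J_3$ those with $C_j$ of type $2$ (even or infinite dimensional), $3$, or $4$, and $\mathcal J_4$ those with $C_j$ of type $1$. For $j\in\mathcal J_1\cup\mathcal J_2$ the factor $C_j$ is finite dimensional, so the summand $A_j\overline{\otimes}C_j$ is of the form $\bigoplus^{\ell_\infty}L^\infty(\mu,C_j)$ (as in the discussion leading to \eqref{eq decomp in L 9.1}); these produce the first two summands of the asserted representation. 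For $j\in\mathcal J_3$ the summand $A_j\overline{\otimes}C_j$ is a JBW$^*$-algebra, and so is $H(W,\alpha)$; for $j\in\mathcal J_4$ the summand $A_j\overline{\otimes}C_j$ is a right ideal $\widehat p_j\,(A_j\overline{\otimes}B(H_j))$ of a von Neumann algebra.

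It remains to assemble $N$ and the last summand. First, the $\ell_\infty$-sum of the JBW$^*$-algebras indexed by $\mathcal J_3$ together with $H(W,\alpha)$ is again a JBW$^*$-algebra, say $N_0$. Next, the $\ell_\infty$-sum of the right ideals indexed by $\mathcal J_4$ with the continuous right-ideal summand of \eqref{eq decomposition of JBW*-triples} is a right ideal $\widetilde p\,\widetilde V$ of the von Neumann algebra $\widetilde V$ obtained as the corresponding $\ell_\infty$-sum. Finally I would split off the maximal JBW$^*$-algebra direct summand of $\widetilde p\,\widetilde V$: by the comparison theory in $\widetilde V$ there is a largest central projection $z$ for which $z\widetilde p$ is Murray--von Neumann equivalent to $z$, and then $z\widetilde p\,\widetilde V$ carries a unitary tripotent and is a JBW$^*$-algebra, while $(1-z)\widetilde p\,\widetilde V$ has, by maximality of $z$, no nonzero direct summand triple-isomorphic to a JBW$^*$-algebra. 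Putting $N=N_0\oplus^{\ell_\infty} z\widetilde p\,\widetilde V$ and letting the final summand be $(1-z)\widetilde p\,\widetilde V$ yields a representation of the form \eqref{eq decomp in L 9.1}.

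For uniqueness the idea is to reduce to the uniqueness of the Horn--Neher decomposition. Given two representations of $M$ in the stated form, I would apply the Horn--Neher theorem to each of the four summands and take the $\ell_\infty$-sum; this produces a Horn--Neher decomposition of $M$ itself, which by the uniqueness part of \cite{horn1987classification,horn1988classification} does not depend on the chosen representation. One then checks that the listed constraints force each Horn--Neher constituent into a prescribed summand: the first summand must be the maximal \emph{purely exceptional} direct summand (the one with no nonzero direct summand triple-isomorphic to a subtriple of some $B(H)$), which is exactly the type $5$ and $6$ content; the type $2$ odd-dimensional pieces are singled out among the remaining (special) summands as the finite-dimensional ones admitting no unitary tripotent; $N$ is the maximal JBW$^*$-algebra direct summand of what is then left; and the last summand is pinned down by the requirement that it have no nonzero JBW$^*$-algebra direct summand. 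Since both representations sort the same, unique Horn--Neher data by the same rule, the four summands must agree up to triple isomorphism.

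The main obstacle is the uniqueness bookkeeping, and inside it the treatment of the exceptional algebra of type $6$. Since the type $6$ Cartan factor is itself a JB$^*$-algebra, a block $L^\infty(\mu,C_j)$ with $C_j$ of type $6$ is a JBW$^*$-algebra and could a priori be absorbed into $N$; the argument must therefore establish that the first summand is \emph{intrinsically} characterized as the maximal purely exceptional summand, so that such blocks are always assigned to it and never to $N$. This rests on the (separate) fact that a JBW$^*$-triple splits uniquely into a special and a purely exceptional part. A second delicate point is to verify that the JBW$^*$-algebra direct summands of the right ideal $\widetilde p\,\widetilde V$ correspond precisely to central projections $z$ with $z\widetilde p\sim z$, so that the algebra/ideal splitting is genuinely canonical and the maximality of $z$ really prevents any residual algebra summand from surviving in the last summand.
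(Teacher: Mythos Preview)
Your existence argument is exactly the paper's: the paper's ``proof'' is the single sentence ``Combining the arguments in the previous paragraphs we get the following representation'', and those paragraphs carry out precisely the Cartan-factor-type sorting you describe. Your additional step of extracting the maximal JBW$^*$-algebra direct summand from the merged right ideal $\widetilde p\,\widetilde V$ via a maximal central projection is not made explicit by the paper but is indeed needed for the stated condition on $pV$.

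For uniqueness the paper offers no argument whatsoever, and your identification of the type~6 obstacle is correct and in fact points to a genuine gap: since the type~6 Cartan factor is a unital JB$^*$-algebra (the Albert algebra), a block $L^\infty(\mu,C_6)$ is itself a JBW$^*$-algebra and could legitimately be absorbed into $N$, producing two representations satisfying all the listed bullets but with non-isomorphic first summands. Your proposed fix---characterize the first summand intrinsically as the maximal purely exceptional part---is the right idea, but note that this amounts to \emph{imposing} a constraint that the proposition does not state; without it the uniqueness claim is not literally true. The paper's subsequent uses of the result (Theorems~\ref{T:SWCG} and~\ref{T:order}) only require the canonical representation produced by the construction, and for that Horn--Neher uniqueness suffices, so the looseness is harmless in context.
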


Thanks to the structure result in the previous proposition, the promised characterization of JBW$^*$-triples is now stated in the following theorem.

\begin{thm}\label{T:SWCG}
Let $M$ be a JBW$^*$-triple. Consider its representation provided by Proposition~\ref{P:representation}.
\begin{enumerate}[$(a)$]
\item $M$ is $\sigma$-finite if and only if the sets $\Lambda_1$ and $\Lambda_2$ are countable, the JBW$^*$-algebra $N$ is $\sigma$-finite and the projection $p$ is $\sigma$-finite.
\item $M_*$ is WCG if and only if $M$ is $\sigma$-finite.
\item The following assertions are equivalent.
\begin{enumerate}[$(i)$]
    \item $M_*$ is strongly WCG.
    \item $M$ is $\sigma$-finite and, moreover, the projection $p$ is finite.
    \item There is $\varphi\in M_*\setminus\{0\}$ such that the strong$^*$ topology on $B_M$ is generated by $\norm{\cdot}_\varphi$.
    \item There is a $\sigma$-finite tripotent $u\in M$ whose Peirce-2 subspace $M_2(u)$ is maximal with respect to inclusion.
\end{enumerate}
\end{enumerate}
\end{thm}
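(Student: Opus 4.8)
The plan is to treat the three parts in turn, using the Horn--Neher-type decomposition of Proposition~\ref{P:representation} for the structural statements and the equivalence between strong WCG and metrizability of the strong$^*$ topology for the topological ones. For $(a)$ I would use that $\sigma$-finiteness of an $\ell_\infty$-sum is equivalent to countability of the index set together with $\sigma$-finiteness of each summand. A tuple $(u_\alpha)$ of complete tripotents is complete, and any orthogonal family of nonzero tripotents below it splits coordinatewise into orthogonal families below the $u_\alpha$; each is countable when $u_\alpha$ is $\sigma$-finite, and every member is nonzero in some coordinate, so over countably many coordinates the whole family is countable. Conversely, an uncountable $\Lambda_1$ or $\Lambda_2$ supplies an uncountable orthogonal family (one nonzero tripotent per summand), while $\sigma$-finiteness clearly descends to the orthogonal summands $N$ and $pV$, giving $\sigma$-finiteness of $N$ and of $p=uu^*$ for a complete tripotent $u$ of $pV$. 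Finally each summand $L^\infty(\mu,C)$ with $C$ finite-dimensional and $\mu$ a probability measure is $\sigma$-finite: integrating a faithful state of the relevant Peirce-$2$ algebra of a complete tripotent of $C$ against $\mu$ produces a norm-one normal functional whose support is that (constant) tripotent, so Lemma~\ref{L:sigma finite tripotent}$(a)$ applies.

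Part $(b)$ is \cite[Theorem 1.1]{BHKPP-triples} combined with the trivial fact, already noted, that strongly WCG spaces are WCG. For part $(c)$ I first recall that by \cite[Theorem 2.1]{SWCG} and Proposition~\ref{P:strong*=Mackey}, $(i)$ is equivalent to metrizability of the strong$^*$ topology on $B_M$. The implication $(iii)\Rightarrow(i)$ is then immediate, as a single seminorm is a pseudometric. For $(i)\Rightarrow(iii)$ I would use that a metrizable locally convex topology on $B_M$ is generated by countably many of the defining seminorms $\norm{\cdot}_{\psi_n}$; their supports $s(\psi_n)$ admit, in the coarse order of Proposition~\ref{P:M2 inclusion}, a common $\sigma$-finite upper bound $u$ (countable subsets of $\sigma$-finite tripotents have $\sigma$-finite upper bounds, by the inner-ideal/$\Sigma$-subspace analysis of the $\sigma$-finite part $M_\sigma$ in \cite{BHKPP-triples}). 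Then $M_2(s(\psi_n))\subset M_2(u)$ for all $n$, so by Proposition~\ref{P:seminorms order}$(i)$ the seminorm $\norm{\cdot}_\varphi$ with $u=s(\varphi)$ dominates each $\norm{\cdot}_{\psi_n}$ on $B_M$, hence dominates the whole strong$^*$ topology and generates it; this is $(iii)$. Finally $(iii)\Leftrightarrow(iv)$ is soft: $s(\varphi)$ is $\sigma$-finite by Lemma~\ref{L:sigma finite tripotent}$(a)$, and by Proposition~\ref{P:seminorms order} domination of $\norm{\cdot}_\psi$ by $\norm{\cdot}_\varphi$ on $B_M$ is equivalent to $M_2(s(\psi))\subset M_2(s(\varphi))$; thus $\norm{\cdot}_\varphi$ generates the strong$^*$ topology iff $M_2(s(\varphi))$ is the greatest, equivalently (by the up-directedness just invoked) a maximal, Peirce-$2$ subspace of a $\sigma$-finite tripotent.

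It remains to tie in the structural condition $(ii)$. For $(ii)\Rightarrow(i)$ it suffices, by $(a)$ and the fact that a countable $\ell_\infty$-sum of spaces with metrizable bounded-set strong$^*$ topology again has this property, to check metrizability on each summand. On a $\sigma$-finite JBW$^*$-algebra summand (in particular $N$) this is Lemma~\ref{L:JBW*algebra sigma finite}. A finite-dimensional Cartan-factor summand $L^\infty(\mu,C)$ embeds, via a faithful representation $C\hookrightarrow M_k$, as a weak$^*$-closed subtriple of the $\sigma$-finite von Neumann algebra $L^\infty(\mu)\,\overline{\otimes}\,M_k$, whose strong$^*$ topology is metrizable on bounded sets by \cite[Proposition III.5.3]{Tak}, and Lemma~\ref{L:strong* topology}$(b)$ transfers this. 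For the right-ideal summand $pV$ with $p$ finite I would exhibit a single generating seminorm: pick a partial isometry $u$ with $uu^*=p$ and $u^*u=q\le 1$; since $x\in pV$ satisfies $px=x$, one gets $P_0(u)x=(1-p)x(1-q)=0$, so $u$ is a $\sigma$-finite complete tripotent of $pV$ with $M_2(u)=pVq$. Finiteness of $p$ makes $M_2(u)$ maximal: if $M_2(u)\subset M_2(u')$, testing on $u$ forces $u'u'^*=p$ and $q\le u'^*u'\sim p$, and a finite projection satisfies $q\le r,\ q\sim r\Rightarrow q=r$, whence $u'^*u'=q$ and $M_2(u')=M_2(u)$. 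By $(iii)\Leftrightarrow(iv)$ applied inside $pV$, $\norm{\cdot}_{s(u)}$ generates the strong$^*$ topology on $B_{pV}$, which is therefore metrizable, completing $(ii)\Rightarrow(i)$.

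For $(i)\Rightarrow(ii)$, part $(b)$ forces $M$ to be $\sigma$-finite, so only finiteness of $p$ remains, and I would argue contrapositively. If $p$ is infinite then, since $pV$ has no direct summand triple-isomorphic to a JBW$^*$-algebra, comparison theory excludes any nonzero central projection on which $p$ is equivalent to the identity; this forces the relevant corner of $V$ to be non-$\sigma$-finite, for a $\sigma$-finite $V$ would, by comparison theory and the absence of algebra summands, make $p$ finite. On such a corner one reproduces the configuration of Example~\ref{l example where the strong* topology is not metrizable on bounded sets}: its shift sequence lies in $B_{pV}$ and is null for any prescribed countable subfamily of the defining seminorms yet not strong$^*$-null, so no countable subfamily generates the topology and strong$^*$ is non-metrizable on $B_{pV}$; as $pV$ is a weak$^*$-closed subtriple, Lemma~\ref{L:strong* topology}$(b)$ propagates this to $B_M$, so $(i)$ fails. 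This closes all the equivalences. The crux of the whole argument is precisely this last, non-metrizability, step for infinite $p$: deducing from the absence of algebra summands that an \emph{infinite} (though $\sigma$-finite) projection makes the ambient corner non-$\sigma$-finite, and then running the shift construction of Example~\ref{l example where the strong* topology is not metrizable on bounded sets} there, which is what pins down finiteness of $p$ as the exact dividing line for metrizability.
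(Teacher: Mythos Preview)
Your proposal contains two genuine gaps.

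The most serious is in your implications $(i)\Rightarrow(iii)$ and $(iv)\Rightarrow(iii)$, both of which rest on the claim that countably many $\sigma$-finite tripotents in a JBW$^*$-triple admit a common $\sigma$-finite upper bound in the coarse order of Proposition~\ref{P:M2 inclusion} (equivalently, that this family of Peirce-$2$ subspaces is upwards $\sigma$-directed). You attribute this to the $M_\sigma$ analysis of \cite{BHKPP-triples}, but that paper only shows $M_\sigma$ is a norm-closed inner ideal; it does not yield the directedness you need. In a JBW$^*$-\emph{algebra} one can indeed arrange this via projections (Proposition~\ref{p JBWstar algebras sigma finite}$(a)$), but in a genuine triple there are no projections to fall back on, and the argument does not transfer. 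The paper obtains this directedness only as Corollary~\ref{cor:sigma-directed}, whose proof requires the full structure analysis of Sections~\ref{sec:9}--\ref{sec:10}, \emph{including} Theorem~\ref{T:SWCG}; so your route is circular. The same issue bites in your $(iv)\Rightarrow(iii)$: maximality of $M_2(u)$ does not by itself force $M_2(s(\psi))\subset M_2(u)$ for every $\psi$ --- you need up-directedness to promote ``maximal'' to ``greatest'', and Proposition~\ref{P:seminorms order} gives only one implication, not the equivalence you claim. The paper avoids this entirely by proving $(ii)\Rightarrow(iii)$ constructively (Proposition~\ref{P:sigmafiniteYES}) and closing the remaining implications $(i)\Rightarrow(ii)$ and $(iv)\Rightarrow(ii)$ through the non-metrizability analysis of the properly infinite piece (Proposition~\ref{p pV}, Proposition~\ref{P:sigmafiniteNO}).

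Second, in your $(ii)\Rightarrow(i)$ for the summands $L^\infty(\mu,C)$, you embed $C$ into some matrix algebra $M_k$ and invoke metrizability of $L^\infty(\mu)\overline{\otimes}M_k$. This works for the odd-dimensional type~$2$ factors, but the exceptional Cartan factors of types~$5$ and~$6$ admit \emph{no} embedding as JB$^*$-subtriples of any $B(H)$; this is precisely what makes them exceptional, and the paper notes it explicitly. The paper's Proposition~\ref{P:LinfyC} handles these summands intrinsically, using only finite-dimensionality of $C$, transitivity of $\mathrm{Iso}(C)$ on complete tripotents, and a Borel selection, with no operator-algebra embedding.
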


Assertions $(a)$ and $(b)$ follow from \cite{Edw-Rut-exposed} and \cite{BHKPP-triples}, respectively.
More concretely, the \textquoteleft only if part\textquoteright in $(a)$ is obvious; to see the `if part' it is enough to use the known fact that exceptional Cartan factors are finite-dimensional and every $D_j$ is finite-dimensional too, hence each of the summands $L^\infty(\mu_\alpha,C_\alpha)$ and $L^\infty(\mu_j,D_j)$ is $\sigma$-finite (cf. \cite[Theorem 4.4]{Edw-Rut-exposed}). Assertion $(b)$ follows from \cite[Theorem 1.1]{BHKPP-triples}.
\smallskip

It remains to prove $(c)$. In view of $(b)$ we may restrict our attention to the $\sigma$-finite case. Let us observe that some implications in $(c)$ are easy at this point. Indeed, $(iii)$ implies that the strong$^*$ topology on $B_M$ is metrizable, hence we get $(iii)\Rightarrow(i)$. Further, $(iii)\Rightarrow(iv)$ follows from Proposition~\ref{P:seminorms order}(ii).
Recall that a projection $p$ in a von Neumann algebra $V$ is finite if there is no partial isometry in $V$ with final projection $p$ and initial projection stricly less than $p$.
\smallskip
The argument will follow after considering the individual summands in the representation.
However, we first give the following corollary on JBW$^*$-triples with separable predual. Note that while any separable Banach space is trivially WCG, $c_0$ is an example of a separable space which is not strongly WCG by \cite[Theorem 2.5]{SWCG}. A similar example cannot be a predual of a JBW$^*$-triple.\smallskip

\begin{cor}
Let $M$ be a JBW$^*$-triple with separable predual $M_*$. Then $M_*$ is strongly WCG.
\end{cor}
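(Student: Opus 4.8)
The plan is to deduce the result from the characterization in Theorem~\ref{T:SWCG}(c). Since every separable Banach space is WCG, $M_*$ is WCG, so $M$ is $\sigma$-finite by Theorem~\ref{T:SWCG}(b). By the equivalence (i)$\Leftrightarrow$(ii) in part (c) it then suffices to prove that the projection $p$ appearing in the representation \eqref{eq decomp in L 9.1} is finite; strong WCG of $M_*$ follows at once. As $pV$ is an $\ell_\infty$-summand of $M$, its predual $(pV)_*$ is a (closed) subspace of $M_*$ and hence separable; recall also that, by construction of this summand, $pV$ has no nonzero direct summand triple-isomorphic to a JBW$^*$-algebra, which is equivalent to the nonexistence of a nonzero central projection $z\in V$ with $zp\sim z$.

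To prove $p$ finite I argue by contradiction. If $p$ is not finite, standard comparison theory provides a nonzero central projection $z\in V$ such that $p':=zp$ is properly infinite with central support equal to $z$; note that $zpV$ is again a central $\ell_\infty$-summand, so $(zpV)_*$ is separable and $zpV$ still has no JBW$^*$-algebra summand. Now I distinguish two cases. If $zV$ is not $\sigma$-finite, choose an uncountable family $(r_i)_{i\in I}$ of pairwise orthogonal nonzero projections in $zV$. Since $c(p')=z\ge r_i$, the comparison theorem yields for each $i$ a nonzero partial isometry $v_i\in zV$ with $v_iv_i^*\le p'$ and $v_i^*v_i\le r_i$, so that $v_i\in zpV$ and $\norm{v_i}=1$. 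Taking normal states $\omega_i$ supported below $v_i^*v_i$ and setting $f_i(x)=\omega_i(v_i^*x)$ for $x\in zpV$, the orthogonality $r_i\perp r_j$ gives $v_jv_i^*v_i=0$ for $i\ne j$, whence $f_i(v_j)=\delta_{ij}$ and $\norm{f_i-f_j}\ge1$. Thus $\{f_i\}_{i\in I}$ is an uncountable $1$-separated subset of $(zpV)_*$, contradicting its separability. If instead $zV$ is $\sigma$-finite, then a properly infinite projection of full central support is equivalent to the identity, so there is a partial isometry $w\in zV$ with $w^*w=z$ and $ww^*=p'$; the map $a\mapsto wa$ is then a surjective linear isometry of $zV$ onto $zpV$, hence a triple isomorphism by Kaup's theorem, exhibiting $zpV$ as triple-isomorphic to the JBW$^*$-algebra $zV$ and contradicting the absence of a JBW$^*$-algebra summand. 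Either way we reach a contradiction, so $p$ is finite.

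The routine parts --- that separable implies WCG, that the central cut $zpV$ remains a summand with separable predual and no JBW$^*$-algebra pieces, and the final assembly through Theorem~\ref{T:SWCG}(c) --- are straightforward. The crux, and the only place where real work is needed, is the proof that $p$ is finite: it rests on the two von Neumann algebra facts used above (the finite/properly-infinite central decomposition together with passage to full central support, and the equivalence of a properly infinite projection of full central support with the identity in a $\sigma$-finite algebra) and on the $1$-separation estimate for the functionals $f_i$, which is precisely where separability of the predual is genuinely exploited.
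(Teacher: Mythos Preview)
Your proof is correct and follows essentially the same strategy as the paper: reduce to showing that $p$ is finite, and obtain a contradiction in the non-$\sigma$-finite case via an uncountable $1$-separated family of normal functionals built from partial isometries with orthogonal initial projections. The only organizational difference is that the paper assumes $C_p=1$ and shows $V$ itself is $\sigma$-finite (using exactly your Case~1 argument), then concludes $p$ is finite by passing through metrizability of the strong$^*$ topology on $B_V$ and applying Theorem~\ref{T:SWCG} to $pV$; your Case~2, by contrast, handles the $\sigma$-finite situation directly via $p'\sim z$ (this is the content of Proposition~\ref{p pV}(a)), which is a bit more economical since it avoids looping back through the strongly WCG characterization for the summand $pV$.
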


\begin{proof} First observe that $M$ is $\sigma$-finite. Indeed, being separable, $M_*$ is WCG, thus $M$ is $\sigma$-finite by Theorem~\ref{T:SWCG}(b).
(There is also an alternative way of proving
this. Assume that $M_*$ is separable and fix $e\in M$ a complete tripotent.
Then $M_2(e)_*$ is also separable.  Since $M_2(e)$ is a JBW$^*$-algebra, we can choose a countable family of normal states $\{\varphi_n : n\in \mathbb{N}\}$ which is norm-dense in the set of normal states of $M_2(e)$. Then $\displaystyle \sum_{n=1}^{\infty}\frac{1}{2^n} \varphi_n$ is a faithful normal state of $M_2(e)$. Therefore, $M_2(e)$ is $\sigma$-finite, so $e$ is $\sigma$-finite and $M$ is $\sigma$-finite as well.)

So, assume $M_*$ is separable and fix a representation of $M$ given by Proposition~\ref{P:representation}. It follows that $(pV)_*$ is separable as well.
Without loss of generality there is no nonzero central projection in $V$ orthogonal to $p$ (if $z$ is such a projection, then $pV=p(1-z)V$). We claim that in this case necessarily $V$ is $\sigma$-finite. Assume it is not the case. Then there is an uncountable family of pairwise orthogonal nonzero projections $(r_\gamma)_{\gamma\in\Gamma}$ in $V$. It follows from \cite[Theorem V.1.8]{Tak} that for each $\gamma\in\Gamma$ there is a nonzero partial isometry $u_\gamma\in V$ such that its initial projection $p_i(u_\gamma)\le r_\gamma$ and its final projection $p_f(u_\gamma)\le p$. Then clearly $u_\gamma\in pV$ for $\gamma\in\Gamma$.
Fix $\varphi_\gamma\in (pV)_*$ of norm one with $u_\gamma=s(\varphi_\gamma)$. Since $\varphi_\gamma(u_\gamma)=1$ and for $\delta\ne\gamma$
$$\varphi_\gamma(u_\delta)=\varphi_\gamma P_2(u_\gamma)(u_\delta)=0,$$
we see that $(\varphi_\gamma)_{\gamma\in\Gamma}$ is a $1$-discrete set, contradicting the separability of $(pV)_*$.

Hence the strong$^*$ topology on $B_V$ is metrizable, so by Lemma~\ref{L:strong* topology}$(b)$ the same holds for $B_{pV}$,
thus $(pV)_*$ is strongly WCG. Using Theorem~\ref{T:SWCG} we now see that $p$ is finite and hence $M_*$ is strongly WCG as well.
\end{proof}

To prove assertion $(c)$ in Theorem~\ref{T:SWCG} we will describe the structure of all preHilbertian seminorms generating the strong$^*$ topology using Proposition~\ref{P:seminorms order} and some complements to that. We will do it first for $\sigma$-finite triples and then (in the next section) we shall discuss the general case.
We start by analyzing the individual summands appearing in Proposition \ref{P:representation}.

\subsection{JBW$^*$-algebras}

In the case of JBW$^*$-algebras we can conclude by applying the existing literature. The desired conclusion is covered by the following proposition.

\begin{prop}\label{p JBWstar algebras sigma finite}
Let $M$ be a JBW$^*$-algebra.
\begin{enumerate}[$(a)$]
    \item Let $(e_n)$ be a sequence of $\sigma$-finite tripotents in $M$. Then there is a $\sigma$-finite projection $p\in M$ such that $M_2(p)$ contains $e_n$ for each $n\in\en$.
    \item Assume $M$ is not $\sigma$-finite. Then for each $\sigma$-finite projection $p\in M$ there is a $\sigma$-finite projection $q\in M$ such that $q>p$ (and hence $M_2(p)\subsetneqq M_2(q)$).
    \item The strong$^*$ topology on $B_M$ is metrizable if and only if $M$ is $\sigma$-finite. In this case it is metrizable by $\norm{\cdot}_\omega$, where $\omega$ is any faithful normal state.
\end{enumerate}
\end{prop}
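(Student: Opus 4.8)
The plan is to establish the three parts in order, using $(a)$ and $(b)$ to drive the nontrivial implication of $(c)$.

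For $(a)$ I would first pass from tripotents to projections. By Lemma~\ref{L:sigma finite tripotent}$(d)$ each $\sigma$-finite tripotent $e_n$ satisfies $e_n\in M_2(p_n)$ for some $\sigma$-finite projection $p_n$. I then assemble the $p_n$ into a single dominating $\sigma$-finite projection, exactly as in the proof of Lemma~\ref{L:omega on JBW*algebra}: using \cite[Lemma 3.5]{BHK-JBW} (a $\sigma$-finite projection above finitely many $\sigma$-finite projections always exists) I build inductively an increasing sequence $(q_n)$ of $\sigma$-finite projections with $q_n\ge p_1,\dots,p_n$, and set $p=\sup_n q_n$, which is again $\sigma$-finite by \cite[Theorem 3.4]{Edw-Rut-exposed}. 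Since $p_n\le q_n\le p$, Proposition~\ref{p characterization of triple order} gives $M_2(p_n)\subseteq M_2(p)$, whence $e_n\in M_2(p)$ for every $n$.

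For $(b)$, since $M$ is not $\sigma$-finite its unit is not $\sigma$-finite, so $1-p\ne 0$. Working inside the JBW$^*$-algebra $M_2(1-p)$ (whose unit is $1-p$) I pick any normal state and let $r$ be its support projection; by Lemma~\ref{L:sigma finite tripotent}$(c)$ this $r$ is a nonzero $\sigma$-finite projection with $r\le 1-p$, hence $r\perp p$. Then $q=p+r$ is a projection with $q>p$, it is $\sigma$-finite as the join of two $\sigma$-finite projections, and $M_2(p)\subseteq M_2(q)$ with the inclusion proper because $r\in M_2(q)$ while $r\in M_0(p)\setminus\{0\}$ forces $r\notin M_2(p)$.

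For $(c)$ the implication from $\sigma$-finiteness is precisely Lemma~\ref{L:JBW*algebra sigma finite}: a faithful normal state $\omega$ exists and $\norm{\cdot}_\omega$ metrizes strong$^*$ on bounded sets. For the converse I would argue by contradiction. Metrizability of strong$^*$ on $B_M$ makes it first countable, so there is a countable family $(\varphi_n)\subset M_*\setminus\{0\}$ whose seminorms generate the strong$^*$ topology on $B_M$. Each $s(\varphi_n)$ is $\sigma$-finite by Lemma~\ref{L:sigma finite tripotent}$(a)$, so $(a)$ yields a $\sigma$-finite projection $p$ with $s(\varphi_n)\in M_2(p)$, that is $M_2(s(\varphi_n))\subseteq M_2(p)$ (Proposition~\ref{P:M2 inclusion}). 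Choosing by Lemma~\ref{L:sigma finite tripotent}$(c)$ a normal state $\varphi_p$ with $s(\varphi_p)=p$, Proposition~\ref{P:seminorms order}$(i)$ shows each $\norm{\cdot}_{\varphi_n}$ is weaker than $\norm{\cdot}_{\varphi_p}$ on $B_M$; since $\norm{\cdot}_{\varphi_p}$ is itself one of the defining strong$^*$ seminorms, this forces strong$^*$ to coincide on $B_M$ with the topology of the single seminorm $\norm{\cdot}_{\varphi_p}$. If $M$ were not $\sigma$-finite, part $(b)$ would give a $\sigma$-finite projection $q>p$ with $M_2(p)\subsetneq M_2(q)$; taking a normal state $\varphi_q$ with $s(\varphi_q)=q$, Proposition~\ref{P:seminorms order}$(ii)$ would make $\norm{\cdot}_{\varphi_p}$ strictly weaker than $\norm{\cdot}_{\varphi_q}$ on $B_M$. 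But $\norm{\cdot}_{\varphi_q}$ is strong$^*$-continuous, hence continuous for the topology of $\norm{\cdot}_{\varphi_p}$, contradicting strictness; thus $M$ is $\sigma$-finite. The step I expect to be the main obstacle is the bookkeeping in $(c)$: cleanly separating the non-strict comparison of Proposition~\ref{P:seminorms order}$(i)$, which collapses the countably many seminorms to a single $\norm{\cdot}_{\varphi_p}$, from the strict comparison of Proposition~\ref{P:seminorms order}$(ii)$, which produces the contradiction, and making sure the metrizability hypothesis genuinely supplies a countable generating subfamily on bounded sets.
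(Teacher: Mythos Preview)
Your proposal is correct and follows essentially the same route as the paper's own proof: part $(a)$ via Lemma~\ref{L:sigma finite tripotent}$(d)$ and \cite[Lemma~3.5]{BHK-JBW}, part $(b)$ by picking a nonzero $\sigma$-finite projection $r\le 1-p$ and setting $q=p+r$, and part $(c)$ by combining Lemma~\ref{L:JBW*algebra sigma finite} for the `if' direction with $(a)$, $(b)$ and Proposition~\ref{P:seminorms order}$(i)$,$(ii)$ for the converse. Your write-up is simply more explicit about the intermediate steps (the inductive construction in $(a)$, the existence of $r$ via a normal state in $(b)$, and the contradiction argument in $(c)$), which the paper compresses into one-line citations.
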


\begin{proof}
$(a)$ This follows from Lemma~\ref{L:sigma finite tripotent}$(d)$ and \cite[Lemma 3.5]{BHK-JBW}.\smallskip

$(b)$ This follows easily from the definitions. If $M$ is not $\sigma$-finite and $p$ is $\sigma$-finite, then $1-p\ne0$, hence there is a $\sigma$-finite projection $r\in M_2(1-p)$. It is enough to take $q=p+r$.\smallskip

$(c)$ The `if part' follows from Lemma~\ref{L:JBW*algebra sigma finite}. Conversely, assume that $B_M$ is metrizable in the strong$^*$ topology. Then there is a countable base of strong$^*$ neighborhoods of zero in $B_M$. It follows that the strong$^*$ topology on $B_M$ is generated by countably many seminorms. By (a) and Proposition~\ref{P:seminorms order}(i)  it is generated by one seminorm. By (b) and Proposition~\ref{P:seminorms order}(ii) we deduce that $M$ is $\sigma$-finite.
\end{proof}

\subsection{Finite dimensional Cartan factors}

In this subsection we shall deal with summands of the form $L^\infty(\mu,C)$ where $C$ is an exceptional Cartan factor (i.e., the Cartan factor of type $5$ or $6$) or a finite-dimensional Cartan factor of type 2 with dim$(H)\in \mathbb{N}$ odd.
We start with properties of a finite-dimensional JB$^*$-triple.\smallskip

Let $E$ be a JB$^*$-triple. Following the most employed notation, the symbol $\mathcal{U}(E)$ will stand for the set of all tripotents in $E$. We shall write $\mathcal{U}(E)^*$ for the set of all nonzero tripotents in $E$, and we shall employ the symbol $\mathcal{U}_{max} (E)$ to denote the set of all complete tripotents in $E$. By Kaup's Riemann mapping theorem \cite[Proposition 5.5]{kaup1983riemann}, a linear bijection between JB$^*$-triples $E$ and $F$ is a triple isomorphism if and only if it is an isometry. Henceforth, we denote by Iso$(E,F)$ the set of all surjective isometries (equivalently, triple isomorphisms) from $E$ to $F$. We write Iso$(E) =$Iso$(E,E)$ for the set of all triple automorphisms of $E$.\smallskip

Fix $\Phi\in\hbox{Iso}(E)$. Then $\Phi$, being a JB$^*$-triple automorphism, preserves all the triple structure. In particular, it maps tripotents to tripotents and complete tripotents to complete tripotents, that is, \begin{equation}\label{eq invariance of sets of tripotnets under triple automorphisms}  \Phi(E) (\mathcal{U}(E)^*) = \mathcal{U}(E)^*, \hbox{ and } \Phi(E) (\mathcal{U}_{max}(E)) = \mathcal{U}_{max}(E).
\end{equation} Moreover, the equality $\Phi(P_j(e)(x))=P_j(\Phi(e))(\Phi(x))$ holds for every $e\in\mathcal{U}(E),$ $j\in\{0,1,2\}$ and $x\in E$. In particular, $\Phi(E_2(e))=E_2(\Phi(e))$ and $\Phi$ is a (unital) JB$^*$-algebra isomorphism of $E_2(e)$ onto $E_2(\Phi(e))$. Let us fix $e\in\mathcal{U}(E)$ and $\varphi\in E^*$ a functional satisfying $\varphi = \varphi P_2(e)$. Then $\varphi\circ \Phi^{-1}=\varphi\circ P_2(e)\circ \Phi^{-1}=\varphi\circ \Phi^{-1}\circ P_2(\Phi(e))$ and $(\varphi\circ \Phi^{-1})|_{E_2(\Phi(e))}=\varphi|_{E_2(e)}\circ \Phi^{-1}$.\smallskip

It is natural to ask about the orbit of a fixed $e\in \mathcal{U}_{max}(E)$ under the group $\hbox{Iso}(E)$.
In general, $\hbox{Iso}(E) (e)$ is not easy to be determined (cf. \cite{braun1978holomorphic} and \cite{kaup1997real}). If $E$ is a finite-dimensional JB$^*$-triple,
then any two complete (maximal) tripotents in $E$ are interchanged by an element in $\hbox{Iso}(E)$ (see \cite[Theorem 5.3$(b)$]{loos1977bounded}).
This can be also seen by applying that $E$ being finite-dimensional implies that $E$ coincides with a finite $\ell_{\infty}$-sum of finite-dimensional Cartan factors,
and it is known that on a finite-dimensional Cartan factor $C$ the group  $\hbox{Iso}(C)$ acts transitively on $\mathcal{U}_{max}(C)$.
Therefore, for $e\in \mathcal{U}_{max}(E)$ and dim$(E)<\infty$ we have
\begin{equation}\label{eq orbit of a complete tripotent in finite dimension}  \hbox{Iso}(E) (e) = \mathcal{U}_{max}(E).
\end{equation}

\begin{lemma}\label{L:exceptional}
Let $E$ be a finite-dimensional JB$^*$-triple, let $e\in \mathcal{U}_{max} (E),$ and let $\varphi\in E_*$ be a norm-one functional such that $e=s(\varphi)$. Then the following assertions hold:
\begin{enumerate}[$(a)$]
\item\label{it:exceptional1} For each $\Phi\in\hbox{Iso}(E)$ we have $\Phi(e)\in \mathcal{U}_{max}(E)$ and $\Phi(e)=s(\varphi\circ \Phi^{-1})$;
\item\label{it:exceptional2} $\mathcal{U}(E)$, $\mathcal{U}(E)^*$, and $\mathcal{U}_{max} (E)$ are compact subsets of $E$ and $\hbox{Iso}(E)$ is a compact subset of $B(E)$;
\item\label{it:exceptional3} There is a constant $\alpha >0$ such that for each $\Phi\in \hbox{Iso}(E)$
    we have
    $$\alpha\norm{x}\le\norm{x}_{\varphi\circ \Phi^{-1}}\le \norm{x},\quad x\in E.$$
\item\label{it:exceptional4} There is a Borel measurable mapping
    $\theta:\mathcal{U}_{max} (E)\to\hbox{Iso}(E)$ such that $u=\theta(u)(e)$ for each $u\in\mathcal{U}_{max} (E)$.
\end{enumerate}
\end{lemma}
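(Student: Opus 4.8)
The plan is to treat the four assertions in turn, using the transformation rules for triple automorphisms recalled in \eqref{eq invariance of sets of tripotnets under triple automorphisms} and in the paragraph preceding the statement, together with finite-dimensionality (which makes all the relevant sets compact) and, at the very end, a measurable selection theorem. Throughout, $\hbox{Iso}(E)$ consists of triple automorphisms by Kaup's Riemann mapping theorem.

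For $(a)$, the membership $\Phi(e)\in\mathcal{U}_{max}(E)$ is immediate from \eqref{eq invariance of sets of tripotnets under triple automorphisms}. To identify $\Phi(e)$ as $s(\varphi\circ\Phi^{-1})$ I would verify the two defining properties of the support tripotent for $\psi:=\varphi\circ\Phi^{-1}$. The relation $\psi=\psi\circ P_2(\Phi(e))$ is exactly the identity $\varphi\circ\Phi^{-1}=\varphi\circ\Phi^{-1}\circ P_2(\Phi(e))$ noted before the statement. For the faithfulness and positivity, the same paragraph gives $\psi|_{E_2(\Phi(e))}=\varphi|_{E_2(e)}\circ\Phi^{-1}$, where $\Phi^{-1}\colon E_2(\Phi(e))\to E_2(e)$ is a JB$^*$-algebra isomorphism; since $e=s(\varphi)$ forces $\varphi|_{E_2(e)}$ to be faithful and positive, its composition with a JB$^*$-algebra isomorphism is again faithful and positive. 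Finally $\norm{\psi}=1=\psi(\Phi(e))$ because $\Phi^{-1}$ is an isometry and $\varphi(e)=\norm{\varphi}=1$. Uniqueness of the support tripotent then gives $s(\psi)=\Phi(e)$.

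For $(b)$ everything reduces to closedness and boundedness in a finite-dimensional space (here $S_E$ denotes the unit sphere of $E$). Tripotents satisfy $\norm{u}=\norm{\J uuu}=\norm{u}^3$, so $\mathcal{U}(E)\subset\{0\}\cup S_E$ is bounded, and it is closed since the triple product is continuous; hence $\mathcal{U}(E)$ is compact, and $\mathcal{U}(E)^*=\mathcal{U}(E)\cap S_E$ is compact as well, $0$ being isolated in $\mathcal{U}(E)$. For $\mathcal{U}_{max}(E)$ I would use $\mathcal{U}_{max}(E)=\{u\in\mathcal{U}(E):P_0(u)=0\}$; since $u\mapsto P_0(u)=\operatorname{id}_E-2L(u,u)+Q(u)^2$ is continuous, this is a closed subset of the compact set $\mathcal{U}(E)$. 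Finally $\hbox{Iso}(E)$ is bounded in $B(E)$ (its elements have norm one) and closed, because an operator-norm limit of surjective linear isometries is a linear isometry, hence injective and, in finite dimension, bijective; closed and bounded in finite dimension yields compactness.

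For $(c)$ the upper estimate is general: for any norm-one $\psi$ one has $\norm{x}_\psi^2=\psi(\J xx{s(\psi)})\le\norm{\J xx{s(\psi)}}\le\norm{x}^2$ from $\norm{\J abc}\le\norm a\norm b\norm c$. The uniform lower estimate is the real point, and here I would exploit the identity
\[
\norm{x}_{\varphi\circ\Phi^{-1}}=\norm{\Phi^{-1}x}_\varphi,\qquad x\in E,\ \Phi\in\hbox{Iso}(E),
\]
which follows from $s(\varphi\circ\Phi^{-1})=\Phi(e)$ (part $(a)$) and the fact that $\Phi^{-1}$ preserves the triple product: $(\varphi\circ\Phi^{-1})(\J xx{\Phi(e)})=\varphi(\J{\Phi^{-1}x}{\Phi^{-1}x}{e})$. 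Since $\Phi^{-1}$ is an isometry, this reduces the whole inequality, uniformly in $\Phi$, to the single statement $\alpha\norm y\le\norm y_\varphi\le\norm y$ for all $y\in E$. As $e=s(\varphi)$ is complete, $M_0(e)=\{0\}$, so by Lemma~\ref{L:strong* topology}$(e)$ the seminorm $\norm{\cdot}_\varphi$ is a genuine norm on the finite-dimensional space $E$, hence equivalent to $\norm{\cdot}$; taking $\alpha=\inf_{\norm y=1}\norm y_\varphi>0$ (the infimum is attained by compactness of $S_E$) finishes $(c)$.

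For $(d)$, by \eqref{eq orbit of a complete tripotent in finite dimension} the evaluation map $\operatorname{ev}_e\colon\hbox{Iso}(E)\to\mathcal{U}_{max}(E)$, $\Phi\mapsto\Phi(e)$, is a continuous surjection between compact metric spaces (part $(b)$), and a map $\theta$ with $\theta(u)(e)=u$ is exactly a section of $\operatorname{ev}_e$. I would produce a Borel section via the multifunction $u\mapsto F(u)=\operatorname{ev}_e^{-1}(\{u\})$, whose values are nonempty (surjectivity) and closed. For every open $U\subset\hbox{Iso}(E)$ the set $\{u:F(u)\cap U\ne\emptyset\}=\operatorname{ev}_e(U)$ is $\sigma$-compact (an open subset of a compact metric space is $\sigma$-compact and its continuous image is a countable union of compacta), hence Borel; thus $F$ is a Borel-measurable closed-valued multifunction into the Polish space $\hbox{Iso}(E)$, and the Kuratowski--Ryll-Nardzewski selection theorem produces a Borel measurable selection $\theta$ with $\theta(u)\in F(u)$, i.e. $\theta(u)(e)=u$. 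This last step, namely checking the measurability of $F$ and invoking the selection theorem, is the only genuinely nontrivial point; the remaining verifications are routine given the structure theory recalled above.
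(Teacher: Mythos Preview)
Your proof is correct and follows essentially the same line as the paper's. The only cosmetic differences are that in $(b)$ you prove compactness of $\mathcal{U}_{max}(E)$ directly via closedness of $\{u:P_0(u)=0\}$ (the paper obtains it as a continuous image of the compact $\hbox{Iso}(E)$ using transitivity), and in $(c)$ you note that the isometry of $\Phi^{-1}$ collapses the paper's joint infimum over $S_E\times\hbox{Iso}(E)$ to a single infimum over $S_E$---which is a slight simplification but yields the same constant.
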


\begin{proof}
Since $E$ is finite-dimensional, it is a $\sigma$-finite JBW$^*$-triple, so $\varphi$ can be found.\smallskip

$(a)$ This was justified in \eqref{eq invariance of sets of tripotnets under triple automorphisms}.\smallskip

$(b)$ Since the triple product is jointly norm continuous, $\mathcal{U}(E)$ and $\mathcal{U}(E)^*= \mathcal{U}(E)\backslash\{0\}$ are closed subsets of the closed unit ball and the unit sphere of $E$, respectively, so they are compact.
Elements of $\hbox{Iso}(E)$ are precisely (surjective) isometries, so $\hbox{Iso}(E)$ is a closed subset of the unit sphere of $B(E)$, hence it is compact.\smallskip

We next consider the mapping $\Psi:\hbox{Iso}(E)\to E$ defined by
$$\Psi(\Phi)=\Phi(e), \quad \Phi\in \hbox{Iso}(E).$$
It is clearly a continuous mapping and by $(a)$ it maps $\hbox{Iso}(E)$ into $\mathcal{U}_{max} (E)$.
We deduce from \eqref{eq orbit of a complete tripotent in finite dimension} that $\Psi$ is onto, so $\mathcal{U}_{max} (E)$ is compact.\smallskip

$(c)$ For any $\Phi\in\hbox{Iso}(E)$ and $x\in E$ we have (due to $(a)$) $$\norm{x}_{\varphi\circ \Phi^{-1}}^2=(\varphi\circ \Phi^{-1}) \J xx{\Phi(e)} =\varphi \J{\Phi^{-1}(x)}{\Phi^{-1}(x)}{e} =  \norm{\Phi^{-1}(x)}_\varphi^2.$$
Since
$$(x,\Phi)\mapsto \norm{\Phi^{-1}(x)}_\varphi,\qquad x\in S_E, \Phi\in\hbox{Iso}(E)$$
is a strictly positive continuous mapping on the compact space $S_E\times \hbox{Iso}(E)$,
it has some strictly positive minimum and maximum. Thus, the existence of the constant $\alpha$ easily follows. Clearly, $\|x\|_{\phi}\leq \|x\|$ for all $x\in M$ and every norm-one functional $\phi$ in $M_*$.\smallskip

$(d)$ The mapping $\Psi$ from the proof of $(b)$ is a continuous mapping of a compact metric space $\hbox{Iso}(E)$ onto a compact metric space $\mathcal{U}_{max} (E)$,
hence the inverse set-valued map $u\mapsto \Psi^{-1}(u)$ admits a Borel-measurable selection by the Kuratowski---Ryll-Nardzewski theorem (see \cite[Theorem 18.13]{aliprantisborder}).
\end{proof}

The reader may already guess at this stage that the constant $\alpha>0$ given by Lemma~\ref{L:exceptional}$(\ref{it:exceptional3})$ is directly linked to the dimension of the JB$^*$-triple $E$.
If we have a family $\{C_k:k\in\Lambda\}$ of finite-dimensional Cartan factors for which the dim$(C_k)$ is uniformly bounded for all $k\in \Lambda$ (for example, a family of exceptional Cartan factors of types 5 and 6), then the constant $\alpha$ can be chosen to be valid for all $k\in \Lambda$.

\begin{prop}\label{P:LinfyC}
Let $E$ be a finite-dimensional JB$^*$-triple, and let $(\Omega,\Sigma,\mu)$ be a probability space.
Consider the JBW$^*$-triple $M=L^\infty(\mu,E)$ {\rm(}equipped with the  pointwise triple product{\rm)}.
Let $e,\varphi,\theta,\alpha$ be as in Lemma~\ref{L:exceptional}. Then the following assertions hold:
\begin{enumerate}[$(a)$]
\item An element $f\in M$ is a tripotent if and only if $f(\omega)\in \mathcal{U}(E)$ $\mu$-almost everywhere;
\item An element $f\in M$ is a complete tripotent if and only if $f(\omega)\in\mathcal{U}_{max} (E)$ $\mu$-almost everywhere;
\item Assume that $f\in M$ is a complete tripotent. Let $$v(\omega)=\varphi\circ \theta(f(\omega))^{-1},\quad \omega\in\Omega.$$
    Then $v\in L^1(\mu,E_*)=L^\infty(\mu,E)_*$ and  $s(v)=f$;
\item Let $f$ and $v$ be as in $(c)$. Then
$$\alpha\left(\int \norm{g(\omega)}^2\di\mu(\omega)\right)^{\frac12}\le \norm{g}_v\le \left(\int \norm{g(\omega)}^2\di\mu(\omega)\right)^{\frac12},\quad g\in M;$$
\item The strong$^*$ topology on $B_M$ coincides with the topology generated by the norm $\norm{\cdot}_v$ and also with the topology generated by the norm $g\mapsto\left(\int \norm{g(\omega)}^2\di\mu(\omega)\right)^{\frac12}$.
\end{enumerate}
\end{prop}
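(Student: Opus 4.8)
The plan is to reduce everything to pointwise statements, using that on $M=L^\infty(\mu,E)$ the triple product, the Peirce projections and the JB$^*$-algebra product on a Peirce-$2$ subspace are all computed pointwise, and then to feed in the pointwise uniform estimates of Lemma~\ref{L:exceptional}. Assertion $(a)$ is immediate: since $\J fff(\omega)=\J{f(\omega)}{f(\omega)}{f(\omega)}$, the identity $\J fff=f$ holds in $M$ exactly when $f(\omega)\in\mathcal{U}(E)$ for a.e.\ $\omega$. For $(b)$ I would use that $P_0(f)(g)(\omega)=P_0(f(\omega))(g(\omega))$, so $M_0(f)=\{g\in M: g(\omega)\in E_0(f(\omega))\text{ a.e.}\}$; if $f(\omega)\in\mathcal{U}_{max}(E)$ a.e.\ then $E_0(f(\omega))=\{0\}$ and $M_0(f)=\{0\}$, while if $f$ fails to be complete on a set $A$ of positive measure I would produce a nonzero element of $M_0(f)$ by a measurable selection: fixing a basis $v_1,\dots,v_d$ of $E$ and setting $g(\omega)=P_0(f(\omega))(v_{i(\omega)})$ (with $i(\omega)$ the least index making this nonzero, and $g=0$ off $A$) yields a nonzero, essentially bounded $g\in M_0(f)$, contradicting completeness.

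For $(c)$ I would first observe that $v$ is measurable (as $f$ is measurable, $\theta$ is Borel by Lemma~\ref{L:exceptional}$(\ref{it:exceptional4})$, and inversion is continuous on the compact set $\hbox{Iso}(E)$) and essentially bounded by $\norm{\varphi}=1$, hence $v\in L^\infty(\mu,E_*)\subset L^1(\mu,E_*)=M_*$. To show $s(v)=f$ I verify the three characterizing properties of the support tripotent pointwise and integrate. Writing $\Phi_\omega=\theta(f(\omega))$, the transformation rule recorded before Lemma~\ref{L:exceptional} gives $v(\omega)=v(\omega)\circ P_2(f(\omega))$, whence $v=v\circ P_2(f)$; next $v(\omega)(f(\omega))=\varphi(e)=1$ and $\norm{v(\omega)}=\norm{\varphi}=1$, so $\norm{v}=1=v(f)$; finally $\Phi_\omega^{-1}$ is a unital JB$^*$-algebra isomorphism of $E_2(f(\omega))$ onto $E_2(e)$ and $\varphi|_{E_2(e)}$ is faithful positive, so $v|_{M_2(f)}$ is faithful positive (a positive $a\in M_2(f)$ is pointwise positive, and $v(a)=0$ forces $v(\omega)(a(\omega))=0$, hence $a(\omega)=0$, a.e.). Uniqueness of the support tripotent then gives $s(v)=f$.

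Assertion $(d)$ is then routine: using the pointwise triple product,
$$\norm{g}_v^2=v(\J ggf)=\int v(\omega)(\J{g(\omega)}{g(\omega)}{f(\omega)})\di\mu(\omega)=\int\norm{g(\omega)}_{v(\omega)}^2\di\mu(\omega),$$
and Lemma~\ref{L:exceptional}$(\ref{it:exceptional3})$ applied with $\Phi=\Phi_\omega$ gives $\alpha\norm{g(\omega)}\le\norm{g(\omega)}_{v(\omega)}\le\norm{g(\omega)}$; squaring and integrating yields the two-sided estimate. In particular $\norm{\cdot}_v$ is equivalent to $N(g):=\left(\int\norm{g(\omega)}^2\di\mu(\omega)\right)^{1/2}$, so these generate the same topology on $B_M$, and $(e)$ reduces to matching that topology with the strong$^*$ topology.

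One inclusion is free: from $(d)$ we have $N\le\alpha^{-1}\norm{\cdot}_v$, and $\norm{\cdot}_v=\norm{\cdot}_{v}$ is one of the defining strong$^*$ seminorms (namely the one attached to $v\in M_*$), so the $N$-topology is coarser than the strong$^*$ topology on $B_M$. For the reverse inclusion I must dominate \emph{every} defining seminorm $\norm{\cdot}_\psi$, $\psi\in M_*$, by $N$ on $B_M$; the relevant pointwise estimate is
$$\norm{g}_\psi^2=\betr{\psi(\J gg{s(\psi)})}\le\int\norm{\psi(\omega)}\,\norm{g(\omega)}^2\di\mu(\omega),$$
using $\norm{\J aac}\le\norm a^2\norm c$ and $\norm{s(\psi)(\omega)}\le1$. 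The main obstacle is that $\psi$ lies only in $L^1(\mu,E_*)$, so $\norm{\psi(\cdot)}$ need not be bounded; I would handle this by splitting the integral at a level $R$ with $\int_{\{\norm{\psi}>R\}}\norm{\psi(\omega)}\di\mu(\omega)<\varepsilon$, which together with $\norm{g(\omega)}\le1$ on $B_M$ gives the uniform bound $\norm{g}_\psi^2\le R\,N(g)^2+\varepsilon$. This shows each $\norm{\cdot}_\psi$ is $N$-continuous on $B_M$, so the strong$^*$ topology is coarser than the $N$-topology there, and the three topologies coincide.
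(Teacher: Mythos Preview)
Your proof is correct. Parts $(a)$, $(c)$ and $(d)$ match the paper's arguments essentially verbatim. In $(b)$ and $(e)$, however, you take genuinely different and more elementary routes.

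For $(b)$, the paper produces a nonzero element of $M_0(f)$ via the Kuratowski--Ryll-Nardzewski selection theorem: it selects a Borel map $\zeta:\mathcal U(E)\to\mathcal U_{max}(E)$ with $u\le\zeta(u)$ and sets $g=\zeta\circ f$, so that $P_0(f)(g)(\omega)=\zeta(f(\omega))-f(\omega)$. Your construction instead exploits finite-dimensionality directly: fixing a basis and taking the first basis vector whose Peirce-$0$ image is nonzero gives the required measurable, bounded, nonzero element of $M_0(f)$ without any selection theorem. Your argument is shorter and self-contained; the paper's has the aesthetic advantage of building $g$ as a tripotent majorizing $f$.

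For $(e)$, the paper reduces to complete tripotents by invoking Proposition~\ref{P:seminorms order}$(i)$ (every $\sigma$-finite tripotent lies below a complete one, and the Peirce-$2$ inclusion implies the seminorm comparison), and then appeals to $(d)$ to see that all the resulting norms $\norm{\cdot}_{v(f)}$ are equivalent to the $L^2$-type norm. You bypass this machinery entirely: you bound $\norm{g}_\psi^2\le\int\norm{\psi(\omega)}\,\norm{g(\omega)}^2\di\mu$ directly from $\norm{\J aac}\le\norm a^2\norm c$, and then use the $L^1$-tail estimate $\int_{\{\norm{\psi}>R\}}\norm{\psi}\di\mu<\varepsilon$ together with $\norm{g(\omega)}\le1$ on $B_M$ to get $\norm{g}_\psi^2\le R\,N(g)^2+\varepsilon$. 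This is a clean, self-contained argument that avoids the nontrivial comparison lemma; the paper's route, on the other hand, illustrates how Proposition~\ref{P:seminorms order} organizes the whole seminorm hierarchy and is reused elsewhere in the paper.
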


\begin{proof}
Assertion $(a)$ follows immediately from the fact that the triple product is defined pointwise.\smallskip

$(b)$ Since the triple product is defined pointwise, we have, for a given tripotent $f\in M$,
$$P_0(f)(g)(\omega)=P_0(f(\omega))(g(\omega))\quad \mu\mbox{-a.e.}$$
Hence, if $f(\omega)\in \mathcal{U}_{max}(E)$  $\mu$-almost everywhere, then clearly $P_0(f)=0$.\smallskip

Conversely, assume that it is not true that $f(\omega)\in \mathcal{U}_{max}(E)$ $\mu$-almost everywhere. Since $\mathcal{U}_{max} (E)$ is a closed set, there is a measurable set $A\subset\Omega$ of positive measure such $f(\omega)\notin \mathcal{U}_{max}(E)$, for all $\omega\in A$.\smallskip

For any $u\in\mathcal{U}(E)$ there is $u'\in\mathcal{U}_{max} (E)$ with $u\le u'$ (cf. \cite[Lemma 3.12]{Horn1987}).
Moreover,
the set
$$\{(u,u')\in \mathcal{U}(E)\times\mathcal{U}_{max} (E)\setsep u\le u'\}$$
is closed, hence compact, and thus the set-valued mapping
$$\mathcal{U}(E)\ni u\mapsto\{u'\in \mathcal{U}_{max} (E)\setsep u\le u'\}$$
is upper-semicontinuous and compact-valued. By the Kuratowski--Ryll-Nardzewski theorem we find a Borel-measurable
mapping $\zeta:\mathcal{U}(E)\to\mathcal{U}_{max} (E)$ such that $u\le \zeta(u)$ for $u\in \mathcal{U}(E)$.\smallskip

Then the mapping $g=\zeta\circ f$ belongs to $M$ and
$$P_0(f)(g)(\omega)=P_0(f(\omega))(g(\omega))=P_0(f(\omega))(\zeta(f(\omega))=\zeta(f(\omega))-f(\omega)$$
which is nonzero on $A$. Thus $f$ is not complete.\smallskip

$(c)$ By $(b)$ we know that $f(\omega)\in \mathcal{U}_{max} (E)$ $\mu$-almost everywhere,
so the mapping $\omega\mapsto\theta(f(\omega))$ is a $\mu$-almost everywhere defined measurable mapping from $\Omega$ into $\hbox{Iso}(E)$.
Since taking an inverse is a continuous transformation, we see that $v$ is a $\mu$-almost everywhere defined measurable mapping from $\Omega$ into $E_*$.
Moreover, since $\norm{\varphi}=1$ and elements of $\hbox{Iso}(E)$ are isometries, $\norm{v(\omega)}=1$ $\mu$-almost everywhere. Thus $v\in L^1(\mu,E_*)$ and $\norm{v}=1$ (as $\mu$ is a probability measure). Moreover,
$$\ip vf=\int \ip{v(\omega)}{f(\omega)}\di\mu =
\int \varphi \circ \theta(f(\omega))^{-1} (f(\omega))\di\mu
=\int \varphi(e)\di\mu=1.
$$
Furthermore, assume that $h\in M_2(f)$ is positive with $\ip vh=0$. Then $h(\omega)$ is a positive element of $E_2(f(\omega))$ for $\mu$-almost all $\omega\in \Omega$, hence $\theta(f(\omega))^{-1}(h(\omega))$ is a positive element of $E_2(e)$ for $\mu$-almost all $\omega$. Hence
$$0=\ip v h=\int \ip{v(\omega)}{h(\omega)}\di\mu =
\int \varphi(\theta(f(\omega))^{-1}(h(\omega)))\di\mu,
$$
so $\varphi(\theta(f(\omega))^{-1}(h(\omega)))=0$ $\mu$-a.e. Since $\varphi$ is faithful on $E_2(e)$, we deduce that $\theta(f(\omega))^{-1}(h(\omega))=0$ $\mu$-a.e., so $h(\omega)=0$ $\mu$-a.e.\smallskip

$(d)$ For any $g\in M$ we have
$$\begin{aligned}
\norm{g}^2_v&=\ip{v}{\J ggf} = \int \ip{v(\omega)}{\J {g(\omega)}{g(\omega)}{f(\omega)}}\di\mu\\&=\int \ip{\varphi\circ \theta(f(\omega))^{-1}}{\J {g(\omega)}{g(\omega)}{f(\omega)}}\di\mu
=\int \norm{g(\omega)}^2_{\varphi\circ \theta(f(\omega))^{-1}}\di\mu,
\end{aligned}$$
so we can conclude by the choice of $\alpha$.\smallskip

$(e)$ For any tripotent $h\in M$ there is a complete tripotent $f\in M$
with $f\ge h$. For any complete tripotent $f$ let $v(f)\in M_*$ be as in $(c)$. By Proposition~\ref{P:seminorms order}$(i)$ the strong$^*$ topology on $B_M$ coincides with the topology generated by the seminorms $\norm{\cdot}_{v(f)}$, $f\in M$ a complete tripotent. We deduce from $(d)$ that all these norms are equivalent to the norm $g\mapsto\left(\int \norm{g(\omega)}^2\di\mu(\omega)\right)^{\frac12}$.
\end{proof}

\subsection{Triples of the form $pV$}

It turns out that the analysis of this case is more complicated than the previous two cases. We shall employ an argument which is closely related to the notion of equivalence of projections and to the theory of types of von Neumann algebras (see, for example, \cite{KR2}).\smallskip

Given a von Neumann algebra $V$, two projections $p,q\in V$ are said to be \emph{equivalent} (we write $p\sim q$) if there is a partial isometry in $V$ with initital projection $p$ and final projection $q$. Further, a projection $p$ is called \emph{finite} if the only projection $q$ satisfying $q\le p$ and $q\sim p$ is the projection $p$ itself. A projection which is not finite is called \emph{infinite}. Finally, a projection $p$ is \emph{properly infinite} if $zp$ is infinite for any central projection $z$ such that $zp\ne0$.\smallskip

For any projection $p\in V$ its \emph{central carrier} is the smallest central projection $C_p$
satisfying $C_pp=p$. It is further known that
there is a unique central projection $z\le C_p$ such that $zp$ is properly infinite or zero and $(1-z)p=(C_p-z)p$ is finite.
Indeed, if $p$ is finite, we take $z=0$, and if $p$ is infinite we may use  \cite[Proposition 6.3.7]{KR2}.
\smallskip

Henceforth, assume that we have a JBW$^*$-triple of the form $pV$, where $V$ is a von Neumann algebra, and $p\in V$ is a projection. 
We may assume, without loss of generality, that $C_p=1$ (otherwise we may replace $V$ by $C_pV$). By the previous paragraph there is a central projection $z\in V$ such that $zp$ is properly infinite and $(1-z)p$ is finite. Then $pV =zpV \oplus (1-z)pV$, thus we discuss separately the cases in which $p$ is finite or properly infinite.\smallskip

We begin with the following lemma on equivalence of projections.

\begin{lemma}\label{L:projequi}
Let $V$ be a von Neumann algebra. Then the following assertions are true.
\begin{enumerate}[$(a)$]
    \item Let $(p_n)$ be a sequence of properly infinite projections in $V$ which are all equivalent to one projection $q\in V$. Then the supremum of the sequence $(p_n)$ is also equivalent to $q$.
    \item Let $(p_n)$ be an increasing sequence of projections in $V$ with supremum $p$. If all the projections $p_n$ are equivalent to one projection $q$, then $p\sim q$ as well.
    \item Assume that $p_1,p_2$ are two equivalent projections in $V$. Then for any projection $q_1\ge p_1$ there is a projection $q_2\ge p_2$ such that $q_1\sim q_2$.
\end{enumerate}
\end{lemma}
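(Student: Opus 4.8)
The plan is to reduce all three statements to the Murray--von Neumann comparison theory of projections, for which I will freely use the following standard tools (see, e.g., \cite[Chapter 6]{KR2} or \cite[Chapter V]{Tak}): the comparison theorem (for projections $e,f$ there is a central projection $z$ with $ze\precsim zf$ and $(1-z)f\precsim(1-z)e$, where $e\precsim f$ means that $e$ is equivalent to a subprojection of $f$); the Schr\"oder--Bernstein theorem for projections ($e\precsim f$ and $f\precsim e$ imply $e\sim f$); countable orthogonal additivity of $\sim$ and of $\precsim$; Kaplansky's parallelogram law ($e\vee f-f\sim e-e\wedge f$); the self-absorption property of a properly infinite projection ($q$ properly infinite implies $q\sim\bigoplus_{n\in\en}q_n$ for pairwise orthogonal $q_n\le q$ with $q_n\sim q$); the invariance of finiteness, proper infiniteness and the central carrier under $\sim$; and the fact that a central projection commutes with any partial isometry, so that $p\sim q$ yields $zp\sim zq$ for central $z$.

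For $(a)$, note first that $q\sim p_1\le p:=\sup_n p_n$ gives $q\precsim p$, so by Schr\"oder--Bernstein it suffices to prove $p\precsim q$. I would orthogonalise the supremum by setting $P_n=\sup_{k\le n}p_k$ and $e_n=P_n-P_{n-1}$ (with $P_0=0$), so that the $e_n$ are pairwise orthogonal with $\sum_n e_n=p$. The parallelogram law applied to $p_n$ and $P_{n-1}$ gives $e_n=P_n-P_{n-1}=(p_n\vee P_{n-1})-P_{n-1}\sim p_n-(p_n\wedge P_{n-1})\le p_n\sim q$, hence $e_n\precsim q$. Since $q$ is properly infinite (being equivalent to the properly infinite $p_1$), it absorbs countably many copies of itself, $q\sim\bigoplus_n q_n$ with pairwise orthogonal $q_n\sim q$ below $q$; additivity of $\precsim$ then yields $p=\sum_n e_n\precsim\bigoplus_n q_n=q$, as required.

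For $(b)$ I would split off the properly infinite part. Take the central projection $z$ (recalled in the text preceding the lemma) for which $zq$ is properly infinite or zero and $(1-z)q$ is finite; multiplying the implementing partial isometries by $z$ and $1-z$ shows that $zp_n$ is properly infinite (or zero) and $(1-z)p_n$ finite, with the respective equivalences to $zq$ and $(1-z)q$, while $zp=\sup_n zp_n$ and $(1-z)p=\sup_n(1-z)p_n$ because a central projection commutes with suprema. On the part $z$ the sequence $(zp_n)$ consists of properly infinite projections each equivalent to $zq$, so $(a)$ gives $zp\sim zq$. On the part $1-z$ all projections are finite, and for finite projections $e\le f$ together with $e\sim f$ forces $e=f$; hence $(1-z)p_n$ is constant in $n$ and equal to $(1-z)p$, giving $(1-z)p=(1-z)p_1\sim(1-z)q$. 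Adding the two orthogonal equivalences yields $p\sim q$.

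Part $(c)$ is where the real difficulty lies, and the naive idea of taking $q_2=p_2+r$ with $r\le 1-p_2$ and $r\sim q_1-p_1$ fails in the infinite setting (already in $B(H)$ one can have $q_1-p_1$ of infinite rank while $1-p_2$ has rank one, so no such $r$ exists even though a valid $q_2$ does). I would therefore apply the comparison theorem to $e:=q_1-p_1$ and $f:=1-p_2$ to obtain a central $z$ with $ze\precsim zf$ and $(1-z)f\precsim(1-z)e$, and treat the two parts separately. On the good part $z$, since $ze\precsim zf$ there is $r\le zf$ with $r\sim ze$; then $q_2^{z}:=zp_2+r\ge zp_2$ and, using $zp_2\sim zp_1$ and additivity, $q_2^{z}\sim zp_1+z(q_1-p_1)=zq_1$. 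On the bad part $1-z$ the additive construction is impossible, but here I would instead show that $(1-z)q_1$ is already full: from $(1-z)f\precsim(1-z)e$ one gets a subprojection $e'\le(1-z)(q_1-p_1)$ with $e'\sim(1-z)(1-p_2)$ and $e'\perp(1-z)p_1$, whence $(1-z)=(1-z)p_2+(1-z)(1-p_2)\sim(1-z)p_1+e'\le(1-z)q_1$; combined with the obvious $(1-z)q_1\le(1-z)$ and Schr\"oder--Bernstein this gives $(1-z)q_1\sim(1-z)$, so on this part I take $q_2^{1-z}:=1-z\ge(1-z)p_2$. Setting $q_2:=q_2^{z}+q_2^{1-z}$ produces a projection with $q_2\ge p_2$ and, by additivity over the orthogonal central pieces, $q_2\sim q_1$. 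The main obstacle is exactly this bad part of $(c)$: one must recognise that the required $q_2$ need not respect the splitting $p_1+(q_1-p_1)$, and that the comparison theorem together with Schr\"oder--Bernstein is what supplies the absorption replacing the failed additive construction.
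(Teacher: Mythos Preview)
Your proof is correct and follows essentially the same route as the paper for parts $(b)$ and $(c)$: the paper also splits $(b)$ by a central projection into the finite and properly infinite parts (handling the former by the ``finite + comparable + equivalent $\Rightarrow$ equal'' principle and the latter by $(a)$), and it proves $(c)$ by applying the comparability theorem to $q_1-p_1$ and $1-p_2$, building $q_2$ additively on the good central piece and showing $(1-z)q_1\sim 1-z$ on the other.

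The one genuine difference is in $(a)$: the paper simply cites \cite[Lemma 3.2(1)]{sherman}, whereas you supply a self-contained argument via the parallelogram law (to orthogonalise the supremum into $\sum e_n$ with $e_n\precsim q$) combined with the countable self-absorption of a properly infinite projection and Schr\"oder--Bernstein. Your argument is correct; the only cosmetic slip is writing ``$\bigoplus_n q_n=q$'' where ``$\bigoplus_n q_n\sim q$'' (or ``$\le q$ with $\sum q_n\sim q$'') is what you mean and use. This self-contained treatment of $(a)$ is a small bonus over the paper's citation.
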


\begin{proof}
Assertion $(a)$ is proved in \cite[Lemma 3.2(1)]{sherman}.\smallskip

$(b)$ By \cite[Proposition 6.2.8]{KR2} we have $C_{p_n}=C_q$ for each $n\in\en$, hence $C_p=C_q$ by \cite[Proposition 5.5.3]{KR1}. So, denote by $c$ the common central carrier of all the projections in question.\smallskip

Let $z\le c$ be the central projection such that $zq$ is finite and $(c-z)q$ is properly infinite. Then
$zp_n\sim zq$ for each $n\in\en$, so $zp_n\sim zp_m$ for $m,n\in\en$. Since $zp_n$ is finite for each $n$, we deduce that $zp_n=zp_m$ for each $m,n\in\en$, thus $zp=zp_n$ for $n\in\en$, hence $zp\sim zq$.\smallskip

Further, $(c-z)p_n\sim (c-z)q$ for $n\in\en$. Since $(c-z)q$ is properly infinite, the projections $(c-z)p_n$ are properly infinite as well. Thus by $(a)$ we deduce that $(c-z)p\sim(c-z)q$, hence by \cite[Proposition 6.2.2]{KR2} $p\sim q$.\smallskip

$(c)$  By the comparability theorem \cite[Theorem V.1.8]{Tak} for the pair of projections $q_1-p_1$ and $1-p_2$, there is a central projection $z$ such that
\begin{itemize}
    \item $z(q_1-p_1)$ is equivalent to some projection  $r\le z(1-p_2)$, and
    \item $(1-z)(1-p_2)$ is equivalent to some projection $s\le (1-z)(q_1-p_1)$.
\end{itemize}

By \cite[Proposition 6.2.2]{KR2} we get that $zq_1 = zp_1 + z(q_1-p_1)$ is equivalent to $r+zp_2$ and, moreover,
$1-z= (1-z)p_2 + (1-z)(1-p_2)$ is equivalent to $(1-z)p_1 +s \le(1-z)q_1$. But this means that $(1-z)q_1$ is equivalent to $1-z$ (by \cite[Proposition 6.2.4]{KR2}). Finally, one can take $q_2=r+zp_2+1-z$.
\end{proof}

We consider first the case in which $p$ is finite.

\begin{lemma}\label{L:pV finite sigmafinite}
Let $V$ be a von Neumann algebra and $p\in V$ be a finite and $\sigma$-finite projection such that $p\ne 1$. Consider the JBW$^*$-triple $M=pV$.
\begin{enumerate}[$(a)$]
    \item There is $\tau\in M_*$ such that $s(\tau)=p$, $\tau(p)=1$ and $\tau|_{pVp}$ is a trace.
    \item Let $u\in M$ be a complete tripotent. Then $u$ can be extended to a unitary operator $\tilde{u}\in V$. Moreover, the functional
    $$\tau_u(x)=\tau(x\tilde{u}^*),\quad x\in M,$$
    belongs to $M_*$, $s(\tau_u)=u$ and
    $$\frac{1}{\sqrt{2}}\sqrt{\tau(pxx^*p)}\le\norm{x}_{\tau_u}\le \sqrt{\tau(pxx^*p)},\quad x\in M.$$
    \item The strong$^*$ topology on $B_M$ is generated by the norm $\norm{\cdot}_\tau$ and also by the norm
    $$x\mapsto\sqrt{\tau(pxx^*p)}.$$
\end{enumerate}
\end{lemma}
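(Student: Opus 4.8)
The plan is to establish the three assertions in order; the essential difficulty is concentrated in the unitary extension in $(b)$, the remaining steps being either a reduction to the corner $pVp$ or a trace computation.

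For $(a)$, since $p$ is finite and $\sigma$-finite, the corner $pVp=M_2(p)$ is a finite, $\sigma$-finite von Neumann algebra with unit $p$, hence admits a faithful normal tracial state $\tau_0$ (obtained by composing the center-valued trace with a faithful normal state on its $\sigma$-finite, abelian centre; cf.\ \cite{KR2,Tak}). I would then put $\tau=\tau_0\circ P_2(p)$, where $P_2(p)(x)=pxp$ is the weak$^*$-continuous Peirce-$2$ projection of the tripotent $p$. Then $\tau\in M_*$ with $\tau(p)=\tau_0(p)=1=\norm{\tau}$, and $\tau|_{M_2(p)}=\tau_0$ is faithful and tracial; Lemma~\ref{L:functionals and seminorms} together with the uniqueness of the support tripotent then gives $s(\tau)=p$.

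For $(b)$, I first note that completeness of $u$ forces $uu^*=p$. Indeed, $uu^*\le p$ (as $u\in pV$), so the Peirce-$0$ projection in the ambient algebra gives $M_0(u)=(p-uu^*)V(1-u^*u)$, whence $M_0(u)=\{0\}$ means $C_{p-uu^*}\,C_{1-u^*u}=0$; writing $e=p-uu^*\le p$ this yields $C_e\le u^*u$, so $C_e$ is finite, being dominated by $u^*u\sim uu^*\le p$. If $e\ne0$, cutting the equivalence $u^*u\sim uu^*$ by the central projection $C_e$ produces $C_e\sim C_e uu^*\lneq C_e$ (strict because $0\ne e\perp uu^*$ and $e\le C_e$), so $C_e$ is infinite, a contradiction. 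Thus $uu^*=p$ and $u^*u\sim p$. Granting the extension $\tilde u$ described below, the crucial structural point is that right multiplication $R_{\tilde u^*}\colon x\mapsto x\tilde u^*$ is a surjective isometry, hence a triple automorphism of $M=pV$, sending $u=p\tilde u$ to $p$; since $\tau_u=\tau\circ R_{\tilde u^*}=\tau\circ R_{\tilde u}^{-1}$ and $s(\tau)=p$, the transformation rule for support tripotents recorded before Lemma~\ref{L:exceptional} gives $s(\tau_u)=R_{\tilde u}(p)=p\tilde u=u$, and clearly $\tau_u\in M_*$.

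The main obstacle is the construction of the unitary $\tilde u\in V$ with $p\tilde u=u$, which amounts to proving $1-u^*u\sim 1-p$ (then $\tilde u=u+w$ for any partial isometry $w$ with $w^*w=1-u^*u$ and $ww^*=1-p$). Here I would use that $p$, hence $u^*u\sim p$, is finite: after splitting $V$ along the central projection $z_f$ for which $z_fV$ is finite and $z_\infty V$ is properly infinite ($z_\infty=1-z_f$), on $z_fV$ the equivalence of complements follows from the center-valued trace, while on $z_\infty V$ both $z_\infty(1-u^*u)$ and $z_\infty(1-p)$ are properly infinite with central carrier $z_\infty$ -- because a central subprojection dominated by the finite projections $u^*u$ or $p$ must be finite -- and are therefore equivalent by \cite[Proposition 6.3.8]{KR2}; the comparison theorem \cite[Theorem V.1.8]{Tak} glues the two pieces. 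The two-sided estimate is then routine: using $u\tilde u^*=p$, $p\tilde u=u$ and the trace property of $\tau_0$, one computes $\norm{x}_{\tau_u}^2=\tau_u(\{x,x,u\})=\tfrac12\tau(pxx^*p)+\tfrac12\tau_0\big(x(u^*u)x^*\big)$, and the claimed bounds follow from $0\le x(u^*u)x^*\le xx^*=pxx^*p$.

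Finally, $(c)$ will follow by combining $(b)$ with Proposition~\ref{P:seminorms order}. Since every tripotent in $M$ lies below a complete one (\cite[Lemma 3.12]{Horn1987}) and $s(\psi)\le u$ implies $M_2(s(\psi))\subset M_2(u)$, Proposition~\ref{P:seminorms order}$(i)$ shows that the strong$^*$ topology on $B_M$ is generated by the family $\{\norm{\cdot}_{\tau_u}\colon u\in M \text{ a complete tripotent}\}$. By the estimate in $(b)$, every member of this family is, with uniform constants, equivalent to the single seminorm $x\mapsto\sqrt{\tau(pxx^*p)}$; hence this seminorm alone generates the strong$^*$ topology on $B_M$. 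Taking $u=p$ (so $\tilde u=1$ and $\tau_p=\tau$), the same estimate gives $\tfrac1{\sqrt2}\sqrt{\tau(pxx^*p)}\le\norm{x}_\tau\le\sqrt{\tau(pxx^*p)}$, so $\norm{\cdot}_\tau$ generates the same topology, completing the proof.
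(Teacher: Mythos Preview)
Your proof is correct and follows essentially the same strategy as the paper's: construct a faithful normal tracial state on the corner $pVp$, transport it via the triple automorphism $x\mapsto x\tilde u^*$, and conclude $(c)$ from Proposition~\ref{P:seminorms order}. The only noteworthy difference is in the order of the two key facts in $(b)$: the paper first cites \cite[Proposition V.1.38]{Tak} to obtain the unitary extension $\tilde u$ (i.e., $1-u^*u\sim 1-uu^*$ for finite $uu^*$), and then uses this extension together with completeness to deduce $p_f(u)=p$; you instead first prove $uu^*=p$ directly via the central-carrier argument, and then reprove the ``equivalent finite projections have equivalent complements'' fact by the finite/properly-infinite decomposition. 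Both routes are short and equivalent in content; yours is self-contained, while the paper's is slightly more economical by invoking the reference.
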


\begin{proof}
$(a)$ Since $pVp$ is a finite and $\sigma$-finite von Neumann algebra with unit $p$,  it admits a normal finite faithful trace $\tau$ with $\tau(p)=1$.
Indeed, such a trace can be obtained by composition of the standard canonical center valued trace on $pVp$ (see \cite[Theorem V.2.6]{Tak})
with  any norm-one faithful positive normal functional on the center of $pVp$. Then $\tau\circ P_2(p)$ (i.e., the mapping $x\mapsto \tau(xp)$) is an extension of $\tau$ to $pV$. Clearly $p=s(\tau\circ P_2(p))$, hence it is enough to denote the composition again by $\tau$.\smallskip

$(b)$ Let $u\in M$ be a complete tripotent. Then it is a partial isometry in $V$ with final projection $p_f(u)\le p$.
Since $p_f(u)$ is finite $u$ can be extended to a unitary operator $\tilde{u}\in V$ (by \cite[Proposition V.1.38]{Tak}). Moreover, observe that the final projection $p_f(u)$ must coincide with $p$.
Indeed, $p\tilde u\in M$ and, since $u$ is complete,
$$0=P_0(u)(p\tilde u)=(p-p_f(u))p\tilde u(1-p_i(u))=
(p-p_f(u))\tilde u(1-p_i(u)).$$
Since $p$ is finite, we get $p_i(u)\ne 1$. Moreover, $\tilde u$ maps the range of $1-p_i(u)$ onto the range of $1-p_f(u)$, which contains the range of $p-p_f(u)$. It follows that $p_f(u)=p$.\smallskip

Set $q=p_i(u) =u^*u$ and consider the operator $\upsilon:M\to M$ defined by $$\upsilon(x)=x\tilde{u}^*, \quad x\in M.$$ Then $\upsilon$ is a surjective isometry. Hence it is a triple isomorphism (this can be also easily checked directly), in particular, it is a weak$^*$-to-weak$^*$ homeomorphism. Since $\upsilon(u)=p$, we deduce that
$s(\tau\circ\upsilon)=u$. Thus,
$$\begin{aligned}
\norm{x}^2_{\tau\circ\upsilon}&=(\tau\circ\upsilon)(\J xxu)
=\tau(\J{\upsilon(x)}{\upsilon(x)}{\upsilon(u)})=\tau(\J{x\tilde{u}^*}{x\tilde{u}^*}p)\\&=\frac12\tau(px\tilde{u}^*\tilde{u}x^*p+
p\tilde{u}x^*x\tilde{u}^*p)=\frac12\tau(pxx^*p+
ux^*xu^*)\\
&=\frac12(\tau(pxx^*p)+\tau((ux^*p)(pxu^*)))=\frac12(\tau(pxx^*p)+\tau((pxu^*)(ux^*p))\\&=\frac12(\tau(pxx^*p)+\tau(pxqx^*p)).
\end{aligned}$$
Since $q\le 1$,  we deduce $pxqx^*p\le pxx^*p$, and thus
$$\frac12 \tau(pxx^*p)\le \norm{x}_{\tau\circ\upsilon}^2\le \tau(pxx^*p).$$
Since $\tau_u=\tau\circ\upsilon$, the proof is completed.\smallskip

$(c)$ It follows from $(b)$ combined with
Proposition~\ref{P:seminorms order}$(i)$ that the strong$^*$ topology on $B_M$ coincides with the topology generated by the norms $\norm{\cdot}_{\tau_u}$, where $u\in M$ is a complete tripotent. By a further application of $(b)$ we see that all these norms are equivalent to the one given in $(c)$.
\end{proof}

We finally consider the case in which $p$ is properly infinite.

\begin{prop}\label{p pV}
Let $V$ be a von Neumann algebra and let $p\in V$ be a $\sigma$-finite properly infinite projection. Consider the JBW$^*$-triple $M=pV$. Assume that $M$ contains no nonzero direct summand triple-isomorphic to a JBW$^*$-algebra. Then the following assertions hold:
\begin{enumerate}[$(a)$]
    \item $V$ is not $\sigma$-finite;
    \item A tripotent $u\in M$ is complete if and only if its final projection equals $p$;
    \item Let $(u_n)$ be a sequence of complete tripotents in $M$. Then there is a complete tripotent $u\in M$ such that $M_2(u_n)\subset M_2(u)$ for each $n\in\en$;
    \item If $u\in M$ is a complete tripotent, then there is a complete tripotent $v\in M$ such that $M_2(u)\subsetneqq M_2(v)$;
    \item The strong$^*$ topology on $B_M$ is not metrizable.
\end{enumerate}
\end{prop}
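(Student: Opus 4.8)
The plan is to translate every triple-theoretic statement into the language of partial isometries in $V$, keeping the standing reduction $C_p=1$. First I would record the dictionary: a tripotent $u\in M=pV$ is exactly a partial isometry $u\in V$ with final projection $p_f:=uu^*\le p$ and initial projection $p_i:=u^*u$, and the formulas on page~\pageref{eq Fla Peirce projections} give $M_2(u)=p_fVp_i$ and $M_0(u)=(p-p_f)V(1-p_i)$; in particular $p$ itself is a complete tripotent. By Proposition~\ref{P:M2 inclusion}, $M_2(u')\subset M_2(u)\iff u'\in M_2(u)$, which for complete tripotents ($p_f=p_f'=p$) reads $p_i'\le p_i$, strict inclusion corresponding to $p_i'<p_i$. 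Since $M_0(u)=0$ iff $C_{p-p_f}C_{1-p_i}=0$, the implication $p_f=p\Rightarrow u$ complete in $(b)$ is immediate. For the converse I would argue by contradiction: if $z:=C_{p-p_f}\neq0$, then $zC_{1-p_i}=0$ forces $z\le p_i$, so via the partial isometry $zu$ one gets $z=zp_i\sim zp_f\le zp\le z$, whence $zp\sim z$ by the Schr\"oder--Bernstein theorem \cite[Proposition~6.2.4]{KR2}. But $zp\sim z$ makes $zpV$ a nonzero direct summand of $M$ that is triple-isomorphic to a JBW$^*$-algebra (the implementing partial isometry is a unitary tripotent of $zpV$), contradicting the hypothesis and proving $(b)$.

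For $(a)$, suppose $V$ is $\sigma$-finite. For every nonzero central projection $z$ we have $zp\neq0$ (as $C_p=1$) and $zp$ is infinite (as $p$ is properly infinite) with $zp\le z$, so $z$ is infinite; hence $1$ is properly infinite with $C_1=1$. Since $V$ is countably decomposable, the comparison theory for properly infinite projections with equal central carrier (see \cite[\S6.3]{KR2}) gives $p\sim1$, so $M=pV$ would itself be triple-isomorphic to a JBW$^*$-algebra, a contradiction. (Alternatively $(a)$ is a consequence of $(e)$, since $\sigma$-finiteness of $V$ would metrize the strong$^*$ topology on $B_V$ and hence, by Lemma~\ref{L:strong* topology}$(b)$, on $B_M$.)

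Parts $(c)$ and $(d)$ then become projection-theoretic. For $(c)$, by $(b)$ each $u_n$ has $p_f(u_n)=p$, so $p_n:=u_n^*u_n\sim p$ is properly infinite; Lemma~\ref{L:projequi}$(a)$ gives $q:=\sup_n p_n\sim p$, and any partial isometry $u$ with $u^*u=q$, $uu^*=p$ is a complete tripotent with $M_2(u_n)=pVp_n\subset pVq=M_2(u)$. For $(d)$, write $q=u^*u\sim p$; then $q\neq1$ (else $M$ would be a JBW$^*$-algebra) and $C_q=C_p=1$, so any nonzero $e_0\le1-q$ is not centrally orthogonal to $q$ and therefore admits a nonzero subprojection $e\le1-q$ with $e\precsim q$ (see \cite[\S6.1]{KR2}); proper infiniteness of $q$ then yields $q\sim q+e$ (absorb $e$ using $q\sim q\oplus q$ and Schr\"oder--Bernstein), and a partial isometry $v$ with $v^*v=q+e$, $vv^*=p$ is a complete tripotent with $M_2(u)=pVq\subsetneqq pV(q+e)=M_2(v)$.

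Finally, for $(e)$ I would assume $B_M$ is strong$^*$-metrizable; then, exactly as in the proof of Proposition~\ref{p JBWstar algebras sigma finite}$(c)$, the strong$^*$ topology on $B_M$ is generated by countably many seminorms $\norm{\cdot}_{\varphi_n}$. Each support tripotent $s(\varphi_n)$ extends to a complete tripotent $w_n\ge s(\varphi_n)$ \cite[Lemma~3.12]{Horn1987}, so $s(\varphi_n)\in M_2(w_n)$; applying $(c)$ to $(w_n)$ produces a complete tripotent $w$ with $s(\varphi_n)\in M_2(w_n)\subset M_2(w)$ for all $n$. Being complete with final projection $p$, the tripotent $w$ is $\sigma$-finite (pairwise orthogonal smaller tripotents have pairwise orthogonal final projections below the $\sigma$-finite $p$), so $w=s(\psi)$ for some norm-one $\psi\in M_*$. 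By Proposition~\ref{P:seminorms order}$(i)$ each $\norm{\cdot}_{\varphi_n}$ is weaker than $\norm{\cdot}_\psi$ on $B_M$, whereas $\norm{\cdot}_\psi$ is itself a defining strong$^*$ seminorm; hence the strong$^*$ topology on $B_M$ is generated by the single seminorm $\norm{\cdot}_\psi$. Now $(d)$ furnishes a complete tripotent $v=s(\psi')$ with $M_2(s(\psi))\subsetneqq M_2(v)$, and Proposition~\ref{P:seminorms order}$(ii)$ says $\norm{\cdot}_{\psi'}$ is strictly stronger than $\norm{\cdot}_\psi$ on $B_M$ — contradicting that $\norm{\cdot}_{\psi'}$ is strong$^*$-continuous. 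The step I expect to be the main obstacle is $(b)$: it is the sole place where the \textquoteleft no JBW$^*$-algebra summand\textquoteright\ hypothesis is used, it requires the Schr\"oder--Bernstein absorption to identify completeness with $p_f=p$, and it is precisely this identification that makes the projection bookkeeping in $(c)$--$(e)$ run.
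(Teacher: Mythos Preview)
Your proof is correct and follows essentially the same route as the paper: translate tripotents in $pV$ into partial isometries with final projection below $p$, use comparison theory of projections (Lemma~\ref{L:projequi}) for $(c)$ and $(d)$, and combine these with Proposition~\ref{P:seminorms order} for $(e)$. The only differences are cosmetic: for $(b)$ the paper argues the contrapositive by directly exhibiting a nonzero element of $M_0(u)$ (using $(a)$ to get $p_i(u)<1$ and then comparability), whereas you phrase it via central supports and Schr\"oder--Bernstein; for $(d)$ the paper enlarges $p_i(u)$ to a $\sigma$-finite projection and invokes equivalence of $\sigma$-finite properly infinite projections with equal central carrier, whereas you absorb a small subprojection of $1-p_i(u)$ using halving --- both are standard and yield the same conclusion.
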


\begin{proof}
$(a)$ If $V$ is $\sigma$-finite, then $p\sim 1$ (as $C_p=1$ and $p$ is purely infinite), thus $M=pV$ would be triple-isomorphic to a JBW$^*$-algebra (given a partial isometry $u$ with $uu^* =p$ and $u^* u =1$, the mapping $x\mapsto x u^*$ is a surjective isometry from $M$ onto $pVp$).\smallskip

$(b)$ The `if part' is clear. Let us prove the `only if part'. Assume $p_f(u)<p$. By $(a)$ we get $p_i(u)<1$. Thus $p-p_f(u)$ and $1-p_i(u)$ are two nonzero projections in $V$, thus it follows easily from the comparability theorem \cite[Theorem V.1.8]{Tak} that there are two nonzero projections $q_1\le 1-p_i(u)$ and $q_2\le p-p_f(u)$ which are equivalent. Fix a partial isometry $v\in V$ with initial projection $q_1$ and final projection $q_2$. Then $v\in M$ and
$$P_0(u)(v)=(p-p_f(u))v(1-p_i(u))\ne0$$
as the range of $1-p_i(u)$ contains the range of $q_1$, $v$ maps it isometrically to the range of $q_2$ which  is contained in the range of $p-p_f(u)$.\smallskip

$(c)$ By $(b)$ we know that $p_f(u_n)=p$ for each $n$.
So, $p\sim p_i(u_n)$ for each $n\in\en$.
If we set $q=\sup_n p_i(u_n)$, Lemma~\ref{L:projequi}(a) yields $p\sim q$.
 Then $u$ can be any partial isometry with initial projection $q$ and final projection $p$.\smallskip

$(d)$ By $(b)$ we know that $p_f(u)=p$. Since $p_i(u)<1$ (by $(a)$), we can find a $\sigma$-finite projection $q>p_i(u)$.
Then $q$ is properly infinite, and hence $p\sim q$.
Then $v$ can be any partial isometry with initial projection $q$ and final projection $p$.\smallskip

$(e)$ Assume that the restriction of the strong$^*$ topology to $B_M$ is metrizable. Then it is first countable, hence
generated by countably many of the defining seminorms. Then $(c)$ and $(d)$ together with Proposition~\ref{P:seminorms order}$(i)$ yield a contradiction.
\end{proof}

\subsection{The case of a general $\sigma$-finite JBW$^*$-triple} We are now ready to prove assertion $(c)$ of Theorem~\ref{T:SWCG}. We will do it by proving the following two propositions (the final proof follows them). \smallskip

\begin{prop}\label{P:sigmafiniteYES}
Assume that $M$ is a nontrivial JBW$^*$-triple of the form $$\left( \bigoplus_{k\in\Lambda}^{\ell_{\infty}} L^\infty(\mu_k,C_k)\right)\bigoplus^{\ell_{\infty}} N\bigoplus^{\ell_{\infty}} pV,$$
where
\begin{enumerate}[$\bullet$]
\item $\Lambda$ is a {\rm(}possibly empty{\rm)} countable set;
\item $(\mu_k)_{k\in\Lambda}$ is a {\rm(}possibly empty{\rm)} family of probability measures;
\item Each $C_k$ is a Cartan factor of type 5 or 6 or a finite-dimensional Cartan factor of type 2 in $B(H_k)$ with dim$(H_k)$ odd;
\item $N$ is a  {\rm(}possibly trivial{\rm)} $\sigma$-finite JBW$^*$-algebra;
\item $V$ is a {\rm(}possibly trivial{\rm)} von Neumann algebra and $p\in V$ is a finite $\sigma$-finite projection such that the triple $pV$ has no nonzero direct summand triple-isomorphic to a JBW$^*$-algebra.
\end{enumerate}

Fix a faithful normal state $\phi_3\in N_*$. Let $\tau\in (pV)_*$ be as in Lemma~\ref{L:pV finite sigmafinite}$(a)$.
Then the following statements hold:
\begin{enumerate}[$(a)$]
\item We can regard $\phi_3$ as an element in $M_*$ satisfying that the strong$^*$ topology on $B_N$ is metrizable by the norm $\|\cdot\|_{\phi_3}|_{N}$;
\item We can regard $\tau$ as an element in $M_*$ satisfying that the strong$^*$ topology on $B_{(pV)}$ is metrizable by the norm $\|\cdot\|_{\tau}|_{pV}$;
\item Let $\displaystyle C= \bigoplus_{k\in\Lambda}^{\ell_{\infty}} L^\infty(\mu_k,C_k)$. Fix any $\varphi\in C_*\setminus\{0\}$ such that $s(\varphi)\in \mathcal{U}_{max}(C)$. Then the norm $\norm{\cdot}_\varphi$ is equivalent to the norm
    $$(a_k)_{k\in\Lambda}\mapsto \left(\sum_{n=1}^{\infty} 4^{-n}\ \int \norm{a_{k_n}}^2 \di\mu_{k_n}\right)^{\frac12}$$ on bounded sets of $C$ {\rm(}where $(k_n)$ is an enumeration of $\Lambda${\rm)}. The strong$^*$ topology on $B_{C}$ is metrized by the norm displayed above;

\item The strong$^*$ topology on $B_M$ is metrized by the norm $\norm{\cdot}_{\tau+\phi_3+\varphi}$ {\rm(}where the functional $\varphi$ from $(c)$ is considered as an element of $M_*${\rm)} which is equivalent to the norm $$ \norm{((a_{k})_{k\in\Lambda},x,y)}^2= \left(\sum_{n=1}^{\infty} \frac1{4^n} \int \norm{a_{k_n}}^2 \di\mu\right)+\|x\|_{\phi_3}^2+\tau(py^*yp).$$
\end{enumerate}
\end{prop}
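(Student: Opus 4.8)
The plan is to handle the three orthogonal $\ell_\infty$-summands $C=\bigoplus_{k\in\Lambda}^{\ell_\infty}L^\infty(\mu_k,C_k)$, $N$ and $pV$ one at a time, produce for each a single pre\-Hilbertian seminorm metrizing the strong$^*$ topology on its closed unit ball, and then glue. The basic tool throughout is the orthogonal splitting of the defining seminorms: writing $M_*=C_*\oplus_{\ell_1}N_*\oplus_{\ell_1}(pV)_*$, any $\psi=(\psi_C,\psi_N,\psi_{pV})$ has support $s(\psi)=(s(\psi_C),s(\psi_N),s(\psi_{pV}))$ (by orthogonality of the summands and the defining properties of the support tripotent), so that for $x=(a,b,c)$ one has $\norm{x}_\psi^2=\norm{a}_{\psi_C}^2+\norm{b}_{\psi_N}^2+\norm{c}_{\psi_{pV}}^2$. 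I would also use repeatedly that the canonical projections onto the summands are weak$^*$-to-weak$^*$, hence strong$^*$-to-strong$^*$, continuous by Lemma~\ref{L:strong* topology}(d), and that the strong$^*$ topology of a weak$^*$-closed subtriple is the restriction one by Lemma~\ref{L:strong* topology}(b).

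\textbf{Parts $(a)$ and $(b)$.} These are restriction statements. Regarding $\phi_3$ as $(0,\phi_3,0)\in M_*$, its support in $M$ is the unit $1_N$ of $N$ with $M_2(1_N)=N$, so the splitting identity shows $\norm{\cdot}_{\phi_3}|_N$ coincides with the intrinsic seminorm computed in $N$; since $\phi_3$ is a faithful normal state on the $\sigma$-finite JBW$^*$-algebra $N$, Lemma~\ref{L:JBW*algebra sigma finite} gives that it metrizes the strong$^*$ topology on $B_N$, which by Lemma~\ref{L:strong* topology}(b) is the restriction of that of $M$. Part $(b)$ is identical with $\tau\in(pV)_*$ in place of $\phi_3$, invoking Lemma~\ref{L:pV finite sigmafinite}(c) (which already yields a single norm $\norm{\cdot}_\tau$, equivalent to $x\mapsto\sqrt{\tau(pxx^*p)}$, metrizing the strong$^*$ topology on $B_{pV}$) instead of Lemma~\ref{L:JBW*algebra sigma finite}.

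\textbf{Part $(c)$, the main step.} Let $(k_n)$ enumerate $\Lambda$. For each finite-dimensional $C_{k_n}$ I fix the data of Lemma~\ref{L:exceptional} and, for each complete tripotent $f_n$ of $M_n:=L^\infty(\mu_{k_n},C_{k_n})$, the norm-one $v(f_n)\in(M_n)_*$ of Proposition~\ref{P:LinfyC}, so $s(v(f_n))=f_n$ and by Proposition~\ref{P:LinfyC}(d) the quantity $\norm{g}_{v(f_n)}^2$ is squeezed between $\alpha_{k_n}^2\int\norm{g(\omega)}^2\di\mu_{k_n}$ and $\int\norm{g(\omega)}^2\di\mu_{k_n}$. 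Given a complete $f=(f_n)_n\in\mathcal{U}_{max}(C)$ I set $w(f)=(4^{-n}v(f_n))_n$; the weights are summable, so $w(f)\in C_*$ with (complete) support $f$ and $\norm{x}_{w(f)}^2=\sum_n 4^{-n}\norm{x_{k_n}}_{v(f_n)}^2$. The crucial point is that the comparison with the target norm $N(x)^2=\sum_n4^{-n}\int\norm{x_{k_n}}^2\di\mu_{k_n}$ need not be a \emph{uniform} norm equivalence — it can fail if $\dim C_{k_n}$, and with it $\alpha_{k_n}$, is unbounded — but only an equivalence \emph{on bounded sets}: for a bounded net $(x^\nu)$ both series are dominated by a fixed summable sequence, so dominated convergence over the counting measure reduces $\norm{x^\nu}_{w(f)}\to0$ and $N(x^\nu)\to0$ each to their coordinatewise versions, which agree for every fixed $n$ by Proposition~\ref{P:LinfyC}(d) since $\alpha_{k_n}>0$. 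The same reduction applied to a general $\varphi=(\varphi_{k_n})_n\in C_*$ with complete support shows $\norm{\cdot}_\varphi$ is equivalent to $N$ on bounded sets via Proposition~\ref{P:LinfyC}(e). Finally, to see $N$ metrizes the strong$^*$ topology on $B_C$, I dominate an arbitrary defining seminorm: every $s(\psi)$, $\psi\in C_*$, lies below a complete $f$, hence $s(\psi)\in C_2(f)$, so Proposition~\ref{P:seminorms order}(i) gives $\norm{\cdot}_\psi$ weaker than $\norm{\cdot}_{w(f)}\asymp N$ on bounded sets. This unboundedness of the $\alpha_{k_n}$ is the one genuinely delicate obstacle, and it is precisely what forces the bounded-sets formulation and the dominated-convergence argument.

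\textbf{Part $(d)$.} I then assemble the pieces. Viewing $\varphi,\phi_3,\tau$ as orthogonally supported elements of $M_*$, their sum has support $(s(\varphi),1_N,p)$, and the splitting identity gives $\norm{x}_{\tau+\phi_3+\varphi}^2=\norm{(a_k)}_\varphi^2+\norm{x_N}_{\phi_3}^2+\norm{y}_\tau^2$; combined with $(a)$–$(c)$ and Lemma~\ref{L:pV finite sigmafinite}(b) this is the displayed explicit norm up to equivalence on bounded sets. To match its topology with strong$^*$ on $B_M$, I note that strong$^*$-continuity of the summand projections makes the strong$^*$ topology on $B_M$ the product of those on $B_C$, $B_N$, $B_{pV}$; and for any $\psi=(\psi_C,\psi_N,\psi_{pV})\in M_*$ the splitting reduces the needed domination to the three coordinatewise dominations already proved, so every defining seminorm is weaker than $\norm{\cdot}_{\tau+\phi_3+\varphi}$ on $B_M$, which finishes the proof.
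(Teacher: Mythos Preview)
Your proposal is correct and follows essentially the same route as the paper: parts $(a)$ and $(b)$ are handled by the same citations (Lemma~\ref{L:JBW*algebra sigma finite} and Lemma~\ref{L:pV finite sigmafinite}), and in part $(c)$ both you and the paper reduce the bounded-set equivalence to the coordinatewise statement via the same ``dominated convergence over the counting measure / same convergent sequences'' argument, then invoke Proposition~\ref{P:seminorms order} to cover arbitrary defining seminorms. Your explicit remark that the $\alpha_{k_n}$ need not be bounded below (so only a bounded-set equivalence is available) is a point the paper leaves implicit but treats by exactly the same mechanism.
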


\begin{proof}$(a)$ and $(b)$ are proved in Lemmata \ref{L:JBW*algebra sigma finite}$(b)$ and \ref{L:pV finite sigmafinite}, respectively.\smallskip

$(c)$ Fix any $\varphi\in C_*\setminus\{0\}$ such that $s(\varphi)=(f_k)_{k\in\Lambda}\in\U_{max}(C)$.

Fix $k\in\Lambda$. Then $f_k$ is a maximal tripotent in $L^\infty(\mu_k,C_k)$, hence we can fix $h_k\in L^1(\mu_k,(C_k)_*)$ provided by Proposition~\ref{P:LinfyC}$(c)$. Let $(k_n)$ be an enumeration of $\Lambda$ and set $$\phi_1 ((a_k)_{k\in\Lambda})=\sum_{n=1}^{\infty} 2^{-n} \ip{h_{k_n}}{a_{k_n}}, \ \  ((a_k)_{k\in\Lambda}\in C).$$ Clearly, $s(\phi_1) =(f_k)_{k\in\Lambda}= s(\varphi)$, so $\norm{\cdot}_{\phi_1}$ and $\norm{\cdot}_\varphi$ are equivalent on $B_C$ by Proposition~\ref{P:seminorms order}. By Proposition~\ref{P:LinfyC} the norm $\norm{\cdot}_{h_k}$ is equivalent to the norm
$$f\mapsto \left(\int\norm{f}^2\di\mu_k\right)^{1/2}$$
on the unit ball of $L^\infty(\mu_k,C_k)$ for each $k\in\Lambda$.
Hence, the norm
$$\norm{(a_k)_{k\in\Lambda}}_{\phi_1}=\left(\sum_{n=1}^\infty 4^{-n}\norm{a_{k_n}}_{h_{k_n}}^2\right)^{1/2}$$
is equivalent on $B_C$ to the norm
$$(a_k)_{k\in\Lambda}\mapsto \left(\sum_{n=1}^{\infty} 4^{-n} \int \norm{a_{k_n}}^2\right)^{1/2}.$$
Indeed, both norms are well defined. Moreover, a bounded sequence $((a_k^j)_{k\in\Lambda})_{j=1}^\infty$ converges to zero in the first norm if and only if
$$\norm{a_k^j}_{h_k}\overset{j}{\to}0 \mbox{ for each }k\in\Lambda,$$ which takes place if and only if $$\int\Norm{a_k^j}^2\di\mu_k\overset{j}{\to}0\mbox{ for each }k\in\en,$$ which is in turn equivalent to the convergence to zero in the second norm.\smallskip

Finally, it follows from Proposition~\ref{P:seminorms order} that the strong$^*$ topology on $B_C$ is generated by the mentioned norm.\smallskip

Lastly, statement $(d)$ follows from the previous statements.
\end{proof}

The remaining case is treated in our next result.

\begin{prop}\label{P:sigmafiniteNO} Assume that $M$ is a JBW$^*$-triple of the form $$\left( \bigoplus_{k\in\Lambda}^{\ell_{\infty}} L^\infty(\mu_k,C_k)\right)\bigoplus^{\ell_{\infty}}  N\bigoplus^{\ell_{\infty}} pV\bigoplus^{\ell_\infty}qW,$$
where
\begin{enumerate}[$\bullet$]
\item $\Lambda$ is a {\rm(}possibly empty{\rm)} countable set;
\item $(\mu_k)_{k\in\Lambda}$ is a {\rm(}possibly empty{\rm)} family of probability measures;
\item Each $C_k$ is a Cartan factor of type 5 or 6 or a finite-dimensional Cartan factor of type 2 in $B(H_j)$ with dim$(H_j)$ odd;
\item $N$ is a {\rm(}possibly trivial{\rm)} $\sigma$-finite JBW$^*$-algebra;
\item $V$ is a {\rm(}possibly trivial{\rm)} von Neumann algebra and $p\in V$ is a finite $\sigma$-finite projection such that the triple $pV$ has no nonzero direct summand triple-isomorphic to a JBW$^*$-algebra;
\item $W$ is a nontrivial von Neumann algebra and $q\in W$ is a properly infinite $\sigma$-finite projection such that the triple $qW$ has no nonzero direct summand triple-isomorphic to a JBW$^*$-algebra.
\end{enumerate}
Then the strong$^*$ topology on $B_M$ is not metrizable.
\end{prop}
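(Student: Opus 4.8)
The plan is to isolate the summand $qW$ and appeal to Proposition~\ref{p pV}, which already handles the properly infinite case. First I would record two structural facts. Since $qW$ appears as an $\ell_{\infty}$-direct summand of $M$, it is a weak$^*$-closed subtriple of $M$ (it is the image of a weak$^*$-to-weak$^*$ continuous norm-one projection), and its closed unit ball satisfies $B_{qW}=B_M\cap qW$. By Lemma~\ref{L:strong* topology}(b) the strong$^*$ topology of $qW$ coincides with the restriction to $qW$ of the strong$^*$ topology of $M$. Restricting once more to the subset $B_{qW}\subset qW$, I conclude that the topology induced on $B_{qW}$ by the strong$^*$ topology of $M$ is precisely the strong$^*$ topology on $B_{qW}$.

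Next I would argue by contradiction. Suppose the strong$^*$ topology on $B_M$ were metrizable. Because $B_{qW}\subset B_M$ and metrizability passes to subspaces, the topology that $B_M$ induces on $B_{qW}$ would be metrizable; by the previous paragraph this is exactly the strong$^*$ topology on $B_{qW}$. On the other hand, the hypotheses of the proposition guarantee that $W$ is a von Neumann algebra, that $q\in W$ is a $\sigma$-finite properly infinite projection, and that $qW$ has no nonzero direct summand triple-isomorphic to a JBW$^*$-algebra. Thus Proposition~\ref{p pV}(e), applied with $(V,p,M)$ replaced by $(W,q,qW)$, asserts that the strong$^*$ topology on $B_{qW}$ is \emph{not} metrizable. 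This contradiction shows that the strong$^*$ topology on $B_M$ cannot be metrizable, which is the desired statement.

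There is essentially no serious obstacle here: the genuine work was carried out in Proposition~\ref{p pV}, whose part (e) rests on parts (c) and (d) together with Proposition~\ref{P:seminorms order}(i). The only points needing care are purely topological bookkeeping, namely that the strong$^*$ topology localizes correctly to the $\ell_{\infty}$-summand $qW$ via Lemma~\ref{L:strong* topology}(b) and that metrizability of $B_M$ descends to the subset $B_{qW}$. Note that the remaining summands $\bigoplus_{k\in\Lambda}^{\ell_{\infty}} L^\infty(\mu_k,C_k)$, $N$ and $pV$ play no role in the argument; the mere presence of the properly infinite summand $qW$ already obstructs metrizability, which is exactly the feature that distinguishes this case from Proposition~\ref{P:sigmafiniteYES}.
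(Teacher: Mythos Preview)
Your proof is correct and follows exactly the same approach as the paper: invoke Proposition~\ref{p pV}(e) to see that the strong$^*$ topology on $B_{qW}$ is not metrizable, and then use Lemma~\ref{L:strong* topology}(b) to transfer this obstruction to $B_M$. The paper's proof is simply a two-line compression of your argument, omitting the routine topological bookkeeping you spelled out.
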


\begin{proof} Proposition \ref{p pV}$(e)$ assures that the strong$^*$ topology on $B_{qW}$ is not metrizable, and the desired conclusion follows from Lemma \ref{L:strong* topology}$(b)$.
\end{proof}

\begin{proof}[Proof of Theorem~\ref{T:SWCG}$(c)$]
$(ii)\Rightarrow(iii)$ This follows from Proposition~\ref{P:sigmafiniteYES}.\smallskip 

$(iii)\Rightarrow(i)$ This follows from Proposition~\ref{P:strong*=Mackey} and \cite[Theorem 2.1]{SWCG} as explained above.\smallskip

$(i)\Rightarrow(ii)$ Assume $p$ is not finite. By Proposition~\ref{P:sigmafiniteNO} the strong$^*$ topology on $B_M$ is not metrizable. Hence $M_*$ is not strongly WCG by Proposition~\ref{P:strong*=Mackey} and \cite[Theorem 2.1]{SWCG}.\smallskip

$(iii)\Rightarrow(iv)$ This follows from Proposition~\ref{P:seminorms order}(ii).\smallskip

$(iv)\Rightarrow(ii)$ Assume (iv) holds but $p$ is not finite. It follows from (iv) that $M$ is $\sigma$-finite, hence $M$ has the form from Proposition~\ref{P:sigmafiniteNO}. Let $u=((a_k),(b_j),x,v,w)$ be any tripotent in $M$. Then $w$ is a tripotent in $qW$. It follows from Proposition~\ref{p pV} that there is a tripotent $\tilde w\in qW$ with $(qW)_2(w)\subsetneqq (qW)_2(\tilde w)$. Set $\tilde u=((a_k),(b_j),x,v,\tilde w)$. Then $\tilde u$ is a tripotent in $M$ and $M_2(u)\subsetneqq M_2(\tilde u)$.
\end{proof}

\begin{remark}
It follows from the analysis of the individual cases in this section that any $\sigma$-finite JBW$^*$-triple can be expressed as a direct sum of (countably many) summands of three different types.
\begin{description}
\item[Type 1 -- JBW$^*$-algebra] If $N$ is a $\sigma$-finite JBW$^*$-algebra, it admits a unit, i.e., a ($\sigma$-finite) unitary element. Then $N_2(1_N)=N$, hence the Peirce-2 subspace is the largest possible.
\item[Type 2 -- $L^\infty(\mu,C)$ or $pV$ with $p$ finite] Assume $M=L^\infty(\mu,C)$, where $\mu$ is a probability measure and $C$ is a finite-dimensional Cartan factor without
a unitary element, or $M=pV$, where $V$ is a von Neumann algebra and $p\in V$ is a finite $\sigma$-finite projection such that $M$ has no direct summand isomorphic to a JBW$^*$-algebra.
Then there are tripotents whose Peirce-2 subspaces are maximal with respect to inclusion, but mutually different. But all the norms $\norm{\cdot}_\varphi$, where $s(\varphi)$ is such a tripotent, are equivalent on bounded sets.
\item[Type 3 -- $pV$ for $p$ properly infinite] Assume that
$M=pV$, where $V$ is a von Neumann algebra and $p\in V$ is a properly infinite $\sigma$-finite projection such that $M$ has no direct summand isomorphic to a JBW$^*$-algebra. Then the family of Peirce-2 subspaces $M_2(u)$, $u\in \U(M)$, is upwards $\sigma$-directed by inclusion and has no maximal element.
\end{description}
In the next section we give some consequences of this trichotomy to the structure of general (not necessarily $\sigma$-finite) JBW$^*$-triples.
\end{remark}

\section{On seminorms generating the strong$^*$ topology}\label{sec:10}

In this section we provide a characterization of the natural ordering of the seminorms generating the strong$^*$ topology for a JBW$^*$-triple,
which is defined by inclusion of the respective topologies on the unit ball. The case of $\sigma$-finite triples is covered by Propositions~\ref{P:sigmafiniteYES} and~\ref{P:sigmafiniteNO}, here we deal with general triples. The promised result is contained in the following theorem.

\begin{thm}\label{T:order}
Let $M$ be a JBW$^*$-triple. Then it can be represented in the form
$$M=\bigoplus_{\alpha\in\Lambda}L^\infty(\mu_\alpha,C_\alpha)\oplus N\oplus pV\oplus qW,$$
where
\begin{enumerate}[$\bullet$]
\item $\Lambda$ is an arbitrary {\rm(}possibly empty{\rm)} set;
\item $\mu_\alpha$ is a probability measure and $C_\alpha$ is a finite-dimensional Cartan factor not containing a unitary element for any $\alpha\in\Lambda$;
\item $N$ is a {\rm(}possibly trivial{\rm)} JBW$^*$-algebra;
\item $V$ is a {\rm(}possibly trivial{\rm)} von Neumann algebra and $p\in V$ is a finite projection such that the triple $pV$ has no nonzero direct summand triple-isomorphic to a JBW$^*$-algebra;
\item $p=\sum_{j\in J}p_j$, where $(p_j)_{j\in J}$ is an orthogonal family of {\rm(}finite{\rm)} $\sigma$-finite projections in the center of $pVp$;
\item $W$ is a {\rm(}possibly trivial{\rm)} von Neumann algebra and $q\in W$ is a properly infinite  projection such that the triple $qW$ has no nonzero direct summand triple-isomorphic to a JBW$^*$-algebra.
\end{enumerate}
For an element
$$\h= ((f_\alpha(\h))_{\alpha\in\Lambda},x(\h),v(\h),w(\h))= ((f_\alpha)_{\alpha\in\Lambda},x,v,w)\in M$$
we denote
$$\spt_1\h=\{\alpha\in\Lambda\setsep f_\alpha\ne0\},\quad
\spt_2\h=\{j\in J\setsep p_j v\ne0\}.$$
Further, set
$$\begin{aligned}
\T(M)=\{ \h=((f_\alpha&)_{\alpha\in\Lambda},x,v,w)\in M\setsep \spt_1\h,\spt_2\h\mbox{ are countable},\\
&\forall \alpha\in\spt_1\h: f_\alpha\mbox{ is a complete tripotent in }L^\infty(\mu_\alpha,C_\alpha),
\\& x\mbox{ is a $\sigma$-finite projection in }N,\
v v^*=\sum_{j\in\spt_2\h}p_j,\\&
ww^*\mbox{ is a properly infinite $\sigma$-finite projection below }q \}.
\end{aligned}$$
Then the following assertions hold:
\begin{enumerate}[$(a)$]
      \item The elements of $\T(M)$ are $\sigma$-finite tripotents in $M$. Moreover, for any $\sigma$-finite tripotent $\g\in M$ there is $\h\in\T(M)$ with $M_2(\g)\subset M_2(\h)$.
    \item  Let $\varphi,\psi\in M_*\setminus\{0\}$ such that the support tripotents of these functionals belong to $\T(M)$. Set $\h=s(\varphi)$ and $\g=s(\psi)$. Then $\norm{\cdot}_\varphi$ is weaker than $\norm{\cdot}_\psi$ on $B_M$ if and only if the following assertions hold:
    \begin{itemize}
        \item[$\circ$] $\spt_1\h\subset\spt_1\g$ and $\spt_2\h\subset\spt_2\g$;
        \item[$\circ$] $x(\h)\le x(\g)$ as projections in $N$;
        \item[$\circ$] $\norm{\cdot}_\varphi$ is weaker than $\norm{\cdot}_\psi$ on $B_{qW}$.
    \end{itemize}
\end{enumerate}
\end{thm}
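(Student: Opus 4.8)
The plan is to read off the stated decomposition from Proposition~\ref{P:representation}, then to prove $(a)$ by a componentwise extension of $\sigma$-finite tripotents, and finally to prove $(b)$ by showing that the comparison of the two preHilbertian seminorms on $B_M$ splits into four independent comparisons, one on each block.

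\emph{The representation.} Starting from Proposition~\ref{P:representation} I would merge the two families $L^\infty(\mu_k,C_k)$ (exceptional) and $L^\infty(\mu_j,D_j)$ (type $2$, odd dimension) into a single family $\bigoplus_\alpha L^\infty(\mu_\alpha,C_\alpha)$; all these $C_\alpha$ are finite-dimensional and contain no unitary element (a type $2$ Cartan factor has a unitary only when $\dim H$ is even or infinite, and types $5,6$ have none). Next I split $pV$ using the central projection $z$ with $zp$ properly infinite and $(1-z)p$ finite, renaming $(1-z)pV$ as the new finite block $pV$ and $zpV$ as $qW$. Finally, since $pVp$ is finite its center is abelian, so its unit decomposes as $p=\sum_{j\in J}p_j$ with the $p_j$ orthogonal $\sigma$-finite central projections of $pVp$; each $p_j$ is $\sigma$-finite in $V$ because $p_jVp_j$ is finite with $\sigma$-finite center and hence carries a faithful normal tracial state.

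\emph{Proof of $(a)$.} That each $\h\in\T(M)$ is a $\sigma$-finite tripotent follows by checking that every block component is a $\sigma$-finite tripotent and that only countably many are nonzero, using Lemma~\ref{L:sigma finite tripotent}$(a),(b)$ (orthogonal subtripotents have orthogonal final projections, and a countable sum of $\sigma$-finite projections is $\sigma$-finite); equivalently one assembles a supporting norm-one functional. For cofinality, given a $\sigma$-finite tripotent $\g$ I decompose it across the four blocks: its components $(g_\alpha),x_0,v_0,w_0$ are again $\sigma$-finite, with only countably many $g_\alpha\neq0$. On $L^\infty(\mu_\alpha,C_\alpha)$ I enlarge $g_\alpha$ to a complete tripotent $f_\alpha\geq g_\alpha$, so $g_\alpha\in M_2(f_\alpha)$ by Proposition~\ref{p characterization of triple order}; on $N$ I use Lemma~\ref{L:sigma finite tripotent}$(d)$ to get a $\sigma$-finite projection $x$ with $x_0\in N_2(x)$. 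For $pV$, putting $S_0=\{j:p_jv_0\neq0\}$ (countable, as $v_0v_0^*$ is $\sigma$-finite) I have $v_0v_0^*\le p_{S_0}:=\sum_{j\in S_0}p_j$; by Lemma~\ref{L:projequi}$(c)$ there is a projection $q_2\ge v_0^*v_0$ with $q_2\sim p_{S_0}$, and any partial isometry $v$ with $vv^*=p_{S_0}$ and $v^*v=q_2$ then satisfies $v_0v_0^*\le vv^*$ and $v_0^*v_0\le v^*v$, whence $v_0\in(pV)_2(v)$ and $\spt_2 v=S_0$. For $qW$ I first fix a properly infinite $\sigma$-finite projection $r$ with $w_0w_0^*\le r\le q$ (absorbing an orthogonal sequence of equivalent $\sigma$-finite subprojections of $q$, whose supremum stays properly infinite by Lemma~\ref{L:projequi}$(a)$) and argue identically via Lemma~\ref{L:projequi}$(c)$ to produce $w$ with $w_0\in(qW)_2(w)$. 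Then $\h=((f_\alpha),x,v,w)\in\T(M)$, and since the Peirce-$2$ space respects the $\ell_\infty$-decomposition, Proposition~\ref{P:M2 inclusion} gives $M_2(\g)\subset M_2(\h)$.

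\emph{Proof of $(b)$.} Decomposing $\varphi,\psi$ over the four blocks, the identity $\norm{a}_\varphi^2=\sum_{\text{blocks}}\norm{a_{\text{block}}}^2_{\varphi_{\text{block}}}$ (and likewise for $\psi$) shows that $\norm{\cdot}_\varphi$ is weaker than $\norm{\cdot}_\psi$ on $B_M$ if and only if the same holds on the unit ball of each block: the forward direction tests sequences supported in a single block, the reverse adds the four nonnegative contributions. On $N$, Proposition~\ref{P:seminorms order} gives that the comparison is equivalent to $N_2(x(\h))\subset N_2(x(\g))$, i.e.\ (for projections) to $x(\h)\le x(\g)$; on $qW$ the comparison is precisely the third listed condition, kept unsimplified. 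The family and $pV$ blocks are the delicate ones: on $L^\infty(\mu_\alpha,C_\alpha)$ Proposition~\ref{P:LinfyC} makes $\norm{\cdot}_{\varphi_\alpha}$ equivalent on the ball to the $L^2(\mu_\alpha,C_\alpha)$-norm, and on each $p_jV$ Lemma~\ref{L:pV finite sigmafinite} makes $\norm{\cdot}_{\varphi_j}$ equivalent, with constants $1/\sqrt2$ and $1$ independent of $j$, to $x\mapsto\sqrt{\tau_j(p_jxx^*p_j)}$ (here decomposing $v(\h)=\sum_{j\in\spt_2\h}v_j$ into complete tripotents of the $p_jV$ is what reduces this block to that model). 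Thus on the family and on $pV$ both seminorms become weighted $\ell^2$-sums $\sum_\alpha\norm{\varphi_\alpha}\,\norm{a_\alpha}^2_{L^2}$ and its analogue over $\spt_2$, with positive summable weights. For such sums $\norm{\cdot}_\psi$-convergence forces each common coordinate to zero, and a dominated-convergence argument (using $\norm{a_\alpha}_{L^2}\le1$ and summability of the weights) then forces $\norm{\cdot}_\varphi$-convergence exactly when the $\varphi$-support is contained in the $\psi$-support; testing single coordinates gives the converse. This yields $\spt_1\h\subset\spt_1\g$ and $\spt_2\h\subset\spt_2\g$, completing the equivalence.

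\emph{Main obstacle.} The crux is the finite block $pV$ (and, in parallel, the exceptional family): naively one expects the comparison to be governed by inclusion of the Peirce-$2$ subspaces of the support tripotents, but Lemma~\ref{L:pV finite sigmafinite} shows that \emph{all} complete tripotents of a single $p_jV$ induce uniformly equivalent seminorms, so the genuine invariant is the much coarser support set $\spt_2$. Making this precise requires the $j$-independent equivalence constants together with summability of the weights $\norm{\varphi_j}$, so that the comparison of two weighted $\ell^2$-sums over a possibly infinite index set is decided by dominated convergence rather than by a single linear estimate.
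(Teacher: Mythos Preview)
Your proof is correct and follows essentially the same route as the paper: the representation and part~$(a)$ are handled identically (the paper packages your ``absorbing an orthogonal sequence'' step for $qW$ into a separate lemma, and cites \cite[Corollary~V.2.9]{Tak} for the decomposition $p=\sum_j p_j$, but the content is the same), and part~$(b)$ is reduced to a blockwise comparison via Proposition~\ref{P:LinfyC}, Proposition~\ref{P:seminorms order}, and Lemma~\ref{L:pV finite sigmafinite}. The only notable variation is your treatment of the $pV$ block in~$(b)$: you decompose over the $p_j$ and run a weighted-$\ell^2$ dominated-convergence argument, whereas the paper replaces the support tripotent $v(\h)$ by the projection $u(\h)=\sum_{j\in\spt_2\h}p_j$ and compares $u(\h),u(\g)$ directly via Proposition~\ref{P:seminorms order}; both arguments ultimately rest on the centrality of the $p_j$ in $pVp$ (which makes $pV=\bigoplus_j^{\ell_\infty}p_jV$ as triples) together with Lemma~\ref{L:pV finite sigmafinite}, so they are equivalent.
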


Before proving this theorem let us formulate some consequences.

\begin{cor}\label{cor:sigma-directed}
Let $M$ be  a JBW$^*$-triple. Given a sequence $(\varphi_n)$ in $M_*\setminus\{0\}$, there is $\psi\in M_*\setminus\{0\}$, such that $\norm{\cdot}_{\varphi_n}$ is weaker than $\norm{\cdot}_\psi$ on $B_M$ for each $n\in\en$, i.e., the family of topologies on $B_M$ generated by the seminorms $\norm{\cdot}_\varphi$, $\varphi\in M_*\setminus\{0\}$ is upwards $\sigma$-directed by inclusion.
\end{cor}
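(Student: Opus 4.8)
The plan is to read off the corollary from the structural description in Theorem~\ref{T:order}, reducing the whole question to the four summands, where the only genuinely non-trivial directedness occurs in the properly infinite part $qW$. Throughout I use that ``$\norm{\cdot}_\varphi$ is weaker than $\norm{\cdot}_\psi$ on $B_M$'' is a transitive relation and that a norm-one functional with a prescribed $\sigma$-finite support tripotent always exists (Lemma~\ref{L:sigma finite tripotent}$(a)$).

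First I would normalize and pass to support tripotents lying in $\T(M)$. Writing $e_n=s(\varphi_n)$, each $e_n$ is a $\sigma$-finite tripotent by Lemma~\ref{L:sigma finite tripotent}$(a)$, so Theorem~\ref{T:order}$(a)$ supplies $\h_n\in\T(M)$ with $M_2(e_n)\subset M_2(\h_n)$; since $\h_n$ is again $\sigma$-finite, I fix a norm-one $\varphi_n'\in M_*$ with $s(\varphi_n')=\h_n$. Because $e_n\in M_2(\h_n)=M_2(s(\varphi_n'))$, Proposition~\ref{P:seminorms order}$(i)$ shows that $\norm{\cdot}_{\varphi_n}$ is weaker than $\norm{\cdot}_{\varphi_n'}$ on $B_M$. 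By transitivity it then suffices to dominate the family $(\varphi_n')$, so from now on I may assume $\h_n:=s(\varphi_n)\in\T(M)$ for every $n$.

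Next I assemble the dominating support tripotent componentwise. Write $\h_n=((f^n_\alpha)_\alpha,x_n,v_n,w_n)$ in the decomposition of Theorem~\ref{T:order}, and build $\g=((f_\alpha)_\alpha,x,v,w)\in\T(M)$ as follows. On the finite-dimensional summands I set $\spt_1\g=\bigcup_n\spt_1\h_n$ (a countable union of countable sets, hence countable) and choose an arbitrary complete tripotent $f_\alpha$ in $L^\infty(\mu_\alpha,C_\alpha)$ for each $\alpha$ in this union. On the $pV$-summand I set $\spt_2\g=\bigcup_n\spt_2\h_n$ and take $v=\sum_{j\in\spt_2\g}p_j$, a $\sigma$-finite projection with the final projection required by $\T(M)$. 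On $N$, Proposition~\ref{p JBWstar algebras sigma finite}$(a)$ yields a $\sigma$-finite projection $x\in N$ with $x_n\le x$ for all $n$ (the $x_n$ being projections lying in $N_2(x)$). Granting the $qW$-step below, I obtain $w$; then by construction $\spt_1\h_n\subset\spt_1\g$, $\spt_2\h_n\subset\spt_2\g$ and $x(\h_n)\le x(\g)$, and together with the $qW$-clause these are exactly the three conditions in Theorem~\ref{T:order}$(b)$. Hence $\norm{\cdot}_{\varphi_n}$ is weaker than $\norm{\cdot}_\psi$ on $B_M$, where $\psi\in M_*\setminus\{0\}$ is any norm-one functional with $s(\psi)=\g$, and the corollary follows.

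The main obstacle is the $qW$-summand. Each $w_n$ is a partial isometry in $qW$ whose final projection $w_nw_n^*$ is a properly infinite $\sigma$-finite projection below $q$ and whose initial projection satisfies $w_n^*w_n\sim w_nw_n^*$; inside the triple $qW$ one has $(qW)_2(w_n)=(w_nw_n^*)\,W\,(w_n^*w_n)$, so I must produce a single $\sigma$-finite tripotent $w$ with $w_nw_n^*\le ww^*$ and $w_n^*w_n\le w^*w$ for all $n$. This is precisely the $\sigma$-directedness recorded in the Remark following Theorem~\ref{T:SWCG} for the properly infinite (Type~3) case, extending Proposition~\ref{p pV}$(c)$ from complete to $\sigma$-finite tripotents. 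I would prove it by a diagonal construction: inductively build an increasing sequence of pairs of equivalent, properly infinite, $\sigma$-finite projections $(R_k\le q,\,S_k\le 1)$, $R_k\sim S_k$, with $R_k\ge w_jw_j^*$ and $S_k\ge w_j^*w_j$ for $j\le k$, at each step replacing $R_k\vee w_{k+1}w_{k+1}^*$ and $S_k\vee w_{k+1}^*w_{k+1}$ by equivalent properly infinite $\sigma$-finite majorants via the comparability theorem and Lemma~\ref{L:projequi}$(b)$,$(c)$, the proper infiniteness of $q$ providing the necessary room. Then $R=\sup_kR_k$ and $S=\sup_kS_k$ are equivalent $\sigma$-finite projections by Lemma~\ref{L:projequi}$(b)$, and any partial isometry $w$ with $ww^*=R$ and $w^*w=S$ satisfies $M_2(w_n)\subset M_2(w)$, which gives the $qW$-clause through Proposition~\ref{P:seminorms order}$(i)$ applied inside $qW$. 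Checking that the successive enlargements preserve equivalence, proper infiniteness and $\sigma$-finiteness is the one place where the comparison theory of projections is genuinely used.
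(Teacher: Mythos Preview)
Your overall strategy---reduce to $\T(M)$ via Theorem~\ref{T:order}$(a)$, assemble a dominating element of $\T(M)$ summand by summand, and invoke Theorem~\ref{T:order}$(b)$---is exactly the paper's. Your handling of the $L^\infty(\mu_\alpha,C_\alpha)$, $N$, and $pV$ pieces matches the paper's essentially verbatim.

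The gap is in the $qW$ step. Your diagonal construction produces increasing sequences $(R_k)$ and $(S_k)$ with $R_k\sim S_k$ at each stage, and you then invoke Lemma~\ref{L:projequi}$(b)$ to conclude $R=\sup_kR_k\sim\sup_kS_k=S$. But Lemma~\ref{L:projequi}$(b)$ requires the entire increasing sequence to be equivalent to a \emph{single fixed} projection; it does not say that suprema of two increasing sequences with termwise equivalent entries are equivalent. In your construction the central supports of the $R_k$ may genuinely grow with $k$, so the $R_k$ need not be mutually equivalent, and the lemma does not apply as stated. (Nor is $R$ automatically properly infinite, which $\T(M)$ demands of $ww^*$.)

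The paper sidesteps this entirely by reversing the order of operations. First it applies Lemma~\ref{L:infinitecofinality} \emph{once} to the $\sigma$-finite projection $\sup_n p_f(w^n)$, obtaining a single properly infinite $\sigma$-finite $r\le q$ dominating all the final projections. Then for each $n$ it uses Lemma~\ref{L:projequi}$(c)$ (with $p_f(w^n)\sim p_i(w^n)$ and $p_f(w^n)\le r$) to produce $s_n\ge p_i(w^n)$ with $s_n\sim r$. Now all $s_n$ are equivalent to the \emph{fixed} projection $r$, so Lemma~\ref{L:projequi}$(a)$ gives $s=\sup_n s_n\sim r$ directly, and any partial isometry $w$ with $ww^*=r$, $w^*w=s$ does the job. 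Fixing the target first, rather than growing both sides in tandem, is precisely what makes Lemma~\ref{L:projequi} applicable.
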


The proof of this corollary will use one of the lemmata below, so we postpone the 	 the end of the section.

\begin{cor}\label{c 10.3}
Let $M$ be a JBW$^*$-triple and $L\subset M_*$ any weakly compact set. Then there is $\varphi\in M_*$ such that for any $\varepsilon>0$ there is $n\in\en$ satisfying $L\subset n K(\varphi)+\varepsilon B_{M_*}$.
\end{cor}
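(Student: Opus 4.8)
The plan is to reduce the assertion to the existence of a single dominating seminorm, which is exactly what the $\sigma$-directedness result Corollary~\ref{cor:sigma-directed} provides. First I would apply Proposition~\ref{P:wcompact cofinal}$(a)$ to the weakly compact set $L$ and the family $A=M_*\setminus\{0\}$ (which trivially generates the strong$^*$ topology): for each $m\in\en$, taking $\varepsilon=2^{-m}$, this yields finitely many functionals $\varphi_1^{(m)},\dots,\varphi_{k_m}^{(m)}\in M_*\setminus\{0\}$ and $n_m\in\en$ with
$$L\subset n_m\co\bigl(K(\varphi_1^{(m)})\cup\dots\cup K(\varphi_{k_m}^{(m)})\bigr)+2^{-m}B_{M_*}.$$
The collection $\{\varphi_j^{(m)}\setsep m\in\en,\ 1\le j\le k_m\}$ is countable, so Corollary~\ref{cor:sigma-directed} furnishes a single $\varphi\in M_*\setminus\{0\}$ such that each $\norm{\cdot}_{\varphi_j^{(m)}}$ is weaker than $\norm{\cdot}_\varphi$ on $B_M$. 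This $\varphi$ will be the desired control functional.

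The heart of the argument, and the step I expect to be the main obstacle, is to convert the relation \textquoteleft$\norm{\cdot}_\psi$ weaker than $\norm{\cdot}_\varphi$ on $B_M$\textquoteright\ into a containment of the associated weakly compact sets. I would argue through polarity in the dual pairing $\la M,M_*\ra$, mimicking the computation in the proof of Proposition~\ref{P:wcompact cofinal}$(a)$. Writing $\norm{\cdot}_\psi^K=q_{K(\psi)}$ and using that $\norm{\cdot}_\psi$ and $\norm{\cdot}_\psi^K$ induce the same topology on $B_M$ (Lemma~\ref{L:normal functional}$(c),(d)$), the domination gives, for every $\delta>0$, some $\eta>0$ with $\{x\in B_M\setsep\norm{x}_\varphi^K\le\eta\}\subset\{x\in B_M\setsep\norm{x}_\psi^K\le\delta\}$. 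Taking prepolars in $M_*$ reverses this inclusion; combining the two estimates already recorded in the proof of Proposition~\ref{P:wcompact cofinal}$(a)$, namely $\{x\in B_M\setsep q_{K(\psi)}(x)\le\delta\}_\circ\supset\frac1\delta K(\psi)$ and $\{x\in B_M\setsep\norm{x}_\varphi^K\le\eta\}_\circ\subset\overline{\frac1\eta K(\varphi)+B_{M_*}}$, I obtain $K(\psi)\subset\frac\delta\eta K(\varphi)+2\delta B_{M_*}$. Thus each $K(\varphi_j^{(m)})$ is controlled by a dilate of the single set $K(\varphi)$ up to an arbitrarily small norm error.

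Finally I would assemble the pieces. Fix $\varepsilon>0$ and choose $m$ with $2^{-m}<\varepsilon/2$. Applying the previous paragraph to the finitely many functionals $\varphi_1^{(m)},\dots,\varphi_{k_m}^{(m)}$ with a small common error, and using that $K(\varphi)$ is absolutely convex (Lemma~\ref{L:normal functional}$(b)$), the convex hull $\co(K(\varphi_1^{(m)})\cup\dots\cup K(\varphi_{k_m}^{(m)}))$ is contained in $C\,K(\varphi)+\theta B_{M_*}$ for a suitable constant $C$ and arbitrarily small $\theta>0$. Substituting into the first display gives $L\subset n_m C\,K(\varphi)+(n_m\theta+2^{-m})B_{M_*}$, and choosing $\theta<\varepsilon/(2n_m)$ yields $L\subset n\,K(\varphi)+\varepsilon B_{M_*}$ with $n=n_mC$, as required. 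The only routine points to check are the handling of the closures (which are in fact superfluous, since the sum of the weakly compact set $\frac\delta\eta K(\varphi)$ with a closed ball is already norm closed) and the fact that shrinking the error forces the constants to grow, which is harmless because only a single value of $n$ is needed for each fixed $\varepsilon$.
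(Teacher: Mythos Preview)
Your proposal is correct and follows essentially the same strategy as the paper: collect countably many functionals needed to control $L$ at all scales, invoke Corollary~\ref{cor:sigma-directed} to find a single dominating $\varphi$, and finish with the polar computation from the proof of Proposition~\ref{P:wcompact cofinal}.

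The only difference is a small detour. You first apply Proposition~\ref{P:wcompact cofinal}$(a)$ to get $L\subset n_m\co(\bigcup_j K(\varphi_j^{(m)}))+2^{-m}B_{M_*}$, and then do a second polar argument to absorb each $K(\varphi_j^{(m)})$ into a dilate of $K(\varphi)$. The paper avoids this intermediate step: it works directly with the Mackey-continuous seminorm $q_L$, finds for each $m$ finitely many $\varphi_j^m$ and $\delta_m>0$ with $\{x\in B_M:\norm{x}_{\varphi_j^m}\le\delta_m\ \forall j\}\subset\{x\in B_M:q_L(x)\le 1/m\}$, applies Corollary~\ref{cor:sigma-directed} once, and then observes that the single $\varphi$ already gives $\{x\in B_M:q_{K(\varphi)}(x)\le\eta_m\}\subset\{x\in B_M:q_L(x)\le 1/m\}$ for suitable $\eta_m$. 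One polar computation then yields $L\subset \frac{1}{m\eta_m}K(\varphi)+\frac{2}{m}B_{M_*}$ directly. So your argument is sound but does the polar step twice where once suffices.
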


\begin{proof}
We will imitate the proof of Proposition~\ref{P:wcompact cofinal} using the same notation. The seminorm $q_L$ is Mackey continuous, so $q_L|_{B_M}$ is strong$^*$-continuous.
Hence, given $m\in\en$, there are $\varphi_1^m,\dots,\varphi_{k_m}^m\in M_*\setminus\{0\}$ and $\delta_m>0$ such that
$$\{x\in B_M\setsep \norm{x}_{\varphi_j^m}\le\delta_m\mbox{ for }j=1,\dots,k_m\}\subset\{x\in B_M\setsep q_L(x)\le\frac1m\}.$$
By Corollary~\ref{cor:sigma-directed} there is $\varphi\in M_*\setminus\{0\}$ such that $\norm{\cdot}_{\varphi_j^m}$ is weaker than $\norm{\cdot}_\varphi$ on $B_M$ for each $m\in\en$ and $j=1,\dots,k_m$. Since $\norm{\cdot}_\varphi$ is on $B_M$ equivalent to $\norm{\cdot}_\varphi^K=q_{K(\varphi)}$ (by Lemma~\ref{L:normal functional}$(c,d)$) we deduce that
for each $m\in\en$ there is $\eta_m>0$ such that
$$\{x\in B_M\setsep q_{K(\varphi)}(x)\le\eta_m\}\subset \{x\in B_M\setsep q_L(x)\le\frac1m\}.$$
The calculation of polars (see the proof of Proposition~\ref{P:wcompact cofinal}) shows that
$$L\subset \frac1{m\eta_m} K(\varphi)+\frac2m B_{M_*}.$$
This completes the proof.
\end{proof}

Now we proceed with the proof of Theorem~\ref{T:order}. This will be done in several steps.\smallskip

Let us start by explaining the existence of the respective representation. Let $M$ be any JBW$^*$-triple. By Proposition~\ref{P:representation} $M$ can be represented as
$$M=\bigoplus_{\alpha\in\Lambda}L^\infty(\mu_\alpha,C_\alpha)\oplus N\oplus sU,$$
where $\Lambda$ is a set,  $\mu_\alpha$ is a probability measure and $C_\alpha$ is a finite-dimensional Cartan factor without unitary element for each $\alpha\in\Lambda$, $N$ is a JBW$^*$-algebra,
$U$ is a von Neumann algebra, and $s\in U$ is a projection such that $sU$ admits no nonzero direct summand isomorphic to a JBW$^*$-algebra. We may assume without loss of generality that $C_s=1_U$.
By \cite[Proposition 6.3.7]{KR2} there is a unique central projection $z\in U$ such that $zs$ is poperly infinite or zero and $(1-z)s$ is finite,
hence $sU=zsU\oplus (1-z)sU$. Take $W=zU$, $q=zs$, $V=(1-z)U$, $p=(1-z)s$. Then we have the representation of the form from Theorem~\ref{T:order}, where $p$ is finite and $q$ properly infinite.
Finally, the existence of the relevant decomposition of $p$ follows from  \cite[Corollary V.2.9]{Tak}.\smallskip

We continue by proving assertion (a). It is clear that all the elements of $\T(M)$ are $\sigma$-finite tripotents. To prove the second statement we will use two lemmata.

\begin{lemma}\label{L:sigmafiniteOGfamily}
Let $V$ be a von Neumann algebra, $u\in V$ a $\sigma$-finite tripotent and $(r_j)_{j\in J}$ an orthogonal family of projections. Then the sets
$$\{j\in J\setsep r_j u\ne0\}\mbox{ and }\{j\in J\setsep ur_j \ne0\}$$
are countable.
\end{lemma}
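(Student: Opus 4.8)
The plan is to translate the assertion about the tripotent $u$ into one about two $\sigma$-finite \emph{projections}, and then to invoke the standard countable decomposability of such projections. Since $u$ is a tripotent in the von Neumann algebra $V$, it is a partial isometry with initial projection $p_i=u^*u$ and final projection $p_f=uu^*$, and $uu^*u=u$. For any projection $r$ one has $ru=0$ if and only if $rp_f=0$: the forward implication is trivial, and if $rp_f=0$ then $rp_f u=ru=0$. Dually, applying this to $u^*$ gives $ur=0$ if and only if $p_ir=0$, equivalently $rp_i=0$. Hence
$$\{j\in J\setsep r_ju\ne0\}=\{j\in J\setsep r_jp_f\ne0\},\qquad \{j\in J\setsep ur_j\ne0\}=\{j\in J\setsep r_jp_i\ne0\},$$
so it suffices to show that a fixed $\sigma$-finite projection meets only countably many members of the orthogonal family $(r_j)_{j\in J}$.

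The hard part, and the step I would carry out next, is to verify that $p_f$ and $p_i$ are themselves $\sigma$-finite projections. Let $(e_k)_{k\in K}$ be an orthogonal family of nonzero subprojections of $p_f$ and put $v_k=e_ku$. Then $v_kv_k^*=e_kp_fe_k=e_k$ and $\{v_k,u,v_k\}=v_ku^*v_k=e_kp_fe_ku=v_k$, so by Proposition~\ref{p characterization of triple order} each $v_k$ is a nonzero tripotent with $v_k\le u$; moreover $v_kv_l^*=e_ke_l=0$ and $v_k^*v_l=u^*e_ke_lu=0$ for $k\ne l$, whence $v_k\in M_0(v_l)$ and the $v_k$ are pairwise orthogonal (cf. Proposition~\ref{P:OG trip}). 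As $u$ is $\sigma$-finite, $K$ is countable, so $p_f$ is $\sigma$-finite. The analogous computation with $w_k=uf_k$ for subprojections $f_k\le p_i$ shows that $p_i$ is $\sigma$-finite.

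It remains to prove the elementary fact that a $\sigma$-finite projection $p\in V$ is orthogonal to all but countably many members of any orthogonal family $(r_j)_{j\in J}$ of projections. By Lemma~\ref{L:sigma finite tripotent}(c) I may write $p=s(\psi)$ for a normal state $\psi\in V_*$, so that $\psi=\psi\circ P_2(p)$ and $\psi$ is faithful on $pVp$. For each $j$ we have $\psi(r_j)=\psi(pr_jp)$ with $pr_jp\ge0$, and since $r_jp\ne0$ forces $pr_jp\ne0$, faithfulness gives $\psi(r_j)>0$ whenever $r_jp\ne0$. By normality $\psi$ is completely additive on the orthogonal family, so $\sum_{j\in J}\psi(r_j)\le\psi(1)=1$, and thus only countably many indices satisfy $\psi(r_j)>0$. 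Applying this once with $p=p_f$ and once with $p=p_i$ yields the countability of both sets and completes the argument; once the $\sigma$-finiteness of $p_i$ and $p_f$ is in hand, this final summability argument is routine.
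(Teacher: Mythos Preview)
Your proof is correct. The reduction to the projections $p_f=uu^*$ and $p_i=u^*u$ is exactly what the paper does, and your verification that these are $\sigma$-finite (which the paper states without detail) is clean and correct.

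Where you diverge from the paper is in the final step. The paper proceeds by choosing two orthogonal families of cyclic projections summing to $1$, one refining $p$ and one refining each $r_j$, and then invokes an external result (\cite[Proposition~4.1]{BHK-vN}) about how a countable family of cyclic projections can be non-orthogonal to only countably many members of another orthogonal family of cyclic projections. Your argument instead uses Lemma~\ref{L:sigma finite tripotent}(c) to produce a normal state $\psi$ with support $p$, notes that $r_jp\ne0$ forces $pr_jp\ne0$ and hence $\psi(r_j)>0$ by faithfulness on $pVp$, and concludes by the summability $\sum_j\psi(r_j)\le\psi(1)=1$. This is more elementary and entirely self-contained within the paper: it avoids the cyclic-projection machinery and the external citation, at the cost of relying on the support-tripotent characterization already established earlier. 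Both approaches are short; yours is arguably the more natural one given the tools the paper has in hand.
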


\begin{proof}
Note that $u$, being a tripotent, is a partial isometry with initial projection $p_i(u)=u^*u$ and final projection $p_f(u)=uu^*$. Moreover, since $u$ is $\sigma$-finite, both $p_i(u)$ and $p_f(u)$ are $\sigma$-finite.
Further, it is clear that $r_ju\ne0$ if and only if $r_j p_f(u)\ne0$, and that $ur_j\ne0$ if and only if $p_i(u)r_j\ne0$. We can therefore assume, without loss of generality, that $u$ is a projection.\smallskip

So, assume $u$ is a projection. Now consider two orthogonal families of cyclic projections in $V$ with sum equal to $1$, say $(q_\gamma)_{\gamma\in\Gamma}$ and
$(s_\delta)_{\delta\in\Delta}$, such that $u$ is the sum of a subfamily of the first one and $r_j$ is the sum of a subfamily of the second one for each $j\in J$. The existence of these families follows easily from \cite[Proposition 5.5.9]{KR1}.\smallskip

Since $\{\gamma\in \Gamma\setsep q_\gamma u\ne0\}$ is countable, \cite[Proposition 4.1]{BHK-vN} implies that both sets
$\{\delta\in\Delta\setsep us_\delta\ne0\}$
and
$\{\delta\in\Delta\setsep s_\delta u\ne0\}$
are countable. Now the assertion easily follows.
\end{proof}

\begin{lemma}\label{L:infinitecofinality}
Let $V$ be a von Neumann algebra
and let $p\in V$ be a properly infinite projection. Then for any $\sigma$-finite projection $q\le p$ there is a properly infinite $\sigma$-finite projection $r$ such that $q\le r\le p$.
\end{lemma}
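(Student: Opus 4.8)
The plan is to build $r$ by placing countably many mutually orthogonal copies of $q$ inside $p$ and joining them together with $q$ itself. First I would dispose of the degenerate case $q=0$ by replacing $q$ with the support projection of a normal state on the (nonzero) von Neumann algebra $pVp$, which is a nonzero $\sigma$-finite projection dominated by $p$; any $r$ produced for this replacement still satisfies $0=q\le r\le p$. So assume $q\ne0$. Since $p$ is properly infinite, the halving lemma \cite[\S 6.3]{KR2}, applied repeatedly, yields an orthogonal sequence $(e_n)_{n\ge1}$ of projections with $e_n\sim p$ for every $n$ and $\sum_n e_n\le p$: halve $p=e_1+p^{(1)}$ with $e_1\sim p^{(1)}\sim p$, then halve the (properly infinite) projection $p^{(1)}$, and iterate.

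Next I would transport $q$ into each $e_n$. Fix partial isometries $v_n\in V$ with $v_n^*v_n=p$ and $v_nv_n^*=e_n$, and set $q_n=v_nqv_n^*$. A direct computation using $q\le p=v_n^*v_n$ shows that each $q_n$ is a projection with $q_n\le e_n$, and that $w_n=v_nq$ is a partial isometry with $w_n^*w_n=q$ and $w_nw_n^*=q_n$; hence $q_n\sim q$, so each $q_n$ is $\sigma$-finite, and the $q_n$ are mutually orthogonal because the $e_n$ are. Now put $r=q\vee\bigvee_{n}q_n$. Then $q\le r\le p$, since all of the joined projections lie below $p$, and $r$ is $\sigma$-finite, being the supremum of a sequence of $\sigma$-finite projections \cite[Theorem 3.4]{Edw-Rut-exposed} (equivalently \cite[Lemma 3.5]{BHK-JBW}, applied to the increasing sequence of finite partial joins).

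It remains to check that $r$ is properly infinite, which I expect to be the only nonroutine point. Since equivalent projections share the same central carrier, $c:=C_q=C_{q_n}$ for all $n$. Let $z$ be a central projection with $zr\ne0$. If $zc=0$, then $z$ annihilates $q$ and every $q_n$, whence $zr=z\left(q\vee\bigvee_n q_n\right)=(zq)\vee\bigvee_n (zq_n)=0$, a contradiction; so $zc\ne0$, and therefore $zq_n\ne0$ for every $n$. Thus $zr$ dominates $\sum_n zq_n$, an orthogonal sum of nonzero, mutually equivalent projections, which is infinite (the shift partial isometry identifies it with its proper subprojection $\sum_{n\ge2}zq_n$); a projection dominating an infinite projection is infinite, so $zr$ is infinite. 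As $z$ was an arbitrary nonzero central cut of $r$, the projection $r$ is properly infinite, completing the proof. The main obstacle is precisely this central-carrier bookkeeping: one must ensure that every nonzero central summand of $r$ still meets all of the copies $q_n$, after which the infiniteness of $zr$ is immediate from the standard fact about orthogonal families of mutually equivalent subprojections.
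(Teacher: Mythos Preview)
Your proof is correct and shares the same core idea as the paper's: produce countably many mutually orthogonal copies of $q$ below $p$, so that their join is $\sigma$-finite and properly infinite. The executions differ only in the endgame. The paper (after reducing to $p=1$) decomposes $1$ as $\sum_n q_n$ with each $q_n\sim 1$, places a copy $r_n'\sim q$ inside each $q_n$, sets $r'=\sum_n r_n'$, cites \cite[Proposition~4.12]{Stratila} for the fact that $r'$ is properly infinite, and then invokes Lemma~\ref{L:projequi}(c) to obtain an equivalent projection $r\ge q$. You instead take $r=q\vee\bigvee_n q_n$ directly, which already dominates $q$, and verify proper infiniteness by hand via the central-carrier argument. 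Your route is slightly more self-contained (no appeal to Lemma~\ref{L:projequi}(c) or to \cite{Stratila}), at the price of the extra bookkeeping in the last paragraph; the paper's route is shorter once those auxiliary facts are in place.
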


\begin{proof}
Without loss of generality we can assume that $p=1$. According to assumption $V$ is properly infinite. Therefore there is a sequence $(q_n)$ of mutually orthogonal projections such that $\sum_n q_n =1$ and $q_n\sim 1$ for each $n$ (see e.g. Proposition 4.12, page 97 in \cite{Stratila}). Therefore, there are projections $r_n'$ with $r_n'\le q_n$ and  $q\sim r'_n$ for each $n$ (cf. Lemma~\ref{L:projequi}$(c)$). Then $r'=\sum_n r_n'$ is a properly infinite $\sigma$-finite projection (cf. \cite[Proposition 4.12]{Stratila}) to which $q$ is subequivalent (see Lemma~\ref{L:projequi}). Now by Lemma~\ref{L:projequi}$(c)$ there is a projection $r\ge q$ with $r\sim r'$. This projection is $\sigma$-finite and properly infinite.
\end{proof}

Now we are ready to prove the second statement of assertion $(a)$. Let
$$\h=((f_\alpha)_{\alpha\in\Lambda},x,v,w)\in M$$
be a $\sigma$-finite tripotent. It is clear that $\spt_1\h$ is countable. For each $\alpha\in\spt_1\h$ choose a complete tripotent $g_\alpha\in L^\infty(\mu_\alpha,C_\alpha)$ such that $g_\alpha\ge f_\alpha$, and
for each $\alpha\in \Lambda\setminus \spt_1\h$ set $g_\alpha=0$.\smallskip

Further, $x$ is a $\sigma$-finite tripotent in $N$, hence by Lemma~\ref{L:sigma finite tripotent}$(d)$ there is a $\sigma$-finite projection $y\in N$ with $x\in N_2(y)$.\smallskip

Since $v$ is a $\sigma$-finite tripotent in $pV$,  by Lemma~\ref{L:sigmafiniteOGfamily} the set $\spt_2\h$ is countable. The final projection of $v$ satisfies $p_f(v)\le\sum_{j\in \spt_2\h}p_j$, so by Lemma~\ref{L:projequi}$(c)$ there is a projection $r\ge p_i(v)$ such that $r\sim \sum_{j\in \spt_2\h}p_j$, so we can choose a partial isometry $\tilde{v}\in V$ with $p_i(\tilde{v})=r$ and $p_f(\tilde{v})=\sum_{j\in \spt_2\h}p_j$.\smallskip

Finally, $w$ is a $\sigma$-finite tripotent in $qW$, thus $p_f(w)$ is a $\sigma$-finite projection below $q$. It follows from Lemma~\ref{L:infinitecofinality} that there is a $\sigma$-finite properly infinite projection $s_1$ with $p_f(w)\le s_1\le q$. By Lemma~\ref{L:projequi}(c) there is a projection $s_2\ge p_i(w)$ equivalent to $s_1$. Let $\tilde{w}$ be any partial isometry with $p_i(\tilde{w})=s_2$ and $p_f(\tilde{w})=s_1$.\smallskip

Now it is clear that
$$\g=((g_\alpha)_{\alpha\in\Lambda},y,\tilde{v},\tilde{w})\in \T(M)$$
and $M_2(\h)\subset M_2(\g)$. This completes the proof of assertion $(a)$.\smallskip

We continue by proving $(b)$. Fix $\varphi,\psi\in M_*\setminus\{0\}$ such that
$$\begin{aligned}
s(\varphi)&=\h=((h_\alpha)_{\alpha\in\Lambda},x(\h),v(\h),w(\h)),\\
s(\psi)&=\g=((g_\alpha)_{\alpha\in\Lambda},x(\g),v(\g),w(\g)).
\end{aligned}$$
It is clear that $\norm{\cdot}_\varphi$ is weaker than $\norm{\cdot}_\psi$ on $B_M$ if and only if
\begin{itemize}
    \item $\norm{\cdot}_\varphi$ is weaker than $\norm{\cdot}_\psi$ on $B_{L^\infty(\mu_\alpha,C_\alpha)}$ for each $\alpha\in\Lambda$,
     \item $\norm{\cdot}_\varphi$ is weaker than $\norm{\cdot}_\psi$ on $B_N$,
      \item $\norm{\cdot}_\varphi$ is weaker than $\norm{\cdot}_\psi$ on $B_{pV}$,
       \item $\norm{\cdot}_\varphi$ is weaker than $\norm{\cdot}_\psi$ on $B_{qW}$.
\end{itemize}

Observe that Proposition~\ref{P:LinfyC} yields that $\norm{\cdot}_\varphi$ and $\norm{\cdot}_\psi$ are equivalent on the closed unit ball of ${L^\infty(\mu_\alpha,C_\alpha)}$ whenever both $h_\alpha$ and $g_\alpha$ are nonzero. Further, clearly $\norm{\cdot}_\varphi$ is weaker than $\norm{\cdot}_\psi$ on $B_N$ if and only if $x(\h)\le x(\g)$ (by Proposition~\ref{P:seminorms order}, but in fact this is an easy case).\smallskip

Further, set
$$u(\g)=\sum_{j\in\spt_2\g} p_j,\qquad u(\h)=\sum_{j\in\spt_2\h} p_j.$$
These are $\sigma$-finite projections in $V$ which belong to $pV$, thus there are $\tilde{\varphi},\tilde{\psi}\in (pV)_*\setminus\{0\}$ with $s(\tilde{\varphi})=u(\h)$ and $s(\tilde{\psi})=u(\g)$. By Lemma~\ref{L:pV finite sigmafinite} and Proposition~\ref{P:seminorms order} we see that $\norm{\cdot}_{\varphi}$ and $\norm{\cdot}_{\tilde{\varphi}}$ are equivalent on $B_{pV}$ (and $\norm{\cdot}_{\psi}$ and $\norm{\cdot}_{\tilde{\psi}}$ as well). Now we deduce, via Proposition~\ref{P:seminorms order}, that $\norm{\cdot}_\varphi$ is weaker than $\norm{\cdot}_\psi$ on $B_{pV}$ if and only if $\spt_2\h\subset\spt_2\g$.\smallskip

Now assertion $(b)$ follows easily.\smallskip

Next we provide the following postponed proof.

\begin{proof}[Proof of Corollary~\ref{cor:sigma-directed}.]
We use the notation from Theorem~\ref{T:order}. By assertion $(a)$ in the just quoted theorem, for each $n\in\en$, we can find an element $\h^n\in \T(M)$ such that $M_2(s(\varphi_n))\subset M_2(\h^n)$ for each $n\in\en$. Fix the notation
$$\h^n=((f_\alpha^n)_{\alpha\in\Lambda},x^n,v^n,w^n),\quad n\in\en.$$
For any $\alpha\in\bigcup_n \spt_1 \h^n$ choose a complete tripotent $f_\alpha\in L^\infty(\mu_\alpha,C_\alpha)$ and set $f_\alpha=0$ for the remaining $\alpha\in \Lambda$. Further, set
$x=\sup_n x^n$ and
$$v=\sum_{j\in\bigcup_n\spt_2\h^n} p_j\; .$$

Finally, $w^n$ is a partial isometry in $W$ such that its final projection $p_f(w^n)$ is a properly infinite $\sigma$-finite projection below $q$ for each $n\in\en$. By Lemma~\ref{L:infinitecofinality} there is a $\sigma$-finite properly infinite projection $r\in W$ with
$$\sup_n p_f(w^n)\le r\le q.$$
By Lemma~\ref{L:projequi}$(c)$ we can find, for each $n\in\en$, a projection $s_n\ge p_i(w^n)$ such that $s_n\sim r$. Then $s=\sup_n s_n$ is equivalent to $r$ by Lemma~\ref{L:projequi}$(a)$. So, we can fix a partial isometry $w\in W$ with initial projection $s$ and final projection $r$. Then
$$\h=((f_\alpha)_{\alpha\in\Lambda},x,v,w)\in \T(M).$$
Choose $\varphi\in M_*\setminus\{0\}$ with $s(\varphi)=\h$.
Then $\norm{\cdot}_{\varphi_n}$ is weaker than $\norm{\cdot}_\varphi$ on $B_M$ by Theorem~\ref{T:order}$(b)$
(and Proposition~\ref{P:seminorms order}).
\end{proof}

\section{Characterizations of weakly compact sets and operators}\label{sec:11}

As a byproduct of our investigation we improve characterizations of weakly compact sets in preduals of JBW$^*$-triples and of weakly compact operators on such spaces. We start by recalling the following known result.

\begin{thm}\label{weakly compact characterization}\cite[Theorem 1.1, Corollary 1.4 and Theorem 1.5]{peralta2006some}
Let $K$ be a bounded subset in the predual of a JBW$^*$-triple $M$.
Then the following are equivalent:
\begin{enumerate}[$(a)$]\item $K$ is relatively weakly compact;
\item There exist norm-one normal functionals $\varphi_1, \varphi_2\in M_{*}$ satisfying the following property:
Given $\varepsilon >0,$ there exists $\delta > 0$ such that for every
$x\in M$ with $\|x\|\leq 1$ and $\|x\|_{\varphi_1,\varphi_2} <
\delta,$ we have $| \phi (x)| < \varepsilon$ for every $\phi\in K;$
\item The restriction, $K|_{C},$ of $K$ to each maximal abelian subtriple $C$ of $M$ is relatively weakly compact in $C_*$;
\item For each tripotent $e\in M$ the restriction of $K$ to $M_{2} (e)$ is relatively
weakly compact in $\left(M_{2}(e)\right)_{*}$;
\item For any monotone decreasing sequence of tripotents
$(e_n)$ in $M$ with $(e_n) \to 0$ in the weak$^*$ topology, we have $\displaystyle \lim_{n\to +\infty} \phi (e_n) =0$ uniformly for $\phi\in K$.
\end{enumerate}
If $M$ is a JBW$^*$-algebra then statement $(b)$ can be replaced with the following:
\begin{enumerate}[$(b')$]
\item There exists a normal state $\psi\in M_{*}$ satisfying the following
property: Given $\varepsilon >0,$ there exists $\delta > 0$ such
that for every $x\in M$ with $\|x\|\leq 1$ and $\|x\|_{\psi} <
\delta,$ we have $| \phi (x)| < \varepsilon$ for each $\phi\in K.$
\end{enumerate}
\end{thm}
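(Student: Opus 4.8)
The backbone of the argument is the duality recorded in Proposition~\ref{P:strong*=Mackey}: on $B_M$ the strong$^*$ topology coincides with the Mackey topology of uniform convergence on weakly compact subsets of $M_*$. For a bounded $D\subset M_*$ write $q_D(x)=\sup\{|\phi(x)|:\phi\in D\}$ for the associated seminorm on $M$. The plan is to read the two extremal conditions through $q_D$: condition $(b)$ says precisely that $q_K$ is, on $B_M$, dominated by the single preHilbertian seminorm $\|\cdot\|_{\varphi_1,\varphi_2}$, while $(a)$ is equivalent, by Krein's theorem, to weak compactness of the absolutely convex set $L=\overline{\co(K\cup -K)}^{\,w}$, and this in turn makes $q_L$ a Mackey, hence (by Proposition~\ref{P:strong*=Mackey}) a strong$^*$, continuous seminorm on $B_M$.

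The implication $(b)\Rightarrow(a)$ is the soft direction. By Lemma~\ref{L:normal functional} each $\|\cdot\|_{\varphi_i}$ is equivalent on $B_M$ to $q_{K(\varphi_i)}$, where $K(\varphi_i)$ is absolutely convex and weakly compact; a polar/bipolar computation identical to the one in the proof of Proposition~\ref{P:wcompact cofinal} then turns the hypothesis into an inclusion $K\subset n\,\co(K(\varphi_1)\cup K(\varphi_2))+\varepsilon B_{M_*}$ for every $\varepsilon>0$ and a suitable $n\in\en$. As $\co(K(\varphi_1)\cup K(\varphi_2))$ is weakly compact, this reads $\omega(K)=0$, i.e.\ $K$ is relatively weakly compact. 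For the substantial converse $(a)\Rightarrow(b)$ I would apply Corollary~\ref{c 10.3} to $L$: it produces a single $\varphi\in M_*$ such that for each $\varepsilon>0$ there is $n\in\en$ with $L\subset nK(\varphi)+\varepsilon B_{M_*}$. Writing an arbitrary $\phi\in K\subset L$ as $\psi+\theta$ with $\psi\in nK(\varphi)$ and $\theta\in\varepsilon B_{M_*}$, and using $q_{K(\varphi)}(x)=\|x\|^K_\varphi\le\sqrt{\|\varphi\|}\,\|x\|_\varphi$ from Lemma~\ref{L:normal functional}$(d)$, one obtains $|\phi(x)|\le n\sqrt{\|\varphi\|}\,\|x\|_\varphi+\varepsilon$ for all $x\in B_M$, which is precisely condition $(b)$ — in fact with the single functional $\varphi_1=\varphi$, so that the present machinery already yields a conclusion sharper than the stated two-functional one.

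The remaining conditions are organised around restriction. Since $P_2(e)$ is a weak$^*$-to-weak$^*$ continuous norm-one projection and every maximal abelian subtriple $C$ is weak$^*$-closed, the restriction maps $M_*\to(M_2(e))_*$ and $M_*\to C_*$ are weak-to-weak continuous, whence $(a)\Rightarrow(c)$ and $(a)\Rightarrow(d)$ are immediate; $(a)\Rightarrow(e)$ follows from the uniform countable additivity of a weakly compact set along a monotone sequence of tripotents decreasing weak$^*$ to $0$. To close the cycle I would prove $(d)\Rightarrow(b)$: applying the JBW$^*$-algebra instance of the theorem inside each $M_2(e)$ supplies a local control functional, and these local data, one for every $\varepsilon=1/m$, are then amalgamated into a single global seminorm using that the family $\{\|\cdot\|_\varphi\}$ is upward $\sigma$-directed (Corollary~\ref{cor:sigma-directed}). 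The refinement $(b')$ for a JBW$^*$-algebra is obtained by running the same scheme with positive functionals: by Lemma~\ref{L:strong* topology}$(a)$ the strong$^*$ topology there is the algebra strong$^*$ topology, so a single normal state $\psi$ with $\|x\|_\psi^2=\psi(x^*\circ x)$ already controls both the ``left'' and ``right'' behaviour and one functional suffices.

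The main obstacle is exactly the amalgamation step underlying $(a)\Rightarrow(b)$ and $(d)\Rightarrow(b)$. Strong$^*$ continuity of $q_L$ only delivers, for each fixed $\varepsilon$, finitely many control seminorms, and the genuine difficulty — the one responsible for the long-standing gap in the single-functional version — is to collapse the resulting countable array of control data to a bounded number of functionals uniformly in $\varepsilon$. In this framework that collapse is precisely what the order theory of Section~\ref{sec:10} supplies, through the $\sigma$-directedness of the seminorm order (Corollaries~\ref{cor:sigma-directed} and~\ref{c 10.3}); everything else in the proof is either soft polar calculus or a routine restriction argument.
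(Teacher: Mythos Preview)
The paper does not prove this theorem at all: it is quoted verbatim from \cite{peralta2006some} as background, and the only thing the paper adds in Section~\ref{sec:11} is the improvement Theorem~\ref{t weakly compact with a single control functional}, whose proof consists of combining the cited equivalence $(a)\Leftrightarrow(b)$ with Corollary~\ref{cor:sigma-directed}. Your treatment of $(a)\Leftrightarrow(b)$ via Corollary~\ref{c 10.3} and Lemma~\ref{L:normal functional} is exactly that argument --- you have correctly observed that the paper's machinery already delivers the single-functional version, which is precisely the content of Theorem~\ref{t weakly compact with a single control functional}. So for the core equivalence your route and the paper's route coincide.

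Your handling of $(c)$, $(d)$, $(e)$, however, has real gaps. You establish only the trivial directions $(a)\Rightarrow(c)$, $(a)\Rightarrow(d)$, $(a)\Rightarrow(e)$ and then attempt $(d)\Rightarrow(b)$; the implications $(c)\Rightarrow(a)$ and $(e)\Rightarrow(a)$ are simply absent, so the cycle is not closed. Moreover, your $(d)\Rightarrow(b)$ sketch is circular and incomplete: it invokes ``the JBW$^*$-algebra instance of the theorem'' inside each $M_2(e)$, i.e.\ it presupposes $(a)\Rightarrow(b')$ in the special case you have not yet proved, and even granting that, the amalgamation step is not well-posed --- condition $(d)$ gives you a control functional $\psi_e$ \emph{for every} tripotent $e$, with no a priori relationship between different $e$'s, and Corollary~\ref{cor:sigma-directed} only lets you merge \emph{countably many} seminorms. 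You would need to first select a countable family of tripotents whose Peirce-2 subspaces are in some sense cofinal, and nothing in your argument does this. The original proofs in \cite{peralta2006some} of the hard directions $(c)\Rightarrow(a)$, $(d)\Rightarrow(a)$, $(e)\Rightarrow(a)$ proceed by quite different means (reducing to abelian subtriples and a Dunford--Pettis/uniform-integrability style argument), and that substance is not reproducible from the tools you invoke.
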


The equivalence $(a)\Leftrightarrow(b)$ is a generalization of Akemann's theorem \cite{akemann1967dual} characterizing weakly compact sets in predual of von Neumann algebras. Recall that $\norm{x}_{\varphi_1,\varphi_2}^2=\norm{x}_{\varphi_1}^2+\norm{x}_{\varphi_2}^2$, hence it gives also a more precise version of Proposition~\ref{P:strong*=Mackey} on the relationship of strong$^*$ and Mackey topologies.\smallskip

We also notice that a triple $C$ is \emph{abelian} if the operators $L(a,b)$ and $L(x,y)$ commute for any choice $a,b,x,y\in C$ (cf. \cite[p. 468]{Cabrera-Rodriguez-vol1}).\smallskip

As observed in \cite[pages 340--342]{Cabrera-Rodriguez-vol2} the previous theorem can be applied to characterize weakly compact operators from a complex Banach space into the predual of a JBW$^*$-triple and from a JB$^*$-triple into a complex Banach space. The concrete result in the latter case reads as follows.

\begin{thm}\label{t weakly compact charac}\cite[Theorem 10]{peralta2001grothendieck}
Let $E$ be a JB$^*$-triple, $X$ a complex Banach space, and $T:E \rightarrow X$ a
bounded linear operator. Then the following assertions are equivalent:
\begin{enumerate}[$(i)$]
\item $T$ is weakly compact;
\item There exist norm-one functionals $\varphi_1, \varphi_2
\in E^{*}$ and a function $N:(0,+\infty) \rightarrow (0,+\infty)$
such that $$\|T (x) \| \leq N(\varepsilon)\
\|x\|_{\varphi_1,\varphi_2} + \varepsilon \|x\|$$ for all $x\in E$
and $\varepsilon>0$;
\item There exist a bounded linear operator $G$ from $E$ to a real
(respectively, complex) Hilbert space and a function
$N:(0,+\infty) \rightarrow (0,+\infty)$ such that $$\|T (x) \|
\leq N(\varepsilon) \|G(x)\| + \varepsilon \|x\|$$ for all $x\in
E$ and $\varepsilon >0$.
\end{enumerate}
\end{thm}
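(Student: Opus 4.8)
The plan is to prove the cycle $(i)\Rightarrow(ii)\Rightarrow(iii)\Rightarrow(i)$, using Theorem~\ref{weakly compact characterization} as the essential input and passing between operators and sets via Gantmacher's theorem together with the De Blasi measure $\omega$ from Section~\ref{sec:2}. The key structural remark is that $E^*=(E^{**})_*$: the dual of the JB$^*$-triple $E$ is precisely the predual of the JBW$^*$-triple $E^{**}$, which lets us feed subsets of $E^*$ into the set-characterization. For $(i)\Rightarrow(ii)$, assume $T$ is weakly compact; then $T^*\colon X^*\to E^*$ is weakly compact, so $K:=T^*(B_{X^*})$ is a relatively weakly compact subset of $E^*=(E^{**})_*$. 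The implication $(a)\Rightarrow(b)$ of Theorem~\ref{weakly compact characterization}, applied to $M=E^{**}$, produces norm-one normal functionals $\varphi_1,\varphi_2\in E^*$ so that for every $\varepsilon>0$ there is $\delta>0$ with $\abs{\phi(z)}<\varepsilon$ whenever $z\in E^{**}$, $\norm z\le 1$ and $\norm z_{\varphi_1,\varphi_2}<\delta$. Since $\sup_{\phi\in K}\abs{\phi(z)}=\sup_{f\in B_{X^*}}\abs{f(T^{**}z)}=\norm{T^{**}z}$, restricting to $x\in E$ (where $T^{**}|_E=T$) gives $\norm{Tx}<\varepsilon$ for $x\in B_E$ with $\norm x_{\varphi_1,\varphi_2}<\delta$. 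A homogenization argument --- trivial when $\norm x_{\varphi_1,\varphi_2}<\delta$, and using $\norm{Tx}\le(\norm T/\delta)\norm x_{\varphi_1,\varphi_2}$ otherwise, then scaling off $B_E$ --- yields $(ii)$ with $N(\varepsilon)=\norm T/\delta$.

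The implication $(ii)\Rightarrow(iii)$ is immediate: the seminorm $\norm\cdot_{\varphi_1,\varphi_2}$ is pre-Hilbertian and bounded by $\sqrt2\,\norm\cdot$, so letting $H$ be the Hilbert space completion of $E/\ker\norm\cdot_{\varphi_1,\varphi_2}$ and $G\colon E\to H$ the canonical bounded map we have $\norm{Gx}=\norm x_{\varphi_1,\varphi_2}$, and $(ii)$ becomes $(iii)$ verbatim; realifying $H$ gives the real version.

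For $(iii)\Rightarrow(i)$ I would argue on the dual side. As $H$ is reflexive, $G$, and hence $G^*\colon H^*\to E^*$, is weakly compact. Dualizing $(iii)$, for every $f\in B_{X^*}$ and $x\in E$ we get $\abs{T^*f(x)}\le N(\varepsilon)\norm{Gx}+\varepsilon\norm x$; writing $N(\varepsilon)\norm{Gx}=\sup\{\abs{G^*h(x)}\setsep h\in N(\varepsilon)B_{H^*}\}$ shows that $T^*f$ is dominated by the support function of $C_\varepsilon:=G^*(N(\varepsilon)B_{H^*})$ increased by $\varepsilon\norm\cdot$, i.e. by the support function of the absolutely convex weak$^*$-compact set $C_\varepsilon+\varepsilon B_{E^*}$. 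The bipolar theorem then forces $T^*f\in C_\varepsilon+\varepsilon B_{E^*}$, and since $f$ was arbitrary, $T^*(B_{X^*})\subset C_\varepsilon+\varepsilon B_{E^*}$. As $C_\varepsilon$ is the image of a weakly compact set under the (weakly continuous) operator $G^*$, it is weakly compact in $E^*$; hence $\omega(T^*(B_{X^*}))\le\varepsilon$, and letting $\varepsilon\to0$ gives $\omega(T^*(B_{X^*}))=0$. Thus $T^*(B_{X^*})$ is relatively weakly compact, and Gantmacher's theorem returns the weak compactness of $T$, closing the cycle. I note that this route also delivers $(iii)\Rightarrow(ii)$ through $(i)$, so the little Grothendieck inequality for JB$^*$-triples is never invoked.

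The main obstacle is the bookkeeping at the operator/set interface rather than any single hard estimate, since all the genuinely difficult content is encapsulated in Theorem~\ref{weakly compact characterization}. Concretely, one must identify $E^*$ with $(E^{**})_*$, verify $\sup_{\phi\in K}\abs{\phi(z)}=\norm{T^{**}z}$, and --- crucially --- check that the functionals $\varphi_1,\varphi_2$ extracted from the JBW$^*$-triple $E^{**}$ actually lie in $E^*$, so that $(ii)$ and $(iii)$ are statements genuinely about $E$. The remaining subtleties (the homogenization lemma and the support-function/bipolar computation) are routine once the weak$^*$-compactness of $C_\varepsilon$, guaranteed by reflexivity of $H$, is in hand.
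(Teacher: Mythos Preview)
The paper does not prove this theorem: it is quoted from \cite[Theorem~10]{peralta2001grothendieck} and used as an established result, so there is no in-paper argument to compare against.

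Your proof is correct. The route $(i)\Rightarrow(ii)$ via Gantmacher, Theorem~\ref{weakly compact characterization} applied to $M=E^{**}$, and homogenization is precisely the mechanism the paper itself employs when it deduces the single-functional strengthening in Theorem~\ref{t weakly compact operators in the statement of ChuIochum}; compare your scaling step with the paper's substitution $b=a/(\norm a+\delta^{-1}\norm a_\varphi)$ there. Your support-function/bipolar argument for $(iii)\Rightarrow(i)$ is self-contained and clean (the key point being that $C_\varepsilon=G^*(N(\varepsilon)B_{H^*})$ is simultaneously weak$^*$-compact, so the bipolar in the $(E,E^*)$-duality returns $D_\varepsilon$ itself, and weakly compact, so $\omega(T^*(B_{X^*}))\le\varepsilon$); the paper does not spell this direction out at all, simply invoking the cited theorem for the converse implications.
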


This result is collected in the recent monograph \cite{Cabrera-Rodriguez-vol2} as Theorem 5.10.141. By quoting \cite{Cabrera-Rodriguez-vol2}, it should be noted that \emph{
``{The above theorem is established in \cite[Theorem 11]{chu1988weakly}, with $\|\cdot\|_{\varphi_1,\varphi_2}$ in condition $(ii)$ replaced with $\|\cdot\|_{\varphi}$ for a single functional $\varphi$ in the unit sphere of $E^*$. Since this refinement depends on an affirmative answer to \cite[Problem 5.10.131]{Cabrera-Rodriguez-vol2}, it should remain in doubt.}''} Problem 5.10.131 refers to the so-called Barton-Friedman conjecture\label{label BF conjecture} for JB$^*$-triples and the subtle difficulties appearing around the original statement of Grothendieck's inequality for JB$^*$-triples published in \cite{barton1987grothendieck} (see \cite{peralta2001little,peralta2001grothendieck,peralta2005new}, \cite[Subsection 5.10.4]{Cabrera-Rodriguez-vol2}, \cite{HKPP} and the final remark in page \pageref{label new final remark} for more details). Summarizing, the problem whether in Theorem \ref{t weakly compact charac}$(ii)$ (respectively, Theorem \ref{weakly compact characterization}$(b)$) the seminorm of the form $\|\cdot\|_{\varphi_1,\varphi_2}$ can be replaced with a seminorm of the form $\|\cdot\|_{\varphi}$ for a single norm-one functional $\varphi\in E^*$ remains as an open question. Our next result provides a positive solution to these problems and proves the validity of the original statement in \cite[Theorem 11]{chu1988weakly}.

\begin{thm}\label{t weakly compact with a single control functional} Let $K$ be a bounded subset in the predual of a JBW$^*$-triple $M$.
Then $K$ is relatively weakly compact if and only if there exists a norm-one normal functional $\varphi\in M_{*}$ satisfying the following property:
Given $\varepsilon >0,$ there exists $\delta > 0$ such that for every
$x\in M$ with $\|x\|\leq 1$ and $\|x\|_{\varphi} < \delta,$ we have $| \phi (x)| < \varepsilon$ for every $\phi\in K.$
\end{thm}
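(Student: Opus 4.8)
The plan is to derive the theorem from the two-functional characterization of relatively weakly compact sets already available in Theorem~\ref{weakly compact characterization} (the equivalence $(a)\Leftrightarrow(b)$), the only new ingredient being a way to \emph{merge} the pair of control functionals $\varphi_1,\varphi_2$ into a single one. The device that makes this possible is Corollary~\ref{cor:sigma-directed}: the family of topologies generated on $B_M$ by the seminorms $\norm{\cdot}_\varphi$, $\varphi\in M_*\setminus\{0\}$, is upwards $\sigma$-directed by inclusion. This directedness — which ultimately rests on the structural description of these seminorms in Theorem~\ref{T:order} — is exactly what lets us bypass the (problematic) Grothendieck inequality for JB$^*$-triples on which the earlier arguments depended.

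For the \emph{sufficiency} direction, suppose $\varphi$ is a norm-one normal functional with the stated single-seminorm control of $K$. Since $\norm{x}_{\varphi,\varphi}=\sqrt2\,\norm{x}_\varphi$, the same control holds verbatim for the seminorm $\norm{\cdot}_{\varphi,\varphi}$ after replacing $\delta$ by $\sqrt2\,\delta$. Taking $\varphi_1=\varphi_2=\varphi$ in condition $(b)$ of Theorem~\ref{weakly compact characterization} and invoking the implication $(b)\Rightarrow(a)$ shows that $K$ is relatively weakly compact, so this direction is immediate.

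For the \emph{necessity} direction, assume $K$ is relatively weakly compact. By Theorem~\ref{weakly compact characterization} $(a)\Rightarrow(b)$ there are norm-one normal functionals $\varphi_1,\varphi_2\in M_*$ such that for every $\varepsilon>0$ there is $\delta>0$ with
$$x\in B_M,\ \norm{x}_{\varphi_1,\varphi_2}<\delta\ \Longrightarrow\ \abs{\phi(x)}<\varepsilon\quad(\phi\in K).$$
I would then apply Corollary~\ref{cor:sigma-directed} to the pair $\varphi_1,\varphi_2$ (viewed as a sequence) to obtain $\psi\in M_*\setminus\{0\}$ for which both $\norm{\cdot}_{\varphi_1}$ and $\norm{\cdot}_{\varphi_2}$ are weaker than $\norm{\cdot}_\psi$ on $B_M$. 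Rescaling $\psi$ to norm one multiplies $\norm{\cdot}_\psi$ by a positive constant, hence leaves the generated topology (and the domination) unchanged; so we may assume $\norm\psi=1$ and put $\varphi=\psi$. Given the $\delta$ above, the domination of $\norm{\cdot}_{\varphi_1}$ and $\norm{\cdot}_{\varphi_2}$ by $\norm{\cdot}_\varphi$ furnishes $\delta'>0$ such that $x\in B_M$ with $\norm{x}_\varphi<\delta'$ forces $\norm{x}_{\varphi_i}<\delta/\sqrt2$ for $i=1,2$, whence $\norm{x}_{\varphi_1,\varphi_2}<\delta$; combined with the displayed implication this gives $\abs{\phi(x)}<\varepsilon$ for all $\phi\in K$, which is precisely the required single-functional control.

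The only substantial ingredient is Corollary~\ref{cor:sigma-directed}, and it is already established; the remaining work is routine quantitative $\varepsilon$--$\delta$ bookkeeping. The one point I would be careful to record is that simultaneous domination of $\norm{\cdot}_{\varphi_1}$ and $\norm{\cdot}_{\varphi_2}$ by $\norm{\cdot}_\varphi$ on $B_M$ automatically yields domination of the combined seminorm $\norm{\cdot}_{\varphi_1,\varphi_2}=(\norm{\cdot}_{\varphi_1}^2+\norm{\cdot}_{\varphi_2}^2)^{1/2}$, together with the harmless normalization of the functional produced by the corollary.
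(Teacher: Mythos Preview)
Your proposal is correct and follows essentially the same approach as the paper's own proof: the sufficiency is the implication $(b)\Rightarrow(a)$ of Theorem~\ref{weakly compact characterization}, and the necessity uses $(a)\Rightarrow(b)$ to obtain $\varphi_1,\varphi_2$ and then merges them via Corollary~\ref{cor:sigma-directed}. You have simply spelled out the $\varepsilon$--$\delta$ bookkeeping and the normalization that the paper's two-line proof leaves implicit.
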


\begin{proof} The `if part' follows from the  implication $(b)\Rightarrow(a)$ in Theorem~\ref{weakly compact characterization},
whereas the `only if part follows from the  implication $(a)\Rightarrow(b)$ in Theorem~\ref{weakly compact characterization}
and Corollary~\ref{cor:sigma-directed}.
 \end{proof}

We can now provide a proof of the statement in \cite[Theorem 11]{chu1988weakly} and close a conjecture which has remained open for over eighteen years. The proof dissipates the commented doubts expressed in \cite[page 341]{Cabrera-Rodriguez-vol2}.

\begin{thm}\label{t weakly compact operators in the statement of ChuIochum} Let $M$ be a JBW$^*$-triple, $E$ a JB$^*$-triple, and let $X$ be a complex Banach space. Then the following statements hold:
\begin{enumerate}[$(a)$]
\item A bounded linear operator $T: X\to M_*$ is weakly compact if and only if there exists a norm-one functional $\varphi\in M_{*}$ and a function $N:(0,+\infty) \rightarrow (0,+\infty)$ such that $$\|T^* (a) \| \leq N(\varepsilon)\ \|a\|_{\varphi} + \varepsilon \|a\|$$ for all $a\in M$ and $\varepsilon>0$;
\item A bounded linear operator $T: E\to X$ is weakly compact if and only if there exists a norm-one functional $\varphi\in E^{*}$ and a function $N:(0,+\infty) \rightarrow (0,+\infty)$ such that $$\|T (a) \| \leq N(\varepsilon)\ \|a\|_{\varphi} + \varepsilon \|a\|$$ for all $a\in E$ and $\varepsilon>0$.
\end{enumerate}
\end{thm}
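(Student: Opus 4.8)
The plan is to derive both statements from the single-functional characterization of relatively weakly compact subsets obtained in Theorem~\ref{t weakly compact with a single control functional} (which itself rests on Corollary~\ref{cor:sigma-directed}), handling part $(a)$ directly and reducing part $(b)$ to part $(a)$ by passing to the bidual.

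For part $(a)$ I would first recall that a bounded operator $T\colon X\to M_*$ is weakly compact if and only if $K:=T(B_X)$ is relatively weakly compact in $M_*$, and then re-express the defining $\ep$--$\delta$ property as a linear majorization of seminorms on $M=(M_*)^*$. Writing $q_K(a)=\sup\{\abs{\phi(a)}\setsep\phi\in K\}$ for $a\in M$, the adjoint $T^*\colon M\to X^*$ satisfies $T^*(a)(x)=\langle Tx,a\rangle$, whence $\norm{T^*(a)}=q_K(a)$. Now the equivalence is the routine passage between the two formulations. If $K$ is relatively weakly compact, Theorem~\ref{t weakly compact with a single control functional} furnishes a norm-one $\varphi\in M_*$ so that for every $\ep>0$ there is $\delta>0$ with $q_K(a)\le\ep$ whenever $a\in B_M$ and $\norm{a}_\varphi<\delta$; setting $N(\ep)=R/\delta$ with $R=\sup_{\phi\in K}\norm{\phi}<\infty$ and distinguishing on $B_M$ the cases $\norm{a}_\varphi<\delta$ and $\norm{a}_\varphi\ge\delta$ gives $q_K(a)\le N(\ep)\norm{a}_\varphi+\ep$ there, and homogeneity extends it to all $a\in M$. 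Conversely, applying the inequality with $\ep/2$ and taking $\delta=\ep/(2N(\ep/2))$ yields $q_K(a)<\ep$ for $a\in B_M$ with $\norm{a}_\varphi<\delta$, which is exactly the property characterizing relative weak compactness in Theorem~\ref{t weakly compact with a single control functional}; hence $K$, and so $T$, is weakly compact.

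For part $(b)$, set $M=E^{**}$, a JBW$^*$-triple with $M_*=E^*$. The \emph{only if} direction combines Gantmacher's theorem with part $(a)$: if $T\colon E\to X$ is weakly compact then $T^*\colon X^*\to M_*$ is weakly compact, so part $(a)$ applied to $T^*$ produces a norm-one $\varphi\in M_*=E^*$ and a function $N$ with $\norm{T^{**}(a)}\le N(\ep)\norm{a}_\varphi+\ep\norm{a}$ for all $a\in M=E^{**}$; restricting to $a\in E$ and using $T^{**}|_E=T$ together with the isometric embedding $X\hookrightarrow X^{**}$ gives the asserted inequality on $E$. The \emph{if} direction is immediate from the known two-functional criterion Theorem~\ref{t weakly compact charac}: taking $\varphi_1=\varphi_2=\varphi$ one has $\norm{a}_{\varphi,\varphi}=\sqrt2\,\norm{a}_\varphi$, so the hypothesis becomes condition $(ii)$ of that theorem with $N$ rescaled by $1/\sqrt2$, forcing $T$ to be weakly compact.

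The main obstacle is the single genuinely analytic point buried in the \emph{only if} direction of $(b)$: one must verify that the seminorm $\norm{\cdot}_\varphi$ on $E$ occurring in the statement coincides with the restriction to $E$ of the seminorm $\norm{\cdot}_\varphi$ constructed on $M=E^{**}$ from the support tripotent $s(\varphi)\in E^{**}$. This is precisely the compatibility recorded in Lemma~\ref{L:functionals and seminorms}$(d)$ -- that $\varphi(\J xy{s(\varphi)})$ does not depend on the norm-attaining element representing $\varphi$ -- combined with the fact that $\varphi\in M_*$ attains its norm on the weak$^*$-compact ball $B_{E^{**}}$ even when it fails to do so on $B_E$. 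Once this identification and the elementary relation $T^{**}|_E=T$ are in place, the remainder is bookkeeping, and no circularity arises since Theorem~\ref{t weakly compact with a single control functional} and Corollary~\ref{cor:sigma-directed} have already been established.
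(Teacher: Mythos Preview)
Your proof is correct and follows essentially the same approach as the paper: for $(a)$ you use Theorem~\ref{t weakly compact with a single control functional} on $T(B_X)$ to get the control functional and then convert the $\ep$--$\delta$ statement into the linear majorization (the paper does this with the single rescaling $b=a/(\norm{a}+\delta^{-1}\norm{a}_\varphi)$ rather than your case split, yielding $N(\ep)=\ep/\delta$ instead of $R/\delta$, but this is cosmetic); for $(b)$ you reduce to $(a)$ via Gantmacher exactly as the paper does, and your invocation of Theorem~\ref{t weakly compact charac} for the converse matches the paper's one-line justification of both `if parts'. The compatibility concern you raise about $\norm{\cdot}_\varphi$ on $E$ versus $E^{**}$ is a non-issue in the paper's setup, since for $\varphi\in E^*$ the seminorm is by definition the one on $E^{**}$ restricted to $E$, so nothing needs reconciling.
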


\begin{proof} In both statements the `if parts'  follow from Theorem \ref{t weakly compact charac}.\smallskip

$(a)$ Suppose $T: X\to M_*$ is weakly compact. Since $T(B_X)\subseteq M_*$ is relatively weakly compact, Theorem \ref{t weakly compact with a single control functional} implies the existence of a norm-one normal functional $\varphi\in M_{*}$ satisfying the following property:
Given $\varepsilon >0,$ there exists $\delta > 0$ such that for every
$a\in M$ with $\|a\|\leq 1$ and $\|a\|_{\varphi} < \delta,$ we have $| T(x) (a) | < \varepsilon$ for every $x\in B_X.$\smallskip

Given $a\in M\backslash\{0\},$ the element $b= \frac{a}{\|a\|+\delta^{-1} \|a\|_{\varphi}}\in B_M$ and satisfies $\|b\|_{\varphi}<\delta$, therefore
$| T(x) (b) | < \varepsilon$ for every $x\in B_X,$ equivalently,
$$ |T^*(a) (x) | = |T(x) (a)| < \varepsilon \delta^{-1} \|a\|_{\varphi} + \varepsilon \|a\|,$$ for all $x\in B_X$, and thus $$ \|T^*(a) \| \leq \varepsilon \delta^{-1} \|a\|_{\varphi} + \varepsilon \|a\|, \hbox{ for all $a\in M$}.$$

Statement $(b)$ follows from $(a)$ since, by virtue of Gantmacher's theorem, an operator $T: E\to X$ is weakly compact if and only if $T^*: X^*\to E^*$ is.
\end{proof}

\section{Final remarks and open problems}

Theorem~\ref{T:main} says that the three measures of weak non-compactness considered in this paper coincide in preduals of JBW$^*$-triples which complements the previous results from \cite{qdpp,HK-nuclear}. However, the mentioned results include explicit formulas for these measures. In fact, these formulas are substantially used in the proofs. In the present paper we do not get an explicit formula due to the procedure of the proof -- we use subsequence splitting property, Lemma~\ref{L:omega on JBW*algebra}, Proposition~\ref{P:triple 1-complemented} and Lemma~\ref{L:measures 1-complemented}. So it is natural to ask whether there are some natural formulas for the De Blasi measure $\omega$. The first question deals with a special case of von Neumann algebras.

\begin{ques}
Let $M$ be a semifinite von Neumann algebra with a fixed normal semifinite faithful trace $\tau$. Is there a formula for the De Blasi measure of weak noncompatness in $M_*$ in terms of the trace $\tau$? Is it so at least for finite $\sigma$-finite von Neumann algebras?
\end{ques}

We note that the special case of commutative spaces $L^1(\mu)$ is settled in \cite[Section 7]{qdpp}.\smallskip

Another possibility is to try to get  quantitative versions of some characterizations of weakly compact sets in the predual of a JBW$^*$-triple
given in Theorem~\ref{weakly compact characterization} (or in the improvement of its assertion (b) contained in Theorem~\ref{t weakly compact with a single control functional}). More precisely, we have the following question.

\begin{ques} Let $M$ be a JBW$^*$-triple and let $A\subset M_*$ be a bounded set. Can $\omega(A)$ be expressed using quantitative versions of the characterizations from Theorem~\ref{weakly compact characterization}? In particular, is $\omega(A)$ equal (or at least equivalent) to the following quantities?
\begin{enumerate}[$(a)$]\setcounter{enumi}{1}
    \item $\displaystyle\inf_{\varphi\in S_{M_*}}\, \sup_{\phi\in A}\, \inf_{\delta>0}\, \sup\, \{ \abs{\phi(x)}\setsep  x\in B_M, \norm{x}_{\varphi}<\delta\}$;
    \item $\sup\,\{\omega(A|_C)\setsep C\subset M\mbox{ a maximal abelian subtriple}\}$;
    \item $\sup\,\{\omega(A|_{M_2(e)})\setsep e\in M\mbox{ a tripotent}\}$;
    \item  $\displaystyle
       \sup\, \left\{\smash[b]{ \limsup_{n\to\infty} \sup_{\phi\in A}}\abs{\phi(e_n)} \setsep \begin{array}{c}
  (e_n) \mbox{ a decreasing sequence of tripotents} \\
   \mbox{ weak$^*$-converging to }0
\end{array}\right\}. 
$
\end{enumerate}
\end{ques}

Note that these quantities naturally correspond to the respective characterizations in Theorem~\ref{weakly compact characterization} (or in the improvement of assertion (b) contained in Theorem~\ref{t weakly compact with a single control functional}). It is easy to check that all these quantities are bounded above by the De Blasi measure $\omega$, but the converse inequalities seem not to be obvious.

We investigated measures of weak non-compactness in preduals of JBW$^*$-triples. Another possibility is to look at subsets of JB$^*$-triples themselves. In this direction there is just one positive result -- coincidence of measures of weak non-compactness for subsets of $c_0(\Gamma)$ (see \cite[Theorem C]{HK-nuclear}) and no negative result up to now. So, the following question seems to be natural.

\begin{ques}
Are the above-considered measures of weak non-compactness equi\-va\-lent (or even equal) for bounded subsets of a JBW$^*$-triple?
\end{ques}

We have no idea how to approach this general question, so we formulate two important special cases.

\begin{ques} Let $K$ be a compact space.
Are the above-considered measures of weak non-compactness equivalent (or even equal) for bounded subsets of $\C(K)$?
Is it true for $K=[0,1]$?\end{ques}

\begin{ques}
Let $H$ be a Hilbert space.
Are the  above-considered measures of weak non-compactness equi\-va\-lent (or even equal) for bounded subsets of
$K(H)$, the space of compact operators on $H$?\end{ques}

\noindent\textbf{Added during the revision process:}\label{label new final remark} Months after the submission of this paper we discovered a complete proof of the so-called Barton-Friedman conjecture for general JB$^*$-triples, and a solution to Problem 5.10.131 in \cite{Cabrera-Rodriguez-vol2} (treated in page \pageref{label BF conjecture}). The result is included in the recent preprint \cite{HKPP}. This proof of the Barton-Friedman conjecture offers an alternative approach to derive Theorems \ref{t weakly compact with a single control functional} and \ref{t weakly compact operators in the statement of ChuIochum} as a straightforward consequences of \cite[Theorem 10]{peralta2001grothendieck}. \medskip\medskip

\noindent
\textbf{\it Acknowledgements:} We would like to thank the anonymous referee for the time employed in writing a professional and thorough report with a wide list of constructive and enriching comments and suggestions.


\def\cprime{$'$} \def\cprime{$'$}

\end{document}